\documentclass[letterpaper]{amsart}
\usepackage[utf8x]{inputenc}
\usepackage[T1,T2A]{fontenc}
\usepackage[english]{babel}
\usepackage{graphicx}
\usepackage{longtable}
\usepackage{wrapfig}
\usepackage{rotating}
\usepackage[normalem]{ulem}
\usepackage{amsmath}
\usepackage{amssymb}
\usepackage{capt-of}
\usepackage[bookmarks]{hyperref}
\usepackage{amsthm}
\usepackage{amsfonts}
\usepackage{mathtools}
\usepackage{geometry}
\usepackage{bm}
\usepackage[warn]{mathtext}
\usepackage[bbgreekl]{mathbbol}
\usepackage{framed}
\usepackage[dvipsnames]{xcolor}
\usepackage[shortlabels]{enumitem}
\usepackage{cancel}
\usepackage{centernot}
\allowdisplaybreaks[4]

\topmargin-2cm
\setlength{\oddsidemargin}{-0.6 cm}
\setlength{\evensidemargin}{-0.6 cm}
\setlength{\textwidth}{17.5 cm}
\setlength{\textheight}{25 cm}
\setlength{\parindent}{0cm}

\newtheorem{theorem}{Theorem}
\newtheorem{lemma}{Lemma}

\newtheorem{remark}{Remark}

\newtheorem{corollary}{Corollary}

\DeclarePairedDelimiter\ceil{\lceil}{\rceil}
\DeclarePairedDelimiter\floor{\lfloor}{\rfloor}

\DeclareMathOperator\var{Var}
\DeclareMathOperator\diag{diag}

\pdfstringdefDisableCommands{
    \renewcommand*{\bm}[1]{#1}
}
\makeatletter
\def\namedlabel#1#2{\begingroup
  #2%
  \def\@currentlabel{#2}%
  \phantomsection\label{#1}\endgroup
}
\makeatother
\makeatletter
\def\subsubsection{\@startsection{subsubsection}{3}%
  \z@{.5\linespacing\@plus.7\linespacing}{.1\linespacing}%
  {\normalfont\itshape}}
\def\subsection{\@startsection{subsection}{2}
  \z@{.5\linespacing\@plus.7\linespacing}{.3\linespacing}
  {\normalfont\bfseries}}
\makeatother

\usepackage[style=ieee,backend=bibtex]{biblatex}
\addbibresource{master-bibliography.bib}
\author[Ievlev]{Pavel Ievlev}
\address{University of Lausanne, Lausanne, Switzerland}
\email{ievlev.pn@gmail.com}
\author[Kriukov]{Nikolai Kriukov}
\address{Korteweg-de Vries Institute, University of Amsterdam,
POSTBUS  94248, 1090 GE  Amsterdam, The Netherlands.}
\email{n.kriukov@uva.nl}
\keywords{Gaussian processes; Operator-fractional Brownian motion; Vector-valued Gaussian random fields; High exceedence probability; Double crossing probabilities}
\subjclass{60G15; 60G70}
\date{}
\title{Extremes of vector-valued locally additive Gaussian fields with application to double crossing probabilities}
\hypersetup{
 pdfauthor={Pavel Ievlev, Nikolai Kriukov},
 pdftitle={Extremes of vector-valued locally additive Gaussian fields with application to double crossing probabilities},
 pdfkeywords={},
 pdfsubject={},
 pdfcreator={Emacs 29.1 (Org mode 9.7)}, 
 pdflang={English}}
\begin{document}

\maketitle
\begin{abstract}
The asymptotic analysis of high exceedance probabilities for Gaussian processes
and fields has been a blooming research area since J.~Pickands introduced
the now-standard techniques in the late 60's. The \emph{vector-valued} processes,
however, have long remained out of reach due to the lack of some key tools
including Slepian's lemma, Borell-TIS and Piterbarg inequalities. In a 2020
paper by K.~D\c{e}bicki, E.~Hashorva and L.~Wang, the authors
extended the double-sum method to a large class of vector-valued processes, both
stationary and non-stationary. In this contribution we make one step forward,
extending these results to a simple yet rich class of non-homogenous
vector-valued Gaussian \emph{fields}. As an application of our findings, we present an
exact asymptotic result for the probability that a real-valued process first
hits a high positive barrier and then a low negative barrier within a finite
time horizon.
\end{abstract}
\section{Introduction}
\label{sec:org218959c}
\label{SEC:intro}
The asymptotic analysis of high exceedance probabilities for Gaussian processes
and fields has been a blooming research area for several decades. The most
classical results due to J. Pickands~\cite{Pickands1967,Pickands1969} give the
asymptotics of
\begin{equation*}
\mathbb{P}
\left\{ \exists \, t \in [ 0, T ] \colon X ( t ) > u \right\}, \quad \text{as}
\quad u \to \infty,
\end{equation*}
when \(X ( t )\) is a centered Gaussian process on \([ 0, T ]\) with values
in \(\mathbb{R}\) and satisfying some standard assumptions. These results have since
been extended in numerous directions. Among these, little was known about
similar problems for vector-valued processes until very recently. A deep
contribution~\cite{DebickiEtAl2019} paved a way to the asymptotic analysis
of the probabilities
\begin{equation*}
\mathbb{P} \left\{
\exists \, t \in [ 0, T ] \
\forall \, i \in \{ 1, \ldots, d \} \colon
X_i ( t ) > u \, b_i
\right\},
\quad \text{as} \quad u \to \infty,
\end{equation*}
for a centered \(\mathbb{R}^d\)-valued Gaussian process \(\bm{X} ( t )\),
\(t \in [ 0, T ]\) and \(\bm{b} \in \mathbb{R}^d\) with at least one positive component.
As the authors point out, even the seemingly trivial case of centered \(\bm{X}\)
 with independent components is quite challenging
(see~\cite{VectorValuedKamilTabis2015,AzaisPham2019,Pham2019}). We want
to mention in passing the non-centered vector-valued high exceedance problems,
which were initially studied for linear transformations of \(\mathbb{R}^d\)-valued
Brownian motion in~\cite{DebickiEtAl2018a,DebickiEtAl2018} and have
recently been extended to linear transformations of stationary increments
processes with independent components satisfying Berman condition in
\cite{BisewskiKriukov2021simultaneous}. The approximations of high exceedance
probabilities of vector-valued processes appear naturally in various
applications including statistics, ruin theory and queueing theory, see e.g.,
\cite{AzaisPham2019,WorsleyFriston2000,PanBorovkov2019,BorovkovPalmowski2019,DelsingMandjesSpreij2018}
\bigskip

Another direction in which the classical theorems may be extended involves high
exceedances of Gaussian random fields \(X ( \bm{t} )\), \(\bm{t} \in E \subset
\mathbb{R}^n\).
Deep results of this type are known since at least the 70's and
some of them are presented in the well-known monograph by
Piterbarg~\cite{PiterbargBook}. See~\cite{ManifoldPaperBaiPeng}
and~\cite{PiterbargManifold2021} for some recent developments.
\bigskip

In the current contribution we prove some theorems related to the mixture
of the aforementioned generalizations, that is, to centered Gaussian random
vector fields \(\bm{X} ( \bm{t} )\), \(\bm{t} \in \mathbb{R}^n\) taking values in
\(\mathbb{R}^d\) under some simplifying assumptions. More specifically,
the vector fields we consider behave near the most likely point of high
exceedance as sums of independent vector fields, each of which depends on
one coordinate of \(\bm{t}\):
\begin{equation*}
\bm{X} ( \bm{t} ) \approx
\bm{X}_1 ( t_1 ) + \bm{X}_2 ( t_2 ) + \ldots + \bm{X}_n ( t_n ),
\qquad
\bm{t} \approx \bm{t}_{ * }.
\end{equation*}
The exact meaning of this ``\(\approx\)'' sign is described by
Assumption~\ref{A2} below.
In developing these assumptions, our aim was to find the simplest yet fecund
extension of the paper~\cite{DebickiEtAl2019} to the case of
miltidimensional parameter.
There are two recent papers we want to mention in this regard.
In~\cite{2dNonSimBrownian2020} and~\cite{Krystecki2021finiteTime}
the so-called non-simultaneous ruin probability of a pair of correlated Brownian
motions with linear trends
\begin{equation*}
\mathbb{P} \left\{
\exists \, ( t, s ) \in [ 0, T ]^2 \colon
B_1 ( t ) - \mu_1 t > u, \ B_2 ( s ) - \mu_2 s > u
\right\}
\end{equation*}
was studied in the infinite horizon case \(T = \infty\) and finite horizon case \(T < \infty\) correspondingly. Although this probability may well be rewritten as a
ruin of two dimensional vector field, our local additivity assumptions are not
met in this setup, hence these two papers remain out of reach of our
Theorem~\ref{main-theorem}. Similar results have been obtained
in~\cite{MR3624871} for a class \(\mathbb{R}^2\)-valued locally-stationary
Gaussian random fields indexed by \(\mathbb{R}^n\). See
also~\cite{cheng2023expected} for some recent developments in the case of
smooth vector-valued Gaussian fields.
\bigskip

As an application of our findings, we give an asymptotic formula for the
probability that a one-dimensional stationary process \(X ( t )\),
\(t \in [ 0, T ]\) hits two distant barriers: one above and one below its
starting point. Namely, we derive a precise approximation of
\begin{equation} \label{def:double_crossing_probability}
\mathbb{P} \left\{
\exists \, t, \, s \in [ 0, T ] \colon
X ( t ) > a u, \
X ( s ) < -b u
\right\},
\qquad a, \, b \geq 0.
\end{equation}
The probability in~\eqref{def:double_crossing_probability} can be
conveniently rewritten in the vector notation as
\begin{equation*}
  \mathbb{P} \left\{ \exists \, \bm{t} \in [ 0, T ]^2 \colon \bm{X} ( \bm{t} ) > u \bm{b} \right\},
  \quad
  \bm{X} ( \bm{t} ) = \left( X ( t_1 ), \ -X ( t_2 ) \right)^\top,
  \quad
  \bm{b} = ( a, b )^\top.
\end{equation*}
If further the correlation function
\(\rho ( t ) = \mathbb{E} \left\{ X ( 0 ) X ( t ) \right\}\)
is positive, strictly smaller than \(1\), and satisfies
\begin{equation*}
\rho ( t ) \sim 1 - \vartheta \, t^{\alpha} + o \left( t^{\alpha} \right)
\quad \text{as} \quad
t \to 0,
\end{equation*}
with some \(\vartheta > 0\) and \(\alpha \in (0, 2]\), the problem falls within the
scope of our assumptions after some minor tweaks.
In view of Theorem~\ref{double-crossing-theorem} in
Section~\ref{SEC:double_crossing} we have
\begin{equation*}
\mathbb{P} \left\{
\exists \, t, \, s \in [ 0, T ] \colon
X ( t ) > a u, \
X ( s ) < -b u
\right\}
\sim c \, u^{\max \{ 0, 4 / \alpha - 2 \}} \,
\mathbb{P} \left\{
X ( 0 ) > a u, \
X ( T ) < -bu
\right\},
\end{equation*}
where the constant \(c\) is of Pickands type if \(\alpha < 1\), of Piterbarg
type if \(\alpha = 1\) and equals \(2\) if \(\alpha > 1\).

We now recapitulate the main ingredients of our approach and emphasize a few
things which, in our opinion, are worth mentioning.

First, it is known that the high exceedance event of a vector-valued Gaussian
process \(\bm{X} ( t )\), \(t \in [ 0, T ]\) is most likely to happen
near the maximizer of the so-called generalized variance function, defined as
\begin{equation*}
\sigma_{\bm{b}}^{-2} ( t )
= \min_{\bm{x} \, \geq \, \bm{b}} \bm{x}^\top \, \Sigma^{-1} ( t ) \, \bm{x},
\end{equation*}
where \(\Sigma ( t ) = R ( t, t )\) and
\(R ( t, s ) = \mathbb{E} \left\{ \bm{X} ( t ) \, \bm{X} ( s )^\top \right\}\)
is the covariance matrix of the process \(\bm{X} ( t )\).
A similar function with \(\bm{t} \in [ 0, T ]^n\) instead of \(t \in [ 0, T ]\)
plays the same role in the case of random fields.
The asymptotic analysis of the high exceedance probability usually begins with
showing that the probability that the overshoot happens outside some small
vicinity of this maximizer is negligible.

Secondly, in our approach to the issues arising from the dimensionality of the
process, we closely follow the paper~\cite{DebickiEtAl2019}, but there are
two important differences. To explain the first, let us briefly reproduce here
Assumption \textbf{D2} of the paper. Let \(R ( t, s )\) be the covariance matrix of
a Gaussian process \(X ( t )\), \(t \in [ 0, T ]\) and denote
\(\Sigma ( t ) = R ( t, t )\). Then, Assumption \textbf{D2} demands that for all
\(t \in [ 0, T ]\) there be a continuous \(d \times d\) matrix-function
\(A ( t )\), such that
\begin{equation*}
\Sigma ( t ) = A ( t ) \, A ( t )^\top
\end{equation*}
and with some \(d \times d\) real matrix \(\Xi\) and \(\beta' > 0\) holds
\begin{equation}\label{intro:A-expansion}
A ( t ) = A ( t_0 )
-| t - t_0 |^{\beta'} \, \Xi
+o \left( | t - t_0 |^{\beta'} \right)
\end{equation}
as \(t \to t_0\), where \(t_0\) is the point which maximizes the
generalized variance \(\sigma_{\bm{b}}^2 ( t )\).
From a practical point of view, it is not always easy to compute \(A ( t )\)
starting with \(\Sigma ( t )\). This is why instead of assuming something similar
to \textbf{D2} and \textbf{D3} (the latter also requires knowing \(A ( t )\) along with its
inverse), we impose assumptions directly on the asymptotic expansion of the
covariance matrix \(R ( t, s )\) for \(t\) and \(s\) close to
\(t_{ * }\) (see Assumption~\ref{A2}).

Let us now explain the second difference. In~\cite{DebickiEtAl2019} the
authors assumed that~\eqref{intro:A-expansion} is satisfied with \(\Xi\)
and \(A ( t_0 )\) such that \(\bm{w}^\top \Xi \, A^\top ( t_0 ) \, \bm{w} > 0\),
where \(\bm{w}\) is some specific vector. As it turns out, this assumption is
rather strong. To lift it, one may consider extending the
expansion~\eqref{intro:A-expansion} to the second order, namely
\begin{equation}\label{intro:A-expansion-2}
A ( t ) = A ( t_0 )
-| t - t_0 |^{\beta'} \, \Xi
-| t - t_0 |^{\beta} \, \Theta
+o \left( | t - t_0 |^{\beta} \right),
\end{equation}
where \(\bm{w}^\top \Xi \, A^\top ( t_0 ) \, \bm{w} = 0\), but
\(\bm{w}^\top \Theta \, \bm{w} > 0\). The precise conditions under which this extension
is possible are presented by Assumptions~\ref{A2.3} to~\ref{A2.6}.

In regard to the techniques used to work with the vector-valued setting of this
contribution, we refer our reader to the introduction
of~\cite{DebickiEtAl2019} where the authors describe in detail in what
aspects and why this case is much different from the one-dimensional. Here we
mention in passing that some of the tools crucial for the one-dimensional case
are not available in the multivariate setup (such as the Slepian lemma), while
others (such as the Borell-TIS \& Piterbarg inequalities) have been successfully
extended to this case.\newline

\textbf{Brief organization of the paper}. Main results of the paper are presented in
Section~\ref{SEC:main_results} with proofs relegated to
Section~\ref{SEC:proofs}. The asymptotics of the double crossing
probabilities are presented in Section~\ref{SEC:double_crossing} with
proofs relegated to Appendix. Section~\ref{SEC:auxiliary_results} contains
several auxiliary results, most of which are taken
from~\cite{DebickiEtAl2019} and reproduced here in an adapted form and
without proofs for the reader's convenience. We conclude this section by
introducing some notation.\newline

\textbf{Subscripts}.
Throughout the rest of the paper, the subscript \(u\) on any
scalar-, vector or matrix-valued function \(f\) defined on \(\mathbb{R}^n\) or
\(\mathbb{R}^n \times \mathbb{R}^n\) means, unless specified otherwise, its rescaling by a factor of
\(u^{-2/\bm{\nu}}\), that is,
\(f_u ( \bm{t} ) = f ( u^{-2/\bm{\nu}} \bm{t} )\) or \(f_u ( \bm{t}, \bm{s} ) = f ( u^{-2/\bm{\nu}} \bm{t}, u^{-2/\bm{\nu}} \bm{s} )\), where
\(\bm{\nu}\) is defined in~\eqref{def:index-sets-2}.

\textbf{Vectors}. All vectors (and only them) are written in bold letters, for instance
\(\bm{b} = ( b_1, \ldots, b_d )^\top\), \(\bm{1} = ( 1, \ldots, 1 )^\top\) and \(\bm{0} = (
0, \ldots, 0 )^\top\). If \(\mathcal{I} \subset \{ 1, \ldots, n \}\) and \(\bm{b} \in \mathbb{R}^n\), by \(\bm{b}_{\mathcal{I}}\) we mean \(( b_i )_{i \in \mathcal{I}} \in \mathbb{R}^{| \mathcal{I} |}\) or, by notation abuse,
its extension to \(\mathbb{R}^n\) by zeroes: \(b_i = 0\) for \(i \in \mathcal{I}^c\). Unless
specified otherwise, all operations on vectors are performed componentwise. For
example, \(\bm{a} \, \bm{b}\) with \(\bm{a}, \, \bm{b} \in \mathbb{R}^n\) denotes
componentwise producs: \(( \bm{a} \bm{b} )_i = a_i b_i\). Similarly for \(\bm{a} / \bm{b}\), \(e^{\bm{a}}\), or \(\floor*{\bm{a}}\), denoting \(a_i
/ b_i\), \(e^{a_i}\) and \(\floor*{a_i}\) correspondingly. We write \(\bm{a} \geq \bm{b}\) if \(a_i \geq b_i\) for all \(i \in \{ 1, \ldots, n \}\).

\textbf{Matrices}.
If \(A = ( A_{ij} )_{1 \leq i, \, j \leq d}\) is a \(d \times d\) matrix,
we shall write \(A_{IJ}\) for the submatrix \(( A_{ij} )_{i \in I, \, j \in J}\).
If \(I = J\), we shall occasionally write \(A_I\) instead of \(A_{II}\).
\(\left\| A \right\|\) denotes any fixed norm in the space of \(d \times d\)
matrices. Our formulae shall not depend on the choice of the norm.
For \(\bm{w} \in \mathbb{R}^d\), \(\diag ( \bm{w} )\) stands for the
diagonal matrix with entries \(w_1, \ldots, w_d\) on the main diagonal.

\textbf{Asymptotic equivalence.} If \((X, d_X)\) is a metric space,
\((N, \| \bm{\cdot} \|_N)\) and \((H, \| \bm{\cdot} \|_H )\) are normed spaces,
\(f, g \colon M \to N\) and \(h \colon M \to H\), we write
``\(f = g + o ( h )\) as \(x \to x_0\)'' if for every \(\varepsilon > 0\) there exists
some \(\delta > 0\) such that
\begin{equation*}
d_X ( x, x_0 ) < \delta \implies \| f ( x ) - g ( x ) \|_N \leq \varepsilon \left\| h ( x ) \right\|_H.
\end{equation*}
In particular, this convention will be frequently used with \(X = \mathbb{R}^n\),
\(N = \mathbb{R}^{d \times d}\) the space of matrices with Frobenius norm, \(H = \mathbb{R}\) with
standard distance \(| \bm{\cdot} |\) or \(H = \mathbb{R}^{d \times d}\), again with Frobenius
norm.

\textbf{Quadratic programming problem}. Let \(\Sigma\) be a \(d \times d\) real matrix with
inverse \(\Sigma^{-1}\). If \(\bm{b} \in \mathbb{R}^d \setminus ( -\infty, 0 ]^d\), then by Lemma
\ref{lemma:QPP} the quadratic programming problem \(\Pi_{\Sigma} ( \bm{b} )\)
\begin{equation*}
\Pi_{\Sigma} ( \bm{b} ) \colon
\text{minimize} \
\bm{x}^\top \, \Sigma^{-1} \, \bm{x} \
\text{under the linear constraint} \
\bm{x} \geq \bm{b}
\end{equation*}
has a unique solution \(\widetilde{\bm{b}} \geq \bm{b}\) and there exists a
unique non-empty index set \(I \subset \{ 1, \ldots, d \}\) such that
\begin{equation*}
\widetilde{\bm{b}}_I = \bm{b}_I,
\qquad
\widetilde{\bm{b}}_J = \Sigma_{IJ} ( \Sigma_{II} )^{-1} \, \bm{b}_I
\geq \bm{b}_J,
\qquad
\bm{w}_I = ( \Sigma_{II} )^{-1} \, \bm{b}_I > \bm{0}_I,
\qquad
\bm{w}_J = \bm{0}_J,
\end{equation*}
where \(\bm{w} = \Sigma^{-1} \widetilde{\bm{b}}\), where coordinates
\(J = \{ 1, \ldots, d \} \setminus I\) are responsible for the dimension-reduction
phenomena, while coordinates belonging to \(I\) play an essential role in
the exact asymptotics.

\textbf{Other notation}.
We use lower case constants \(c_1, \, c_2, \, \ldots\) to denote generic constants
used in the proofs, whose exact values are not important and can be changed from
line to line. The labeling of the constants starts anew in every proof.
Similarly, \(\epsilon_1, \, \epsilon_2, \, \ldots\) denote error terms, that is, functions of various variables
which are small in some specific sense, always described near the point where
they are introduced. Their labeling also starts anew in every proof.
\section{Main results}
\label{sec:org314c2ec}
\label{SEC:main_results}
Let \(\bm{X} ( \bm{t} )\), \(\bm{t} \in [ \bm{0}, \bm{T} ]\), \(\bm{T} > \bm{0}\) be a
non-stationary centered Gaussian random field with continuous sample paths.
Define two matrix-valued functions by
\[
R ( \bm{t}, \bm{s} )
= \mathbb{E} \left\{
\bm{X} ( \bm{t} ) \, \bm{X} ( \bm{s} )^\top
\right\},
\qquad
\Sigma ( \bm{t} ) = R ( \bm{t}, \bm{t} )
\]
and assume that \(\Sigma ( \bm{t} )\) is non-singular. Set \(\Sigma = \Sigma ( \bm{0} )\).
It is known that the function \(\sigma_{\bm{b}}^2 ( \bm{t} )\), defined by
\begin{equation}
\label{def:gen-variance}
\sigma_{\bm{b}}^{-2} ( \bm{t} )
= \min_{\bm{x} \, \geq \, \bm{b}}
\bm{x}^\top \, \Sigma^{-1} ( \bm{t} ) \, \bm{x}
\end{equation}
and further refered to as the generalized variance, plays a similar role in the
multivariate setup to that of the usual variance in the one-dimensional case.
More precisely, the high exceedance event usually happens near the maximizer of
\(\sigma_{\bm{b}}^{-2} ( \bm{t} )\).
The asymptotics then is determined by the behaviour of \(R ( \bm{t}, \bm{s} )\)
and \(\Sigma ( \bm{t} )\) near the this maximiser.
Let \(\bm{b} ( \bm{t} )\) denote the vector
which minimizes~\eqref{def:gen-variance}.
We shall assume that:
\begin{description}[leftmargin = * ]
\item[\namedlabel{A1}{A1}] \(\sigma_{\bm{b}}^2 ( \bm{t} )\) attains its unique
maximum at \(\bm{t}_{ * } = \bm{0}\).

\item[\namedlabel{A2}{A2}] There exist
\begin{enumerate}
\item collections of real \(d \times d\) matrices
\(( A_{k, i} )_{i = 1, \ldots, n}\), \(k = 1, \ldots, 5\) and
\(( A_{6, i, j} )_{i, j = 1, \ldots, n}\)
\item vectors \(\bm{\beta}', \, \bm{\beta} \in \mathbb{R}^n_{ + }\) satisfying
\(\bm{0} < \bm{\beta}' < \bm{\beta} \leq 2 \bm{\beta}'\)
\item a vector \(\bm{\alpha} \in ( 0, 2 )^n\)
\end{enumerate}
such that
\begin{equation}\label{A2.1}\tag{A2.1}
\Sigma - R ( \bm{t}, \bm{s} )
\sim
\sum_{i = 1}^n \Big[
A_{1, i} \, t_i^{\beta_i'}
+A_{2, i} \, t_i^{\beta_i}
+A_{3, i} \, s_i^{\beta_i'}
+A_{4, i} \, s_i^{\beta_i}
+S_{\alpha_i, A_{5, i}} ( t_i - s_i )
\Big]
+\sum_{i, \, j = 1}^n A_{6, i, j} \, t_i^{\beta_i'} \, s_j^{\beta_j'},
\end{equation}
where
\begin{equation} \label{def:S}
S_{\alpha, V} ( t )
\coloneqq
\left| t \right|^{\alpha} \Big( V \mathbb{1}_{t \geq 0} + V^\top \mathbb{1}_{t < 0} \Big)
\end{equation}
and \(\sim\) means that the error \(\epsilon\) satisfies
\begin{equation}\label{A2.2}\tag{A2.2}
\epsilon
= o \left(
\sum_{i = 1}^n \Big[ t_i^{\beta_i} + s_i^{\beta_i} + | t_i - s_i |^{\alpha_i} \Big]
\right)
\quad \text{as} \quad ( \bm{t}, \bm{s} ) \downarrow ( \bm{0}, \bm{0} ).
\end{equation}
Denote
\begin{equation}\label{def:index-sets-1}
\mathcal{F} \coloneqq \left\{ i \in \{ 1, \ldots, n \} \colon 2 \beta_i' = \beta_i \right\},
\qquad
\mathcal{I} \coloneqq \left\{ i \in \{ 1, \ldots, n \} \colon \alpha_i < \beta_i \right\},
\end{equation}
and assume further that
\begin{align}
\label{A2.3}\tag{A2.3}
& A_{1, i} \, \bm{w} = \bm{0}
\quad \text{and} \quad
\bm{b} ( \bm{t} ) - \bm{b} ( \bm{0} )
= o \left( \sum_{i = 1}^n t_i^{\beta_i'} \right)
& \text{as} \ \bm{t} \to \bm{0},
\\[7pt]
\label{A2.4}\tag{A2.4}
& \xi_i \coloneqq \bm{w}^\top A_{2, i} \, \bm{w} > 0
& \text{for all} \ i \in \{ 1, \ldots, n \},
\\[7pt]
\label{A2.5}\tag{A2.5}
& \varkappa_i \coloneqq \bm{w}^\top A_{5, i} \, \bm{w} > 0
& \text{for all} \ i \in \mathcal{I}.
\end{align}
Finally, define a block matrix \(D = ( D_{i, j} )_{i, j \in \mathcal{F}}\), each block of
which is a \(d \times d\) matrix given by
\begin{equation*}
D_{i, j} \coloneqq A_{6, i, j} + A_{1, i} \, \Sigma^{-1} A_{1, j}^\top,
\quad i, \, j \in \mathcal{F},
\end{equation*}
and assume that it is positive definite:
\begin{equation}
\label{A2.6}\tag{A2.6}
D \succcurlyeq 0.
\end{equation}

\item[\namedlabel{A3}{A3}] There exist \(\bm{\gamma} \in (0, 2]^n\) and \(C > 0\), such that
for all \(\bm{t}, \bm{s}\)
\begin{equation} \label{Holder} \tag{A3}
  \mathbb{E} \left\{ \left| \bm{X} ( \bm{t} ) - \bm{X} ( \bm{s} ) \right|^2 \right\}
  \leq C \sum_{i = 1}^n | t_i - s_i |^{\gamma_i}.
\end{equation}
\end{description}

We shall also frequently use the following notation:
\begin{equation}\label{def:index-sets-2}
\bm{\nu} \coloneqq \min \{ \bm{\alpha}, \bm{\beta} \},
\quad
\mathcal{J} \coloneqq \left\{ i \in \{ 1, \ldots, n \} \colon \alpha_i = \beta_i \right\},
\quad
\mathcal{K} \coloneqq \left\{ i \in \{ 1, \ldots, n \} \colon \alpha_i > \beta_i \right\}.
\end{equation}
Note that
\begin{equation*}
\{ i \colon \nu_i = \alpha_i \} = \mathcal{I} \cup \mathcal{J}
\quad \text{and} \quad
\{ i \colon \nu_i = \beta_i \} = \mathcal{J} \cup \mathcal{K}.
\end{equation*}

\begin{remark}
It follows from~\ref{A2.1} that \(A_{3, i} = A_{1, i}^\top\),
\(A_{4, i} = A_{2, i}^\top\) and \(A_{6, i, j}^\top = A_{6, j, i}\).
Moreover, the terms with \(t_i^{\beta_i'} \, t_j^{\beta_j'}\) such that
\(i \not\in \mathcal{F}\), \(j \not\in \mathcal{F}\) or both can be subsumed into the error term.
Hence, the assumption~\ref{A2.1} may be rewritten as follows:
\begin{multline}\label{Sigma-minus-R}
\Sigma - R ( \bm{t}, \bm{s} )
=
\sum_{i = 1}^n \Big[
A_{1, i} \, t_i^{\beta_i'}
+A_{2, i} \, t_i^{\beta_i}
+A_{1, i}^\top \, s_i^{\beta_i'}
+A_{2, i}^\top \, s_i^{\beta_i}
+S_{\alpha_i, A_{5, i}} ( t_i - s_i )
\Big]
\\[7pt]
+\sum_{i, \, j \in \mathcal{F}} A_{6, i, j} \, t_i^{\beta_i / 2} \, s_j^{\beta_j / 2}
+o \left(
\sum_{i = 1}^n \Big[ t_i^{\beta_i} + s_i^{\beta_i} + | t_i - s_i |^{\alpha_i} \Big]
\right).
\end{multline}
\end{remark}

\begin{remark} \label{important-remark}
It may be instructive to compare our assumption~\ref{A2} to that
of~\cite{DebickiEtAl2019}. To this end, consider the case
\(A_{1, i} = A_{6, i, j} = 0\). The assumptions~\ref{A2.3}
and~\ref{A2.6} are fulfilled automatically and assumption~\ref{A2.1}
reads:
\begin{equation}\label{simple-case}\tag{A2.1*}
\Sigma - R ( \bm{t}, \bm{s} )
=
\sum_{i = 1}^n \Big[
A_{2, i} \, t_i^{\beta_i}
+A_{2, i}^\top \, s_i^{\beta_i}
+S_{\alpha_i, A_{5, i}} ( t_i - s_i )
\Big]
+\epsilon,
\end{equation}
with the same error order as in~\ref{A2.2}.
Assumption~\ref{simple-case} combines (2.10), (2.11) and (2.13) of the
aforementioned paper into one and extends it from processes to fields of simple
(additive) covariance structure. It also has an advantage over them: it does not
require finding \(A ( \bm{t} )\) such that
\(\Sigma ( \bm{t} ) = A ( \bm{t} ) \, A^\top ( \bm{t} )\),
which can always be done in theory, but hard to implement in practice.
Next, assumption~\ref{A2.4} extends (2.12) to the case of fields
and~\ref{A2.5} does the same to the condition \(\bm{w}^\top V \bm{w} > 0\)
of Theorem 2.4.
\smallskip

Another important difference consists in allowing the leading order of
\(\Sigma - R ( \bm{t}, \bm{s} )\) to nullify \(\bm{w}\), that is, to fail
the condition \(\bm{w}^\top A_{1, i} \, \bm{w} > 0\) and even
\(A_{1, i} \, \bm{w} \neq \bm{0}\)\footnote{Note that
\(A = 0 \implies A \, \bm{w} = \bm{0} \implies \bm{w}^\top A \, \bm{w} = 0\),
but neither implication is reversible.}. In this case we require a certain speed
of convergence of the quadratic programming problem solutions
\(\bm{b} ( \bm{t} )\) to \(\bm{b} ( \bm{0} )\), see~\ref{A2.3}.
\end{remark}

\begin{remark}
Assumption~\ref{A2.6} is somewhat mysterious, which is why we present a
few intermediary results without assuming it in the Appendix. By
Lemma~\ref{lemma:quadratic-covariance} it is equivalent to the following:
there exists a family \(( C_{i, k} )_{i, k \in \mathcal{F}}\) such that
\begin{equation}\label{A2.6-D-as-sum}
D_{i, j} = \sum_{k \in \mathcal{F}} C_{i, k} \, C_{k, j}^\top.
\end{equation}
Note that if \(A_{6, i, j} = 0\), then
\(A_{1, i} \Sigma^{-1} A_{1, j}^\top = D_i \, D_j^\top\)
with \(D_i = A_{1, i} \Sigma^{-1/2}\), and therefore the assumption is satisfied.
An example of this situation may be found in our fBm double crossing example.
Another useful example is when \(A_{6, i, j}\) it not zero, but can itself be
represented as \(C_i \, C_j^\top\) for some family \(C_i\).
\(A_{6, i, j} = C_i \, C_j^\top\), then the assumption is also satisfied.
\end{remark}
\subsection{Constants}
\label{sec:orgfaa26c6}
For \(\alpha \in (0, 2]\) and a matrix \(V\), satisfying standard assumptions, let
\(\bm{Y}_{\alpha, V} ( t )\), \(\bm{t} \in \mathbb{R}\) be a multivariate fBm with cmf
\begin{equation}\label{def:R-and-S}
R_{\alpha, V} ( t, s )
\coloneqq
S_{\alpha, V} ( t ) + S_{\alpha, V} ( -s ) - S_{\alpha, V} ( t - s ),
\end{equation}
where \(S_{\alpha, V}\) is defined by~\eqref{def:S}.

For a triplet of disjoint sets
\(\mathcal{I}, \ \mathcal{J}, \ \mathcal{K} \subset \{ 1, \ldots, n \}\),
vector
\(\bm{\nu} \in (0, 2]^{\, \mathcal{I} \, \cup \, \mathcal{J} \, \cup \mathcal{K}}\)
and two collections of matrices
\(\mathbb{V} = \left( V_i \right)_{i \in \mathcal{I} \cup \mathcal{J}}\)
and
\(\mathbb{W} = \left( W_i \right)_{i \in \mathcal{J} \cup \mathcal{K}}\)
define a multivariate additive fBm field
\(\bm{Y}_{\bm{\nu}, \mathbb{V}} ( \bm{t} )\), \(\bm{t} \in \mathbb{R}^n\)
and a non-random vector field
\(\bm{d}_{\bm{\nu}, \mathbb{W}} ( \bm{t} )\), \(\bm{t} \in \mathbb{R}^n\)
by
\[
\bm{Y}_{\bm{\nu}, \mathbb{V}} ( \bm{t} )
\coloneqq
\sum_{i \in \mathcal{I} \cup \mathcal{J}}
\Big[ \bm{Y}_{\nu_i, V_i} ( t_i ) - S_{\nu_i, V_i} ( t_i ) \, \bm{1} \Big],
\qquad
\bm{d}_{\bm{\nu}, \mathbb{W}} ( \bm{t} )
\coloneqq
\sum_{i \in \mathcal{J} \cup \mathcal{K}} | t_i |^{\nu_i} \, W_i \, \bm{1}.
\]
Consider also a family of matrices \(\mathbb{D} = ( C_{i, j} )_{i, j \in \mathcal{D}}\),
\(\mathcal{D} \subset \{ 1, \ldots, n \}\), and set
\begin{equation*}
C_k ( \bm{t} ) \coloneqq \sum_{i \in \mathcal{D}} C_{i, k} \, t_i^{\beta_i/2},
\quad
\bm{Z} ( \bm{t} ) \coloneqq \sum_{k \in \mathcal{D}} C_k ( \bm{t} ) \, \bm{\mathcal{N}}_k,
\end{equation*}
where \(\bm{\mathcal{N}}_k \sim N ( \bm{0}, I )\) are standard Gaussian vectors,
independent of each other and of the fields \(\bm{Y}_{\nu_i, V_i}\), \(i \in \mathcal{I} \cup \mathcal{J}\).

Finally, for a compact set \(E \subset \mathbb{R}^n\) define
\begin{equation*}
  H_{\bm{\nu}, \mathbb{V}, \mathbb{W}, \mathbb{D}} \left( E \right)
  \coloneqq
  \int_{\mathbb{R}^d} e^{\bm{1}^\top \bm{x}} \,
  \mathbb{P} \left\{
    \exists \, \bm{t} \in E \colon
    \bm{Y}_{\bm{\nu}, \mathbb{V}} ( \bm{t} )
    +\bm{Z} ( \bm{t} )
    -\bm{d}_{\bm{\nu}, \mathbb{W}} ( \bm{t} ) > \bm{x}
  \right\}
  \mathop{d \bm{x}}
\end{equation*}
and
\begin{equation*}
\mathcal{H}_{\bm{\nu}, \mathbb{V}, \mathbb{W}, \mathbb{D}}
\coloneqq
\lim_{\Lambda \to \infty}
\lim_{S \to \infty}
S^{-| \mathcal{I} |}
H_{\bm{\nu}, \mathbb{V}, \mathbb{W}, \mathbb{D}}
\left( \left[ \bm{0}, \bm{S}' \right] \right),
\qquad
\bm{S}' = S \bm{1}_{\mathcal{I}} + \Lambda \bm{1}_{\mathcal{I}^c}
\end{equation*}
whenever the limit exists. Define also
\begin{equation*}
H_{\bm{\nu}, \mathbb{V}, \mathbb{W}} ( E ) \coloneqq
H_{\bm{\nu}, \mathbb{V}, \mathbb{W}, \varnothing} ( E ),
\end{equation*}
and similarly for \(\mathcal{H}_{\bm{\nu}, \mathbb{V}, \mathbb{W}}\).

As we shall see, for \(\mathcal{I}, \ \mathcal{J}, \ \mathcal{K}, \, \mathcal{F}\) from~\eqref{def:index-sets-1}
and~\eqref{def:index-sets-2} and matrices from~\ref{A2.1} this limit
exists and is positive and finite, provided that~\ref{A2.4}
and~\ref{A2.5} are satisfied, see Theorem~\ref{main-theorem}.

For an \(n \times n\) matrix \(\Xi\) and vector \(\bm{\beta} > \bm{0}\), define
\begin{equation}\label{def:G}
G ( \bm{\beta}, \Xi )
\coloneqq
\int_{\mathbb{R}_+^n}
\exp \left(
-\frac{1}{2}
\sum_{i, j} \Xi_{i, j} \, t_i^{\beta_i/2} \, t_j^{\beta_j/2}
\right)
\mathop{d \bm{t}}.
\end{equation}
\subsection{Main theorem}
\label{sec:org26870a8}
\begin{theorem} \label{main-theorem}
Let \(\bm{X} ( \bm{t} )\), \(\bm{t} \in [ \bm{0}, \bm{T} ] \subset \mathbb{R}^n\) be a
centered \(\mathbb{R}^d\)-valued Gaussian random field, satisfying
Assumptions~\ref{A1}, \ref{A2} and~\ref{A3}. Then
\begin{equation} \label{main-asymptotics}
  \mathbb{P} \left\{ \exists \, \bm{t} \in [ \bm{0}, \bm{T} ]
  \colon \bm{X} ( \bm{t} ) > u \bm{b}
  \right\}
  \sim
  u^{\zeta} \,
  \mathcal{H}_{\bm{\nu}, \mathbb{V}_{\bm{w}}, \mathbb{W}_{\bm{w}}, \mathbb{D}_{\bm{w}}} \,
  G ( \bm{\beta}_{\mathcal{I}}, \Xi_{\mathcal{I}, \mathcal{I}} ) \,
  \mathbb{P} \left\{ \bm{X} ( \bm{0} ) > u \bm{b} \right\},
\end{equation}
with
\begin{gather*}
\bm{\nu} = \min \{ \bm{\alpha}, \bm{\beta} \},
\quad
\zeta = \sum_{i \in \mathcal{I}} \left( \frac{2}{\alpha_i} - \frac{2}{\beta_i} \right),
\\[7pt]
\begin{aligned}
\mathbb{V}_{\bm{w}}
& = \Big( \diag ( \bm{w} ) \, A_{5, i} \, \diag ( \bm{w} ) \Big)_{i \in \mathcal{I} \cup \mathcal{J}},
& \mathbb{W}_{\bm{w}}
& = \Big( \diag ( \bm{w} ) \, A_{2, i} \, \diag ( \bm{w} ) \Big)_{i \in \mathcal{J} \cup \mathcal{K}},
\\[7pt]
\mathbb{D}_{\bm{w}}
& = \Big( \diag ( \bm{w} ) \, C_{i, k} \Big)_{i, k \in ( \mathcal{J} \cup \mathcal{K} ) \cap \mathcal{F}},
& \Xi_{i, j} & = \bm{w}^\top \Big[
2 \, A_{2, i} \, \mathbb{1}_{i = j} + D_{i, j} \, \mathbb{1}_{i, j \in \mathcal{F}}
\Big] \bm{w},
\end{aligned}
\end{gather*}
\(G\) defined by~\eqref{def:G},
\(( C_{i, k} )_{i, k \in ( \mathcal{J} \cup \mathcal{K} ) \cap \mathcal{F}}\) any family of matrices
satisfying~\eqref{A2.6-D-as-sum}, and
\begin{equation*}
G ( \bm{\beta}_{\mathcal{I}}, \Xi_{\mathcal{I}, \mathcal{I}} ), \quad
\mathcal{H}_{\bm{\nu}, \mathbb{V}_{\bm{w}}, \mathbb{W}_{\bm{w}}, \mathbb{D}_{\bm{w}}}
\in ( 0, \infty ).
\end{equation*}
\end{theorem}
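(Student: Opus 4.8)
The plan is to run the vector-valued double-sum (Pickands--Piterbarg) method of \cite{DebickiEtAl2019}, upgraded to the $n$-dimensional parameter with the three coordinate regimes $\mathcal I,\mathcal J,\mathcal K$ and the second-order correction encoded by $\mathcal F$. Write $p(u):=\mathbb P\{\bm X(\bm 0)>u\bm b\}$. By Lemma~\ref{lemma:QPP} the quadratic programming problem $\Pi_\Sigma(\bm b)$ determines the solution $\widetilde{\bm b}$, the index set $I$, and the vector $\bm w=\Sigma^{-1}\widetilde{\bm b}$; by the second part of~\ref{A2.3} the minimizer $\bm b(\bm t)$ of~\eqref{def:gen-variance} stays $o(\sum_i t_i^{\beta_i'})$-close to $\bm b(\bm 0)$, so this combinatorial data is frozen on the shrinking neighbourhood of $\bm 0$ we work in, and a perturbation analysis of $\Pi_{\Sigma(\bm t)}(\bm b(\bm t))$ gives, uniformly there, $\sigma_{\bm b}^{-2}(\bm t)-\sigma_{\bm b}^{-2}(\bm 0)=\bm w^\top(\Sigma-\Sigma(\bm t))\bm w\,(1+o(1))$, where by~\ref{A2.1} (with $\bm s=\bm t$, using $S_{\alpha,V}(0)=0$ and $A_{1,i}\bm w=\bm 0$) the right-hand side equals $\sum_i 2\xi_i t_i^{\beta_i}+\sum_{i,j\in\mathcal F}\bm w^\top A_{6,i,j}\bm w\,t_i^{\beta_i/2}t_j^{\beta_j/2}+o(\cdot)$. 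First I would use Assumption~\ref{A1}, the H\"older bound~\ref{Holder}, and the vector-valued Borell--TIS/Piterbarg inequalities recalled in Section~\ref{SEC:auxiliary_results} to discard the exceedance over $\bm t$ outside a fixed small cube where~\ref{A2.1}--\ref{A2.2} hold with uniform error, and then, using the lower bound $\sigma_{\bm b}^{-2}(\bm t)-\sigma_{\bm b}^{-2}(\bm 0)\geq c\sum_i t_i^{\beta_i}$ (positivity of $\xi_i$ by~\ref{A2.4} and of the $\mathcal F$-block of $D$ by~\ref{A2.6}), to reduce to the region whose image under the rescaling $\bm t=u^{-2/\bm\nu}\bm\tau$ is $\prod_{i\in\mathcal I}[0,L_i(u)]\times\prod_{i\notin\mathcal I}[0,\Lambda]$ with $L_i(u)\sim u^{2/\alpha_i-2/\beta_i}$ (up to a slowly varying factor) and $\Lambda$ fixed, later sent to $\infty$.

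\textbf{Single-block asymptotics.} Tile this rescaled region by blocks that are cubes of side $S$ in the $\mathcal I$-directions and the single interval $[0,\Lambda]$ in the $\mathcal J\cup\mathcal K$-directions, anchored at lattice points $\bm k S\bm 1_{\mathcal I}$. On a generic block I would write the exceedance probability through the standard representation that integrates the conditional law of $\bm X$ given its value at the anchor against $e^{\bm 1^\top\bm x}\,d\bm x$; the density of $\bm X(\text{anchor})$ near $u\bm b$ differs from that of $\bm X(\bm 0)$ precisely by the envelope $\exp\!\big(-\tfrac{u^2}{2}(\sigma_{\bm b}^{-2}(\text{anchor})-\sigma_{\bm b}^{-2}(\bm 0))\big)$. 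Then $u\,\diag(\bm w)^{-1}$-normalizing the conditional field and letting $u\to\infty$ with~\ref{A2.1}--\ref{A2.2}: the terms $S_{\alpha_i,A_{5,i}}(t_i-s_i)$ produce the additive multivariate fBm field $\bm Y_{\bm\nu,\mathbb V_{\bm w}}$ for $i\in\mathcal I\cup\mathcal J$; the terms $A_{2,i}t_i^{\beta_i}$ produce the deterministic drift $-\bm d_{\bm\nu,\mathbb W_{\bm w}}$ for $i\in\mathcal J\cup\mathcal K$; and the first-order terms $A_{1,i}t_i^{\beta_i'}$ together with the cross-terms $A_{6,i,j}t_i^{\beta_i'}s_j^{\beta_j'}$ on $\mathcal F$ --- where $A_{1,i}\bm w=\bm 0$, so the $A_{1,i}$ part enters only at second order, through the conditional-covariance (Schur complement) expansion, as $A_{1,i}\Sigma^{-1}A_{1,j}^\top$ --- assemble into the matrix $D_{i,j}$, which by~\ref{A2.6} and Lemma~\ref{lemma:quadratic-covariance} factors as in~\eqref{A2.6-D-as-sum} and hence yields the independent-Gaussian field $\bm Z$. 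Integrating the limiting exceedance probability against $e^{\bm 1^\top\bm x}\,d\bm x$ produces exactly $H_{\bm\nu,\mathbb V_{\bm w},\mathbb W_{\bm w},\mathbb D_{\bm w}}([\bm 0,\bm S'])$ with $\bm S'=S\bm 1_{\mathcal I}+\Lambda\bm 1_{\mathcal I^c}$, times the envelope, times $p(u)$.

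\textbf{Summation and constants.} Summing over blocks, the envelope at anchor $\bm k S\bm 1_{\mathcal I}$ is, after the substitution $\tau_i=\rho_i u^{2/\alpha_i-2/\beta_i}$, asymptotically $\exp\!\big(-\tfrac12\sum_{i,j\in\mathcal I}\Xi_{i,j}\rho_i^{\beta_i/2}\rho_j^{\beta_j/2}\big)$ (the powers of $u$ from the two factors cancelling against the $u^2$), so the block sum becomes a Riemann sum over $\bm\rho\in\mathbb R_+^{\mathcal I}$ with mesh $S u^{-(2/\alpha_i-2/\beta_i)}$ per coordinate; it converges, after normalizing by $S^{-|\mathcal I|}$ and letting $S\to\infty$ then $\Lambda\to\infty$, to $u^{\zeta}\,\mathcal H_{\bm\nu,\mathbb V_{\bm w},\mathbb W_{\bm w},\mathbb D_{\bm w}}\,G(\bm\beta_{\mathcal I},\Xi_{\mathcal I,\mathcal I})\,p(u)$, with $\zeta=\sum_{i\in\mathcal I}(2/\alpha_i-2/\beta_i)$ arising from the Jacobian $d\bm\tau=u^{\zeta}d\bm\rho$. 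To legitimize the Bonferroni reduction to this single sum I would bound the sum over pairs of distinct blocks by~\ref{Holder} and the vector-valued Piterbarg inequality, obtaining exponential decay in the rescaled block separation, hence a lower-order contribution; the usual order of limits ($u\to\infty$, then $S\to\infty$, then $\Lambda\to\infty$) closes the upper and lower estimates. Finally, $G(\bm\beta_{\mathcal I},\Xi_{\mathcal I,\mathcal I})\in(0,\infty)$ because $\Xi_{\mathcal I,\mathcal I}\succ0$ (diagonal $2\xi_i>0$ by~\ref{A2.4}, plus the $\mathcal F$-block of $D\succcurlyeq0$ by~\ref{A2.6}), and $\mathcal H_{\bm\nu,\mathbb V_{\bm w},\mathbb W_{\bm w},\mathbb D_{\bm w}}\in(0,\infty)$ by the generalized Pickands-constant estimates: positivity is immediate from the block structure, while finiteness uses $\varkappa_i=\bm w^\top A_{5,i}\bm w>0$ for $i\in\mathcal I$ (\ref{A2.5}), which forces the embedded drift $S_{\nu_i,V_i}(t_i)\bm 1$ in $\bm Y_{\bm\nu,\mathbb V_{\bm w}}$ to grow, together with the explicit drift $\bm d_{\bm\nu,\mathbb W_{\bm w}}$ driven by $\xi_i>0$, which supplies the decay making the $d\bm x$-integral and the $S,\Lambda\to\infty$ limits converge.

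\textbf{Main obstacle.} The technical heart is the single-block step: the Gaussian-conditioning expansion that simultaneously separates the three limiting objects $\bm Y$, $\bm Z$, $\bm d$ --- above all the bookkeeping by which the degenerate first-order terms $A_{1,i}$ (annihilated by $\bm w$) re-enter at second order as $A_{1,i}\Sigma^{-1}A_{1,j}^\top$ inside $D_{i,j}$, which is exactly where Assumption~\ref{A2.6} and Lemma~\ref{lemma:quadratic-covariance} are needed --- together with the uniformity of every $o(\cdot)$ estimate across the growing family of $\sim u^{\zeta}$ many $\mathcal I$-blocks, without which the Riemann-sum passage to $G(\bm\beta_{\mathcal I},\Xi_{\mathcal I,\mathcal I})$ and the negligibility of the double sum cannot be justified.
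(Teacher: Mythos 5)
Your outline follows the same route as the paper (localization near $\bm t_*=\bm 0$, log-layer reduction, tiling by Pickands blocks of side $S$ in the $\mathcal I$-directions and $\Lambda$ in the others, a local Pickands lemma on each block, a Riemann-sum passage to $G$, Bonferroni plus a double-sum bound, and subadditivity for the constant). Two steps, however, are wrong or critically underspecified.

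First, your envelope formula $\sigma_{\bm b}^{-2}(\bm t)-\sigma_{\bm b}^{-2}(\bm 0)=\bm w^\top\bigl(\Sigma-\Sigma(\bm t)\bigr)\bm w\,(1+o(1))$ is incorrect at the order of precision the theorem requires. Expanding $\Sigma^{-1}(\bm t)$ by the Neumann series, the \emph{quadratic} term $\Sigma^{-1}[\Sigma-\Sigma(\bm t)]\Sigma^{-1}[\Sigma-\Sigma(\bm t)]\Sigma^{-1}$ contributes $\bm w^\top B_{1,i}\Sigma^{-1}B_{1,j}\bm w\,t_i^{\beta_i/2}t_j^{\beta_j/2}=\bm w^\top A_{1,i}\Sigma^{-1}A_{1,j}^\top\bm w\,t_i^{\beta_i/2}t_j^{\beta_j/2}$ for $i,j\in\mathcal F$, which is of exactly the same order $t_i^{\beta_i/2}t_j^{\beta_j/2}$ as the $A_{6,i,j}$ term you keep. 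Your formula therefore produces $\Xi_{i,j}$ with $A_{6,i,j}$ in place of $D_{i,j}=A_{6,i,j}+A_{1,i}\Sigma^{-1}A_{1,j}^\top$, and hence the wrong constant $G(\bm\beta_{\mathcal I},\Xi_{\mathcal I,\mathcal I})$ whenever $\mathcal F\cap\mathcal I\neq\varnothing$ and $A_{1,i}\neq 0$ (this is precisely the point of the remark following Theorem~\ref{double-crossing-theorem-fBm}: $\kappa_2\neq 2\,\bm w^\top A_{2,2}\,\bm w$). You correctly identify the mechanism by which $A_{1,i}\Sigma^{-1}A_{1,j}^\top$ enters the conditional covariance, but you must also carry it into the generalized-variance expansion (the paper's Lemma~\ref{lemma:gen-var-exp}), which works with $\bm b^\top[\Sigma^{-1}(\bm t)-\Sigma^{-1}]\bm b$ rather than $\bm w^\top[\Sigma-\Sigma(\bm t)]\bm w$.

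Second, the double-sum control does not follow from ``\ref{Holder} and the vector-valued Piterbarg inequality.'' The Piterbarg inequality bounds a single exceedance probability by $\exp(-u^2/2\sigma_{\bm b}^2)$ over a set and carries no information about the joint event on two separated blocks; the decay in the rescaled separation must come from the local structure \ref{A2.1} applied to the sum field $\tfrac12(\bm X(\bm t)+\bm X(\bm s))$, whose generalized variance drops below $\sigma_{\bm b}^2(\bm 0)$ by an amount proportional to $\varkappa_i\,|\lambda_i-\tau_i|^{\alpha_i}$ — this is where \ref{A2.5} is actually consumed, and it requires its own conditioning argument with uniform $G+\sigma^2$ estimates (the paper's Lemma~\ref{lemma:double-sum}). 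Moreover, \emph{adjacent} blocks have separation zero or one, so no decay in separation is available, while their number is of the same order as the number of blocks; they must be handled separately (the paper shaves a sliver of width $S^{1/2}$ off each block and uses subadditivity of $H$ to show their normalized contribution is $O(S^{-1/2})$ per coordinate). Without these two ingredients the Bonferroni lower bound does not close, so as written the argument establishes neither the matching lower bound nor, via the same estimates, the finiteness and positivity of $\mathcal H_{\bm\nu,\mathbb V_{\bm w},\mathbb W_{\bm w},\mathbb D_{\bm w}}$.
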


\begin{remark}\label{rem:gen-var-XI}
By Lemma~\eqref{lemma:gen-var-exp}, the matrix \(\Xi\) may be
alternatively defined by
\begin{equation*}
\sigma_{\bm{b}}^{-2} ( \bm{\tau} ) - \sigma_{\bm{b}}^{-2} ( \bm{0} )
=
\sum_{i, j = 1}^n \Xi_{i, j} \, \tau_i^{\beta_i/2} \, \tau_j^{\beta_j/2}
+o \left( \sum_{i = 1}^n \tau_i^{\beta_i} \right).
\end{equation*}
This is useful from the practical point of view, since to apply the theorem we
first have to compute \(\sigma_{\bm{b}}^{-2} ( \bm{\tau} )\).
\end{remark}

\begin{corollary} \label{main-corollary}
If the conditions of Theorem~\ref{main-theorem} are satisfied with
\(\mathcal{F} = \varnothing\) or \(D_{i, j} = 0\) for all \(i, j \in \mathcal{F}\), then
\begin{equation*}
\mathbb{P} \left\{
\exists \, \bm{t} \in [\bm{0}, \bm{T}] \colon
\bm{X} ( \bm{t} ) > u \bm{b}
\right\}
\sim
u^{\zeta} \, \mathcal{H}_{\bm{\nu}, \mathbb{V}_{\bm{w}}, \mathbb{W}_{\bm{w}}} \,
\prod_{i \in \mathcal{I}} ( 2 \, \xi_i )^{-1/\beta_i} \, \Gamma \left( \frac{1}{\beta_i} + 1 \right) \,
\mathbb{P} \{ \bm{X} ( \bm{0} ) > u \bm{b} \}.
\end{equation*}
If \(\mathcal{F} \subset \mathcal{I}\), but not necessarily \(\varnothing\), we have that
\(\mathcal{D} = \varnothing\), and therefore~\eqref{main-asymptotics} holds with
\(\mathcal{H}_{\bm{\nu}, \mathbb{V}_{\bm{w}}, \mathbb{W}_{\bm{w}}}\).
\end{corollary}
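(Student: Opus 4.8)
The plan is to deduce the claim from Theorem~\ref{main-theorem} by specialising its two nontrivial ingredients — the constant $\mathcal{H}_{\bm{\nu},\mathbb{V}_{\bm{w}},\mathbb{W}_{\bm{w}},\mathbb{D}_{\bm{w}}}$ and the integral $G(\bm{\beta}_{\mathcal{I}},\Xi_{\mathcal{I},\mathcal{I}})$ — to the situations described. Two elementary observations do all the work: (i) as soon as the family $\mathbb{D}_{\bm{w}}$ can be taken empty or identically zero, the auxiliary Gaussian field $\bm{Z}$ entering $H_{\bm{\nu},\mathbb{V},\mathbb{W},\mathbb{D}}(E)$ is identically $\bm{0}$, so the subscript $\mathbb{D}_{\bm{w}}$ may be dropped from $\mathcal{H}$; (ii) the matrix $\Xi$ of Theorem~\ref{main-theorem} is diagonal precisely when the block matrix $D$ makes no contribution, and in that case $G$ factorises into one-dimensional Gamma integrals.

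For the first assertion, assume $\mathcal{F}=\varnothing$ or $D_{i,j}=0$ for all $i,j\in\mathcal{F}$. Then the family $(C_{i,k})_{i,k\in\mathcal{F}}$ in~\eqref{A2.6-D-as-sum} may be chosen to consist of zero matrices only (it is empty when $\mathcal{F}=\varnothing$); since Theorem~\ref{main-theorem} permits any family satisfying~\eqref{A2.6-D-as-sum}, we may use this one, so that $\mathbb{D}_{\bm{w}}=\big(\diag(\bm{w})\,C_{i,k}\big)_{i,k\in(\mathcal{J}\cup\mathcal{K})\cap\mathcal{F}}$ has only zero blocks. By the definitions preceding Theorem~\ref{main-theorem} this forces $C_k(\bm{t})\equiv\bm{0}$, hence $\bm{Z}(\bm{t})\equiv\bm{0}$, hence $H_{\bm{\nu},\mathbb{V}_{\bm{w}},\mathbb{W}_{\bm{w}},\mathbb{D}_{\bm{w}}}(E)=H_{\bm{\nu},\mathbb{V}_{\bm{w}},\mathbb{W}_{\bm{w}}}(E)$ for every compact $E$, and passing to the limit in $S$ and $\Lambda$ gives $\mathcal{H}_{\bm{\nu},\mathbb{V}_{\bm{w}},\mathbb{W}_{\bm{w}},\mathbb{D}_{\bm{w}}}=\mathcal{H}_{\bm{\nu},\mathbb{V}_{\bm{w}},\mathbb{W}_{\bm{w}}}$. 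Simultaneously, under this hypothesis all off-diagonal entries of $\Xi$ vanish and $\Xi_{i,i}=2\,\bm{w}^\top A_{2,i}\,\bm{w}=2\xi_i>0$ by~\ref{A2.4}, so $\Xi_{\mathcal{I},\mathcal{I}}$ is diagonal; plugging it into~\eqref{def:G}, the integrand splits as a product over $i\in\mathcal{I}$ and each factor is an elementary Gamma integral. Collecting the resulting constants gives $G(\bm{\beta}_{\mathcal{I}},\Xi_{\mathcal{I},\mathcal{I}})=\prod_{i\in\mathcal{I}}(2\xi_i)^{-1/\beta_i}\,\Gamma(1/\beta_i+1)$, and substituting the two evaluations into~\eqref{main-asymptotics} yields the first display.

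For the second assertion, recall that $\mathcal{I}=\{i:\alpha_i<\beta_i\}$, $\mathcal{J}=\{i:\alpha_i=\beta_i\}$ and $\mathcal{K}=\{i:\alpha_i>\beta_i\}$ are pairwise disjoint, so $\mathcal{F}\subset\mathcal{I}$ forces $(\mathcal{J}\cup\mathcal{K})\cap\mathcal{F}\subset(\mathcal{J}\cup\mathcal{K})\cap\mathcal{I}=\varnothing$; hence the index set $\mathcal{D}=(\mathcal{J}\cup\mathcal{K})\cap\mathcal{F}$ over which $\mathbb{D}_{\bm{w}}$ is defined is empty. Exactly as above this gives $\bm{Z}\equiv\bm{0}$ and $\mathcal{H}_{\bm{\nu},\mathbb{V}_{\bm{w}},\mathbb{W}_{\bm{w}},\mathbb{D}_{\bm{w}}}=\mathcal{H}_{\bm{\nu},\mathbb{V}_{\bm{w}},\mathbb{W}_{\bm{w}}}$, which is what is claimed; note that here $\Xi_{\mathcal{I},\mathcal{I}}$ still carries the contributions $\bm{w}^\top D_{i,j}\,\bm{w}$ with $i,j\in\mathcal{F}\subset\mathcal{I}$, so $G(\bm{\beta}_{\mathcal{I}},\Xi_{\mathcal{I},\mathcal{I}})$ does not simplify and must be kept. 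I do not expect a genuine obstacle anywhere: the analytic content sits entirely in Theorem~\ref{main-theorem}, and the only points needing a little care are bookkeeping — which index set $\mathbb{D}_{\bm{w}}$ is attached to, and the routine evaluation of $G$ when $\Xi_{\mathcal{I},\mathcal{I}}$ is diagonal.
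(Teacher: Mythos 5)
Your overall route is the right one, and indeed the only sensible one: the corollary is a pure specialization of Theorem~\ref{main-theorem}, and the two bookkeeping points you isolate are exactly the ones that matter. Your handling of the constant is correct: when $\mathcal{F}=\varnothing$ or $D_{i,j}=0$ one may take the family $(C_{i,k})$ in~\eqref{A2.6-D-as-sum} to be identically zero (the theorem allows any such family, and the law of $\bm{Z}$ depends only on $D$), so $\bm{Z}\equiv\bm{0}$ and the subscript $\mathbb{D}_{\bm{w}}$ drops out of $\mathcal{H}$. Likewise, for the second assertion, since $\mathcal{I},\mathcal{J},\mathcal{K}$ partition the index set, $\mathcal{F}\subset\mathcal{I}$ forces $(\mathcal{J}\cup\mathcal{K})\cap\mathcal{F}=\varnothing$, hence $\mathcal{D}=\varnothing$ and $\bm{Z}\equiv\bm{0}$ again, while $\Xi_{\mathcal{I},\mathcal{I}}$ retains the $D$-contributions so that $G$ must be kept; this is all correct.

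The one place you wave your hands is the only place a computation is actually required, and as written it does not check out. With the hypotheses of the first assertion, $\Xi_{\mathcal{I},\mathcal{I}}$ is diagonal with $\Xi_{i,i}=2\,\bm{w}^\top A_{2,i}\,\bm{w}=2\xi_i$, and~\eqref{def:G} carries a factor $\tfrac12$ in the exponent, so each one-dimensional factor is
\begin{equation*}
\int_0^{\infty}\exp\left(-\tfrac12\,\Xi_{i,i}\,t^{\beta_i}\right)dt
=\int_0^{\infty}e^{-\xi_i t^{\beta_i}}\,dt
=\xi_i^{-1/\beta_i}\,\Gamma\left(\tfrac{1}{\beta_i}+1\right),
\end{equation*}
not $(2\xi_i)^{-1/\beta_i}\Gamma(1/\beta_i+1)$ as you assert (and as the corollary states). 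The two expressions differ by $\prod_{i\in\mathcal{I}}2^{-1/\beta_i}$; the stated value corresponds to dropping the $\tfrac12$ in~\eqref{def:G}, i.e.\ to substituting $c=\Xi_{i,i}$ instead of $c=\tfrac12\Xi_{i,i}$ into $\int_0^\infty e^{-ct^\beta}dt=c^{-1/\beta}\Gamma(1/\beta+1)$. Either the corollary's constant contains a typo which you have reproduced without verification, or you have made the same slip independently; in either case ``collecting the resulting constants'' is precisely the step you needed to write out. Everything else in your argument is sound.
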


\begin{remark}
If we consider the same problem on \([ -\bm{T}_1, \bm{T}_2 ]\), where
\(\bm{T}_{1, 2} \geq \bm{0}\) satisfy \(T_{1, i} + T_{2, i} > 0\) for all \(i\)
(so that the rectangle \([ -\bm{T}_1, \bm{T}_2 ]\) be of full
dimension), we can obtain a result similar to Theorem~\ref{main-theorem}
under slightly modified assumptions. Denote
\[
\mathcal{L} \coloneqq \{ i \colon T_{1, i} > 0 \},
\qquad
\mathcal{R} \coloneqq \{ i \colon T_{2, i} > 0 \},
\]
and consider the following symmetric extension of~\ref{A2.1} to the
negative values of \(t_i\)'s:
\begin{equation*}
\Sigma - R ( \bm{t}, \bm{s} )
\sim
\sum_{i = 1}^n \Big[
A_{1, i} \, |t_i|^{\beta_i'}
+A_{2, i} \, |t_i|^{\beta_i}
+A_{3, i} \, |s_i|^{\beta_i'}
+A_{4, i} \, |s_i|^{\beta_i}
+S_{\alpha_i, A_{5, i}} ( t_i - s_i )
\Big]
+\sum_{i, \, j = 1}^n A_{6, i, j} \, |t_i|^{\beta_i'} \, |s_j|^{\beta_j'}.
\end{equation*}
With these assumptions, we have
\begin{equation*}
  \mathbb{P} \left\{
  \exists \, \bm{t} \in [ -\bm{T}_1, \bm{T}_2 ] \colon
  \bm{X} ( \bm{t} ) > u \bm{b}
  \right\}
  \sim
  u^{\zeta} \,
  \mathcal{H}_{\bm{\nu}, \mathbb{V}_{\bm{w}}, \mathbb{W}_{\bm{w}}, \mathbb{D}_{\bm{w}}} ( \mathcal{L}, \mathcal{R} ) \,
  G ( \bm{\beta}_{\mathcal{I}}, \Xi_{\mathcal{I}, \mathcal{I}} ) \,
  \mathbb{P} \left\{ \bm{X} ( \bm{0} ) > u \bm{b} \right\},
\end{equation*}
where
\begin{equation*}
\mathcal{H}_{\bm{\nu}, \mathbb{V}, \mathbb{W}, \mathbb{D}} ( \mathcal{L}, \mathcal{R} )
\coloneqq
\lim_{\Lambda \to \infty}
\lim_{S \to \infty}
S^{-| \mathcal{I} |}
H_{\bm{\nu}, \mathbb{V}, \mathbb{W}, \mathbb{D}}
\left(
\bm{S}' \, [ -\bm{1}_{\mathcal{L}}, \bm{1}_{\mathcal{R}} ]
\right)
\in ( 0, \infty ),
\quad
\bm{S}' \coloneqq S \bm{1}_{\mathcal{I}} + \Lambda \bm{1}_{\mathcal{I}^c}.
\end{equation*}
\end{remark}
\section{Double crossing probabilities: general facts}
\label{sec:org4c63d61}
\label{SEC:double_crossing}

Let \(X ( t )\), \(t \in [ 0, T ] \subset \mathbb{R}\) be a continuous centered Gaussian process,
with covariance function \(r(t, s) = \mathbb{E} \left\{ X ( t ) X ( s ) \right\}\).

We want to study the probability that in a given finite time interval the process
\(X\) hits two distant barriers: one above and one below its initial point.
Formally, we study the asymptotics of
\begin{equation*}
  \mathbb{P} \left\{
    \exists \, t, \, s \in [ 0, T ] \colon X ( t ) > a u, \ X ( s ) < -bu
  \right\},
  \qquad a, b > 0,
\end{equation*}
which we shall refer to as the double crossing probability. For our purposes, it
may be conveniently rewritten as
\begin{equation*}
  \mathbb{P} \left\{ \exists \, \bm{t} \in [ 0, T ]^2 \colon \bm{X} ( \bm{t} ) > u \bm{b} \right\},
  \quad
  \bm{X} ( \bm{t} ) = \left( X ( t_1 ), \ -X ( t_2 ) \right)^\top,
  \quad
  \bm{b} = ( a, b )^\top.
\end{equation*}
The problem is thus reduced to the study of a simple (not double) crossing of a
two-dimensional vector-field \(\bm{X} ( \bm{t} )\) over \([ 0, T ]^2\),
which is exactly the setup of our Main Theorem~\ref{main-theorem}.

Let us briefly show how Main Theorem \ref{main-theorem} may be applied in this
case. First, we should find the maximizer of the generalized variance
\(\sigma_{a, b}^2 ( \bm{t} )\) defined by
\begin{equation} \label{def:double_crossing_generalized_sigma_general}
\sigma_{a, b}^{-2} ( \bm{t} )
= \min_{\bm{x} \geq ( a, b )^\top} \bm{x}^\top \, \Sigma^{-1} ( \bm{t} ) \, \bm{x},
\end{equation}
where the matrix \(\Sigma ( \bm{t} )\) is given by
\(\Sigma ( \bm{t} ) = R ( \bm{t}, \bm{t} )\) and
\begin{equation*}
R ( \bm{t}, \bm{s} )
= \mathbb{E} \left\{ \bm{X} ( \bm{t} ) \, \bm{X} ( \bm{s} )^\top \right\}
= \begin{pmatrix}
r ( t_1, s_1 ) & -r ( t_1, s_2 ) \\
-r ( t_2, s_1 ) & r ( t_2, s_2 )
\end{pmatrix},
\qquad \bm{t} = ( t_1, t_2 )^\top, \quad \bm{s} = ( s_1, s_2 )^\top.
\end{equation*}
The inverse of \(\Sigma ( \bm{t} )\), necessary to compute the generalized variance, exists
for all \(\bm{t} \not\in \{ \bm{t} = ( t, t )^\top \colon t \in [ 0, T ] \}\) and is given by
\begin{equation*}
\Sigma^{-1} ( \bm{t} )
=
\frac{1}{r ( t_1, t_1 ) \, r ( t_2, t_2 ) - r^2 ( t_1, t_2 )}
\begin{pmatrix}
r ( t_2, t_2 ) & r ( t_1, t_2 ) \\
r ( t_1, t_2 ) & r ( t_1, t_1 )
\end{pmatrix}.
\end{equation*}
Near the diagonal \(\{ \bm{t} = ( t, t )^\top \colon t \in [ 0, T ] \}\) the matrix \(\Sigma ( \bm{t} )\)
is degenerate, so we need some additional lemma to deal with the probability that
the extreme event happens there. Intuitively, such event means that a process
has managed to hit both boundaries during a very short time interval, which seems
unlikely. The precise meaning to this is given by the next lemma.

\begin{lemma} \label{double-crossing-diagonal}
Let \(X ( t )\) be a separable centered Gaussian process with a.s. continuous
sample paths. For \(\varepsilon > 0\), define \(D_{\varepsilon} = \{ \bm{t} \in [ 0, T ]^2 \colon | t_1 - t_2 | < \varepsilon \}\).
If there exists a function \(f\) such that
\[
\mathbb{E} \left\{ \Big[ X ( t + l ) - X ( t ) \Big]^2 \right\}
<
f ( l )
\]
for all \(t \in [ 0, T ]\), \(l > 0\) and \(f ( l ) \to 0\) as \(l \to 0\), then
for any \(\delta > 0\) there exists \(\varepsilon > 0\) such that
\[
\mathbb{P} \left\{
\exists \, \bm{t} \in D_{\varepsilon} \colon
X ( t_1 ) > au, \ X ( t_2 ) < -b u
\right\}
= o \left( e^{-\delta u^2} \right).
\]
\end{lemma}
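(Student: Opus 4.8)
The plan is to control the probability on the thin strip $D_\varepsilon$ by a crude but uniform bound, exploiting the fact that on $D_\varepsilon$ the process $X$ must travel from above $au$ to below $-bu$ within a time lag $l < \varepsilon$. First I would split the rectangle $[0,T]^2$ into the two triangles $t_1 \le t_2$ and $t_1 > t_2$ (say, handling $t_1 \le t_2 < t_1 + \varepsilon$; the other is symmetric, with $X$ replaced by $-X$, which satisfies the same incremental bound). On this piece, the event $\{X(t_1) > au,\ X(t_2) < -bu\}$ forces $X(t_1) - X(t_2) > (a+b)u$ with $0 \le t_2 - t_1 < \varepsilon$, so
\begin{equation*}
\mathbb{P}\left\{\exists\,\bm t \in D_\varepsilon \colon X(t_1) > au,\ X(t_2) < -bu\right\}
\le
\mathbb{P}\left\{\exists\, t \in [0,T],\ l \in [0,\varepsilon) \colon X(t+l) - X(t) > (a+b)u\right\}
+ (\text{symmetric term}).
\end{equation*}

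Next I would recognize the right-hand side as a high-exceedance probability for the increment field $Z(t,l) := X(t+l) - X(t)$ over the compact parameter set $K_\varepsilon := \{(t,l) \colon t \in [0,T],\ 0 \le l \le \varepsilon\}$, whose variance is bounded by $\sup_{l \le \varepsilon} f(l) =: \sigma_\varepsilon^2 \to 0$ as $\varepsilon \to 0$. To turn a supremum bound into a tail bound I would invoke the Borell--TIS inequality (available in the scalar Gaussian setting, and mentioned in the introduction as a standard tool): writing $m_\varepsilon := \mathbb{E}\sup_{K_\varepsilon} Z$, which is finite by continuity of sample paths and compactness, Borell--TIS gives
\begin{equation*}
\mathbb{P}\left\{\sup_{K_\varepsilon} Z > (a+b)u\right\}
\le
\exp\!\left(-\frac{\big((a+b)u - m_\varepsilon\big)^2}{2\sigma_\varepsilon^2}\right)
\end{equation*}
for $u$ large enough that $(a+b)u > m_\varepsilon$. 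Given $\delta > 0$, I choose $\varepsilon$ small enough that $(a+b)^2/(2\sigma_\varepsilon^2) > \delta$; then for all large $u$ the exponent is $\le -\delta u^2 (1 + o(1))$, which is $o(e^{-\delta u^2})$ after possibly shrinking $\varepsilon$ a touch further (or replacing $\delta$ by $2\delta$ from the outset and absorbing the $m_\varepsilon u$ cross-term). Adding the symmetric contribution only doubles the bound, which is harmless.

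The main obstacle — really the only subtlety — is making sure $m_\varepsilon = \mathbb{E}\sup_{K_\varepsilon} Z$ does not blow up as a function of $u$; but it is a $u$-free constant, so it contributes only a lower-order $m_\varepsilon u$ term in the exponent, dominated by the $u^2$ term. One should also check that $Z$ has a.s. continuous sample paths on $K_\varepsilon$, which follows from separability and a.s. continuity of $X$ together with the incremental bound $f(l) \to 0$ (so that the metric entropy of $K_\varepsilon$ under the canonical pseudometric is integrable — or one simply notes continuity of $Z$ is inherited directly from that of $X$). No assumption on the covariance structure of $X$ beyond the stated uniform modulus $f$ is needed, so the lemma is genuinely soft; the argument is a one-shot Borell--TIS estimate after the elementary reduction to an increment.
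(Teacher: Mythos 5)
Your argument is correct and is essentially the paper's own proof: the same reduction of the double-crossing event on the strip to the increment field $X(t+l)-X(t)$ exceeding $(a+b)u$ over $[0,T]\times[0,\varepsilon]$, followed by a single Borell--TIS estimate with variance controlled by $f$ near $0$ and $\varepsilon$ chosen so that $(a+b)^2/(2\sup_{l\le\varepsilon}f(l))>\delta$. Your handling of the sup of $f$ and of the mean $m_\varepsilon$ (versus the paper's constant $\mu$) is, if anything, slightly more explicit; no gap.
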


Unfortunately, even the problem of minimizing \(\sigma_{\bm{b}}^{-2} ( \bm{t} )\)
over \(\{ \bm{t} = ( t , s )^\top \colon | t - s | > \varepsilon \}\) is too
hard in its full generality. In the next two sections we study the double crossing
probabilities for two classes of processes: stationary with positive correlation
and fractional Brownian motion.
\section{Double crossing probabilities: stationary case}
\label{sec:org8932c7f}
\label{SEC:double_crossing_stationary}

Let \(X ( t )\) be a stationary process with unit variance and positive
correlation function \(r ( t, s ) = \rho ( | t - s | ) > 0\) satisfying
\begin{equation} \label{rho-stationary}
\rho ( t ) = 1 - \vartheta \, t^{\alpha} + o \left( t^{\alpha} \right)
\quad \text{as} \quad
t \to 0
\end{equation}
with some \(\vartheta > 0\) and \(\alpha \in ( 0, 2 ]\). We additionally assume
that \(\rho\) is strictly decreasing and differentiable in \(t > 0\).
\bigskip

We need to minimize
\begin{equation*}
\frac{
x_1^2 + 2 x_1 x_2 \, \rho ( | t_1 - t_2 | ) + x_2^2
}{
1 - \rho^2 ( t_1, t_2 )
}
\end{equation*}
with respect to \(\bm{x} = ( x_1, x_2 )^\top\) subject to \(\bm{x} \geq ( a, b )^\top\),
and then minimize it again, but with respect to \(\bm{t}\) sufficiently far
away from the diagonal. The unique solution of the first minimization problem in
this case can easily be shown to be \(\bm{x} = ( a, b )^\top\). Therefore, we have
\begin{equation*}
\sigma_{a, b}^{-2} ( \bm{t} )
=
\min_{\bm{x} \geq ( a, b )^\top}
\bm{x}^\top \, \Sigma^{-1} ( \bm{t} ) \, \bm{x}
= \frac{
a^2 + 2 a b \, \rho ( | t_1 - t_2 | ) + b^2
}{
1 - \rho^2 ( | t_1 - t_2 | )
}.
\end{equation*}
To solve the second, we note by rewriting \(\sigma_{a, b}^{-2} ( \bm{t} )\) as
\[
\sigma_{a, b}^{-2} ( \bm{t} )
= \frac{( a + b )^2}{1 - \rho^2 ( | t_1 - t_2 | )}
-\frac{2 a b}{1 + \rho ( | t_1 - t_2 | )}
\]
that \(\sigma_{a, b}^{-2} ( \bm{t} )\) attains its minimum at the same point
as \(\rho ( | t_1 - t_2 | )\). Since \(\rho\) is decreasing, we have
two minimizing points \(\bm{t}_{ * , 1 } = ( 0, T )^\top\) and
\(\bm{t}_{ * , 2 } = ( T, 0 )^\top\). We can easily find the values of the
generalized variance at these points to be equal:
\[
\sigma_{a, b}^{-2} ( \bm{t}_{ * , 1 } )
= \sigma_{a, b}^{-2} ( \bm{t}_{ * , 2 } )
= \frac{a^2 + 2 a b \rho ( T ) + b^2}{1 - \rho^2 ( T )}.
\]

By Lemma~\ref{double-crossing-diagonal} and Piterbarg inequality, we can
easily show that for any \(\varepsilon > 0\) we have
\begin{multline*}
\mathbb{P} \left\{
\exists \, \bm{t} \in [ \bm{0}, \bm{T} ] \colon
\bm{X} ( \bm{t} ) > u \, \bm{b}
\right\}
\\[7pt]
\sim
\mathbb{P} \left\{
\exists \, \bm{t} \in [ 0, \varepsilon ] \times [ T - \varepsilon, T ] \colon
\bm{X} ( \bm{t} ) > u \, \bm{b}
\right\}
+\mathbb{P} \left\{
\exists \, \bm{t} \in [ T - \varepsilon, T ] \times [ 0, \varepsilon ] \colon
\bm{X} ( \bm{t} ) > u \, \bm{b}
\right\}
\\[7pt]
= \mathbb{P} \left\{
\exists \, \bm{t} \in [ 0, \varepsilon ]^2 \colon
\bm{X}_1 ( \bm{t} ) > u \, \bm{b}
\right\}
+\mathbb{P} \left\{
\exists \, \bm{t} \in [ 0, \varepsilon ]^2 \colon
\bm{X}_2 ( \bm{t} ) > u \, \bm{b}
\right\},
\end{multline*}
where in the last line we performed appropriate time changes in both
probabilities by introducing
\(\bm{X}_1 ( \bm{t} ) = ( X ( t_1 ), -X ( T - t_2 ) )^\top\)
and
\(\bm{X}_2 ( \bm{t} ) = ( X ( T - t_1 ), -X ( t_2 ) )^\top\).
\bigskip

In order to apply Main Theorem~\ref{main-theorem} to
\begin{equation*}
\mathbb{P} \left\{
\exists \, \bm{t} \in [ 0, \varepsilon ]^2 \colon
\bm{X}_1 ( \bm{t} ) > u \, \bm{b}
\right\}
\quad \text{and} \quad
\mathbb{P} \left\{
\exists \, \bm{t} \in [ 0, \varepsilon ]^2 \colon
\bm{X}_2 ( \bm{t} ) > u \, \bm{b}
\right\},
\end{equation*}
we need to derive asymptotic expansions of the corresponding covariances.
This is done in the following lemma.
\begin{lemma} \label{double-crossing-expansions}
The random field \(\bm{X}_1 ( \bm{t} )\) satisfies the
assumptions~\ref{A1} to~\ref{A3} of Theorem~\ref{main-theorem}
with
\begin{equation*}
\alpha_1 = \alpha_2 = \alpha, \quad \beta_1 = \beta_2 = 1, \quad \mathcal{F} = \varnothing
\end{equation*}
and
\begin{equation*}
A_{2, 1} = A_{2, 2}^\top =
-\rho' ( T )
\begin{pmatrix}
0 & 1 \\
0 & 0
\end{pmatrix},
\quad
A_{5, 1} =
\vartheta
\begin{pmatrix}
1 & 0 \\
0 & 0
\end{pmatrix},
\quad
A_{5, 2} =
\vartheta
\begin{pmatrix}
0 & 0 \\
0 & 1
\end{pmatrix}.
\end{equation*}
Moreover,
\begin{equation*}
\bm{w}
= \Sigma^{-1} ( \bm{0} ) \, ( a, b )^\top
= \frac{1}{1 - \rho^2 ( T )}
\begin{pmatrix}
1 & \rho ( T ) \\
\rho ( T ) & 1
\end{pmatrix}
\begin{pmatrix}
a \\ b
\end{pmatrix}
= \frac{1}{1 - \rho^2 ( T )}
\begin{pmatrix}
a + b \rho ( T ) \\
b + a \rho ( T )
\end{pmatrix}
\end{equation*}
Assumption~\ref{A2.4} is satisfied with \(\xi_i\) given by
\begin{equation*}
\xi_1 = \bm{w}^\top \, A_{2, 1} \, \bm{w}
= \xi_2 = \bm{w}^\top \, A_{2, 2} \, \bm{w}
= \frac{
-\rho' ( T ) ( a + b \rho ( T ) )
( b + a \rho ( T ) )
}{( 1 - \rho^2 ( T ) )^2}
> 0,
\end{equation*}
and~\ref{A2.5} with \(\varkappa_i\) given by
\begin{equation*}
\varkappa_1 = \bm{w}^\top \, A_{5, 1} \, \bm{w}
= \varkappa_2 ( \bm{w} ) = \bm{w}^\top \, A_{5, 2} \, \bm{w}
= \frac{C ( b + a \rho ( T ) )^2}{( 1 - \rho^2 ( T ) )^2}
> 0.
\end{equation*}
\end{lemma}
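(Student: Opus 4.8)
The plan is to directly compute the Taylor-type expansions of the four scalar entries of $R(\bm{t}, \bm{s})$ near $(\bm{0}, \bm{0})$ and match them against the template~\eqref{Sigma-minus-R}. Recall that $\bm{X}_1(\bm{t}) = (X(t_1), -X(T - t_2))^\top$, so the entries of $R(\bm{t}, \bm{s}) = \mathbb{E}\{\bm{X}_1(\bm{t}) \bm{X}_1(\bm{s})^\top\}$ are $r(t_1, s_1) = \rho(|t_1 - s_1|)$, $-r(t_1, T - s_2) = -\rho(|T - t_1 - s_2|)$, $-r(T - t_1, s_2) = -\rho(|T - t_1 - s_2|)$ wait — more carefully, the $(1,2)$ entry is $\mathbb{E}\{X(t_1)(-X(T - s_2))\} = -\rho(|t_1 - (T - s_2)|) = -\rho(T - t_1 - s_2)$ for small $t_1, s_2$, the $(2,1)$ entry is $-\rho(T - t_2 - s_1)$, and the $(2,2)$ entry is $\rho(|t_2 - s_2|)$. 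Then $\Sigma = R(\bm{0}, \bm{0})$ has entries $1, -\rho(T), -\rho(T), 1$, giving the stated $\bm{w} = \Sigma^{-1}(a,b)^\top$ by direct $2\times 2$ inversion.

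\textbf{Step 1: expansion of $\Sigma - R$.} For the diagonal entries I would use~\eqref{rho-stationary}: $1 - \rho(|t_1 - s_1|) = \vartheta |t_1 - s_1|^\alpha + o(|t_1 - s_1|^\alpha)$, which contributes exactly $S_{\alpha, A_{5,1}}(t_1 - s_1)$ with $A_{5,1} = \vartheta\,e_{11}$ (symmetric, so $V = V^\top$ and the sign of $t_1 - s_1$ is irrelevant), and similarly for the $(2,2)$ entry with $A_{5,2} = \vartheta\,e_{22}$. For the off-diagonal entries, since $\rho$ is differentiable at $T > 0$ I would Taylor-expand $\rho(T - t_1 - s_2) = \rho(T) - \rho'(T)(t_1 + s_2) + o(t_1 + s_2)$, so $(\Sigma - R)_{12} = -\rho(T) - (-\rho(T - t_1 - s_2)) = -\rho'(T)(t_1 + s_2) + o(t_1 + s_2)$. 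Matching the linear-in-$t_1$ term to $(A_{2,1})_{12}\, t_1^{\beta_1}$ with $\beta_1 = 1$ forces $(A_{2,1})_{12} = -\rho'(T)$, and matching the linear-in-$s_2$ term to $(A_{2,2}^\top)_{12}\, s_2^{\beta_2}$, i.e.\ $(A_{2,2})_{21}\, s_2$, forces $(A_{2,2})_{21} = -\rho'(T)$; all other entries of $A_{2,1}, A_{2,2}$ vanish, which gives exactly the stated matrices. Since $\beta_i = 1$ we have $2\beta_i' = \beta_i$ only if $\beta_i' = 1/2$, but there is no lower-order term, so one takes $\mathcal{F} = \varnothing$, $A_{1,i} = A_{6,i,j} = 0$, and~\ref{A2.3} and~\ref{A2.6} hold trivially. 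The error control~\eqref{A2.2} is immediate since each $o(\cdot)$ above is $o(t_i + s_i) = o(t_i^{\beta_i} + s_i^{\beta_i})$ and the cross term $t_1 s_2$ from the second-order part of $\rho(T - t_1 - s_2)$ is $O(t_1 s_2) = o(t_1 + s_2)$ as well.

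\textbf{Step 2: verify~\ref{A1} and~\ref{A3}, then the constants.} For~\ref{A1}, I would invoke the computation already carried out in the surrounding text: $\sigma_{a,b}^{-2}(\bm{t})$ depends only on $\rho(|t_1 - t_2|)$ and, after the time-change defining $\bm{X}_1$, is maximized (as a function on $[0,\varepsilon]^2$) at $\bm{t}_* = \bm{0}$ because $\rho$ is strictly decreasing — this is exactly where $(0, T)$ sits before the change of variables; the uniqueness on the small box follows from strict monotonicity of $\rho$. Assumption~\ref{A3} is a Hölder bound: $\mathbb{E}\{|\bm{X}_1(\bm{t}) - \bm{X}_1(\bm{s})|^2\} = \mathbb{E}\{(X(t_1) - X(s_1))^2\} + \mathbb{E}\{(X(T - t_2) - X(T - s_2))^2\} = 2(1 - \rho(|t_1 - s_1|)) + 2(1 - \rho(|t_2 - s_2|))$, which by~\eqref{rho-stationary} and continuity/monotonicity of $\rho$ away from $0$ is bounded by $C(|t_1 - s_1|^\alpha + |t_2 - s_2|^\alpha)$ on the compact box, so $\gamma_1 = \gamma_2 = \alpha$. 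Finally $\bm{w}$ is computed by the explicit $2\times 2$ inverse, and then $\xi_i = \bm{w}^\top A_{2,i}\bm{w}$ and $\varkappa_i = \bm{w}^\top A_{5,i}\bm{w}$ are one-line bilinear evaluations: $\bm{w}^\top A_{2,1}\bm{w} = -\rho'(T)\, w_1 w_2 = -\rho'(T)(a + b\rho(T))(b + a\rho(T))/(1 - \rho^2(T))^2$, positive because $\rho' < 0$ on $(0,\infty)$ and $a, b, \rho(T) > 0$; and $\bm{w}^\top A_{5,1}\bm{w} = \vartheta\, w_1^2 > 0$. The identity $\xi_1 = \xi_2$ and $\varkappa_1 = \varkappa_2$ follows from $A_{2,2} = A_{2,1}^\top$ together with symmetry of the scalar $\bm{w}^\top(\cdot)\bm{w}$, and from $w_1^2 = w_2^2$ only when $a = b$ — so here I would instead note $\varkappa_2 = \vartheta w_2^2$ and record both values, which is what the statement does (writing $\varkappa_2(\bm{w})$).

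\textbf{Main obstacle.} The only genuinely delicate point is the bookkeeping in Step 1: one must be careful that the off-diagonal entries of $R$ involve the \emph{sum} $t_1 + s_2$ (not a difference), so the linear term splits cleanly into a pure-$t$ part absorbed by $A_{2,1}$ and a pure-$s$ part absorbed by $A_{2,2}^\top = A_{4,2}$, with no $S_{\alpha, V}$-type contribution from the off-diagonal — and conversely the diagonal entries contribute \emph{only} through $S_{\alpha, A_{5,i}}$ and have no $A_{2,i}$ part, since $1 - \rho(|t_i - s_i|)$ is a function of the difference alone with no separate $t_i^\beta$ or $s_i^\beta$ terms. Keeping these two mechanisms from leaking into each other, and checking that the leftover second-order cross terms are of strictly higher order than the $o(\cdot)$ in~\eqref{A2.2}, is the part that needs care; everything else is routine substitution.
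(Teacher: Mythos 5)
Your proposal is correct and follows essentially the same route as the paper: entry-wise expansion of $\Sigma - R(\bm{t},\bm{s})$ using \eqref{rho-stationary} on the diagonal and first-order Taylor expansion of $\rho$ near $T$ off the diagonal, followed by the explicit $2\times 2$ inversion for $\bm{w}$ and the one-line bilinear evaluations of $\xi_i$ and $\varkappa_i$. You also correctly observe that $\varkappa_1 = \vartheta w_1^2$ and $\varkappa_2 = \vartheta w_2^2$ differ unless $a = b$ (the lemma's displayed common value is imprecise on this point), but only positivity is needed for \ref{A2.5}, so this does not affect the result.
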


Using Lemma~\ref{double-crossing-diagonal}, and noting that
\(\mathcal{F} = \varnothing\), we may apply the first assertion of
Corollary~\ref{main-corollary} instead of Theorem~\ref{main-theorem},
and obtain the following theorem on the asymptotics of double crossing
probabilities.
\begin{theorem} \label{double-crossing-theorem}
Let \(X ( t )\), \(t \in [0, T]\) be a centered a.s. continuous stationary Gaussian process with
unit variance and positive strictly decreasing and differentiable correlation
function \(\rho ( t ) > 0\) which satisfies~\eqref{rho-stationary}
with some \(\vartheta > 0\) and \(\alpha \in ( 0, 2 ]\). Define
\begin{equation*}
p ( u ) \coloneqq \mathbb{P} \left\{ X ( 0 ) > a u, \ X ( T ) < -bu \right\}.
\end{equation*}
Then
\begin{description}[leftmargin = * ]
\item [Pickands case] If \(\alpha < 1\),
\begin{equation*}
\mathbb{P} \left\{
\exists \, t, \, s \in [ 0, T ] \colon
X ( t ) > a u, \
X ( s ) < -b u
\right\}
\sim
C \,
\mathcal{H}^2 \,
u^{4 / \alpha - 2} \, p ( u ),
\end{equation*}
where
\begin{equation*}
\mathcal{H}
= \lim_{S \to \infty} \frac{1}{S} \,
\mathbb{E} \left\{
\sup_{t \in [ 0, S ]}
e^{B_H ( t ) - t^{2H} / 2}
\right\},
\quad
C =
\frac{2^{1 + 2/\alpha} \, \theta^{2/\alpha}}{( \rho' ( T ) )^2}
\left(
\frac{
( 1 - \rho^2 ( T ) )^2
}{
( a + b \rho ( T ) ) ( b + a \rho ( T ) )
}
\right)^{2 - 2/\alpha}.
\end{equation*}

\item [Piterbarg case] If \(\alpha = 1\),
\begin{equation*}
\mathbb{P} \left\{
\exists \, t, s \in [ 0, T ] \colon
X ( t ) > a u, \
X ( s ) < -b u
\right\}
\sim
2 \ \widetilde{\mathcal{H}}_1 \, \widetilde{\mathcal{H}}_2 \,
p ( u ),
\end{equation*}
where
\begin{equation*}
\widetilde{\mathcal{H}}_k
= \lim_{S \to \infty}
\mathbb{E} \left\{
  \sup_{t \in [ 0, S ]}
  e^{B ( t ) - ( 1 + \lambda_k ) t / 2}
\right\},
\qquad
\lambda_1 = \frac{-\rho' ( T ) \, w_2}{2 \, w_1 \, \vartheta}, \quad
\lambda_2 = \frac{-\rho' ( T ) \, w_1}{2 \, w_2 \, \vartheta}
> 0.
\end{equation*}

\item [Talagrand case] If \(\alpha > 1\),
\begin{equation*}
\mathbb{P} \left\{
\exists \, t, \, s \in [ 0, T ] \colon
X ( t ) > a u, \
X ( s ) < -b u
\right\}
\sim
2 \, p ( u ).
\end{equation*}
\end{description}
\end{theorem}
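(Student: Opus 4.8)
The plan is to derive Theorem~\ref{double-crossing-theorem} from Corollary~\ref{main-corollary} applied to the two fields $\bm{X}_1$ and $\bm{X}_2$, using the covariance expansions supplied by Lemma~\ref{double-crossing-expansions} and the diagonal-negligibility estimate of Lemma~\ref{double-crossing-diagonal}. First I would record the reduction already sketched before the theorem: by Lemma~\ref{double-crossing-diagonal} together with a Piterbarg-type upper bound, the contribution of the diagonal strip $D_\varepsilon$ is $o(e^{-\delta u^2})$, hence negligible compared with $p(u)$ (which decays only polynomially slower than the Gaussian rate $e^{-\sigma_{a,b}^{-2}(\bm{t}_*)u^2/2}$); and the remaining part of $[\bm0,\bm T]^2$ splits into a neighbourhood of $(0,T)$ and a neighbourhood of $(T,0)$, on each of which the probability equals that of the time-changed field $\bm{X}_1$ resp.\ $\bm{X}_2$ over $[0,\varepsilon]^2$. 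By symmetry $\bm{X}_2$ is obtained from $\bm{X}_1$ by swapping coordinates, so the two contributions are equal, which is where the overall factor $2$ comes from.

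Next I would feed the parameters from Lemma~\ref{double-crossing-expansions} into Corollary~\ref{main-corollary}. Since $\beta_1=\beta_2=1$ and $\mathcal F=\varnothing$, the $D$-matrix and the $\mathbb D$-data are absent, so the first assertion of the corollary applies. The index sets depend on the sign of $\alpha-1$: when $\alpha<1$ we have $\mathcal I=\{1,2\}$, $\mathcal J=\mathcal K=\varnothing$, so $\zeta=2(2/\alpha-2)=4/\alpha-2$, $\mathbb V_{\bm w}$ consists of the two matrices $\diag(\bm w)A_{5,i}\diag(\bm w)$ and $\mathbb W_{\bm w}=\varnothing$; when $\alpha=1$ we have $\mathcal J=\{1,2\}$, $\mathcal I=\mathcal K=\varnothing$, so $\zeta=0$ and both $\mathbb V_{\bm w}$ and $\mathbb W_{\bm w}$ are the two-element families built from $A_{5,i}$ and $A_{2,i}$ respectively; when $\alpha>1$ we have $\mathcal K=\{1,2\}$, $\mathcal I=\mathcal J=\varnothing$, so $\zeta=0$, there is no $\bm Y$-field at all and $\mathcal H_{\bm\nu,\mathbb V_{\bm w},\mathbb W_{\bm w}}=1$, giving the bare factor $2$. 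In each case one then has to identify the abstract constant $\mathcal H_{\bm\nu,\mathbb V_{\bm w},\mathbb W_{\bm w}}$ (times the $\prod_{i\in\mathcal I}(2\xi_i)^{-1/\beta_i}\Gamma(1/\beta_i+1)$ prefactor, which for $\beta_i=1$ is just $\prod(2\xi_i)^{-1}$) with the classical Pickands/Piterbarg constants stated in the theorem.

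The identification step is the bulk of the work. For $\alpha<1$: the matrices $A_{5,i}$ are rank-one and "diagonal" ($A_{5,1}$ hits only the first component, $A_{5,2}$ only the second), so $\diag(\bm w)A_{5,i}\diag(\bm w)$ is $\vartheta w_1^2$ times $E_{11}$ and $\vartheta w_2^2$ times $E_{22}$; consequently the additive fBm field $\bm Y_{\bm\nu,\mathbb V_{\bm w}}$ has \emph{independent scalar components}, each a one-dimensional fractional Brownian motion (in $t_1$ for the first, in $t_2$ for the second), and the max over the box factorizes. Plugging this into the definition of $H_{\bm\nu,\mathbb V_{\bm w},\mathbb W_{\bm w}}$ and using the standard computation that turns $\int_{\mathbb R^d}e^{\bm1^\top\bm x}\mathbb P\{\sup(\ldots)>\bm x\}d\bm x$ into a product of scalar Pickands-type functionals (this is exactly the kind of reduction done in the auxiliary section adapted from~\cite{DebickiEtAl2019}), one gets $\mathcal H_{\bm\nu,\mathbb V_{\bm w},\mathbb W_{\bm w}}=\mathcal H_{\nu}^{(1)}\mathcal H_{\nu}^{(2)}$ where each factor, after rescaling the fBm by the variance constant $\varkappa_i/w_i^2=\vartheta(b+a\rho(T))^2/(1-\rho^2(T))^2$-type expression and by $\vartheta$, reduces to the normalized Pickands constant $\mathcal H$ of index $H=\alpha/2$. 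Collecting the scaling factors from the rescalings of $t_i$, the $(2\xi_i)^{-1}$ prefactors and $u^\zeta$ produces the explicit constant $C$; I expect this bookkeeping of powers of $2$, $\vartheta$, $\rho'(T)$ and $(1-\rho^2(T))$ to be the most error-prone part, and the natural cross-check is Remark~\ref{rem:gen-var-XI}, i.e.\ computing $\sigma_{a,b}^{-2}(\bm\tau)$ directly near the maximizer and matching its Taylor coefficients. For $\alpha=1$: now both $A_{5,i}$ and $A_{2,i}$ contribute; $\bm Y_{\bm\nu,\mathbb V_{\bm w}}$ is again componentwise a Brownian motion, and $\bm d_{\bm\nu,\mathbb W_{\bm w}}(\bm t)=\sum_i|t_i|W_i\bm1$ supplies a linear drift $-\lambda_i t_i$ in component $i$ with $\lambda_i=\bm w^\top A_{2,i}\bm w/w_i^2$ rescaled by the Brownian variance $\varkappa_i$; plugging $A_{2,1}=-\rho'(T)E_{12}$ gives $\bm w^\top A_{2,1}\bm w=-\rho'(T)w_1w_2$ and hence $\lambda_1=-\rho'(T)w_2/(2w_1\vartheta)$ as claimed (the $2$ coming from the fBm normalization $t^{2H}/2$ vs.\ variance $2\vartheta t$). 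The functional $H$ then factorizes into two Piterbarg-type constants $\widetilde{\mathcal H}_k=\lim_{S}\mathbb E\sup_{[0,S]}e^{B(t)-(1+\lambda_k)t/2}$, and since $\mathcal I=\varnothing$ there is no $S^{-|\mathcal I|}$ normalization and no $\Lambda$-limit subtlety beyond checking finiteness, which follows from $\lambda_k>0$. For $\alpha>1$ everything collapses: $\mathcal I=\mathcal J=\varnothing$, the field degenerates to the single point $\bm t=\bm0$, $\mathcal H=1$, and only the factor $2$ survives. Finally I would note that all the positivity and finiteness requirements ($\xi_i>0$, $\varkappa_i>0$, the limits in $(0,\infty)$) are guaranteed by Lemma~\ref{double-crossing-expansions} and Theorem~\ref{main-theorem}, completing the argument.
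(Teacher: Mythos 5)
Your overall route is the same as the paper's: reduce to the two corners via Lemma~\ref{double-crossing-diagonal} and a Piterbarg-inequality bound, invoke the first assertion of Corollary~\ref{main-corollary} with the data of Lemma~\ref{double-crossing-expansions}, and identify the abstract functional with classical Pickands/Piterbarg constants by exploiting the fact that the limiting field has independent components, each depending on a single coordinate, so that the integral defining $H_{\bm{\nu},\mathbb{V}_{\bm w},\mathbb{W}_{\bm w}}$ factorizes. The index-set bookkeeping ($\mathcal I=\{1,2\}$ for $\alpha<1$, $\mathcal J=\{1,2\}$ for $\alpha=1$, $\mathcal K=\{1,2\}$ for $\alpha>1$), the factor $2$ from the two symmetric corners, and the verification that $\mathcal H=1$ in the Talagrand case are all as intended.

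There is, however, a genuine error at the central step. With $\alpha_i=\alpha$ and $\beta_i=1$ for $i=1,2$ (which is forced by Lemma~\ref{double-crossing-expansions}), Theorem~\ref{main-theorem} yields
\[
\zeta=\sum_{i\in\mathcal I}\left(\frac{2}{\alpha_i}-\frac{2}{\beta_i}\right)
=2\left(\frac{2}{\alpha}-2\right)=\frac{4}{\alpha}-4,
\]
not $4/\alpha-2$. Your equality $2(2/\alpha-2)=4/\alpha-2$ is false arithmetic, and because it happens to land exactly on the exponent displayed in the statement, the discrepancy between what the machinery actually produces ($u^{4/\alpha-4}$) and what the statement asserts ($u^{4/\alpha-2}$) is never confronted. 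Two independent checks confirm $4/\alpha-4$: (i) continuity of the exponent across the Pickands/Piterbarg boundary --- the $\alpha=1$ case carries no power of $u$, and $4/\alpha-4\to0$ as $\alpha\to1$ while $4/\alpha-2\to2$; (ii) the double-sum heuristic, in which each of the two coordinates contributes $u^{2/\alpha-2/\beta_i}=u^{2/\alpha-2}$ Pickands cells. A correct write-up must either exhibit the source of an extra $u^{2}$ (there is none) or state that the exponent obtained is $4/\alpha-4$ and flag the displayed one. Separately, your normalizations in the constant identifications (the value of $\lambda_k$, and the power of $2$ in $C$) hinge on whether one uses $G(\bm\beta_{\mathcal I},\Xi_{\mathcal I,\mathcal I})=\prod_{i\in\mathcal I}\xi_i^{-1}$ from Theorem~\ref{main-theorem} or the $\prod_{i\in\mathcal I}(2\xi_i)^{-1}$ of Corollary~\ref{main-corollary}, which differ by $2^{|\mathcal I|}$; you should carry out this bookkeeping explicitly rather than asserting the target values, since these two sources are not mutually consistent.
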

\section{Double crossing probabilities: fBm case}
\label{sec:orgf91a9db}
\label{SEC:double_crossing_fBm}

In this section we study the double crossing
probability~\eqref{def:double_crossing_probability} in case when
\(X ( t )\) is a fractional Brownian motion \(B_H ( t )\), that is, a Gaussian
process associated to the following covariance function:
\begin{equation*}
r ( t, s )
= \frac{1}{2} \Big(
t^{2H} + s^{2H} - | t - s |^{2H}
\Big).
\end{equation*}
As explained in Section~\ref{SEC:double_crossing}, we should first
minimize \(\sigma_{\bm{b}}^{-2} ( \bm{t} )\), defined
in~\eqref{def:double_crossing_generalized_sigma_general}, in \(\bm{x}\)
subject to \(\bm{x} \geq ( a, b )^\top\), and then minimize it again, but with
respect to \(\bm{t}\) sufficiently far away from the diagonal. Minimization in
\(\bm{x}\) yields \(\bm{x} = ( a, b )^\top\), and therefore we have
\begin{equation*}
\sigma_{a,b}^{-2} ( \bm{t} )
= \min_{\bm{x} \geq ( a, b )^\top}
\bm{x}^\top \, \Sigma^{-1} ( \bm{t} ) \, \bm{x}
= \frac{
a^2 \, t_1^{2H} + 2 a b \, r ( t_1, t_2 ) + b^2 \, t_2^{2H}
}{
(t_1 t_2 )^{2H} - r^2 ( t_1, t_2 )
}.
\end{equation*}
For the second minimization, we have the following lemma.

\begin{lemma} \label{lemma:double-crossing-minimizer-fBm}
There exists \(t_{ * } \in ( 0, T )\), such that the function
\(\sigma_{a, b}^{-2} ( \bm{t} )\) defined on
\(\{ \bm{t} \in [ 0, T ]^2 \colon | t_1 - t_2 | > \varepsilon \}\), \(\varepsilon > 0\)
attains,
\begin{description}[leftmargin = *, before={\renewcommand\makelabel[1]{\bfseries ##1,}}]
\item [if \(a < b\)] its unique minimum at point
\(\bm{t}_{ * , 1 } = ( T, t_{ * } )^\top\)
\item [if \(a > b\)] its unique minimum at point
\(\bm{t}_{ * , 2 } = ( t_{ * }, T )^\top\)
\item [if \(a = b\)] its minimum at exactly two points
\(\bm{t}_{ *, 1 } = ( T, t_{ * } )^\top\) and \(\bm{t}_{ *, 2 } = ( t_{ * }, T )^\top\)

Moreover, we have
\[
\sigma_{\bm{b}}^{-2} ( \bm{t}_{ * , 1 } )
-\sigma_{\bm{b}}^{-2} ( \bm{t}_{ * , 1 } - \bm{\tau} )
\sim
-\kappa_1 \, \tau_1
-\kappa_2 \, \tau_2^2
\quad \text{and} \quad
\sigma_{\bm{b}}^{-2} ( \bm{t}_{ * , 2 } )
-\sigma_{\bm{b}}^{-2} ( \bm{t}_{ * , 2 } - \bm{\tau} )
\sim
-\kappa_2 \, \tau_1^2
-\kappa_1 \, \tau_2
\]
with
\[
\kappa_1 \coloneqq
-\frac{\partial \sigma_{\bm{b}}^{-2}}{\partial t_1} ( \bm{t}_{ * , 1 } )
= -\frac{\partial \sigma_{\bm{b}}^{-2}}{\partial t_2} ( \bm{t}_{ * , 2 } )
> 0,
\qquad
\kappa_2 \coloneqq
\frac{\partial^2 \sigma_{\bm{b}}^{-2}}{\partial t_2^2} ( \bm{t}_{ * , 1 } )
= \frac{\partial^2 \sigma_{\bm{b}}^{-2}}{\partial t_1^2} ( \bm{t}_{ * , 2 } ) > 0.
\]
\end{description}
\end{lemma}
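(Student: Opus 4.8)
The statement concerns minimizing the explicit function
\[
\sigma_{a,b}^{-2}(\bm{t})
= \frac{a^2 t_1^{2H} + 2ab\, r(t_1, t_2) + b^2 t_2^{2H}}{(t_1 t_2)^{2H} - r^2(t_1, t_2)}
\]
over the region $\{\bm{t}\in[0,T]^2 \colon |t_1 - t_2| > \varepsilon\}$. The plan is to proceed as a classical calculus problem in two variables. First I would reduce the domain: by symmetry $\sigma_{a,b}^{-2}(t_1, t_2) = \sigma_{b,a}^{-2}(t_2, t_1)$, so it suffices to analyze the region $t_1 > t_2$ and then transfer the conclusion by the $a \leftrightarrow b$ swap. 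Next I would argue that the minimum is attained, either on the interior or the boundary of the (closed) region, using that $\sigma_{a,b}^{-2}$ is continuous and that near the excluded diagonal strip the denominator $(t_1 t_2)^{2H} - r^2(t_1,t_2)$ stays bounded away from $0$ (this is where positivity of the Gram determinant on $\{|t_1 - t_2| \geq \varepsilon\}$ enters, and where the singularity would otherwise live).

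**Locating the critical point.** The key computational step is to show that $\partial \sigma_{a,b}^{-2}/\partial t_1 > 0$ throughout the relevant region, so that along the $t_1$-direction the function is strictly increasing and the minimum in $t_1$ must occur at the right endpoint $t_1 = T$. Intuitively this is because increasing $t_1$ increases the variance of the first coordinate $X(t_1) = B_H(t_1)$ while the correlation structure with $B_H(t_2)$ changes in a way that only helps the exceedance; I would verify this by differentiating the quotient and checking the sign of the numerator of $\partial_{t_1}\sigma_{a,b}^{-2}$, exploiting $\partial_t r(t,s) = H(t^{2H-1} - \operatorname{sign}(t-s)|t-s|^{2H-1})$ for the fBm covariance. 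Having pinned $t_1 = T$, the problem collapses to the one-variable minimization of $g(t_2) \coloneqq \sigma_{a,b}^{-2}(T, t_2)$ over $t_2 \in [0, T - \varepsilon]$. I would then show $g$ is smooth, $g'(0) < 0$ or $g \to +\infty$ as $t_2 \to 0^+$ (the $t_2 = 0$ endpoint corresponds to $B_H(0) = 0$, forcing the barrier constraint to blow up the variance), and $g$ is strictly convex or at least unimodal with a unique interior minimizer $t_* \in (0, T)$ — this gives $\bm{t}_{*,1} = (T, t_*)^\top$ when $a < b$. The case $a > b$ follows from the symmetry, and $a = b$ gives both minimizers since then $g$ restricted to either slice has the same minimal value.

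**The local expansion.** For the final claims about $\kappa_1, \kappa_2$, the expansion
\[
\sigma_{\bm{b}}^{-2}(\bm{t}_{*,1}) - \sigma_{\bm{b}}^{-2}(\bm{t}_{*,1} - \bm{\tau})
\sim -\kappa_1 \tau_1 - \kappa_2 \tau_2^2
\]
is a second-order Taylor expansion at $\bm{t}_{*,1} = (T, t_*)^\top$. The asymmetry in the orders ($\tau_1$ linear, $\tau_2$ quadratic) is dictated by the geometry: $t_1 = T$ is a boundary minimizer so $\partial_{t_1}\sigma_{\bm{b}}^{-2} \neq 0$ and contributes at first order with $\kappa_1 = -\partial_{t_1}\sigma_{\bm{b}}^{-2}(\bm{t}_{*,1}) > 0$ (positive by the monotonicity established above — note the minus sign because we move to $t_1 - \tau_1 < T$), while $t_2 = t_*$ is an interior minimizer so $\partial_{t_2}\sigma_{\bm{b}}^{-2}(\bm{t}_{*,1}) = 0$ and the leading contribution is $\tfrac{1}{2}\partial_{t_2}^2\sigma_{\bm{b}}^{-2}(\bm{t}_{*,1})\tau_2^2$, whence $\kappa_2 = \tfrac12 \partial_{t_2}^2\sigma_{\bm{b}}^{-2}(\bm{t}_{*,1}) > 0$ by the second-order minimality condition (strict, given uniqueness). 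The equalities $\kappa_1 = -\partial_{t_2}\sigma_{\bm{b}}^{-2}(\bm{t}_{*,2})$ and $\kappa_2 = \partial_{t_1}^2\sigma_{\bm{b}}^{-2}(\bm{t}_{*,2})$ are immediate from the $a \leftrightarrow b$ / coordinate-swap symmetry relating $\bm{t}_{*,1}$ and $\bm{t}_{*,2}$; the factor discrepancy between the $\tfrac12$ in the Taylor coefficient and the stated $\kappa_2$ should be absorbed by writing the Hessian term out carefully (I would double-check the normalization against the definition in the lemma).

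**Main obstacle.** The genuinely delicate part is establishing \emph{uniqueness} of the interior minimizer $t_*$ and, relatedly, the global sign $\partial_{t_1}\sigma_{\bm{b}}^{-2} > 0$ on the whole region rather than just near the optimum. The quotient structure makes the sign of the partial derivative depend on a polynomial-in-$r$-and-powers-of-$t_i$ expression whose positivity is not visually obvious; I expect to need the self-similarity and convexity properties of $|\cdot|^{2H}$ (case-splitting on $H < 1/2$ versus $H \geq 1/2$ may be unavoidable), together with the substitution $t_2 = x T$, $x \in (0,1)$, reducing everything to a single-variable inequality in $x$ and $H$ that can be checked by monotonicity of an auxiliary function. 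A secondary nuisance is handling the $\varepsilon$-dependence: one must confirm $t_* < T - \varepsilon$ for small $\varepsilon$ (so the interior minimizer is genuinely interior to the truncated domain and $\varepsilon$ plays no role in the final answer), which follows once $t_*$ is identified as the $\varepsilon$-independent root of $g'(t_2) = 0$ strictly inside $(0, T)$.
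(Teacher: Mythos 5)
Your overall skeleton (restrict to a triangle, pin $t_1 = T$, minimize in one variable, Taylor-expand at the minimizer) matches the paper's, but two steps as proposed do not go through. First, the reduction to the slice $t_1 = T$: you propose to show $\partial_{t_1}\sigma_{a,b}^{-2} > 0$ on the whole region so that "the minimum in $t_1$ must occur at the right endpoint $t_1 = T$" --- but a function increasing in $t_1$ is minimized at the \emph{smallest} admissible $t_1$, and indeed the lemma itself asserts $\partial_{t_1}\sigma_{\bm{b}}^{-2}(\bm{t}_{*,1}) = -\kappa_1 < 0$. Even with the sign corrected, global monotonicity along horizontal lines is an unproven (and unnecessary) claim. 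The paper instead exploits the $(-2H)$-homogeneity of $\sigma_{a,b}^{-2}$: for $t_1 > t_2$ one has $\sigma_{a,b}^{-2}(t_1,t_2) = (t_1/T)^{-2H}\,\sigma_{a,b}^{-2}(T, T t_2/t_1) \geq \sigma_{a,b}^{-2}(T, T t_2/t_1)$, which pushes every point of the lower triangle onto the slice $t_1 = T$ along a ray through the origin with no derivative computation. Relatedly, the $a \leftrightarrow b$ swap alone does not decide \emph{which} triangle hosts the global minimum when $a \neq b$; one still needs the pointwise comparison $\sigma_{a,b}^{-2}(t,T) > \sigma_{a,b}^{-2}(T,t)$ for $a<b$, which you do not address.

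Second, and more seriously, the crux you correctly flag --- uniqueness of the interior minimizer $t_*$ --- is left unresolved, and the fallback you suggest ("strictly convex or at least unimodal") is not available: the paper explicitly notes that $D_{\alpha}'' > 0$ is \emph{false}. The actual argument factors $D_{\alpha}'(s) = \widetilde{D}_{\alpha}(s)\,\widetilde{G}_{\alpha}(s)$ with $\widetilde{D}_{\alpha} > 0$ and $\widetilde{G}_{\alpha}(s) = \alpha\bigl(A(s)-A(1-s)\bigr) - A(1-s) + 1$, where $A(s) = f(s)\,s^{1-2H}$, and then proves $A$ is increasing (case split $H \leq 1/2$ versus $H > 1/2$, using Bernoulli's inequality in the latter case), so that $\widetilde{G}_{\alpha}$ is strictly increasing and has a unique zero $s_*$. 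This monotone-factor trick is the missing idea; it simultaneously delivers uniqueness and the positivity $\kappa_2 = \widetilde{D}_{\alpha}(s_*)\,\widetilde{G}_{\alpha}'(s_*) \cdot a^2 T^{-2H} > 0$, which cannot be obtained from a generic "second-order minimality" appeal since strict minimality does not imply a nonvanishing second derivative. The Taylor-expansion part of your proposal and the symmetry identities relating $\kappa_1, \kappa_2$ at $\bm{t}_{*,1}$ and $\bm{t}_{*,2}$ are fine, modulo the constant-factor normalization you already flag.
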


By Lemma~\ref{double-crossing-diagonal} and Piterbarg
inequality~\eqref{Piterbarg-inequality}, we can easily show that for any
\(\varepsilon > 0\) we have
\begin{equation}\label{fBm-eps-vicinity}
\begin{aligned}
&
\mathbb{P} \left\{
\exists \, \bm{t} \in [ \bm{0}, \bm{T} ] \colon
\bm{X} ( \bm{t} ) > u \, \bm{b}
\right\}
\\[7pt]
& \hspace{50pt} \sim
\mathbb{P} \left\{
\exists \, \bm{t} \in
[ T - \varepsilon, T ]
\times [ t_{ * } - \varepsilon, t_{ * } + \varepsilon ]
\colon
\bm{X} ( \bm{t} ) > u \, \bm{b}
\right\}
\\[7pt]
& \hspace{60pt} +
\mathbb{P} \left\{
\exists \, \bm{t} \in
[ t_{ * } - \varepsilon, t_{ * } + \varepsilon ]
\times [ T - \varepsilon, T ]
\colon
\bm{X} ( \bm{t} ) > u \, \bm{b}
\right\}
\\[7pt]
& \hspace{50pt}
= \mathbb{P} \left\{
\exists \, \bm{t} \in [ 0, \varepsilon ] \times [ -\varepsilon, \varepsilon ] \colon
\bm{X}_1 ( \bm{t} ) > u \, \bm{b}
\right\}
\\[7pt]
& \hspace{60pt}
+\mathbb{P} \left\{
\exists \, \bm{t} \in [ -\varepsilon, \varepsilon ] \times [ 0, \varepsilon ] \colon
\bm{X}_2 ( \bm{t} ) > u \, \bm{b}
\right\},
\end{aligned}
\end{equation}
where in the last line we performed appropriate time changes in both
probabilities by introducing
\(\bm{X}_1 ( \bm{t} )
=
( X ( T - t_1 ), -X ( t_2 - t_{ * } ) )^\top\)
and
\(\bm{X}_2 ( \bm{t} )
=
( X ( t_1 - t_{ * } ), -X ( T - t_2 ) )^\top\).

Next, assume that \(a > b\). Then we can use the Piterbarg
inequality~\eqref{Piterbarg-inequality} again to show that
\begin{equation}\label{fBm-a-bigger-than-b}
\mathbb{P} \left\{
\exists \, \bm{t} \in [ -\varepsilon, \varepsilon ] \times [ 0, \varepsilon ] \colon
\bm{X}_2 ( \bm{t} ) > u \, \bm{b}
\right\}
= o \big(
\mathbb{P} \left\{
\exists \, \bm{t} \in [ 0, \varepsilon ] \times [ -\varepsilon, \varepsilon ] \colon
\bm{X}_1 ( \bm{t} ) > u \, \bm{b}
\right\}
\big).
\end{equation}
Intuitively this means that the process is less likely to first hit a higher
barrier and then hit the lower than the other way around. If \(a = b\), the
two probabilities are equal, which is clear from the symmetry.
Combining~\eqref{fBm-eps-vicinity} and~\eqref{fBm-a-bigger-than-b},
we obtain that if \(a \geq b\), then
\begin{equation*}
\mathbb{P} \left\{
\exists \, \bm{t} \in [ \bm{0}, \bm{T} ] \colon
\bm{X} ( \bm{t} ) > u \bm{b}
\right\}
\sim
\left( 1 + \mathbb{1}_{a = b} \right) \,
\mathbb{P} \left\{
\exists \, \bm{t} \in [0, \varepsilon] \times [-\varepsilon, \varepsilon] \colon
\bm{X}_1 ( \bm{t} ) > u \bm{b}
\right\}.
\end{equation*}

In order to apply Theorem~\ref{main-theorem} to
\(\mathbb{P} \left\{
\exists \, \bm{t} \in [ 0, \varepsilon ]^2 \colon
\bm{X}_1 ( \bm{t} ) > u \, \bm{b}
\right\}\),
we need to derive asymptotic expansions of the corresponding covariance matrix
\(R ( \bm{t}, \bm{s} )\) as \(\bm{t}\) and \(\bm{s}\) tend to zero.
This is done in the following lemma.

\begin{lemma} \label{double-crossing-expansions-fBm}
The random field \(\bm{X}_1 ( \bm{t} )\) satisfies the assumptions~\ref{A1}
to~\ref{A3} of Theorem~\ref{main-theorem} with
\begin{equation*}
\alpha_1 = \alpha_2 = 2H, \quad
\beta_1 = 1, \quad
\beta_2 = 2, \quad
\beta_2' = 1, \quad
\mathcal{F} = \{ 2 \},
\end{equation*}
and
\begin{gather*}
\begin{aligned}
A_{2, 1}
& =
H
\begin{pmatrix}
-T^{2H-1} & T^{2H-1} - | T - t_{ * } |^{2H-1} \\
0 & 0
\end{pmatrix},
\\[7pt]
A_{1, 2}
& =
H
\begin{pmatrix}
0 & 0 \\
t_{ * }^{2H-1} + | T - t_{ * } |^{2H-1} & -t_{ * }^{2H-1}
\end{pmatrix},
\\[7pt]
A_{2, 2}
& =
H \left( H - \frac{1}{2} \right)
\begin{pmatrix}
0 & 0 \\
t_{ * }^{2H-2} + | T - t_{ * } |^{2H-2} & -t_{ * }^{2H-2} \\
\end{pmatrix},
\\[7pt]
A_{5, 1}
& =
\frac{1}{2}
\begin{pmatrix}
1 & 0 \\
0 & 0
\end{pmatrix},
\quad
A_{5, 2} =
\frac{1}{2}
\begin{pmatrix}
0 & 0 \\
0 & 1
\end{pmatrix},
\quad
A_{6, 2, 2} = 0.
\end{aligned}
\end{gather*}
Moreover,
\begin{equation*}
\bm{w} ( \bm{t} ) = \Sigma^{-1} ( \bm{t} ) \, \bm{b} =
\frac{1}{t_1^{2H} t_2^{2H} - r^2 ( t_1, t_2 )}
\begin{pmatrix}
t_2^{2H} \, a + r ( t_1, t_2 ) \, b
\\[7pt]
r ( t_1, t_2 ) \, a + t_1^{2H} \, b
\end{pmatrix},
\end{equation*}
and
\begin{equation*}
A_{1, 1} \, \bm{w} \neq \bm{0}, \quad
A_{1, 2} \, \bm{w} ( \bm{t} ) \sim \bm{0}, \quad
\bm{w}^\top A_{5, i} \, \bm{w} > 0, \quad
\bm{w}^\top A_{2, i} \, \bm{w} > 0, \quad i = 1, 2.
\end{equation*}
\end{lemma}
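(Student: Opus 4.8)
The plan is to verify Assumptions~\ref{A1}--\ref{A3} one at a time by direct Taylor expansion of the covariance function $r(t,s) = \tfrac12(t^{2H} + s^{2H} - |t-s|^{2H})$ of fBm around the minimizer $\bm{t}_{*,1} = (T, t_*)^\top$. After the time change $\bm{X}_1(\bm{t}) = (X(T-t_1), -X(t_2-t_*))^\top$ the covariance matrix $R(\bm{t},\bm{s})$ has entries built from $r(T-t_1, T-s_1)$, $r(T-t_1, s_2-t_*)$ (with a minus sign), $r(t_2-t_*, T-s_1)$ (minus), and $r(t_2-t_*, t_2-s_2)$. I would first record the three elementary one-variable expansions I need: $(c - h)^{2H} = c^{2H} - 2H c^{2H-1} h + H(2H-1)c^{2H-2} h^2 + o(h^2)$ for a fixed $c>0$, the expansion of $|T-t_* - (h_1 - h_2)|^{2H}$ to second order (here $T - t_* > 0$ is bounded away from zero, so this term is smooth and contributes only polynomial terms $t_i^{\beta_i'}$, $t_i^{\beta_i}$, and the cross terms $t_i t_j$), and the non-smooth term $|t_2 - s_2|^{2H} = |(t_2-t_*) - (s_2-t_*)|^{2H}$, which near the diagonal in the second coordinate produces exactly the $S_{\alpha_2, A_{5,2}}$ term with $\alpha_2 = 2H$. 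The first coordinate $T - t_1$ stays near $T > 0$, so $r(T-t_1, T-s_1) = \tfrac12((T-t_1)^{2H} + (T-s_1)^{2H} - (t_1-s_1)^{2H})$ splits into a smooth part (giving $t_1^{\beta_1'} = t_1$ terms via $A_{1,1}$ — note $\beta_1' = 1/2$ is \emph{not} what appears here; rather $\beta_1 = 1$ with $\beta_1$ playing the role of the ``first'' exponent since $1 \notin \mathcal{F}$) plus the non-smooth $-\tfrac12(t_1-s_1)^{2H}$ giving $S_{\alpha_1, A_{5,1}}$ with $\alpha_1 = 2H$.

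Next I would assemble $\Sigma - R(\bm{t},\bm{s})$ block by block and match coefficients against~\eqref{A2.1}. The key structural point is that the two coordinates behave differently: in coordinate $1$ the leading correction is \emph{linear} in $t_1$ (order $\beta_1 = 1$) coming from the $-2H(T-t_1)^{2H-1}\cdot(-t_1)$-type term, whereas in coordinate $2$, because $T - t_*$ and $t_*$ are both positive, the smooth part of $r(t_2-t_*, t_2-s_2)$ and of $|T-t_*-(t_1-s_1)|^{2H}$ contributes a \emph{linear} term $t_2^{\beta_2'}$ with $\beta_2' = 1$ through $A_{1,2}$, plus a genuinely \emph{quadratic} term $t_2^2$ with $\beta_2 = 2$ through $A_{2,2}$; thus $2\beta_2' = \beta_2$ and $2 \in \mathcal{F}$, while $1 \notin \mathcal{F}$ since the coordinate-$1$ correction has no $\beta_1'$ below $\beta_1$. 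This explains the stated $\mathcal{F} = \{2\}$, and the explicit matrices $A_{2,1}$, $A_{1,2}$, $A_{2,2}$ are read off from the coefficients: $A_{1,2}$ from the first-order $t_2$-coefficient, $A_{2,2}$ from the second-order $t_2^2$-coefficient (whence the factor $H(H-\tfrac12)$), and $A_{2,1}$ from the first-order $t_1$-coefficient. I would then check $A_{6,2,2} = 0$: the only way a $t_i^{\beta_i'} s_j^{\beta_j'}$ cross term with $i = j = 2$ could appear is from a product of two first-order increments in the \emph{same} coordinate within a single $r(\cdot,\cdot)$, and a direct inspection of $r(t_2 - t_*, t_2 - s_2)$ and the smooth part of $|T - t_* - (t_1 - s_1)|^{2H}$ shows the $t_2 s_2$ coefficient vanishes (the smooth part of $|T-t_*-(t_1-s_1)|^{2H}$ produces $t_1 s_1$, not $t_2 s_2$; and $r(t_2-t_*, t_2-s_2)$ is affine-plus-$|t_2-s_2|^{2H}$ in each argument to the relevant order).

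Having fixed the matrices, verifying the remaining conditions is mostly bookkeeping. For~\ref{A1} I invoke Lemma~\ref{lemma:double-crossing-minimizer-fBm}, which already asserts the unique minimizer of $\sigma_{a,b}^{-2}$ (for $a > b$, and for $a=b$ with the symmetric second point handled separately via~\eqref{fBm-a-bigger-than-b}); the reduction to $[0,\varepsilon]^2$ and the sign change place the minimizer at $\bm{0}$. For~\ref{A2.3} I must check $A_{1,1}\bm{w} = \bm{0}$ — but the lemma statement asserts $A_{1,1}\bm{w} \neq \bm{0}$, so in fact the relevant vanishing condition is $A_{1,i}\bm{w} = \bm{0}$ only for those $i$ with a genuine $\beta_i'$-order term, i.e. $i = 2$, and indeed $A_{1,2}\bm{w}(\bm{t}) \sim \bm{0}$ as $\bm{t} \to \bm{0}$ must be verified: plugging $\bm{w}(\bm{0})$ (computed from $\Sigma^{-1}(\bm{0})$ evaluated at $t_1 = T$, $t_2 = t_*$, hence the displayed formula) into $A_{1,2}$ and checking the product vanishes, together with the rate $\bm{b}(\bm{t}) - \bm{b}(\bm{0}) = (a,b)^\top - (a,b)^\top = \bm{0}$ (trivially, since minimization in $\bm{x}$ always returns $(a,b)^\top$ here). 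For~\ref{A2.4}, \ref{A2.5} I compute the scalars $\bm{w}^\top A_{2,i}\bm{w}$ and $\bm{w}^\top A_{5,i}\bm{w}$ and check positivity — for $A_{5,i}$ this is immediate since they are positive semidefinite rank-one-plus with the relevant $w$-component nonzero; for $A_{2,i}$ one gets an explicit rational expression in $a, b, T, t_*, H$ whose positivity follows from $H - \tfrac12$ possibly changing sign but being compensated, which is the one genuinely delicate sign computation. For~\ref{A2.6}, since $\mathcal{F} = \{2\}$ and $A_{6,2,2} = 0$, the matrix $D_{2,2} = A_{1,2}\Sigma^{-1}A_{1,2}^\top = (A_{1,2}\Sigma^{-1/2})(A_{1,2}\Sigma^{-1/2})^\top \succcurlyeq 0$ automatically. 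Finally~\ref{A3} is the standard Hölder bound for fBm with $\gamma_i = 2H \wedge 1$ transported through the Lipschitz time changes. The main obstacle I anticipate is not any single step but the sheer algebraic care needed to extract $A_{1,2}$ and $A_{2,2}$ correctly — in particular disentangling which part of the $t_2$-dependence comes from $r(t_2 - t_*, t_2 - s_2)$ versus from $|T - t_* - (t_1 - s_1)|^{2H}$ (this last one contributes to the $t_1$-expansion, not $t_2$, so actually $A_{1,2}$ and $A_{2,2}$ come purely from $r(t_2-t_*,\cdot)$ and $r(\cdot, t_2 - t_*)$), and then confirming the $A_{6}$-cross-term cancellation, since a spurious nonzero $A_{6,2,2}$ would change the conclusion.
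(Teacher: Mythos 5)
Your overall route is the paper's route: expand the fBm covariance to second order around the minimizer $\bm{t}_{*,1}=(T,t_*)^\top$, read off the matrices, and use Lemma~\ref{lemma:double-crossing-minimizer-fBm} for~\ref{A1} and for the degeneracy/positivity checks. However, two points in your plan are genuine problems rather than bookkeeping.

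First, your attribution of the $t_2$-coefficients is wrong. You assert that the term $|T-t_*\pm(\cdot)|^{2H}$ "contributes to the $t_1$-expansion, not $t_2$", and that $A_{1,2}$ and $A_{2,2}$ "come purely from $r(t_2-t_*,\cdot)$ and $r(\cdot,t_2-t_*)$". The off-diagonal entries of $R$ are of the form $-r(T+t_1,\,t_*+s_2)$ and $-r(t_*+t_2,\,T+s_1)$, so the smooth term $\tfrac12|T-t_*+t_1-s_2|^{2H}$ (and its mirror) is a function of \emph{both} a first-coordinate and a second-coordinate increment; its derivatives in $s_2$ (resp.\ $t_2$) are exactly what produce the entries $|T-t_*|^{2H-1}$ in $A_{1,2}$ and $|T-t_*|^{2H-2}$ in $A_{2,2}$. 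Following your stated plan literally would drop these entries and give the wrong matrices (you also write the $(2,2)$ entry as $r(t_2-t_*,t_2-s_2)$ instead of $r(t_*+t_2,t_*+s_2)$, and refer to a nonexistent expression $|T-t_*-(t_1-s_1)|^{2H}$, which suggests the argument bookkeeping has already gone astray). Since the entire content of this lemma is that bookkeeping, this needs to be fixed, not flagged as "care needed".

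Second, you leave the crucial identities $A_{1,2}\,\bm{w}(\bm{t})\sim\bm{0}$ and $\bm{w}^\top A_{2,i}\,\bm{w}>0$ as computations to be "checked", with only the vague remark that the sign of $H-\tfrac12$ is "compensated". The missing idea is that these are consequences of the first-order optimality condition at the interior minimizer $t_*$: the paper proves them by invoking the identity~\eqref{eq:identity_from_op_lemma} established inside the proof of Lemma~\ref{lemma:double-crossing-minimizer-fBm} (equivalently, $\partial_{t_2}\sigma_{\bm b}^{-2}(\bm t_{*,1})=0$), which is precisely the statement $A_{1,2}\,\bm w=\bm 0$ and which also feeds into the sign of $\bm{w}^\top A_{2,2}\,\bm{w}$. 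This last positivity is not routine: the prefactor $H(H-\tfrac12)$ of $A_{2,2}$ is negative for $H<\tfrac12$, so without the optimality identity you have no argument that $\xi_2>0$, and~\ref{A2.4} — which the log-layer and double-sum bounds depend on — would be unverified. Everything else in your proposal (the identification $\mathcal F=\{2\}$, the reason the linear $t_1$-term must be labelled $A_{2,1}$ rather than $A_{1,1}$, the triviality of $\bm b(\bm t)\equiv(a,b)^\top$, the factorization $D_{2,2}=(A_{1,2}\Sigma^{-1/2})(A_{1,2}\Sigma^{-1/2})^\top$ for~\ref{A2.6}, and the Hölder bound for~\ref{A3}) matches the paper.
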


Note that \(2 \in \mathcal{I}\). Since \(2\) is also the only element of \(\mathcal{F}\), it
follows that \(\mathcal{F} \subset \mathcal{I}\), and we can use the second assertion of
Corollary~\ref{main-corollary} instead of
Theorem~\ref{main-theorem}. Applying it with the data from
Lemmata~\ref{lemma:double-crossing-minimizer-fBm}
and~\ref{double-crossing-expansions-fBm}, we obtain the following result.

\begin{theorem} \label{double-crossing-theorem-fBm}
Let \(a \geq b\). Set
\begin{equation*}
p ( u ) \coloneqq
\left( 1 + \mathbb{1}_{a = b} \right)
\mathbb{P} \left\{ B_H ( T ) > au, \ B_H ( t_{ * } ) < -bu \right\}.
\end{equation*}
Then, with \(\kappa_1\) and \(\kappa_2\) from
Lemma~\ref{lemma:double-crossing-minimizer-fBm}, we have the following results.
\begin{description}[leftmargin = * , before={\renewcommand\makelabel[1]{\bfseries ##1,}} ]
\item [If \(H < 1/2\)]
\begin{equation*}
\mathbb{P} \left\{
\exists \, t, \, s \in [ 0, T ] \colon
B_H ( t ) > a u, \
B_H ( s ) < -b u
\right\}
\sim
\frac{9 \, \pi \, w_1^{1/H} \, w_2^{1/H} \, \mathcal{H}^2}{2 \, \kappa_1^{1/2} \, \kappa_2^{1/2}} \,
\, u^{2/H - 3} \,
p ( u ),
\end{equation*}
where
\begin{equation*}
\mathcal{H}
= \lim_{S \to \infty}
\frac{1}{S} \,
\mathbb{E} \left\{
\sup_{t \in [ 0, S ]} e^{B_H ( t ) - t^{2H} / 2}
\right\}.
\end{equation*}

\item [If \(H = 1/2\)]
\begin{equation*}
\mathbb{P} \left\{
\exists \, t, \, s \in [ 0, T ] \colon
B_H ( t ) > a u, \
B_H ( s ) < -b u
\right\}
\sim
\frac{3 \sqrt{\pi} \, w_2^{1/H} \, \mathcal{H} \, \widetilde{\mathcal{H}}}{\kappa_2^{1/2}}
\, u \, p ( u )
\end{equation*}
where
\begin{equation*}
\widetilde{\mathcal{H}}
= \lim_{\Lambda \to \infty}
\mathbb{E} \left\{
\sup_{t \in [ 0, \Lambda ]}
e^{B ( t ) - \lambda t}
\right\}
\end{equation*}
with\footnote{It can be shown that \(\lambda > 1/2\). Moreover, it may be represented as
\(\lambda = 1/2 + \kappa_1 / w_1\).}
\begin{equation*}
\lambda =
\frac{1}{2}
+H T^{2H - 1} w_1 - H \Big[ T^{2H - 1} + ( T - t_{ * } )^{2H - 1} \Big] w_2
\end{equation*}

\item [If \(H > 1/2\)]
\begin{equation*}
\mathbb{P} \left\{
\exists \, t, \, s \in [ 0, T ] \colon
B_H ( t ) > a u, \
B_H ( s ) < -b u
\right\}
\sim
\frac{3 \sqrt{\pi} \, w_2^{1/H} \, \mathcal{H}}{\kappa_2^{1/2}} \,
u^{1/H - 1} \, p ( u ).
\end{equation*}
\end{description}
\end{theorem}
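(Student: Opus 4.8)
The plan is to treat this as an application of the main theorem after localisation: first localise the extreme event near the minimiser(s) of the generalised variance, then invoke Theorem~\ref{main-theorem} via Corollary~\ref{main-corollary}, and finally identify the resulting constant explicitly — which is where the real work lies.

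\textbf{Step 1 (localisation).} I would first collect the reductions already carried out above. Lemma~\ref{double-crossing-diagonal} discards the diagonal strip; the Piterbarg inequality discards everything away from the minimiser(s) $\bm{t}_{*,1}$, $\bm{t}_{*,2}$ of $\sigma_{a,b}^{-2}$ identified in Lemma~\ref{lemma:double-crossing-minimizer-fBm}; and, when $a>b$, the neighbourhood of $\bm{t}_{*,2}$ contributes $o$ of that of $\bm{t}_{*,1}$, while for $a=b$ both contribute equally. Hence the double crossing probability is asymptotic to $(1+\mathbb{1}_{a=b})\,\mathbb{P}\{\exists\,\bm{t}\in[0,\varepsilon]\times[-\varepsilon,\varepsilon]\colon\bm{X}_1(\bm{t})>u\bm{b}\}$, and the time change is arranged so that the boundary probability $(1+\mathbb{1}_{a=b})\,\mathbb{P}\{\bm{X}_1(\bm{0})>u\bm{b}\}$ is exactly $p(u)$.

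\textbf{Step 2 (the main theorem).} By Lemma~\ref{double-crossing-expansions-fBm}, $\bm{X}_1$ satisfies~\ref{A1}--\ref{A3} with $\alpha_1=\alpha_2=2H$, $\beta_1=1$, $\beta_2=2$, $\mathcal{F}=\{2\}$. Since $2H<2$ one has $2\in\mathcal{I}$, so $\mathcal{F}\subset\mathcal{I}$, hence $\mathcal{D}=\varnothing$ and the second assertion of Corollary~\ref{main-corollary} applies — in the rectangle form of the remark following it, with $\mathcal{L}=\{2\}$, $\mathcal{R}=\{1,2\}$ — giving
\begin{equation*}
\mathbb{P}\{\exists\,\bm{t}\in[0,\varepsilon]\times[-\varepsilon,\varepsilon]\colon\bm{X}_1(\bm{t})>u\bm{b}\}\sim u^{\zeta}\,\mathcal{H}_{\bm{\nu},\mathbb{V}_{\bm{w}},\mathbb{W}_{\bm{w}}}(\mathcal{L},\mathcal{R})\;G(\bm{\beta}_{\mathcal{I}},\Xi_{\mathcal{I},\mathcal{I}})\;\mathbb{P}\{\bm{X}_1(\bm{0})>u\bm{b}\}.
\end{equation*}
Coordinate $2$ always lies in $\mathcal{I}\cap\mathcal{F}$, while coordinate $1$ lies in $\mathcal{I}$, $\mathcal{J}$ or $\mathcal{K}$ according as $2H<1$, $2H=1$ or $2H>1$ — this is exactly the trichotomy $H<1/2$, $H=1/2$, $H>1/2$. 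Correspondingly $\zeta=\sum_{i\in\mathcal{I}}(2/\alpha_i-2/\beta_i)$ equals $(1/H-2)+(1/H-1)=2/H-3$ when $H<1/2$ and $1/H-1$ when $H\ge1/2$, which is the exponent of $u$ in each case.

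\textbf{Step 3 (the constant).} It remains to evaluate $\mathcal{H}_{\bm{\nu},\mathbb{V}_{\bm{w}},\mathbb{W}_{\bm{w}}}(\mathcal{L},\mathcal{R})\,G(\bm{\beta}_{\mathcal{I}},\Xi_{\mathcal{I},\mathcal{I}})$. For $G$: by Remark~\ref{rem:gen-var-XI}, $\Xi$ is read off from the Taylor expansion of $\sigma_{\bm{b}}^{-2}$ about $\bm{t}_{*,1}$, which by Lemma~\ref{lemma:double-crossing-minimizer-fBm} has no mixed term and diagonal coefficients proportional to $\kappa_1$ (at $\tau_1^{\beta_1}=\tau_1$) and $\kappa_2$ (at $\tau_2^{\beta_2}=\tau_2^2$); so $G(\bm{\beta}_{\mathcal{I}},\Xi_{\mathcal{I},\mathcal{I}})$ factorises into elementary integrals — $\int_0^\infty e^{-\mathrm{const}\cdot\kappa_1 t}\,dt$ contributing a power of $\kappa_1^{-1}$ when $1\in\mathcal{I}$, and $\int_0^\infty e^{-\mathrm{const}\cdot\kappa_2 t^2}\,dt$ always contributing $\kappa_2^{-1/2}$ up to a numerical factor. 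For the generalised Pickands constant the decisive point is structural: each of $A_{5,1},A_{2,1}$ in Lemma~\ref{double-crossing-expansions-fBm} is supported on the first coordinate of $\mathbb{R}^2$ and each of $A_{5,2},A_{2,2}$ on the second, so (with $\bm{Z}\equiv0$ since $\mathcal{D}=\varnothing$) the limiting field $\bm{Y}_{\bm{\nu},\mathbb{V}_{\bm{w}}}(\bm{t})-\bm{d}_{\bm{\nu},\mathbb{W}_{\bm{w}}}(\bm{t})$ has its first component a function of $t_1$ only, its second a function of $t_2$ only, and the two independent. By the identity $\int_{\mathbb{R}}e^{x}\mathbb{P}\{M>x\}\,dx=\mathbb{E}\,e^{M}$, $H_{\bm{\nu},\mathbb{V}_{\bm{w}},\mathbb{W}_{\bm{w}}}$ of a product rectangle then factorises as $\mathbb{E}\sup_{t_1}e^{\eta_1(t_1)}\cdot\mathbb{E}\sup_{t_2}e^{\eta_2(t_2)}$, so $\mathcal{H}_{\bm{\nu},\mathbb{V}_{\bm{w}},\mathbb{W}_{\bm{w}}}(\mathcal{L},\mathcal{R})$ is a product of two one-dimensional constants. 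The second factor is, in all three cases, a classical Pickands constant $\mathcal{H}$, the self-similar rescaling $t_2\mapsto\mathrm{const}\cdot w_2^{-1/H}t_2$ of $w_2 B_H(t_2)-\tfrac12 w_2^2|t_2|^{2H}$ producing the $w_2^{1/H}$. The first factor is: another $\mathcal{H}$ (with a $w_1^{1/H}$) when $H<1/2$; a Piterbarg-type constant $\widetilde{\mathcal{H}}=\lim_{\Lambda\to\infty}\mathbb{E}\sup_{[0,\Lambda]}e^{B(t)-\lambda t}$ when $H=1/2$, with $\lambda=\tfrac12+\kappa_1/w_1$ (the $\tfrac12$ from the logarithmic density, the rest from the first component of $\diag(\bm{w})A_{2,1}\diag(\bm{w})\bm{1}$ after rescaling); and simply $1$ when $H>1/2$, since then the first component of the limiting field is the deterministic downward drift $-\xi_1|t_1|$ with $\xi_1>0$ by~\ref{A2.4}, whose supremum over $t_1\ge0$ is attained at $0$, so the factor equals $\int_{\mathbb{R}}e^{x}\mathbb{1}_{x<0}\,dx=1$. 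Multiplying $u^{\zeta}$, these one-dimensional constants, the factorised $G$, and $p(u)$ produces the three displayed asymptotics.

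\textbf{Main obstacle.} The obstacle is arithmetical rather than conceptual: chasing every normalising constant through the argument — reconciling one-sided versus two-sided Pickands normalisations against the $S^{-|\mathcal{I}|}$ prefactor in $\mathcal{H}_{\bm{\nu},\mathbb{V}_{\bm{w}},\mathbb{W}_{\bm{w}}}(\mathcal{L},\mathcal{R})$, tracking the $2^{1/(2H)}$-type factors from the self-similar rescalings, and matching the second-derivative normalisation of $\kappa_1,\kappa_2$ in Lemma~\ref{lemma:double-crossing-minimizer-fBm} against the $\Xi$-normalisation of Remark~\ref{rem:gen-var-XI}. A secondary check is that the rectangle form of the theorem really applies in the second coordinate although the covariance expansion of $r$ there is not manifestly even in $t_2$: the only odd contribution is $A_{1,2}\,t_2$ with $A_{1,2}\bm{w}=\bm{0}$ (the first-order stationarity condition defining $t_*$, cf.~\ref{A2.3}), so it drops out of $\bm{w}^\top(\Sigma-R)\bm{w}$ at its order and feeds only an even ($\tau_2^2$) term into $\sigma_{\bm{b}}^{-2}$, leaving the expansions that matter effectively symmetric.
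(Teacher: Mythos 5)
Your proposal follows the paper's route exactly: localisation via Lemma~\ref{double-crossing-diagonal} and the Piterbarg inequality, reduction to the vicinity of $\bm{t}_{*,1}$ (doubled when $a=b$), and then the second assertion of Corollary~\ref{main-corollary} with the data of Lemmata~\ref{lemma:double-crossing-minimizer-fBm} and~\ref{double-crossing-expansions-fBm}; the paper itself gives no more detail than this one-line application, so your Step~3 (factorisation of $\mathcal{H}_{\bm{\nu},\mathbb{V}_{\bm{w}},\mathbb{W}_{\bm{w}}}$ into independent one-dimensional Pickands/Piterbarg/Talagrand factors, times the Gaussian-type integral $G$ read off from $\Xi_{11}=\kappa_1$, $\Xi_{22}=\kappa_2$) is a correct fleshing-out of what the paper leaves implicit, including the correct trichotomy in $H$ and the exponent $\zeta$. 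One bookkeeping point to reconcile before you trust the displayed prefactors: your own $G$-integral for the Pickands coordinate $1$ (when $H<1/2$, $\beta_1=1$) gives $\int_0^\infty e^{-\kappa_1 t/2}\,dt=2/\kappa_1$, i.e.\ a factor $\kappa_1^{-1}$, whereas the stated constant carries $\kappa_1^{-1/2}$ --- this is precisely the normalisation-chasing you flag as the residual obstacle and is worth checking carefully (the discrepancy may well sit in the theorem's stated numerical constant rather than in your outline).
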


\begin{remark}
Note that \(\kappa_2 \neq 2 \, \bm{w}^\top A_{2, 2} \, \bm{w}\), as it were
in~\cite[Remark 2.3, (2.15)]{DebickiEtAl2019}. Using
Lemma~\ref{lemma:gen-var-exp}, we can show that
\(\kappa_2 = 2 \, \bm{w}^\top A_{2, 2} \, \bm{w} + \bm{w}^\top A_{1, 2} \, \Sigma^{-1} A_{1, 2}^\top \, \bm{w}\).
\end{remark}
\section{Auxiliary results}
\label{sec:orgdcf2870}
\label{SEC:auxiliary_results}
This Section consists of known results, taken from~\cite{DebickiEtAl2019}
and reproduced here for the reader's convenience.
\subsection{Quadratic programming problem}
\label{sec:orged02637}
For a given non-singular \(d \times d\) real matrix \(\Sigma\) we consider
the quadratic programming problem
\begin{equation} \label{QPP}
  \Pi_\Sigma ( \bm{b} ) \colon
  \text{minimize} \
  \bm{x}^\top \, \Sigma^{-1} \, \bm{x} \
  \text{under the linear constraint} \
  \bm{x} \geq \bm{b}.
\end{equation}
Below \(J = \{ 1, \ldots, d \} \setminus I\) can be empty; the claim in
\eqref{eq:bJ} is formulated under the assumption that \(J\) is non-empty.

\begin{lemma} \label{lemma:QPP}
Let \(d \geq 2\) and \(\Sigma\) a \(d \times d\) symmetric positive
definite matrix with inverse \(\Sigma^{-1}\). If \(\bm{b} \in \mathbb{R}^d
\setminus (-\infty, ]^d\), then \(\Pi_{\Sigma} ( \bm{b} )\) has a unique
solution \(\widetilde{\bm{b}}\) and there exists a unique non-empty index set
\(I \subset \{ 1, \ldots, d \}\) with \(m \leq d\) elements such that
\begin{align}
& \widetilde{\bm{b}}_I
= \bm{b}_I \neq \bm{0}_I
\\[7pt]
& \widetilde{\bm{b}}_J
= \Sigma_{JI} ( \Sigma_{II} )^{-1} \bm{b}_I \geq \bm{b}_J,
\qquad ( \Sigma_{II} )^{-1} \bm{b}_I > \bm{0}_I,
\label{eq:bJ}
\\[7pt]
& \min_{\bm{x} \, \geq \, \bm{b}}
\bm{x}^\top \, \Sigma^{-1} \, \bm{x}
= \widetilde{\bm{b}}^\top \, \Sigma^{-1} \, \widetilde{\bm{b}}
= \bm{b}_I^\top \, ( \Sigma_{II} )^{-1} \, \bm{b}_I
> 0,
\\
& \max_{\bm{z} \in [ 0, \infty )^d \colon \bm{z}^\top \bm{b} > 0}
\frac{( \bm{z}^\top \bm{b} )^2}{\bm{z}^\top \, \Sigma \, \bm{z}}
= \frac{( \bm{w}^\top \bm{b} )^2}{\bm{w}^\top \, \Sigma \, \bm{w}}
= \min_{\bm{x} \, \geq \, \bm{b}} \bm{x}^\top \, \Sigma^{-1} \, \bm{x},
\end{align}
with \(\bm{w} = \Sigma^{-1} \, \widetilde{\bm{b}}\) satisfying
\(\bm{w}_I = ( \Sigma_{II} )^{-1} \bm{b}_I > \bm{0}_I\),
\(\bm{w}_J = \bm{0}_J\).
\end{lemma}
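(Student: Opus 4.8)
The plan is to regard $\Pi_\Sigma(\bm{b})$ as a convex quadratic program, read off the asserted structure of its solution from the Karush--Kuhn--Tucker (KKT) conditions, and then derive the variational identity in the last display via a Cauchy--Schwarz inequality together with an exhibited maximiser.

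I would begin with existence and uniqueness. The feasible set $\{\bm{x} \colon \bm{x} \geq \bm{b}\}$ is nonempty, closed and convex, while $\bm{x} \mapsto \bm{x}^\top \Sigma^{-1} \bm{x}$ is continuous, strictly convex and coercive (since $\Sigma^{-1}$ is positive definite), so the minimum is attained at a unique point $\widetilde{\bm{b}}$. Because the constraints $\bm{b} - \bm{x} \leq \bm{0}$ are affine, the KKT conditions are here both necessary and sufficient: there is a multiplier $\bm{\lambda} \geq \bm{0}$ with $2\Sigma^{-1}\widetilde{\bm{b}} = \bm{\lambda}$, primal feasibility $\widetilde{\bm{b}} \geq \bm{b}$, and complementary slackness $\lambda_i(\widetilde{b}_i - b_i) = 0$ for every $i$. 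Setting $\bm{w} = \Sigma^{-1}\widetilde{\bm{b}} = \bm{\lambda}/2 \geq \bm{0}$, $I = \{i \colon w_i > 0\}$ and $J = I^c$, complementary slackness gives $\widetilde{\bm{b}}_I = \bm{b}_I$, while $\bm{w}_J = \bm{0}_J$ by construction. Writing $\widetilde{\bm{b}} = \Sigma\bm{w}$ in $I,J$ block form and using $\bm{w}_J = \bm{0}_J$ yields $\bm{b}_I = \widetilde{\bm{b}}_I = \Sigma_{II}\bm{w}_I$, hence $\bm{w}_I = (\Sigma_{II})^{-1}\bm{b}_I > \bm{0}_I$ (the inverse exists since $\Sigma_{II}$ is a principal submatrix of a positive definite matrix), and $\widetilde{\bm{b}}_J = \Sigma_{JI}\bm{w}_I = \Sigma_{JI}(\Sigma_{II})^{-1}\bm{b}_I \geq \bm{b}_J$ by primal feasibility.

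Next I would dispatch the nondegeneracy assertions and the uniqueness of $I$. If $I = \varnothing$ then $\bm{w} = \bm{0}$ and $\widetilde{\bm{b}} = \Sigma\bm{w} = \bm{0}$, contradicting $\widetilde{\bm{b}} \geq \bm{b}$ together with the hypothesis that some $b_k > 0$; hence $I \neq \varnothing$. If $\bm{b}_I = \bm{0}_I$ then $\Sigma_{II}\bm{w}_I = \bm{0}$ forces $\bm{w}_I = \bm{0}_I$ by positive definiteness, again a contradiction; hence $\bm{b}_I \neq \bm{0}_I$. The optimal value is then computed directly as $\widetilde{\bm{b}}^\top\Sigma^{-1}\widetilde{\bm{b}} = \widetilde{\bm{b}}^\top\bm{w} = \widetilde{\bm{b}}_I^\top\bm{w}_I = \bm{b}_I^\top(\Sigma_{II})^{-1}\bm{b}_I$, which is strictly positive since $(\Sigma_{II})^{-1}$ is positive definite and $\bm{b}_I \neq \bm{0}_I$. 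Finally, $I$ is unique: the minimiser $\widetilde{\bm{b}}$ is unique, hence so is $\bm{w} = \Sigma^{-1}\widetilde{\bm{b}}$, and any index set meeting the stated conditions must coincide with the support $\{i \colon w_i > 0\}$.

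For the last display, I would first observe that $\{\bm{z} \in [0,\infty)^d \colon \bm{z}^\top\bm{b} > 0\}$ is nonempty (take $\bm{z} = \bm{e}_k$ with $b_k > 0$). For any such $\bm{z}$ and any feasible $\bm{x} \geq \bm{b}$ one has $0 < \bm{z}^\top\bm{b} \leq \bm{z}^\top\bm{x}$, and by Cauchy--Schwarz in the inner product $\langle \bm{u}, \bm{v} \rangle = \bm{u}^\top\Sigma\bm{v}$ (i.e.\ writing $\bm{z}^\top\bm{x} = (\Sigma^{1/2}\bm{z})^\top(\Sigma^{-1/2}\bm{x})$) we get $(\bm{z}^\top\bm{b})^2 \leq (\bm{z}^\top\bm{x})^2 \leq (\bm{z}^\top\Sigma\bm{z})(\bm{x}^\top\Sigma^{-1}\bm{x})$, so $\sup_{\bm{z}} (\bm{z}^\top\bm{b})^2/(\bm{z}^\top\Sigma\bm{z}) \leq \inf_{\bm{x}} \bm{x}^\top\Sigma^{-1}\bm{x}$. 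Equality follows on plugging in $\bm{z} = \bm{w}$: indeed $\bm{w} \geq \bm{0}$, $\bm{w}^\top\bm{b} = \bm{w}_I^\top\bm{b}_I = \bm{b}_I^\top(\Sigma_{II})^{-1}\bm{b}_I > 0$, and $\bm{w}^\top\Sigma\bm{w} = \bm{w}^\top\widetilde{\bm{b}} = \bm{b}_I^\top(\Sigma_{II})^{-1}\bm{b}_I$, so the ratio equals the optimal value found above. I do not expect a genuine obstacle: the statement is a classical fact about convex quadratic programming, and the only point that requires care is the consistent handling of the $I,J$ block decomposition — in particular the systematic use of $\bm{w}_J = \bm{0}_J$ when passing between $\widetilde{\bm{b}}$, $\bm{w}$ and the submatrices of $\Sigma$.
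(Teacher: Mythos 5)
Your proof is correct. Note that the paper does not prove this lemma at all: it appears in the ``Auxiliary results'' section, which is explicitly stated to consist of known results reproduced without proof from the cited reference of D\c{e}bicki, Hashorva and Wang. Your argument --- existence and uniqueness from strict convexity and coercivity, the block structure of \(\widetilde{\bm{b}}\) and \(\bm{w}\) from the KKT conditions with \(I = \{i \colon w_i > 0\}\), and the dual variational identity via Cauchy--Schwarz in the \(\Sigma\)-inner product with \(\bm{z} = \bm{w}\) as the exhibited maximiser --- is the standard and complete route to this statement, and all the nondegeneracy claims (\(I \neq \varnothing\), \(\bm{b}_I \neq \bm{0}_I\), strict positivity of the optimal value) are handled correctly.
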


Denote the solution map of the quadratic programming problem \eqref{QPP} by
\(\mathcal{P} \colon \Sigma^{-1} \mapsto \widetilde{\bm{b}}\)
with \(\widetilde{\bm{b}}\) the unique solution to \(\Pi_{\Sigma} ( \bm{b} )\).
The next result is a special case of~\cite[Theorem 3.1]{MR528899}.

\begin{lemma}
\(\mathcal{P}\) is Lipshitz continuous on compact subset of the space of real
\(d \times d\) symmetric positive definite matrices.
\end{lemma}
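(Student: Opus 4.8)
The plan is to prove the stronger, quantitative statement directly, without appealing to the cited reference: on any compact set $\mathcal{S}$ of symmetric positive definite $d\times d$ matrices, $\|\mathcal{P}(Q_1)-\mathcal{P}(Q_2)\|\le L\,\|Q_1-Q_2\|$ with $L$ depending only on $\mathcal{S}$ and the fixed vector $\bm{b}$. Fix such an $\mathcal{S}$. By continuity of the extreme eigenvalues and of the operator norm, the numbers $\mu\coloneqq\min_{Q\in\mathcal{S}}\lambda_{\min}(Q)$ and $M\coloneqq\max_{Q\in\mathcal{S}}\|Q\|$ satisfy $0<\mu\le M<\infty$. Write $f_Q(\bm{x})=\bm{x}^\top Q\,\bm{x}$ and $K=\{\bm{x}\in\mathbb{R}^d\colon \bm{x}\ge\bm{b}\}$, a fixed nonempty closed convex polyhedron; for $Q\in\mathcal{S}$ the function $f_Q$ is $\mu$-strongly convex and coercive, so it has a unique minimiser over $K$, which by Lemma~\ref{lemma:QPP} is precisely $\mathcal{P}(Q)$ (with $\widetilde{\bm{b}}=\mathcal{P}(\Sigma^{-1})$).

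First I would establish a uniform a priori bound. Pick any fixed $\bm{y}_0\in K$ (for instance $y_{0,i}=\max\{b_i,0\}$). Strong convexity gives $\mu\,\|\mathcal{P}(Q)\|^2\le f_Q(\mathcal{P}(Q))\le f_Q(\bm{y}_0)=\bm{y}_0^\top Q\,\bm{y}_0\le M\,\|\bm{y}_0\|^2$, hence $\|\mathcal{P}(Q)\|\le\Lambda$ for every $Q\in\mathcal{S}$, where $\Lambda\coloneqq\|\bm{y}_0\|\sqrt{M/\mu}$.

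The core step is a comparison of the two variational inequalities. For $Q\in\mathcal{S}$ the first-order optimality condition for minimising the smooth convex function $f_Q$ over the convex set $K$ reads $\langle Q\,\mathcal{P}(Q),\ \bm{y}-\mathcal{P}(Q)\rangle\ge0$ for all $\bm{y}\in K$. Take $Q_1,Q_2\in\mathcal{S}$, put $\bm{x}_i\coloneqq\mathcal{P}(Q_i)$, test the inequality for $Q_1$ at $\bm{y}=\bm{x}_2$ and the one for $Q_2$ at $\bm{y}=\bm{x}_1$, and add them to obtain
\[
\langle Q_1\bm{x}_1 - Q_2\bm{x}_2,\ \bm{x}_2-\bm{x}_1\rangle\ \ge\ 0 .
\]
Substituting $Q_1\bm{x}_1=Q_2\bm{x}_1+(Q_1-Q_2)\bm{x}_1$ and rearranging yields
\[
\langle Q_2(\bm{x}_1-\bm{x}_2),\ \bm{x}_1-\bm{x}_2\rangle\ \le\ \langle (Q_2-Q_1)\bm{x}_1,\ \bm{x}_1-\bm{x}_2\rangle .
\]
The left-hand side is at least $\mu\,\|\bm{x}_1-\bm{x}_2\|^2$ since $Q_2\succcurlyeq\mu I$, and the right-hand side is at most $\|Q_2-Q_1\|\,\|\bm{x}_1\|\,\|\bm{x}_1-\bm{x}_2\|\le\Lambda\,\|Q_2-Q_1\|\,\|\bm{x}_1-\bm{x}_2\|$ by the a priori bound and Cauchy--Schwarz. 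Dividing through by $\|\bm{x}_1-\bm{x}_2\|$ (the case $\bm{x}_1=\bm{x}_2$ being trivial) gives $\|\mathcal{P}(Q_1)-\mathcal{P}(Q_2)\|\le (\Lambda/\mu)\,\|Q_1-Q_2\|$, which is the assertion with $L=\Lambda/\mu$.

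I expect the only genuinely delicate point to be the \emph{uniformity}: the strong-convexity modulus $\mu$ and the a priori bound $\Lambda$ must be chosen once and for all on $\mathcal{S}$, and it is precisely compactness, through continuity of $\lambda_{\min}$ and of $\|\cdot\|$, that supplies this; everything else is the textbook estimate for variational inequalities governed by a uniformly strongly monotone linear operator. If Lipschitz dependence on $\Sigma$ rather than on $Q=\Sigma^{-1}$ is wanted, one composes with matrix inversion, which is Lipschitz on compact subsets of the invertible matrices because $\|A^{-1}-B^{-1}\|=\|A^{-1}(B-A)B^{-1}\|\le\|A^{-1}\|\,\|B^{-1}\|\,\|A-B\|$.
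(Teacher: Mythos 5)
Your proof is correct. The paper itself gives no argument for this lemma — it simply invokes \cite[Theorem 3.1]{MR528899} — so your contribution is a genuinely self-contained derivation rather than a variant of the paper's proof. The route you take is the classical one for this kind of stability result: the two variational inequalities $\langle Q_i\,\bm{x}_i,\ \bm{y}-\bm{x}_i\rangle\ge0$ tested against each other's solutions, combined with uniform strong monotonicity ($Q\succcurlyeq\mu I$ on the compact set) and a uniform a priori bound on the minimisers, yield the Lipschitz constant $\Lambda/\mu$ directly. All the steps check out: the identity $\langle Q_2(\bm{x}_1-\bm{x}_2),\bm{x}_1-\bm{x}_2\rangle\le\langle(Q_2-Q_1)\bm{x}_1,\bm{x}_1-\bm{x}_2\rangle$ follows correctly from adding the two optimality conditions and substituting $Q_1\bm{x}_1=Q_2\bm{x}_1+(Q_1-Q_2)\bm{x}_1$, and compactness is used exactly where it is needed, namely to make $\mu$ and $\Lambda$ uniform. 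Two cosmetic remarks: the gradient of $f_Q$ is $2Q\bm{x}$, so a factor of $2$ is silently dropped in the variational inequality (harmless, as it cancels); and the a priori bound $\mu\|\mathcal{P}(Q)\|^2\le f_Q(\mathcal{P}(Q))$ is really the positive-definiteness estimate $\bm{x}^\top Q\bm{x}\ge\lambda_{\min}(Q)\|\bm{x}\|^2$ rather than strong convexity per se. Your closing remark about composing with matrix inversion is also apt, since elsewhere the paper applies the lemma with $\Sigma(\bm{t})$ varying continuously, and the bound $\|A^{-1}-B^{-1}\|\le\|A^{-1}\|\,\|B^{-1}\|\,\|A-B\|$ is exactly what transfers the estimate.
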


We will also need the following supplementary lemma on quadratic optimization.
\begin{lemma}\label{lemma:QPP-supp}
Let \(E\) be a compact subset of \(\mathbb{R}^n\) and
\(( \Sigma ( \bm{t} ) )_{\bm{t} \in E}\) be a uniformly positive definite family
of symmetric \(d \times d\) matrices such that the map \(\bm{t} \mapsto \Sigma ( \bm{t} )\) is
continuous.
Denote by \(I ( \bm{t} )\), \(K ( \bm{t} )\) and \(L ( \bm{t} )\)
the three index sets of \(\Pi_{\Sigma ( \bm{t} )} ( \bm{b} )\),
introduced in Lemma \ref{lemma:QPP}.
Then the following assertions hold:
\begin{enumerate}
\item There exist three finite disjoint locally closed covers \(( A_V )_{V \in 2^d}\),
\(( B_V )_{V \in 2^d}\) and \(( C_{U, V} )_{U, V \in 2^d}\) of \(E\),
such that
\begin{equation*}
I ( \bm{t} ) = \sum_{U \in 2^d} U^c \, \mathbb{1}_{A_U} ( \bm{t} ),
\quad
L ( \bm{t} ) = \sum_{V \in 2^d} V^c \, \mathbb{1}_{B_V} ( \bm{t} ),
\quad
K ( \bm{t} ) = \sum_{U, \, V \in 2^d} U \cap V \, \mathbb{1}_{C_{U, V}} ( \bm{t} ).
\end{equation*}
\item The maps \(\bm{t} \mapsto I ( \bm{t} )\) and \(\bm{t} \mapsto L ( \bm{t} )\) are lower hemicontinuous.
Moreover, for all \(\bm{t} \in E\) there exists
\(\varepsilon ( \bm{t} ) > 0\) such that for all \(\bm{s}\)
such that \(| \bm{t} - \bm{s} | < \varepsilon ( \bm{t} )\) holds \(I ( \bm{t} ) \subset I ( \bm{s} )\).
\end{enumerate}
\end{lemma}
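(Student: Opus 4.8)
\emph{Overview.} The whole statement reduces to the regularity of the solution map of the quadratic programming problem, after which only elementary point‑set topology is needed. Since $(\Sigma(\bm{t}))_{\bm{t}\in E}$ is continuous and uniformly positive definite on the compact set $E$, the set $\{\Sigma(\bm{t})^{-1}\colon\bm{t}\in E\}$ is a compact subset of the cone of symmetric positive definite $d\times d$ matrices, and $\bm{t}\mapsto\Sigma(\bm{t})^{-1}$ is continuous. Hence, by the preceding lemma ($\mathcal{P}$ is Lipschitz continuous on compact sets, a special case of \cite[Theorem~3.1]{MR528899}), the solution $\widetilde{\bm{b}}(\bm{t})\coloneqq\mathcal{P}(\Sigma(\bm{t})^{-1})$ of $\Pi_{\Sigma(\bm{t})}(\bm{b})$ is a continuous function of $\bm{t}$, and therefore so is $\bm{w}(\bm{t})\coloneqq\Sigma(\bm{t})^{-1}\widetilde{\bm{b}}(\bm{t})$. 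By Lemma~\ref{lemma:QPP} one has $\widetilde{\bm{b}}(\bm{t})\geq\bm{b}$, $\bm{w}(\bm{t})\geq\bm{0}$, and complementary slackness ($w_i(\bm{t})>0$ forces $\widetilde{b}_i(\bm{t})=b_i$, and $\widetilde{b}_i(\bm{t})>b_i$ forces $w_i(\bm{t})=0$), so that $\{1,\ldots,d\}$ is partitioned into the three disjoint sets $I(\bm{t})=\{i\colon w_i(\bm{t})>0\}$, $L(\bm{t})=\{i\colon\widetilde{b}_i(\bm{t})>b_i\}$ and $K(\bm{t})=\{i\colon w_i(\bm{t})=0,\ \widetilde{b}_i(\bm{t})=b_i\}$, with $I(\bm{t})$ being exactly the unique non‑empty index set of $\Pi_{\Sigma(\bm{t})}(\bm{b})$ supplied by Lemma~\ref{lemma:QPP}.

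\emph{Assertion (1).} For $U,V\in 2^{\{1,\ldots,d\}}$ I set $A_U\coloneqq\{\bm{t}\in E\colon I(\bm{t})=U^c\}$, $B_V\coloneqq\{\bm{t}\in E\colon L(\bm{t})=V^c\}$ and $C_{U,V}\coloneqq A_U\cap B_V$. Each of the three families is pairwise disjoint and covers $E$, because $\bm{t}$ determines $I(\bm{t})$ and $L(\bm{t})$ uniquely; on $A_U$ we have $I=U^c$, on $B_V$ we have $L=V^c$, and since $I\sqcup K\sqcup L=\{1,\ldots,d\}$ this gives $K=U\cap V$ on $C_{U,V}$, which is the claimed representation. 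Local closedness is where continuity enters: $A_U$ is the intersection of the closed set $\bigcap_{i\in U}\{w_i=0\}\cap\bigcap_{i\notin U}\{w_i\geq 0\}$ with the open set $\bigcap_{i\notin U}\{w_i>0\}$; using $\widetilde{\bm{b}}\geq\bm{b}$, $B_V$ is the intersection of the closed set $\bigcap_{i\in V}\{\widetilde{b}_i-b_i\leq 0\}$ with the open set $\bigcap_{i\notin V}\{\widetilde{b}_i-b_i>0\}$; and $C_{U,V}$, being an intersection of two locally closed sets, is locally closed.

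\emph{Assertion (2).} For a map taking values in the finite discrete lattice $2^{\{1,\ldots,d\}}$, lower hemicontinuity at $\bm{t}$ is equivalent to the existence of a neighbourhood of $\bm{t}$ on which the value contains $I(\bm{t})$. If $i\in I(\bm{t})$ then $w_i(\bm{t})>0$, so by continuity $w_i>0$ on a ball around $\bm{t}$; intersecting the finitely many such balls yields $\varepsilon(\bm{t})>0$ with $I(\bm{t})\subseteq I(\bm{s})$ whenever $|\bm{t}-\bm{s}|<\varepsilon(\bm{t})$. This is precisely the ``moreover'' claim, and it gives lower hemicontinuity of $I$; the identical argument applied to the open condition $\widetilde{b}_i(\bm{s})>b_i$ gives $L(\bm{t})\subseteq L(\bm{s})$ near $\bm{t}$, hence lower hemicontinuity of $L$.

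\emph{Main obstacle.} The only real obstacle is the reduction in the first paragraph: everything rests on the continuity of $\bm{t}\mapsto\widetilde{\bm{b}}(\bm{t})$, that is, on the cited Lipschitz property of the QPP solution map, without which the level sets defining $A_U$ and $B_V$ need not be locally closed and the index sets need not grow monotonically under small perturbations. Granting it, the proof is routine; the only points requiring a little care are the bookkeeping identifying $I$, $K$, $L$ on $C_{U,V}$ with $U^c$, $U\cap V$, $U\cap V^c$, and the (standard) fact that a map into a finite set is lower hemicontinuous exactly when its value can only grow under small perturbations — which holds here because $I$ and $L$ are carved out by strict inequalities on the continuous maps $\bm{w}$ and $\widetilde{\bm{b}}-\bm{b}$.
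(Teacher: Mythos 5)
Your proof is correct and follows essentially the same route as the paper: continuity of $\bm{t}\mapsto\widetilde{\bm{b}}(\bm{t})$ and $\bm{w}(\bm{t})$ via the Lipschitz property of the QPP solution map, the same level-set decomposition into $A_U$, $B_V$, $C_{U,V}$, and local closedness as an intersection of a closed and an open set. Your argument for assertion (2) — openness of the conditions $w_i>0$ and $\widetilde{b}_i>b_i$ together with the characterizations $I=\{i\colon w_i>0\}$ and $L=\{i\colon\widetilde{b}_i>b_i\}$ from Lemma~\ref{lemma:QPP} — is a cleaner shortcut than the paper's construction via the sets $\mathbb{V}_1(\bm{t})$, $\mathbb{V}_2(\bm{t})$, but it proves the same thing.
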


\begin{remark}
Since \(I ( \bm{t} ) \cup K ( \bm{t} ) \cup L ( \bm{t} ) = \{ 1, \ldots, d \}\),
it follows that the set-valued map \(\bm{t} \mapsto K ( \bm{t} )\) is upper hemicontinuous.
\end{remark}

\begin{remark}
Note that if the upper hemicontinuity property holds uniformly in \(\bm{t}\),
that is if \(\varepsilon ( \bm{t} )\) can be taken independent on \(\bm{t}\),
then \(\bm{t} \mapsto J ( \bm{t} )\) is constant.
\end{remark}
\subsection{Borell-TIS and Piterbarg inequalities}
\label{sec:orgfa72e5e}
\begin{lemma}
Let \(\bm{Z} ( \bm{t} )\), \(\bm{t} \in E \subset \mathbb{R}^k\) be a separable centered
\(d\)-dimensional vector-valued Gaussian random field having components with
a.s. continuous paths. Assume that
\(\Sigma ( \bm{t} )
= \mathbb{E} \left\{
\bm{Z} ( \bm{t} ) \, \bm{Z} ( \bm{t} )^\top
\right\}\)
is non-singular for all \(\bm{t} \in E\).
Let \(\bm{b} \in \mathbb{R}^d \setminus (-\infty, 0]^d\) and define \(\sigma_{\bm{b}}^2 ( \bm{t} )\) as
in~\eqref{def:gen-variance}.
If \(\sigma_{\bm{b}}^2 = \sup_{t \in E} \sigma_{\bm{b}^2} ( \bm{t} ) \in ( 0, \infty )\),
then there exists some positive constant \(\mu\) such that for all \(u > \mu\)
\begin{equation}
\label{Borell-TIS}
\mathbb{P} \left\{
\exists \, \bm{t} \in E \colon
\bm{Z} ( \bm{t} ) > u \bm{b}
\right\}
\leq
\exp \left(
-\frac{( u - \mu )^2}{2 \sigma_{\bm{b}}^2}
\right).
\end{equation}
If further for some \(C \in ( 0, \infty )\) and \(\bm{\gamma} \in ( 0, 2 ]^k\)
\begin{equation}
\label{eq:29}
\sum_{1 \leq i \leq k}
\mathbb{E} \left\{
\left( Z_i ( \bm{t} ) - Z_i ( \bm{s} ) \right)^2
\right\}
\leq
C \sum_{m = 1}^k | t_m - s_m |^{\gamma_m}
\end{equation}
and
\begin{equation}
\label{eq:30}
\left\|
\Sigma^{-1} ( \bm{t} ) - \Sigma^{-1} ( \bm{s} )
\right\|_F
\leq
C \sum_{m = 1}^k | t_m - s_m |^{\gamma_m}
\end{equation}
hold for all \(\bm{t}, \, \bm{s} \in E\), then for all \(u\) positive
\begin{equation}
\label{Piterbarg-inequality}
\mathbb{P} \left\{
\exists \, \bm{t} \in E \colon
\bm{Z} ( \bm{t} ) > u \bm{b}
\right\}
\leq
C_{ * } \operatorname{mes} ( E ) \,
u^{2d/\gamma - 1}
\exp \left(
-\frac{u^2}{2 \sigma_{\bm{b}}^2}
\right),
\end{equation}
where \(C_{ * }\) is some positive constant not depending on \(u\). In
particular, if \(\sigma_{\bm{b}}^2 ( \bm{t} )\), \(\bm{t} \in E\) is
continuous and achieves its unique maximum at some fixed point \(\bm{t}_{ * }
\in E\), then \eqref{Piterbarg-inequality} is still valid if \eqref{eq:29} and
\eqref{eq:30} are assumed to hold only for all \(\bm{t}, \, \bm{s} \in E\) in an open neighborhood of \(\bm{t}_{ * }\).
\end{lemma}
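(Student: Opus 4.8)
The plan is to reduce both bounds to their classical scalar analogues (as in~\cite{PiterbargBook}) by projecting $\bm{Z}$ onto the optimal direction of the quadratic programming problem $\Pi_{\Sigma(\bm{t})}(\bm{b})$. For $\bm{t}\in E$ let $\widetilde{\bm{b}}(\bm{t}) = \mathcal{P}\big(\Sigma^{-1}(\bm{t})\big)$ be its unique solution and set $\bm{w}(\bm{t}) = \Sigma^{-1}(\bm{t})\,\widetilde{\bm{b}}(\bm{t})$, so that by Lemma~\ref{lemma:QPP} one has $\bm{w}(\bm{t})\geq\bm{0}$, $\bm{w}(\bm{t})\neq\bm{0}$ (the active set is non-empty) and $\bm{w}(\bm{t})^\top\bm{b} = \widetilde{\bm{b}}(\bm{t})^\top\Sigma^{-1}(\bm{t})\,\widetilde{\bm{b}}(\bm{t}) = \sigma_{\bm{b}}^{-2}(\bm{t})$. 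Introduce the scalar centered Gaussian field $\xi(\bm{t}) \coloneqq \sigma_{\bm{b}}^2(\bm{t})\,\bm{w}(\bm{t})^\top\bm{Z}(\bm{t})$, $\bm{t}\in E$. A direct computation using $\bm{w} = \Sigma^{-1}\widetilde{\bm{b}}$ gives $\var\xi(\bm{t}) = \sigma_{\bm{b}}^4(\bm{t})\,\bm{w}(\bm{t})^\top\Sigma(\bm{t})\bm{w}(\bm{t}) = \sigma_{\bm{b}}^2(\bm{t})\leq\sigma_{\bm{b}}^2$, and since $\bm{w}(\bm{t})\geq\bm{0}$ with at least one strictly positive coordinate, the event $\{\bm{Z}(\bm{t})>u\bm{b}\}$ entails $\bm{w}(\bm{t})^\top\bm{Z}(\bm{t})>u\,\bm{w}(\bm{t})^\top\bm{b} = u\,\sigma_{\bm{b}}^{-2}(\bm{t})$, that is $\xi(\bm{t})>u$. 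Consequently
\[
\mathbb{P}\left\{\exists\,\bm{t}\in E\colon\bm{Z}(\bm{t})>u\bm{b}\right\}
\leq
\mathbb{P}\left\{\sup_{\bm{t}\in E}\xi(\bm{t})>u\right\}.
\]

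Since $\bm{Z}$ has a.s. continuous paths, $\Sigma(\bm{t}) = R(\bm{t},\bm{t})$ is continuous on $E$; as $\mathcal{P}$ is Lipschitz on compact subsets of the cone of symmetric positive definite matrices, the deterministic maps $\bm{t}\mapsto\bm{w}(\bm{t})$ and $\bm{t}\mapsto\sigma_{\bm{b}}^2(\bm{t})$ are continuous, so $\xi$ has a.s. continuous paths and (in particular when $E$ is compact, as in all our applications) $\mu\coloneqq\mathbb{E}\sup_E\xi$ is finite. The classical Borell--TIS inequality applied to $\xi$ now yields~\eqref{Borell-TIS}.

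For~\eqref{Piterbarg-inequality} it remains to equip $\xi$ with a Hölder control on $E$. Splitting
\[
\xi(\bm{t})-\xi(\bm{s})
= \sigma_{\bm{b}}^2(\bm{t})\,\bm{w}(\bm{t})^\top\big(\bm{Z}(\bm{t})-\bm{Z}(\bm{s})\big)
+\big(\sigma_{\bm{b}}^2(\bm{t})\,\bm{w}(\bm{t})-\sigma_{\bm{b}}^2(\bm{s})\,\bm{w}(\bm{s})\big)^\top\bm{Z}(\bm{s}),
\]
the first term is controlled by~\eqref{eq:29} and the boundedness of $\sigma_{\bm{b}}^2|\bm{w}|$ on $E$; for the second, the Lipschitz continuity of $\mathcal{P}$ composed with~\eqref{eq:30} makes $\bm{t}\mapsto\sigma_{\bm{b}}^2(\bm{t})\bm{w}(\bm{t})$ Hölder, $\big|\sigma_{\bm{b}}^2(\bm{t})\bm{w}(\bm{t})-\sigma_{\bm{b}}^2(\bm{s})\bm{w}(\bm{s})\big|\leq C\sum_m|t_m-s_m|^{\gamma_m}$, and since $E$ is bounded and $\gamma_m\leq 2$ the square of the right-hand side is again $O\big(\sum_m|t_m-s_m|^{\gamma_m}\big)$. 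Hence $\mathbb{E}\big(\xi(\bm{t})-\xi(\bm{s})\big)^2\leq C'\sum_m|t_m-s_m|^{\gamma_m}$ on $E$, while $\sup_E\var\xi = \sigma_{\bm{b}}^2$ is attained by continuity. The classical anisotropic Piterbarg inequality applied to $\xi/\sigma_{\bm{b}}$ gives~\eqref{Piterbarg-inequality} (in fact with the possibly sharper exponent $\sum_m 2/\gamma_m - 1$). For the localized version one fixes an open neighborhood $U\ni\bm{t}_*$ on which~\eqref{eq:29}--\eqref{eq:30} hold and applies the above on $U$ (with $\operatorname{mes}(U)\leq\operatorname{mes}(E)$); on $E\setminus U$, where by uniqueness and continuity of the maximizer $\sup_{E\setminus U}\sigma_{\bm{b}}^2(\bm{t})=\sigma_1^2<\sigma_{\bm{b}}^2$, the already established~\eqref{Borell-TIS} bounds that part by $o\big(u^{2d/\gamma-1}\exp(-u^2/(2\sigma_{\bm{b}}^2))\big)$, which is absorbed into $C_*$.

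The main obstacle is the Hölder estimate for $\xi$: it requires pushing the regularity of $\Sigma^{-1}(\cdot)$ from~\eqref{eq:30} through the quadratic programming solution map $\mathcal{P}$ — which is only Lipschitz, not smooth, because the active index set depends combinatorially on $\bm{t}$ — so as to obtain a uniform modulus of continuity for the deterministic coefficient $\sigma_{\bm{b}}^2(\bm{t})\bm{w}(\bm{t})$, and then matching the exponents $\gamma_m$ correctly after squaring. Everything else is a routine reduction to the scalar theory.
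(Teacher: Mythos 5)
The paper does not prove this lemma --- it is reproduced from \cite{DebickiEtAl2019} in the auxiliary-results section explicitly without proof --- so there is no in-paper argument to compare against. Your reduction is the standard one used in that reference and it is correct: the identities \(\bm{w} ( \bm{t} )^\top \bm{b} = \bm{w} ( \bm{t} )^\top \widetilde{\bm{b}} ( \bm{t} ) = \sigma_{\bm{b}}^{-2} ( \bm{t} )\) and \(\bm{w} ( \bm{t} )^\top \Sigma ( \bm{t} ) \, \bm{w} ( \bm{t} ) = \sigma_{\bm{b}}^{-2} ( \bm{t} )\) do follow from Lemma~\ref{lemma:QPP}, the inclusion of events is valid because \(\bm{w} ( \bm{t} ) \geq \bm{0}\) with non-empty active set, and pushing \eqref{eq:30} through the Lipschitz solution map \(\mathcal{P}\) is exactly what is needed to transfer \eqref{eq:29} into the Hölder increments of \(\xi\) required by the scalar Piterbarg inequality. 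The only points worth flagging are technical: finiteness of \(\mu = \mathbb{E} \sup_E \xi\) (equivalently, a.s. boundedness of the supremum) is not a consequence of \(\sup_{\bm{t} \in E} \sigma_{\bm{b}}^2 ( \bm{t} ) < \infty\) alone and genuinely requires the compactness of \(E\) that you invoke only parenthetically; likewise the localized version needs \(E\) compact so that \(\sup_{E \setminus U} \sigma_{\bm{b}}^2 ( \bm{t} )\) is strictly below \(\sigma_{\bm{b}}^2\). Both are harmless in every application made in the paper.
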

\subsection{Local Pickands lemma}
\label{sec:orga166594}
Let us introduce the following assumptions.

\begin{description}[leftmargin = * ]
\item[\namedlabel{B1}{B1}] For all large \(u\) and all \(\bm{\tau} \in Q_u\), the
matrix \(\Sigma_{u, \bm{\tau}} = R_{u, \bm{\tau}} ( \bm{0}, \bm{0} )\) is positive
definite and
\begin{equation}
\label{eq:32}
\lim_{u \to \infty}
\sup_{\bm{\tau} \in Q_u}
u \left\| \Sigma - \Sigma_{u, \bm{\tau}} \right\|_F
= 0
\end{equation}
holds for some positive definite matrix \(\Sigma\).

\item[\namedlabel{B2}{B2}] There exists a continuous \(\mathbb{R}^d\)-valued function
\(\bm{d} ( \bm{t} )\), \(\bm{t} \in E\) and a continuous matrix-valued
function \(K ( \bm{t}, \bm{s} )\), \(( \bm{t}, \bm{s} ) \in E \times E\), such that
\begin{equation}
\label{eq:31}
\lim_{u \to \infty}
\sup_{\bm{\tau} \in Q_u, \ \bm{t} \in E}
u \, \left\| \Sigma_{u, \bm{\tau}} - R_{u, \bm{\tau}} ( \bm{t}, \bm{0} ) \right\|_F = 0,
\end{equation}
\begin{equation}
\label{eq:2}
\lim_{u \to \infty} \sup_{\bm{\tau} \in Q_u, \ \bm{t} \in \mathbb{E}}
\left| u^2 \left[
I - R_{u, \bm{\tau}} \, \Sigma_{u, \bm{\tau}}^{-1}
\right] \widetilde{\bm{b}}
-\bm{d} ( \bm{t} )
\right|
= 0
\end{equation}
and
\begin{equation}
\label{eq:33}
\lim_{u \to \infty}
\sup_{\bm{\tau} \in Q_u}
\sup_{\bm{t}, \, \bm{s} \in E}
\left\|
u^2 \, \Big[
R_{u, \bm{\tau}} ( \bm{t}, \bm{s} )
-R_{u, \bm{\tau}} ( \bm{t}, \bm{0} ) \,
\Sigma_{u, \bm{\tau}}^{-1} \,
R_{u, \bm{\tau}} ( \bm{0}, \bm{s} )
\Big]
-K ( \bm{t}, \bm{s} )
\right\|_F
= 0.
\end{equation}

\item[\namedlabel{B3}{B3}] There exist positive constants \(C\) and \(\bm{\gamma} \in ( 0, 2 ]^k\) such that for any
\(\bm{t}, \, \bm{s} \in E\)
\begin{equation}
\label{eq:34}
\sup_{\bm{\tau} \in Q_u}
u^2 \, \mathbb{E} \left\{
\left|
\bm{X}_{u, \bm{\tau}} ( \bm{t} )
-\bm{X}_{u, \bm{\tau}} ( \bm{s} )
\right|^2
\right\}
\leq
C \sum_{m = 1}^k | t_m - s_m |^{\gamma_m}.
\end{equation}
\end{description}

For \(\bm{Y} ( \bm{t} )\), \(\bm{t} \in E\) a centered \(\mathbb{R}^d\)-valued Gaussian random field with a.s. continuous sample paths with cmf \(K
( \bm{s}, \bm{t} )\), \(( \bm{s}, \bm{t} ) \in E \times E\) and an \(\mathbb{R}^d\)-valued function \(\bm{d}\) define below
\begin{equation}
\label{eq:35}
H_{\bm{Y}, \bm{d}} ( E )
= \int_{\mathbb{R}^d}
e^{\bm{1}^\top \bm{x}}
\mathbb{P} \left\{
\exists \, \bm{t} \in E \colon
\bm{Y} ( \bm{t} ) - \bm{d} ( \bm{t} )
> \bm{x}
\right\}
\mathop{d \bm{x}}.
\end{equation}

\begin{lemma} \label{lemma:local-Pickands}
Suppose that \(\bm{X}_{u, \bm{\tau}} ( \bm{t} )\), \(\bm{t} \in E\), \(u >
0\), \(\bm{\tau} \in Q_u\) satisfy \ref{B1}, \ref{B2} and \ref{B3}.
Let \(\bm{w} = \Sigma^{-1} \, \widetilde{\bm{b}}\), where
\(\widetilde{\bm{b}}\) is the unique solution of \(\Pi_{\Sigma} ( \bm{b} )\).
If \(\bm{Y}_{\bm{w}} ( \bm{t} )\), \(\bm{t} \in E\) has
cmf \(R ( \bm{t}, \bm{s} ) = \diag ( \bm{w} ) \, K ( \bm{t}, \bm{s} ) \, \diag ( \bm{w} )\) and
\(\bm{d}_{\bm{w}} ( \bm{t} ) = \diag ( \bm{w} ) \, \bm{d} ( \bm{t} )\), then we have
\begin{equation}
\label{eq:36}
\lim_{u \to \infty}
\sup_{\bm{\tau} \in Q_u}
\left|
\frac{
\mathbb{P} \left\{
\exists \, \bm{t} \in E \colon
\bm{X}_{u, \bm{\tau}} ( \bm{t} ) > u \bm{b}
\right\}
}{
\mathbb{P} \left\{
\bm{X}_{u, \bm{\tau}} ( \bm{0} ) > u \bm{b}
\right\}
}
-H_{\bm{Y}_{\bm{w}}, \bm{d}_{\bm{w}}} ( E )
\right|
= 0.
\end{equation}
\end{lemma}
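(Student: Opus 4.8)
The plan is to establish \eqref{eq:36} via the classical Pickands-type localization argument, adapted to the vector-valued setting following \cite{DebickiEtAl2019}, paying attention to the uniformity in $\bm{\tau} \in Q_u$. First I would condition on the value of $\bm{X}_{u,\bm{\tau}}(\bm{0})$ and write the ratio in \eqref{eq:36} using the exact formula for the conditional law of a Gaussian vector. Concretely, writing $\bm{X}_{u,\bm{\tau}}(\bm{t}) = R_{u,\bm{\tau}}(\bm{t},\bm{0})\,\Sigma_{u,\bm{\tau}}^{-1}\bm{X}_{u,\bm{\tau}}(\bm{0}) + \bm{\chi}_{u,\bm{\tau}}(\bm{t})$, where $\bm{\chi}_{u,\bm{\tau}}$ is centered Gaussian with covariance the Schur complement appearing in \eqref{eq:33}, one substitutes $\bm{X}_{u,\bm{\tau}}(\bm{0}) = u\bm{b} + \bm{y}/u$ after the substitution $\bm{y} = u(\bm{X}_{u,\bm{\tau}}(\bm{0}) - u\bm{b})$, and uses the density of $\bm{X}_{u,\bm{\tau}}(\bm{0})$ near the ``most likely'' point $u\widetilde{\bm{b}}$. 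The standard identity $\widetilde{\bm{b}}^\top \Sigma^{-1} = \bm{w}^\top$ and the Lemma~\ref{lemma:QPP} structure ($\bm{w}_I > \bm{0}$, $\bm{w}_J = \bm{0}$) convert the quadratic form in the exponent into the linear functional $\bm{1}^\top\bm{x}$ that appears in \eqref{eq:35}, after the affine change of coordinates $\bm{x} = \diag(\bm{w})(\text{something})$; this is exactly the place where the weights $\diag(\bm{w})$ enter and produce $\bm{Y}_{\bm{w}}$, $\bm{d}_{\bm{w}}$ with the stated covariance $\diag(\bm{w})K\diag(\bm{w})$.

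Next I would pass to the limit $u \to \infty$ inside the resulting integral. The integrand is, up to $o(1)$ terms controlled uniformly in $\bm{\tau}$ by \ref{B1} and \ref{B2}, the probability $\mathbb{P}\{\exists\,\bm{t}\in E\colon \bm{Y}_{\bm{w}}(\bm{t}) - \bm{d}_{\bm{w}}(\bm{t}) > \bm{x}\}$ multiplied by $e^{\bm{1}^\top\bm{x}}$: assumption \eqref{eq:32} makes the Gaussian density of $\bm{X}_{u,\bm{\tau}}(\bm{0})$ converge (the $u\|\Sigma - \Sigma_{u,\bm{\tau}}\|_F \to 0$ scaling is precisely what is needed because the density is evaluated at a point of size $u$), \eqref{eq:31} and \eqref{eq:2} make the conditional mean converge to $\bm{d}(\bm{t})$, and \eqref{eq:33} makes the conditional covariance converge to $K(\bm{t},\bm{s})$. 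Convergence of the finite-dimensional distributions of the conditional field, together with the tightness furnished by \ref{B3} and the Arzelà–Ascoli / Gaussian continuity criterion, upgrades this to convergence of the exceedance probability over the compact set $E$; a continuous-mapping / portmanteau argument then handles the event $\{\exists\,\bm{t}\in E\colon \cdot > \bm{x}\}$ for $\bm{x}$ outside a null set of its boundary, which suffices for the integral.

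The two genuinely delicate points are (i) justifying the interchange of limit and integral over $\bm{x} \in \mathbb{R}^d$, and (ii) getting \emph{uniformity} in $\bm{\tau}$ rather than mere pointwise convergence. For (i) I would produce a $\bm{\tau}$-uniform integrable majorant: on the region where $\bm{x}$ has a large positive coordinate the factor $e^{\bm{1}^\top\bm{x}}$ is killed by the Gaussian tail of $\bm{X}_{u,\bm{\tau}}(\bm{0})$ (this is where the Borell–TIS bound \eqref{Borell-TIS} and the non-degeneracy from \ref{B1} are used, giving an exponential decay in $|\bm{x}|$ uniform over large $u$); on the region where some coordinate of $\bm{x}$ is very negative, the exceedance probability itself is bounded by a Borell–TIS estimate for the shifted field $\bm{Y}_{\bm{w}} - \bm{d}_{\bm{w}}$, again decaying fast enough to beat $e^{\bm{1}^\top\bm{x}}$. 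Dominated convergence then applies. For (ii), the point is that all the hypotheses \ref{B1}–\ref{B3} are already stated with $\sup_{\bm{\tau}\in Q_u}$, so every estimate above is uniform by construction; the only thing to check is that the weak-convergence step can be made uniform, which follows because the limiting objects $K$, $\bm{d}$ do not depend on $\bm{\tau}$ and the rates of convergence in \eqref{eq:31}–\eqref{eq:34} are $\bm{\tau}$-uniform, so a subsequence argument (assume the $\sup$ in \eqref{eq:36} does not go to $0$, extract $\bm{\tau}_u$ attaining it up to $o(1)$, derive a contradiction) closes the proof. I expect step (i), the construction of the uniform integrable majorant near the ``bad'' directions of $\bm{x}$, to be the main obstacle, since it requires combining the Borell–TIS inequality with the precise non-degeneracy constants from \ref{B1} in a way that is robust as $u \to \infty$.
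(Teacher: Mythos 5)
The paper offers no proof of Lemma~\ref{lemma:local-Pickands}: it is one of the results reproduced verbatim (in adapted form) from~\cite{DebickiEtAl2019}, so there is nothing in the text to compare your argument against line by line. That said, your outline is the standard route taken in that reference, and it is essentially sound: conditioning on $\bm{X}_{u,\bm{\tau}}(\bm{0})$, letting \ref{B1}--\ref{B2} drive the conditional mean and covariance to $\bm{d}$ and $K$, using \ref{B3} for tightness, and closing with a dominated-convergence plus subsequence argument for uniformity in $\bm{\tau}$. Your step (i), the $\bm{\tau}$-uniform integrable majorant, is exactly what the present paper packages as Lemma~\ref{lemma:integral_estimate} (proved in the Appendix via a split of $\mathbb{R}^d$ into orthants and the Borell--TIS/Piterbarg inequalities \eqref{Borell-TIS}, \eqref{Piterbarg-inequality}), so you have correctly identified the crux.

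One point in your sketch deserves more care: the ``affine change of coordinates $\bm{x}=\diag(\bm{w})(\cdot)$'' is not literally a change of variables on $\mathbb{R}^d$ when the index set $J$ of Lemma~\ref{lemma:QPP} is non-empty, because $\bm{w}_J=\bm{0}_J$ makes $\diag(\bm{w})$ singular. The $I$-coordinates are rescaled as you describe and produce the factor $e^{\bm{1}^\top\bm{x}_I}$ together with the non-degenerate part of $\bm{Y}_{\bm{w}}$; the $J$-coordinates must instead be handled by the dimension-reduction mechanism (the conditioning point is $u\widetilde{\bm{b}}$ with $\widetilde{\bm{b}}_J>\bm{b}_J$, so those constraints are satisfied with probability tending to one, and in the limiting functional \eqref{eq:35} the corresponding components of $\bm{Y}_{\bm{w}}-\bm{d}_{\bm{w}}$ are identically zero, making the $\bm{x}_J$-integral of $e^{\bm{1}^\top\bm{x}_J}\mathbb{1}_{\bm{x}_J<\bm{0}}$ equal to one). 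Your sketch gestures at the $I/J$ split but does not execute this step; in a full write-up it is where a naive substitution would break down. With that addition, and with the observation that the denominator $\mathbb{P}\{\bm{X}_{u,\bm{\tau}}(\bm{0})>u\bm{b}\}$ has exactly the asymptotics $u^{-d}\varphi_{\Sigma_{u,\bm{\tau}}}(u\widetilde{\bm{b}})$ times the same Jacobian factors, so that everything except $H_{\bm{Y}_{\bm{w}},\bm{d}_{\bm{w}}}(E)$ cancels in the ratio, your plan goes through.
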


\begin{remark}
If we suppose stronger assumptions on \(\Sigma_{u, \bm{\tau}}\), for instance
\[
\lim_{u \to \infty}
\sup_{\bm{\tau} \in Q_u}
\left\|
u^2 \, \Big[ \Sigma - \Sigma_{u, \bm{\tau}} \Big]
-\Xi
\right\|_F
= 0,
\]
then as \(u \to \infty\)
\[
\mathbb{P} \left\{
\bm{X}_{u, \bm{\tau}} ( \bm{0} )
> u \bm{b}
\right\}
\sim
e^{-\bm{w}^\top \, \Xi \, \bm{w} / 2}
\mathbb{P} \left\{
\bm{\mathcal{N}} > u \bm{b}
\right\},
\]
where \(\bm{\mathcal{N}}\) is a centered Gaussian vector with covariance matrix \(\Sigma\).
\end{remark}
\subsection{Integral estimate}
\label{sec:orgccff620}
\begin{lemma} \label{lemma:integral_estimate}
If a family of Hölder continuous random fields
\(\bm{\chi}_{\bm{x}} ( \bm{t} )\), \(\bm{t} \in [ \bm{0}, \bm{\Lambda} ]\)
measurable in \(\bm{x} \in \mathbb{R}^d\) satisfies
\begin{equation*}
  \sup_{F \subset \{ 1, \ldots, d \}}
  \sup_{\bm{t} \in [ \bm{0}, \bm{\Lambda} ]}
  \bm{w}_F^\top \,
  \mathbb{E} \left\{ \bm{\chi}_{\bm{x}, F} ( \bm{t} ) \right\}
  \leq
  G + \varepsilon \sum_{j = 1}^d | x_j |
\end{equation*}
and
\begin{equation*}
  \sup_{F \subset \{ 1, \ldots, d \}} \sup_{\bm{t} \in [ \bm{0}, \bm{\Lambda} ]}
  \var \left\{ \bm{w}_F^\top \, \bm{\chi}_{\bm{x}, F} ( \bm{t} ) \right\}
  \leq \sigma^2
\end{equation*}
with some constants \(\bm{w} > \bm{0}\), \(\sigma^2 > 0\),
\(G \in \mathbb{R}\) and small enough \(\varepsilon > 0\), then there exist
constants \(C, \, c > 0\) such that the following inequality holds:
\begin{equation*}
  \int_{\mathbb{R}^d}
  e^{\bm{w}^\top \bm{x}} \,
  \mathbb{P} \left\{
    \exists \, \bm{t} \in [ \bm{0}, \bm{\Lambda} ] \colon
    \bm{\chi}_{\bm{x}} ( \bm{t} ) > \bm{x}
  \right\}
  \mathop{d \bm{x}}
  \leq
  C e^{c ( G + \sigma^2 )}.
\end{equation*}
\end{lemma}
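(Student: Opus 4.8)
The plan is to dominate the $d$-dimensional exceedance probability by a scalar Gaussian tail, linearise that tail, and then integrate coordinate by coordinate (this is the standard route for showing that generalised Pickands constants are finite). Fix $\bm{x} \in \mathbb{R}^d$ and set $F_0 = F_0 ( \bm{x} ) := \{ i \colon x_i > 0 \}$. Since $\bm{w} > \bm{0}$, whenever $\bm{\chi}_{\bm{x}} ( \bm{t} ) > \bm{x}$ holds componentwise at some $\bm{t}$ we also have $\bm{w}_{F_0}^\top \bm{\chi}_{\bm{x}, F_0} ( \bm{t} ) > \bm{w}_{F_0}^\top \bm{x}_{F_0} = \sum_{i \in F_0} w_i x_i =: v ( \bm{x} )$, so, writing $\eta_{\bm{x}} ( \bm{t} ) := \bm{w}_{F_0}^\top \bm{\chi}_{\bm{x}, F_0} ( \bm{t} )$,
\[
\mathbb{P} \{ \exists \, \bm{t} \in [ \bm{0}, \bm{\Lambda} ] \colon \bm{\chi}_{\bm{x}} ( \bm{t} ) > \bm{x} \}
\leq
\mathbb{P} \big\{ \sup\nolimits_{\bm{t}} \eta_{\bm{x}} ( \bm{t} ) > v ( \bm{x} ) \big\}.
\]
(For $F_0 = \varnothing$ this is vacuous and I keep the trivial bound $\leq 1$; a version valid for every $G \in \mathbb{R}$ is obtained by running the same reduction with $F_0$ replaced by a single fixed coordinate, but as $G$ is a positive constant in all our applications I only spell out the case $F_0 \neq \varnothing$.) The field $\eta_{\bm{x}}$ is Gaussian with a.s. continuous paths on the compact cube $[ \bm{0}, \bm{\Lambda} ]$, and the two hypotheses give $\sup_{\bm{t}} \var \{ \eta_{\bm{x}} ( \bm{t} ) \} \leq \sigma^2$ and $\sup_{\bm{t}} \mathbb{E} \{ \eta_{\bm{x}} ( \bm{t} ) \} \leq G + \varepsilon \sum_j | x_j |$. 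Setting $m ( \bm{x} ) := \mathbb{E} \{ \sup_{\bm{t}} \eta_{\bm{x}} ( \bm{t} ) \}$ and splitting off the mean, $m ( \bm{x} ) \leq \sup_{\bm{t}} \mathbb{E} \{ \eta_{\bm{x}} ( \bm{t} ) \} + \mathbb{E} \{ \sup_{\bm{t}} ( \eta_{\bm{x}} ( \bm{t} ) - \mathbb{E} \, \eta_{\bm{x}} ( \bm{t} ) ) \}$, where the last term is the expected supremum of a centred Gaussian field whose increments are controlled, uniformly in $\bm{x}$ and $F_0$, by the Hölder continuity of $\bm{\chi}_{\bm{x}}$ (in the applications this is the rescaled form of~\ref{A3}); Dudley's entropy bound thus yields $m ( \bm{x} ) \leq G + \varepsilon \sum_j | x_j | + c_1$ with $c_1$ depending only on $d$, $\bm{\Lambda}$, $\bm{w}$ and the Hölder data. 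The Borell--TIS inequality~\eqref{Borell-TIS} then gives $\mathbb{P} \{ \sup_{\bm{t}} \eta_{\bm{x}} ( \bm{t} ) > v ( \bm{x} ) \} \leq \exp ( - ( v ( \bm{x} ) - m ( \bm{x} ) )_+^2 / ( 2 \sigma^2 ) )$.

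Next I linearise: for every $y \in \mathbb{R}$ and $a > 0$ the arithmetic--geometric mean inequality gives $- y_+^2 / ( 2 \sigma^2 ) \leq - a y_+ + a^2 \sigma^2 / 2 \leq - a y + a^2 \sigma^2 / 2$. Taking $a = 2$ and substituting the bound on $m ( \bm{x} )$,
\[
\mathbb{P} \{ \exists \, \bm{t} \colon \bm{\chi}_{\bm{x}} ( \bm{t} ) > \bm{x} \}
\leq
e^{\, 2 G + 2 c_1 + 2 \sigma^2} \exp \Big( - 2 \sum_{i \in F_0} w_i x_i + 2 \varepsilon \sum_j | x_j | \Big).
\]
On the orthant $\{ x_i > 0 \ ( i \in F_0 ), \ x_i \leq 0 \ ( i \notin F_0 ) \}$ one has $\sum_j | x_j | = \sum_{i \in F_0} x_i - \sum_{i \notin F_0} x_i$, so multiplying by $e^{\bm{w}^\top \bm{x}}$ makes the right-hand side factorise as $e^{\, 2 G + 2 c_1 + 2 \sigma^2} \prod_{i \in F_0} e^{( - w_i + 2 \varepsilon ) x_i} \prod_{i \notin F_0} e^{( w_i - 2 \varepsilon ) x_i}$. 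Provided $\varepsilon < \frac{1}{2} \min_i w_i$, each factor is integrable over the relevant half-line, with $\int_0^\infty e^{( - w_i + 2 \varepsilon ) x} \, dx = \int_{- \infty}^0 e^{( w_i - 2 \varepsilon ) x} \, dx = ( w_i - 2 \varepsilon )^{-1}$, so the contribution of this orthant to the integral is at most $e^{\, 2 G + 2 c_1 + 2 \sigma^2} \prod_i ( w_i - 2 \varepsilon )^{-1}$. Summing over the at most $2^d$ sign patterns, and adding the fixed contribution $\prod_i w_i^{-1}$ coming from the negative orthant $F_0 = \varnothing$, yields the asserted inequality with $c = 2$ and a constant $C$ depending only on $d$, $\bm{w}$, $\bm{\Lambda}$, $\varepsilon$ and the Hölder data.

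I expect the main obstacle to be twofold. First, one cannot run the scalar reduction using all $d$ coordinates at once: the linearised integrand $e^{\bm{w}^\top \bm{x}} e^{- a v ( \bm{x} ) + a \varepsilon \sum_j | x_j |}$ then cannot be made integrable, since the coefficient of $x_i$ in the exponent cannot be simultaneously negative as $x_i \to + \infty$ and positive as $x_i \to - \infty$; this is exactly what forces the decomposition by the sign pattern of $\bm{x}$ and the charging of the Borell--TIS penalty to the positive coordinates only, and getting this bookkeeping right is the crux of the argument. Second, controlling $m ( \bm{x} )$ requires a modulus-of-continuity estimate for the centred part of $\eta_{\bm{x}}$ that is uniform in $\bm{x}$; this is not literally contained in the two displayed hypotheses, which bound only the pointwise mean and variance, and must be supplied from the uniform Hölder control of $\bm{\chi}_{\bm{x}}$ that is available whenever the lemma is invoked.
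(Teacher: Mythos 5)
Your proposal is correct and reaches the same bound, but the tail-estimate machinery differs from the paper's in an interesting way. Both arguments share the skeleton: decompose $\mathbb{R}^d$ by the sign pattern of $\bm{x}$ (your $F_0(\bm{x})$ is exactly the paper's $\Omega_F$), reduce the vector exceedance to the scalar field $\bm{w}_F^\top \bm{\chi}_{\bm{x},F}$, and integrate orthant by orthant. Where the paper then splits each orthant into $\Omega_{F,-}$ (trivial bound, pure exponential integral) and $\Omega_{F,+}$ (Piterbarg inequality for the centred part, followed by an exact Gaussian-mgf integration producing the $e^{c\sigma^2}$ factor), you instead apply Borell--TIS concentration around $\mathbb{E}\sup\eta_{\bm{x}}$ and linearise the quadratic exponent via $-y_+^2/(2\sigma^2)\leq -2y+2\sigma^2$; this single inequality absorbs both of the paper's sub-cases (it is valid also where $v(\bm{x})\leq m(\bm{x})$) and turns the remaining integral into a product of one-dimensional exponential integrals, which is cleaner. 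The price is that Borell--TIS requires finiteness of, and a uniform-in-$\bm{x}$ bound $c_1$ on, the expected supremum of the centred part, which you correctly note is not contained in the two displayed hypotheses and must come from a Dudley-type entropy estimate using Hölder continuity uniform in $\bm{x}$. This is not a defect relative to the paper: the paper's use of the Piterbarg inequality likewise needs the constant $C_*$ of \eqref{Piterbarg-inequality} to be uniform in $\bm{x}$, which rests on exactly the same uniform Hölder control, supplied in the applications by the rescaled form of \ref{A3}. Your two flagged caveats are both benign: the $F_0=\varnothing$ orthant contributes a constant, which is dominated by $Ce^{c(G+\sigma^2)}$ because the hypothesis applied with $F=\varnothing$ at $\bm{x}=\bm{0}$ already forces $G\geq 0$ (and $G+\sigma^2\geq 0$ in every invocation of the lemma); and the restriction $\varepsilon<\tfrac{1}{2}\min_i w_i$ is exactly the ``small enough $\varepsilon$'' of the statement.
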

\section{Proof of the main theorem}
\label{sec:org41d5d4c}
\label{SEC:proofs}
\subsection{Log-layer bound}
\label{sec:org1e3f6a5}
\begin{lemma}[Log-layer bound] \label{lemma:log-layer-bound}
Suppose \(\bm{X} ( \bm{t} )\) satisfy Assumptions~\ref{A1} to~\ref{A3}.
Then there exist positive constants \(c\), \(u_0\) and \(\bm{\Lambda}_0\) such
that for \(\bm{\Lambda} \geq \bm{\Lambda}_0\) and \(u \geq u_0\)
\begin{equation*}
  \mathbb{P} \left\{
    \exists \, \bm{t} \in [ \bm{0}, \bm{\delta}_u ] \setminus
    u^{-2 / \bm{\beta}} [ \bm{0}, \bm{\Lambda} ] \colon
    \bm{X} ( \bm{t} ) > u \bm{b}
  \right\}
  \leq
  c \,
  \mathbb{P} \left\{
    \bm{X} ( \bm{0} ) > u \bm{b}
  \right\}
  \exp \left(
    -\frac{1}{8}
    \sum_{i = 1}^n
    \xi_i \, \Lambda_i^{\beta_i}
  \right),
\end{equation*}
where \(\xi_i = \bm{w}^\top A_{2, i} \, \bm{w} > 0\) by~\ref{A2.4}.
\end{lemma}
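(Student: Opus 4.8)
The plan is to run a standard Borell--TIS / Piterbarg-type argument on a dyadic decomposition of the region $[\bm{0},\bm{\delta}_u]\setminus u^{-2/\bm{\beta}}[\bm{0},\bm{\Lambda}]$, controlling each piece through the growth of the generalized variance $\sigma_{\bm{b}}^{-2}(\bm{t})$ as one moves away from the maximizer $\bm{0}$. First I would record the key local fact: from Remark~\ref{rem:gen-var-XI} (via Lemma~\ref{lemma:gen-var-exp}) together with Assumption~\ref{A2.4}, we have, for $\bm{t}$ in a neighbourhood of $\bm{0}$,
\begin{equation*}
\sigma_{\bm{b}}^{-2}(\bm{t}) - \sigma_{\bm{b}}^{-2}(\bm{0})
\geq \tfrac12 \sum_{i=1}^n \xi_i \, t_i^{\beta_i} + o\Big(\sum_{i=1}^n t_i^{\beta_i}\Big),
\end{equation*}
so after shrinking $\bm{\delta}_u$ (it may be taken to tend to $0$ slowly, e.g. $\bm{\delta}_u = (\log u)^{-1}\bm{1}$) the error is absorbed and one gets $\sigma_{\bm{b}}^{-2}(\bm{t}) \geq \sigma_{\bm{b}}^{-2}(\bm{0}) + \tfrac14\sum_i \xi_i t_i^{\beta_i}$ on $[\bm{0},\bm{\delta}_u]$. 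Since $\mathbb{P}\{\bm{X}(\bm{t})>u\bm{b}\}\asymp u^{-(\cdots)}\exp(-u^2\sigma_{\bm{b}}^{-2}(\bm{t})/2)$ and, by Lemma~\ref{lemma:QPP}, $\mathbb{P}\{\bm{X}(\bm{0})>u\bm{b}\}\asymp u^{-(\cdots)}\exp(-u^2\sigma_{\bm{b}}^{-2}(\bm{0})/2)$, the pointwise tail on the log-layer is already smaller than the right-hand side by roughly $\exp(-\tfrac18 u^2\sum_i\xi_i t_i^{\beta_i})$; the work is to turn this pointwise bound into a bound on the supremum over the whole region.

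Next I would partition $[\bm{0},\bm{\delta}_u]\setminus u^{-2/\bm{\beta}}[\bm{0},\bm{\Lambda}]$ into boxes $\Delta_{\bm{k}} = \prod_{i=1}^n [k_i u^{-2/\beta_i}, (k_i+1)u^{-2/\beta_i}]$ indexed by $\bm{k}\in\mathbb{Z}_+^n$ with $\bm{k}\notin[\bm{0},\bm{\Lambda}]$ and $k_i u^{-2/\beta_i}\leq\delta_{u,i}$; each box has $\bm{X}$-variation controlled by Assumption~\ref{A3} after the $u^{-2/\bm{\nu}}$ rescaling, so the Piterbarg inequality~\eqref{Piterbarg-inequality} applies uniformly on each box with a constant and a polynomial-in-$u$ prefactor independent of $\bm{k}$. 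On $\Delta_{\bm{k}}$ the generalized variance satisfies $\inf_{\bm{t}\in\Delta_{\bm{k}}}\sigma_{\bm{b}}^{-2}(\bm{t}) \geq \sigma_{\bm{b}}^{-2}(\bm{0}) + c\,u^{-2}\sum_i \xi_i k_i^{\beta_i}$ (for those $i$ with $k_i\geq1$; the box corners give the infimum up to constants), hence
\begin{equation*}
\mathbb{P}\left\{\exists\,\bm{t}\in\Delta_{\bm{k}}\colon \bm{X}(\bm{t})>u\bm{b}\right\}
\leq C\, u^{q}\,\exp\!\left(-\tfrac{u^2}{2}\sigma_{\bm{b}}^{-2}(\bm{0})\right)\exp\!\left(-\tfrac{c}{2}\sum_{i=1}^n\xi_i k_i^{\beta_i}\right)
\end{equation*}
for some fixed exponent $q$. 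Summing over admissible $\bm{k}$, the series $\sum_{\bm{k}\notin[\bm{0},\bm{\Lambda}]}\exp(-\tfrac{c}{2}\sum_i\xi_i k_i^{\beta_i})$ converges and, being a tail sum, is bounded by $c'\exp(-\tfrac{c}{4}\sum_i\xi_i\Lambda_i^{\beta_i})$ once $\bm{\Lambda}$ is large; the polynomial factor $u^q$ and the number of boxes ($\mathrm{poly}(u)$ many) are then absorbed into the exponential gap by adjusting constants and taking $u_0$, $\bm{\Lambda}_0$ large, which, after comparing with $\mathbb{P}\{\bm{X}(\bm{0})>u\bm{b}\}$, yields the claimed bound with the constant $\tfrac18$ in the exponent (the passage from $\tfrac14$ to $\tfrac18$ providing the slack needed to swallow the prefactors).

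The main obstacle I anticipate is uniformity: getting the quadratic lower bound on $\sigma_{\bm{b}}^{-2}(\bm{t})-\sigma_{\bm{b}}^{-2}(\bm{0})$ to hold with one fixed constant across the entire shrinking layer $[\bm{0},\bm{\delta}_u]$, and getting the Piterbarg constant $C_*$ in~\eqref{Piterbarg-inequality} to be genuinely uniform over all the boxes $\Delta_{\bm{k}}$ simultaneously. This requires that the Hölder bound~\eqref{eq:29} and the bound~\eqref{eq:30} on $\|\Sigma^{-1}(\bm{t})-\Sigma^{-1}(\bm{s})\|_F$ hold on a fixed neighbourhood of $\bm{0}$ with fixed constants — which follows from Assumption~\ref{A3}, the non-degeneracy of $\Sigma(\bm{0})$, and continuity of $\Sigma(\cdot)$ — and that the quadratic-programming solution $\bm{w}(\bm{t})$ (hence the index set $I$) is stable near $\bm{0}$, which is exactly what Assumption~\ref{A2.3} and Lemma~\ref{lemma:QPP-supp} provide. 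A secondary subtlety is the non-homogeneity of the exponents $\beta_i$: the boxes are anisotropic ($u^{-2/\beta_i}$ in direction $i$), so one must be slightly careful that the rescaling underlying~\eqref{Piterbarg-inequality} is applied with the correct per-coordinate scaling, but this is bookkeeping rather than a genuine difficulty.
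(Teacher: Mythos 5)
Your overall architecture (tile the layer, obtain a per-tile bound with a weight decaying exponentially in $\bm{k}$, then sum) matches the paper's, but the engine you run on each tile --- the vector Piterbarg inequality \eqref{Piterbarg-inequality} --- cannot deliver the stated bound, and the step where you claim the polynomial prefactor is ``absorbed into the exponential gap'' is where the argument breaks. The Piterbarg inequality yields a prefactor $C_{*}\operatorname{mes}(\Delta_{\bm{k}})\,u^{2d/\gamma-1}$ in front of $\exp\bigl(-\tfrac{u^2}{2}\inf_{\Delta_{\bm{k}}}\sigma_{\bm{b}}^{-2}\bigr)$, whereas $\mathbb{P}\{\bm{X}(\bm{0})>u\bm{b}\}\asymp u^{-d}\varphi_{\Sigma}(u\bm{b})$. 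Dividing, each tile contributes $C\,u^{q}\exp\bigl(-c\sum_i\xi_i k_i^{\beta_i}\bigr)$ with $q=2d/\gamma-1+d-\sum_i 2/\beta_i$, and $q$ is generically strictly positive (already for $n=d=1$, $\gamma=\alpha<2$, $\beta=2$ one gets $q=2/\alpha-1>0$). After summing over $\bm{k}\notin[\bm{0},\bm{\Lambda}]$ you are left with $C\,u^{q}\exp\bigl(-c'\sum_i\xi_i\Lambda_i^{\beta_i}\bigr)$: for fixed $\bm{\Lambda}$ the exponential gain is a \emph{constant in $u$}, so it cannot swallow a factor growing polynomially in $u$, and you cannot let $\bm{\Lambda}$ grow with $u$ because the lemma must hold for all $u\ge u_0$ at fixed $\bm{\Lambda}$ (in the proof of Theorem~\ref{main-theorem} the limit $u\to\infty$ is taken before $\bm{\Lambda}\to\infty$). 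Nor is this a matter of sharpening constants: for a Pickands-type coordinate $i\in\mathcal{I}$ (i.e.\ $\alpha_i<\beta_i$) the exceedance probability over a box of Euclidean side $u^{-2/\beta_i}$ genuinely is of order $u^{2/\alpha_i-2/\beta_i}\,\mathbb{P}\{\bm{X}(\bm{0})>u\bm{b}\}$ times the variance-decay exponential, since such a box contains $u^{2/\alpha_i-2/\beta_i}\to\infty$ Pickands intervals; so any per-box bound of the form $O(\mathbb{P}\{\bm{X}(\bm{0})>u\bm{b}\})$ with a $u$-free constant is simply false at your scale.

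The missing idea is the replacement of the Piterbarg inequality by the conditioning device. The paper tiles the layer at the finer scale $\bm{\Lambda}u^{-2/\bm{\nu}}[\bm{k},\bm{k}+\bm{1}]$ (Pickands scale $u^{-2/\alpha_i}$ in the directions $i\in\mathcal{I}$, not $u^{-2/\beta_i}$), writes the per-tile probability via the law of total probability conditioned on the value of the field at the tile's corner, and bounds the resulting integral by the integral estimate of Lemma~\ref{lemma:integral_estimate}, using uniform-in-$\bm{k}$ estimates of the conditional mean and variance. This produces the \emph{exact} prefactor $u^{-d}\varphi_{\Sigma}(u\bm{b})\sim\mathbb{P}\{\bm{X}(\bm{0})>u\bm{b}\}$ with no spurious power of $u$, and moves all of the $\bm{k}$-dependence into competing exponents (the negative term $-\tfrac34\,\xi_i\,u^{2-2\beta_i/\nu_i}(\Lambda_i k_i)^{\beta_i}$ coming from the Gaussian density versus the positive terms of order $u^{2-2\beta_i/\nu_i}\Lambda_i^{\beta_i}(k_i\vee 1)^{\beta_i-1}$ and $u^{2-2\alpha_i/\nu_i}\Lambda_i^{\alpha_i}$ coming from the conditional field), which are then compared term by term before summing in $\bm{k}$. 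Your observations about uniformity of the H\"older constants and the stability of the index set $I$ near $\bm{0}$ are correct but secondary; without the conditioning step the lemma is out of reach by your route.
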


\begin{proof}
For simplicity assume that \(I = \{ 1, \ldots, d \}\), hence \(\widetilde{\bm{b}} = \bm{b}\).
The idea of the proof is to split the log-layer
\([ \bm{0}, u^{-2/\bm{\beta}} \ln^{2/\bm{\beta}} u ] \setminus u^{-2 / \bm{\beta}} [ \bm{0}, \bm{\Lambda} ]\)
into tiny pieces
\begin{equation*}\label{LL:1}
\bm{\Lambda} u^{-2/\bm{\nu}} [ \bm{k}, \bm{k} + \bm{1} ],
\qquad
\bm{k} \in Q_u = \bigcup_{\mathcal{L} \neq \varnothing} Q_u ( \mathcal{L} ),
\end{equation*}
where the union is taken over non-empty subsets \(\mathcal{L}\) of \(\{ 1, \ldots, n \}\)
and
\begin{equation*}
Q_u ( \mathcal{L} )
=
\left\{
\bm{k} \in \mathbb{Z}_+^n \colon
k_i \geq u^{2/\nu_i - 2/\beta_i} / \Lambda_i, \quad i \in \mathcal{L}
\right\}
\cap
\left[ \bm{0}, u^{2/\bm{\nu} - 2/\bm{\beta}} / \bm{\Lambda} \right].
\end{equation*}
Next, derive a suitable \textit{uniform} in \(\bm{k} \in Q_u\) bound for the
Pickands intervals' probabilities
\begin{equation}\label{LL:2}
  \mathbb{P} \left\{
    \exists \, \bm{t} \in \bm{\Lambda} u^{-2/\bm{\nu}} [ \bm{k}, \bm{k} + \bm{1} ] \colon
    \bm{X} ( \bm{t} ) > u \bm{b}
  \right\},
\end{equation}
and then sum them up to obtain an upper bound on the layer's probability. To
this end, let us define a family of random fields
\begin{equation*}\label{LL:3}
\bm{X}_{u, \bm{k}} ( \bm{t} )
=
\bm{X} \left(
u^{-2 / \bm{\nu}}
\left( \bm{\Lambda} \bm{k} + \bm{t} \right)
\right),
\qquad
\bm{k} \in Q_u, \quad
\bm{t} \in [ \bm{0}, \bm{\Lambda} ],
\end{equation*}
and denote the corresonding covariance and variance matrices by
\begin{equation*}\label{LL:4}
R_{u, \bm{k}} ( \bm{t}, \bm{s} )
=
R \left(
u^{-2 / \bm{\nu}}
\left( \bm{\Lambda} \bm{k} + \bm{t} \right),
u^{-2 / \bm{\nu}}
\left( \bm{\Lambda} \bm{k} + \bm{s} \right)
\right),
\qquad
\Sigma_{u, \bm{k}}
= \Sigma \left(
\bm{\Lambda} u^{-2 / \bm{\nu}} \bm{k}
\right).
\end{equation*}

Next, apply the law of total probability
\begin{equation}\label{LL:5}
\mathbb{P} \left\{
\exists \, \bm{t} \in \bm{\Lambda} u^{-2 / \bm{\nu}}
\left[ \bm{k}, \bm{k} + \bm{1} \right] \colon
\bm{X} ( \bm{t} )
>
u \bm{b}
\right\}
=
u^{-d}
\int_{\mathbb{R}^d}
\mathbb{P} \left\{
\exists \, \bm{t} \in [ \bm{0}, \bm{\Lambda} ] \colon
\bm{\chi}_{u, \bm{k}} ( \bm{t} )
>
\bm{x}
\right\}
\varphi_{\Sigma_{u, \bm{k}} } \left( u \bm{b} - \frac{\bm{x}}{u} \right)
\mathop{d \bm{x}},
\end{equation}
where \(\bm{\chi}_{u, \bm{k}} ( \bm{t} )\) denotes the conditional process
\(u \left( \bm{X}_{u, \bm{k}} ( \bm{t} ) - u \bm{b} \right) + \bm{x}\)
given
\(\bm{X}_{u, \bm{k}} ( \bm{0} ) = u \bm{b} - u^{-1} \bm{x}\).

First, bound \(\varphi_{\Sigma_{u, \bm{k}}}\) using~\eqref{prefactor-inequality}
\begin{equation*}\label{LL:6}
\ln \frac{
\varphi_{\Sigma_{u, \bm{k}}} \left( u \bm{b} - \frac{\bm{x}}{u} \right)
}{
\varphi_{\Sigma} ( u \bm{b} )
}
\leq
-\frac{1}{2} \, u^2
\, \bm{b}^\top \,
\Big[
\Sigma_{u, \bm{k}}^{-1}
-\Sigma^{-1}
\Big]
\, \bm{b}
+\bm{b}^\top \Sigma_{u, k}^{-1} \, \bm{x}.
\end{equation*}
Plugging this into~\eqref{LL:5} and noting that
\(u^{-d} \, \varphi_{\Sigma} ( u \bm{b} )
\sim \mathbb{P} \left\{ \bm{X} ( \bm{0} ) > u \bm{b} \right\}\),
we obtain the following bound:
\begin{equation}\label{LL:AB-bound}
\mathbb{P} \left\{
\exists \, \bm{t} \in \bm{\Lambda} u^{-2 / \bm{\nu}}
\left[ \bm{k}, \bm{k} + 1 \right] \colon
\bm{X} ( \bm{t} )
>
u \bm{b}
\right\}
\leq
A B \,
\mathbb{P} \left\{ \bm{X} ( \bm{0} ) > u \bm{b} \right\},
\end{equation}
where
\begin{equation*}\label{LL:8}
\begin{gathered}
A
\coloneqq
\exp \left(
-\frac{1}{2} \, u^2 \, \bm{b}^\top
\left[
\Sigma_{u, \bm{k}}^{-1}
-\Sigma^{-1}
\right]
\bm{b}
\right),
\\[7pt]
B
\coloneqq
\int_{\mathbb{R}^d}
e^{
\bm{b}^\top
\Sigma_{u, \bm{k}}^{-1} \,
\bm{x}
} \,
\mathbb{P} \left\{
\exists \, \bm{t} \in [ \bm{0}, \bm{\Lambda} ] \colon
\bm{\chi}_{u, \bm{k}} ( \bm{t} )
>
\bm{x}
\right\}
\mathop{d \bm{x}}.
\end{gathered}
\end{equation*}
At this point, we split the proof in four parts: bounding \(A\),
bounding \(B\), comparing the bounds and summing them in \(\bm{k}\).
\smallskip

\textbf{Bounding \(A\).}
By~\eqref{exp-prefactor-1}, we have
\begin{equation}\label{LL:exp-prefactor-1}
\bm{b}^\top \Big[ \Sigma^{-1} ( \bm{\tau} ) - \Sigma^{-1} \Big] \bm{b}
= \sum_{i, j = 1}^n
\Xi_{i,j}
\, \tau_i^{\beta_i/2} \, \tau_j^{\beta_j/2}
+o \left( \sum_{i = 1}^n \tau_i^{\beta_i} \right),
\end{equation}
where
\begin{equation*}
\Xi_{i, j} = \bm{w}^\top \widetilde{D}_{i, j} \, \bm{w},
\qquad
\widetilde{D}_{i, j}
= 2 \, A_{2, i} \, \mathbb{1}_{i = j}
+\Big[ A_{6, i, j} + A_{1, i} \Sigma^{-1} A_{1, j}^\top \Big] \mathbb{1}_{i, j \in \mathcal{F}},
\end{equation*}
Using~\eqref{XI-elliptical}, we can bound~\eqref{LL:exp-prefactor-1}
as follows:
\begin{equation*}
\bm{b}^\top \Big[ \Sigma^{-1} ( \bm{\tau} ) - \Sigma^{-1} \Big] \bm{b}
= \sum_{i, j = 1}^n
\Xi_{i,j}
\, \tau_i^{\beta_i/2} \, \tau_j^{\beta_j/2}
+o \left( \sum_{i = 1}^n \tau_i^{\beta_i} \right)
\geq
\frac{3}{2} \sum_{i = 1}^n \bm{w}^\top A_{2, i} \, \bm{w} \, \tau_i^{\beta_i}
\end{equation*}
for \(\bm{\tau}\) close enough to \(\bm{0}\).
Plugging \(\bm{\tau} = u^{-2/\bm{\nu}} \bm{\Lambda} \bm{k}\), we obtain that for all
large enough \(u\) holds
\begin{equation}\label{LL:A}
-\frac{1}{2} \, u^2 \, \bm{b}^\top
\Big[ \Sigma^{-1}_{u, \bm{k}} - \Sigma^{-1} \Big] \bm{b}
\leq
-\frac{3}{4}
\, \sum_{i = 1}^n \bm{w}^\top A_{2, i} \, \bm{w} \,
\, u^{2 - 2 \beta_i / \nu_i}
\, ( \Lambda_i k_i )^{\beta_i}.
\end{equation}

\textbf{Bounding \(B\).}
As a next step, we derive a bound for the integral
\begin{equation} \label{LL:B}
  \int_{\mathbb{R}^d}
  e^{
    \bm{b} ^\top
    \Sigma_{u, \bm{k}}^{-1}
    \bm{x}
  } \,
  \mathbb{P} \left\{
    \exists \, \bm{t} \in [ \bm{0}, \bm{\Lambda} ] \colon
    \bm{\chi}_{u, \bm{k}} ( \bm{t} )
    >
    \bm{x}
  \right\}
  \mathop{d \bm{x}}
  \leq c_1 e^{c_2 ( G + \sigma^2 )},
\end{equation}
using Lemma~\ref{lemma:integral_estimate}. Here \(G \in \mathbb{R}\) and
\(\sigma^2 > 0\) are any such numbers that for all small enough \(\varepsilon\) hold
\begin{equation}\label{eq:log-layer-G-and-sigma}
\begin{gathered}
\sup_{F \subset \{ 1, \ldots, d \}}
\sup_{\bm{t} \in [ \bm{0}, \bm{\Lambda} ]}
\bm{w}_F^\top \,
\mathbb{E} \left\{ \bm{\chi}_{u, \bm{k}, F} ( \bm{t} ) \right\}
\leq
G + \varepsilon \sum_{j = 1}^d | x_j |,
\\[7pt]
\sup_{F \subset \{ 1, \ldots, d \}}
\sup_{\bm{t} \in [ \bm{0}, \bm{\Lambda} ]}
\var \left\{ \bm{w}_F^\top \, \bm{\chi}_{u, \bm{k}, F} ( \bm{t} ) \right\}
\leq \sigma^2.
\end{gathered}
\end{equation}
For our current needs they also must be uniform in \(\bm{k}\). The following
estimate for \(G + \sigma^2\) is proven in the Appendix
(see Section~\ref{LL:calculations})
\begin{equation}
\label{LL:G-plus-sigma2}
G + \sigma^2 =
c_5 \sum_{i = 1}^n \Bigg[
u^{2 - 2 \beta_i / \nu_i} \Lambda_i^{\beta_i}
\Big[
( k_i \vee 1 )^{\beta_i - 1}
+\varepsilon^{-1} \, ( k_i \vee 1 )^{\beta_i - 2}
+\varepsilon \left( k_i^{\beta_i} + 1 \right)
\Big]
+u^{2 - 2 \alpha_i / \nu_i} \Lambda_i^{\alpha_i}
\Bigg].
\end{equation}

\textbf{Comparing the bounds.}
Combining the bound~\eqref{LL:A} for \(A\), the bound~\eqref{LL:B}
for \(B\) with the \(G + \sigma^2\) estimated in~\eqref{LL:G-plus-sigma2},
and plugging all this into the \(AB\) bound~\eqref{LL:AB-bound},
we arrive at the following inequality:
\begin{multline}
\label{LL:lhs}
\ln
\frac{
\mathbb{P} \left\{
\exists \, \bm{t} \in \bm{\Lambda} u^{-2 / \bm{\nu}}
\left[ \bm{k}, \bm{k} + \bm{1} \right] \colon
\bm{X} ( \bm{t} )
>
u \bm{b}
\right\}
}{
c_1 \, \mathbb{P} \left\{ \bm{X} ( \bm{0} ) > u \bm{b} \right\}
}
\\[7pt]
\leq
-\frac{3}{4}
\, \sum_{i = 1}^n \bm{w}^\top A_{2, i} \, \bm{w}
\, u^{2 - 2 \beta_i / \nu_i}
\, ( \Lambda_i k_i )^{\beta_i}
+c_6 \sum_{i = 1}^n \Bigg[
u^{2 - 2 \beta_i / \nu_i} \Lambda_i^{\beta_i}
\Big[
( k_i \vee 1 )^{\beta_i - 1}
\\[7pt]
+\varepsilon^{-1} \, ( k_i \vee 1 )^{\beta_i - 2}
+\varepsilon \left( k_i^{\beta_i} + 1 \right)
\Big]
+u^{2 - 2 \alpha_i / \nu_i} \Lambda_i^{\alpha_i}
\Bigg].
\end{multline}
Setting \(\varepsilon = \bm{w}^\top A_{2, i} \, \bm{w} / 4 \, c_6\), we find that the
\(i\)-th term is at most
\begin{equation}\label{LL:master-bound}
u^{2-2\beta_i/\nu_i} \, \Lambda_i^{\beta_i}
\left[
-\frac{\bm{w}^\top A_{2, i} \, \bm{w}}{2} \, k_i^{\beta_i}
+c_6 \Big[
( k_i \vee 1 )^{\beta_i - 1}
+\varepsilon^{-1} ( k_i \vee 1 )^{\beta_i - 2}
+\varepsilon
\Big]
\right]
+c_6 \, u^{2-2\alpha_i/\nu_i} \, \Lambda_i^{\alpha_i}.
\end{equation}
Note that if \(k_i = 0\), which is only possible if \(i \in \mathcal{L}^c\),
the~\eqref{LL:master-bound} reads:
\begin{equation*}
c_7 \, u^{2-2\beta_i/\nu_i} \, \Lambda_i^{\beta_i}
+c_6 \, u^{2-2\alpha_i/\nu_i} \, \Lambda_i^{\alpha_i}
\end{equation*}
If \(k_i \geq 1\), then \(k_i \vee 1 = k_i\), and~\eqref{LL:master-bound} is
at most
\begin{equation}\label{LL:bound1}
-\frac{\bm{w}^\top A_{2, i} \, \bm{w}}{4} \, k_i^{\beta_i}
u^{2-2\beta_i/\nu_i} \, \Lambda_i^{\beta_i}
+c_6 \, u^{2-2\alpha_i/\nu_i} \, \Lambda_i^{\alpha_i}
\end{equation}
under the following conditions:
\begin{enumerate}
\item if \(i \in \mathcal{L} \cap \mathcal{I}\), \(k_i \geq u^{2/\alpha_i - 2/\beta_i} / \Lambda_i \to \infty\), and we can
choose \(u_0\) such that~\eqref{LL:bound1} holds for all
\(k_i \geq 1\) and \(u \geq u_0\)
\item if \(i \in \mathcal{L}^c \cup \mathcal{I}^c\), there exists \(k_{0, i}\), independent of \(u\),
such that~\eqref{LL:bound1} holds for \(k_i \geq k_{0, i}\).
\end{enumerate}

\textbf{Combined upper bound for \(\bm{k} \geq \bm{k}_0\).}
Denote
\begin{equation*}
k_{0, i} = k_{0, i} \vee 1
\quad \text{for} \quad i \in \mathcal{I}^c
\quad \text{and} \quad
k_{0, i} = u^{2/\nu_i-2/\beta_i}
\quad \text{for} \quad i \in \mathcal{L} \cap \mathcal{I}.
\end{equation*}
We have shown that there exits \(u_0\) and such that if \(u \geq u_0\)
and \(\bm{k} \geq \bm{k}_0\), then the left-hand side of~\eqref{LL:lhs} is
at most
\begin{equation*}
-\frac{1}{4} \, \sum_{i \in \mathcal{L} \cup \mathcal{I}}^n \Big[
u^{2 - 2 \beta_i/\nu_i} \, \Lambda_i^{\beta_i} \,
\bm{w}^\top A_{2, i} \, \bm{w} \, k_i^{\beta_i}
+c_6 \, u^{2 - 2 \alpha_i / \nu_i} \, \Lambda_i^{\alpha_i}
\Big]
+c_8.
\end{equation*}
Hence,
\begin{equation}
\label{LL:k-geq-k0-bound}
\frac{
\mathbb{P} \left\{
\exists \, \bm{t} \in \bm{\Lambda} u^{-2 / \bm{\nu}}
\left[ \bm{k}, \bm{k} + \bm{1} \right] \colon
\bm{X} ( \bm{t} )
>
u \bm{b}
\right\}
}{
c_9 \, \mathbb{P} \left\{ \bm{X} ( \bm{0} ) > u \bm{b} \right\}
}
\\[7pt]
\leq
\prod_{i = 1}^n
\exp \left(
-\frac{\bm{w}^\top A_{2, i} \bm{w}}{4} \,
u^{2-2\beta_i/\nu_i} \,
( \Lambda_i k_i )^{\beta_i}
+c_3 u^{2-2\alpha_i/\nu_i} \, \Lambda_i^{\alpha_i}
\right).
\end{equation}
Let us now show how to cover \(\bm{k}\)'s with \(1 \leq k_i \leq k_{0, i}\),
\(i \in \mathcal{I}^c\). Assume, for the sake of simplicity, that \(\bm{k}\) is such that
there is exactly one \(i\) such that \(1 \leq k_i \leq k_{0, i}\). The general case
can be addressed in a similar way. Note that so far we did not assume anything
about \(\Lambda_i\) except positivity. We want to exploit this fact. To this end,
set \(\widetilde{\Lambda}_i \coloneqq \Lambda_i / k_{0,i}\) and \(\bm{x}' = ( x_j )_{j \neq i}\).
By~\eqref{LL:k-geq-k0-bound} we have that
\begin{equation*}
\mathbb{P} \left\{
\exists \, \bm{t} \in \bm{\Lambda} u^{-2 / \bm{\nu}}
\left[ \bm{k}, \bm{k} + \bm{1} \right] \colon
\bm{X} ( \bm{t} )
>
u \bm{b}
\right\}
\\[7pt]
\leq
\sum_{j = k_{0, i} k_i}^{k_{0, i} ( k_i + 1 ) - 1}
\mathbb{P} \left\{
\begin{aligned}
\displaystyle
& \exists \, t_i \in \widetilde{\Lambda}_i \, u^{-2/\nu_i} [ j, j + 1 ]
\\
\displaystyle
& \exists \, \bm{t}' \in \bm{\Lambda}' u^{-2/\nu_i'} [ \bm{k}', \bm{k}' + \bm{1}' ]
\end{aligned}
\quad \colon \quad
\bm{X} ( \bm{t} ) > u \bm{b}
\right\}
\end{equation*}
Applying~\eqref{LL:k-geq-k0-bound} to the summands, we find that the same
bound~\eqref{LL:k-geq-k0-bound} holds true in this case, but with a
different constant.

\textbf{Bound improvement for \(k_i = 0\).}
Allowing \(k_i = 0\) for some \(i \in \mathcal{L}^c\), we obtain
\begin{multline*}
\frac{
\mathbb{P} \left\{
\exists \, \bm{t} \in \bm{\Lambda} u^{-2 / \bm{\nu}}
\left[ \bm{k}, \bm{k} + \bm{1} \right] \colon
\bm{X} ( \bm{t} )
>
u \bm{b}
\right\}
}{
c_9 \, \mathbb{P} \left\{ \bm{X} ( \bm{0} ) > u \bm{b} \right\}
}
\\[7pt]
\leq
\prod_{i = 1}^n
\exp \left(
-\frac{\bm{w}^\top A_{2, i} \bm{w}}{4} \,
u^{2-2\beta_i/\nu_i} \,
( \Lambda_i k_i )^{\beta_i}
+c_3 u^{2-2\alpha_i/\nu_i} \, \Lambda_i^{\alpha_i}
\right)
\\[7pt]
\times
\prod_{j \in \mathcal{L}^c \colon k_j = 0}
\exp \left( c_6 u^{2-2\beta_j/\nu_j} \Lambda_j^{\beta_j} \right).
\end{multline*}
Note that if \(j \in \mathcal{L}^c \cap \mathcal{I}\), then the corresponding factor is bounded by a
constant, since \(u^{2-2\beta_j/\nu_j} \to 0\).
If \(i \in \mathcal{I}^c\), set \(\widetilde{\Lambda}_i = 1\) and
apply the same trick as above. That is, slice in the \(i\)-th direction and
sum back:
\begin{equation*}
\mathbb{P} \left\{
\exists \, \bm{t} \in \bm{\Lambda} u^{-2 / \bm{\nu}}
\left[ \bm{k}, \bm{k} + \bm{1} \right] \colon
\bm{X} ( \bm{t} )
>
u \bm{b}
\right\}
\\[7pt]
\leq
\sum_{j = 0}^{\Lambda_j + 1}
\mathbb{P} \left\{
\begin{aligned}
\displaystyle
& \exists \, t_i \in \, u^{-2/\nu_i} [ j, j + 1 ]
\\
\displaystyle
& \exists \, \bm{t}' \in \bm{\Lambda}' u^{-2/\nu_i'} [ \bm{k}', \bm{k}' + \bm{1}' ]
\end{aligned}
\quad \colon \quad
\bm{X} ( \bm{t} ) > u \bm{b}
\right\}
\end{equation*}
Since \(\widetilde{\Lambda}_i = 1\), the product in \(j \in \mathcal{L}^c \colon k_j = 0\)
becomes a constant. Other factors remain the same, except of the \(i\)-th,
which gives
\begin{equation*}
\sum_{j = 0}^{\Lambda_i}
\exp \left(
-\frac{\bm{w}^\top A_{2, i} \, \bm{w}}{4} \, j^{\beta_i}
\right)
\leq
c_{10} \, \exp \left(
-\frac{\bm{w}^\top A_{2, i} \, \bm{w}}{4}
\right)
= c_{11}.
\end{equation*}

\textbf{Summing up the bounds in \(\bm{k}\).}
Summing~\eqref{LL:k-geq-k0-bound} in \(\bm{k}\), we obtain
\begin{multline*}
  \sum_{\bm{k} \in Q_u ( \mathcal{L} )}
  \frac{
    \mathbb{P} \left\{
      \exists \, \bm{t} \in \bm{\Lambda} u^{-2 / \bm{\nu}}
      [\bm{k}, \bm{k} + \bm{1}] \colon
      \bm{X} ( \bm{t} )
      >
      u \bm{b}
    \right\}
  }{
    c_{12} \,
    \mathbb{P} \left\{
      \bm{X} ( \bm{0} ) > u \bm{b}
    \right\}
  }
  \\[7pt]
  \leq
  \prod_{i \in \mathcal{I}}
  \sum_{k_i \neq 0}
  \exp \left(
  -\frac{\bm{w}^\top A_{2, i} \, \bm{w}}{4} \,
  u^{2-2\beta_i/\nu_i} \,
  ( \Lambda_i k_i )^{\beta_i}
  +c_3 \, \Lambda_i^{\alpha_i}
  \right)
  \\[7pt]
  \times
  \prod_{j \in \mathcal{I}^c}
  \sum_{k_j \neq 0}
  \exp \left(
  -\frac{\bm{w}^\top A_{2, j} \, \bm{w}}{4} \,
  ( \Lambda_j k_j )^{\beta_j}
  +c_3 u^{2-2\alpha_j/\beta_j} \, \Lambda_j^{\alpha_j}
  \right).
\end{multline*}
If \(i \in \mathcal{I}\), then
\begin{multline*}
\sum_{k_i \geq u^{2/\alpha_i - 2/\beta_i}}
\exp \left(
-\frac{1}{4} \,
u^{2-2\beta_i/\nu_i} \,
\bm{w}^\top A_{2, i} \, \bm{w} \, ( \Lambda_i k_i )^{\beta_i}
+c_3 \, \Lambda_i^{\alpha_i}
\right)
\\[7pt]
\leq
c_5
\exp \left(
-\frac{\bm{w}^\top A_{2, i} \, \bm{w}}{4} \,
\Lambda_i^{\beta_i}
+c_3 \, \Lambda_i^{\alpha_i}
\right)
\leq
c_5
\exp \left(
-\frac{1}{8} \,
\bm{w}^\top A_{2, i} \, \bm{w} \, \Lambda_i^{\beta_i}
\right),
\end{multline*}
where the last inequality is true for \(\Lambda_i \geq \Lambda_{0, i}\) with
\(\Lambda_{0, i}^{\beta_i - \alpha_i} \, \bm{w}^\top A_{2, i} \, \bm{w} \geq 8 \, c_3\).
Same upper bound works for \(i \in \mathcal{I}^c\).
\end{proof}

\begin{remark}
Note that \(\bm{w}^\top A_{2, i} \, \bm{w} > 0\) has to be satisfied for all
\(i\)'s, because otherwise one of the sums in \(k_i\) (as, for example, the
last sum of the proof) may be infinite.
\end{remark}
\subsection{Double sum bound}
\label{sec:org03d5a2e}
Define for \(\bm{\tau}, \bm{\lambda}, \bm{S} \in \mathbb{R}_{ + }^n\) the double events' probabilities by
\begin{equation} \label{def:double-event}
  P_{\bm{b}} ( \bm{\tau}, \bm{\lambda}, \bm{S} )
  \coloneqq \mathbb{P} \left\{
    \begin{aligned}
    & \exists \, \bm{t} \in u^{-2 / \bm{\nu}} [ \bm{\tau}, \bm{\tau} + \bm{S} ] \colon
    & \bm{X} ( \bm{t} ) > u \bm{b},
    \\[3pt]
    & \exists \, \bm{s} \in u^{-2 / \bm{\nu}} [ \bm{\lambda}, \bm{\lambda} + \bm{S} ] \colon
    & \bm{X} ( \bm{s} ) > u \bm{b}
    \end{aligned}
  \right\}.
\end{equation}

\begin{lemma}[Double sum bound] \label{lemma:double-sum}
Let double events' displacements \(\bm{\tau}, \, \bm{\lambda} \in [ \bm{0}, \bm{N}_u ]\) be such that
\begin{description}[leftmargin = * , before={\renewcommand\makelabel[1]{\bfseries ##1.}}]
\item [1] There is no offset in Piterbarg and Talagrand type coordinates:
\(\bm{\tau}_{\mathcal{J} \cup \mathcal{K}} = \bm{\lambda}_{\mathcal{J} \cup \mathcal{K}} = \bm{0}_{\mathcal{J} \cup \mathcal{K}}\).
\item [2] There exists a (possibly empty) subset of Pickands type coordinates
\(\mathcal{I}' \subset \mathcal{I}\) in which there is no offset:
\(\bm{\tau}_{\mathcal{I}'} = \bm{\lambda}_{\mathcal{I}'}\).
\item [3] The offset in the remaining Pickands type coordinates
\(\mathcal{I}_2 \coloneqq \mathcal{I} \setminus \mathcal{I}'\)
is strictly greater than \(\bm{S}\):
\(\bm{\lambda}_{\mathcal{I}_2} - \bm{\tau}_{\mathcal{I}_2} > \bm{S}_{\mathcal{I}_2}\).
\end{description}
Then, there exists \(u_0 \geq 0\) and \(\bm{S}_0 > \bm{0}\) such that for all
\(u \geq u_0\) and \(\bm{S} \geq \bm{S}_0\) holds
\begin{equation*}
  \frac{
    P_{\bm{b}} ( \bm{\tau}, \bm{\lambda}, \bm{S} )
  }{
    H_u ( \bm{\tau} ) \,
    \mathbb{P} \left\{ \bm{X} ( \bm{0} ) > u \bm{b} \right\}
  }
  \leq
  C
  \prod_{i \in \mathcal{J} \cup \mathcal{K}} e^{c S_i^{\beta_i}}
  \prod_{i \in \mathcal{I}'} S_i
  \prod_{i \in \mathcal{I}_2}
  \left[
  \frac{S_i}{\lambda_i - \tau_i - S_i}
  \right]^2
  \exp \left(
  -\frac{\varkappa_i}{64} \,
  ( \lambda_i - \tau_i - S_i )^{\alpha_i}
  \right)
\end{equation*}
with some constants \(C, \ c > 0\) and
\begin{equation*}
H_u ( \bm{\tau} ) =
\exp \left(
-\frac{1}{4} \sum_{i \in \mathcal{I}_2} \xi_i \, \tau_i^{\beta_i} u^{2 - 2 \beta_i / \nu_i}
\right).
\end{equation*}
\end{lemma}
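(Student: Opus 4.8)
The plan is to bound the double event probability $P_{\bm{b}}(\bm{\tau},\bm{\lambda},\bm{S})$ by conditioning on $\bm{X}$ at two points — one in each of the two boxes $u^{-2/\bm{\nu}}[\bm{\tau},\bm{\tau}+\bm{S}]$ and $u^{-2/\bm{\nu}}[\bm{\lambda},\bm{\lambda}+\bm{S}]$ — and then comparing the resulting Gaussian density at these two arguments with the single-point density $\varphi_\Sigma(u\bm{b})$. The structure should parallel the log-layer argument of Lemma~\ref{lemma:log-layer-bound}: write the probability via the law of total probability as an integral over $\mathbb{R}^{2d}$ of a conditional exceedance probability against a joint Gaussian density of $(\bm{X}(\bm{t}_0),\bm{X}(\bm{s}_0))$, factor out $\varphi_\Sigma(u\bm{b})^2/\varphi_\Sigma(u\bm{b}) = \varphi_\Sigma(u\bm{b})$ times a correction, and control the correction using Assumption~\ref{A2}. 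First I would set up the conditional field $\bm{\chi}_{u,\bm{\tau},\bm{\lambda}}$ and apply Lemma~\ref{lemma:integral_estimate} to the $\bm{x}$-integral to get a factor $C e^{c(G+\sigma^2)}$, reducing everything to estimating $G+\sigma^2$ together with the quadratic form in the exponent of the joint density.

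The heart of the matter is the exponent of the joint density of $(\bm{X}(\bm{t}_0),\bm{X}(\bm{s}_0))$ evaluated near $(u\bm{b},u\bm{b})$. Writing the $2d\times 2d$ covariance in block form with diagonal blocks $\Sigma(\bm{t}_0),\Sigma(\bm{s}_0)$ and off-diagonal block $R(\bm{t}_0,\bm{s}_0)$, the quadratic form $u^2(\bm{b},\bm{b})^\top \widehat{\Sigma}^{-1}(\bm{b},\bm{b})$ expands, via the Schur complement and Assumption~\ref{A2.1}, into $u^2\bm{b}^\top\Sigma^{-1}\bm{b}$ plus a gain term governed by $\Sigma - R(\bm{t}_0,\bm{s}_0)$ at the displacement $\bm{s}_0-\bm{t}_0$. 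In the coordinates $i\in\mathcal{I}_2$ where $\lambda_i-\tau_i-S_i$ is large, the dominant contribution comes from the $S_{\alpha_i,A_{5,i}}(t_i-s_i)$ term, which after the $\bm{w}$-contraction yields $\varkappa_i |s_i-t_i|^{\alpha_i}$ — this is what produces the factor $\exp(-\tfrac{\varkappa_i}{64}(\lambda_i-\tau_i-S_i)^{\alpha_i})$; in the coordinates $i\in\mathcal{I}'\cup\mathcal{J}\cup\mathcal{K}$ with no offset, the $A_{2,i}$ and $D_{i,j}$ terms at the boundary of the box give the $\tau_i^{\beta_i}$ term in $H_u(\bm{\tau})$ and the $e^{cS_i^{\beta_i}}$ prefactor. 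I would carry out the block-inverse expansion first (it reduces to a one-dimensional-style computation per coordinate since the covariance increments in~\ref{A2.1} are additive across coordinates), then collect exponents.

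The remaining steps are bookkeeping. The polynomial prefactors $\prod_{i\in\mathcal{I}'}S_i$ and $\prod_{i\in\mathcal{I}_2}[S_i/(\lambda_i-\tau_i-S_i)]^2$ come from the Borell--TIS / Piterbarg inequality~\eqref{Piterbarg-inequality} applied over the two small boxes: the measure of each box in rescaled coordinates contributes $\prod S_i$, and the extra square in $\mathcal{I}_2$ appears because one pays a Piterbarg-type polynomial factor in \emph{both} boxes while the exponential savings from the large offset can absorb the denominator $(\lambda_i-\tau_i-S_i)$ — this is the classical trick of splitting the offset's exponential gain to dominate the polynomial. I would then choose $\varepsilon$ in Lemma~\ref{lemma:integral_estimate} proportional to the $\xi_i$'s (exactly as in the log-layer proof) so that the $\varepsilon^{-1}$ and $\varepsilon$ error terms in $G+\sigma^2$ are absorbed into constants, fix $\bm{S}_0$ large enough that the $e^{cS_i^{\beta_i}}$ in $\mathcal{I}$-coordinates is beaten by the exponential decay (using $\alpha_i<\beta_i$ on $\mathcal{I}$), and take $u_0$ large enough for the $o(\cdot)$ remainder in~\ref{A2.2} to be dominated.

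The main obstacle I expect is the two-point Schur-complement expansion: unlike the single-point case, one must control $\Sigma(\bm{t}_0)$, $\Sigma(\bm{s}_0)$ \emph{and} the cross-covariance $R(\bm{t}_0,\bm{s}_0)$ simultaneously and extract precisely which terms in~\ref{A2.1} survive the $u^2$ scaling in each of the three regimes of coordinates ($\mathcal{I}_2$ with large offset, $\mathcal{I}'$ with zero offset but possibly large $\bm{\tau}$, and $\mathcal{J}\cup\mathcal{K}$ with zero offset and zero $\bm{\tau}$). Keeping the constants genuinely uniform in $(\bm{\tau},\bm{\lambda})$ across the range $[\bm{0},\bm{N}_u]$ — in particular making sure the error term in~\ref{A2.2} stays negligible when $\bm{\tau}$ itself is of order $u^{2/\nu_i-2/\beta_i}$ — is the delicate point, and it is handled exactly as in the corresponding estimate~\eqref{LL:G-plus-sigma2} of the log-layer lemma, whose detailed computation is relegated to the Appendix.
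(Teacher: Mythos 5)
Your overall architecture (law of total probability, conditional field, Lemma~\ref{lemma:integral_estimate} to produce $C e^{c(G+\sigma^2)}$, then a three-regime case analysis over $\mathcal{I}_2$, $\mathcal{I}'$ and $\mathcal{J}\cup\mathcal{K}$) matches the paper's, but your key reduction step is genuinely different. The paper does \emph{not} condition on $\bm{X}$ at two points and does not touch the $2d\times 2d$ joint covariance or its Schur complement. Instead it uses the elementary implication that a double exceedance forces $\tfrac{1}{2}(\bm{X}(\bm{t})+\bm{X}(\bm{s}))>u\bm{b}$, and then runs the \emph{single-point} conditioning machinery of Lemma~\ref{lemma:log-layer-bound} on the averaged field $\bm{X}_{u,\bm{\tau},\bm{\lambda}}(\bm{t},\bm{s})$ indexed by $2n$ time parameters but still $d$-dimensional in value. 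The variance of the averaged field is $\tfrac14[\Sigma(\bm{t})+\Sigma(\bm{s})+R(\bm{t},\bm{s})+R(\bm{s},\bm{t})]$, whose inverse contracted with $\bm{b}$ produces exactly the term $\tfrac12\bm{w}^\top A_{5,i}\bm{w}\,|\lambda_i-\tau_i|^{\alpha_i}$ that drives the exponential decay in the offset; this sidesteps the two-point Gaussian density entirely and, importantly, keeps the quadratic-programming structure ($\widetilde{\bm{b}}$, $\bm{w}$, the index set $I$) one-dimensional in the conditioning variable. Your Schur-complement route is in principle sharper but substantially harder: for $\bm{v}=(\bm{b},\bm{b})$ and a block matrix with non-symmetric off-diagonal block $R(\bm{t}_0,\bm{s}_0)$ there is no clean closed form for $\bm{v}^\top\widehat{\Sigma}^{-1}\bm{v}$, and you would additionally have to redo the quadratic programming analysis in $\mathbb{R}^{2d}$. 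The sum trick is precisely what buys the paper a uniform, tractable bound at the cost of a constant in the exponent (the $1/64$).

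There is also a concrete gap in your account of the polynomial prefactor $\prod_{i\in\mathcal{I}_2}[S_i/(\lambda_i-\tau_i-S_i)]^2$. You attribute it to Piterbarg-type polynomial factors paid in both boxes, with the offset's exponential gain absorbing the denominator. That mechanism does not produce a ratio of the box size to the offset: the Piterbarg inequality~\eqref{Piterbarg-inequality} yields factors polynomial in $u$ and proportional to the measure of the box, not to $(\lambda_i-\tau_i-S_i)^{-2}$. In the paper this factor is purely combinatorial and arises at the stage of \emph{lifting the condition} $\lambda_i-\tau_i\geq N_i S_i$: the basic comparison of the exponential gain against the $G+\sigma^2$ error only closes when the offset exceeds a fixed multiple $N_i$ of the box size, so for offsets in the range $S_i<\lambda_i-\tau_i<N_iS_i$ one re-slices both boxes in the $i$-th direction into pieces of length $\Delta=(\lambda_i-\tau_i-S_i)/N_i$, applies the already-proven bound to each of the $(S_i/\Delta)^2$ pairs of pieces (which now do satisfy the offset condition at scale $\Delta$), and sums. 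Without this re-slicing step your argument only establishes the lemma for $\lambda_i-\tau_i\geq N_iS_i$, whereas the statement requires it for all $\lambda_i-\tau_i>S_i$; a similar slicing is also needed to remove the implicit lower bound on $\lambda_i$ for $i\in\mathcal{I}'$. You should add this step explicitly, or restrict the lemma and handle the intermediate offsets elsewhere.
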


\begin{remark}
We want to stress the fact that the conditions of the lemma demand that there be
no Pickands type coordinates \(i \in \mathcal{I}\) with offsets smaller than
\(S_i\), except those in which the offset is zero. This is not a coincidence,
since the adjacent intervals are to be dealt with differently (see proof of
Theorem~\ref{main-theorem} for details).
Note also that if \(\mathcal{I}' = \mathcal{I}\), the assertion of the lemma is trivial.
\end{remark}

\begin{proof}
To begin with, note that if \(\bm{X} ( \bm{t} )\) and \(\bm{X} ( \bm{s} )\)
exceed \(u \bm{b}\), then their sum exceeds \(2 u \bm{b}\):
\begin{multline*}
  P_{\bm{b}} ( \bm{\tau}, \bm{\lambda}, \bm{S} )
  \leq
  \mathbb{P} \left\{
    \exists \, \bm{t} \in u^{-2 / \bm{\nu}} [ \bm{\tau}, \bm{\tau} + \bm{S} ], \
    \exists \, \bm{s} \in u^{-2 / \bm{\nu}} [ \bm{\lambda}, \bm{\lambda} + \bm{S} ] \colon
    \bm{X} ( \bm{t} ) + \bm{X} ( \bm{s} ) > 2 u \bm{b}
  \right\}
  \\[7pt] =
  \mathbb{P} \left\{
    \exists \, \bm{t}, \bm{s} \in [ \bm{0}, \bm{S} ] \colon
    \bm{X}_{u, \bm{\tau}, \bm{\lambda}} ( \bm{t}, \bm{s} ) > u \bm{b}
  \right\}
\end{multline*}
where
\begin{equation*}
  \bm{X}_{u, \bm{\tau}, \bm{\lambda}} ( \bm{t}, \bm{s} )
  = \frac{1}{2} \left(
    \bm{X} ( u^{-2/\bm{\nu}} \bm{\tau} + u^{-2 / \bm{\nu}} \bm{t} )
    +\bm{X} ( u^{-2/\bm{\nu}} \bm{\lambda} + u^{-2 / \bm{\nu}} \bm{s} )
  \right).
\end{equation*}
Henceforth, we shall seek a bound of the latter probability. To this end, we
employ an idea analogous to that of the proof of
Lemma~\ref{lemma:log-layer-bound}: first, apply the law of total
probability
\begin{equation} \label{eq:13}
  \mathbb{P} \left\{
    \exists \, \bm{t}, \, \bm{s} \in [ \bm{0}, \bm{S} ] \colon
    \bm{X}_{u, \bm{\tau}, \bm{\lambda}} ( \bm{t}, \bm{s} ) > u \bm{b}
  \right\}
  = u^{-d} \int_{\mathbb{R}^d}
  \mathbb{P} \left\{
    \exists \, \bm{t}, \bm{s} \in [ \bm{0}, \bm{S} ] \colon
    \bm{\chi}_{u, \bm{\tau}, \bm{\lambda}} ( \bm{t}, \bm{s} )
    > \bm{x}
  \right\}
  \varphi_{\, \Sigma_u ( \bm{\tau}, \bm{\lambda} ) } \left( u \bm{b} - \frac{\bm{x}}{u} \right)
  \mathop{d \bm{x}},
\end{equation}
where the conditional random field is defined by
\begin{equation*}
\bm{\chi}_{u, \bm{\tau}, \bm{\lambda}} ( \bm{t}, \bm{s} )
=
u \left(
\bm{X}_{u, \bm{\tau}, \bm{\lambda}} ( \bm{t}, \bm{s} )
-\bm{X}_{u, \bm{\tau}, \bm{\lambda}} ( \bm{0}, \bm{0} )
\ \middle| \
\bm{X}_{u, \bm{\tau}, \bm{\lambda}} ( \bm{0}, \bm{0} )
= u \bm{b} - \frac{\bm{x}}{u}
\right)
\end{equation*}
and \(\Sigma_u ( \bm{\tau}, \bm{\lambda} )\) is the variance matrix:
\begin{equation*}
\Sigma_u ( \bm{\tau}, \bm{\lambda} )
= \mathbb{E} \left\{
\bm{X}_{u, \bm{\tau}, \bm{\lambda}} ( \bm{0}, \bm{0} ) \,
\bm{X}_{u, \bm{\tau}, \bm{\lambda}} ( \bm{0}, \bm{0} )^\top
\right\}.
\end{equation*}
Next, bounding the exponential prefactor similarly
to~\eqref{prefactor-inequality} as follows:
\begin{equation*}
\ln \frac{
\varphi_{ \Sigma_u ( \bm{\tau}, \bm{\lambda} ) } ( u \bm{b} - u^{-1} \bm{x} )
}{
\varphi_{ \Sigma } ( u \bm{b} )
}
\leq
-\frac{1}{2} \, u^2 \,
\bm{b}^\top \Big[ \Sigma_u^{-1} ( \bm{\tau}, \bm{\lambda} ) - \Sigma^{-1} \Big] \, \bm{b}
+\bm{b}^\top \Sigma_u^{-1} ( \bm{\tau}, \bm{\lambda} ) \, \bm{x},
\end{equation*}
and using
\begin{equation*}
\mathbb{P} \{ \bm{X} ( \bm{0} ) > u \bm{b} \}
\sim
u^{-d} \, \varphi_{\Sigma} ( u \bm{b} ),
\end{equation*}
we obtain
\begin{equation} \label{DS:AB-bound}
  \mathbb{P} \left\{
    \exists \, \bm{t}, \, \bm{s} \in [ \bm{0}, \bm{S} ] \colon
    \bm{X}_{u, \bm{\tau}, \bm{\lambda}} ( \bm{t}, \bm{s} ) > u \bm{b}
  \right\}
  \leq
  A \, B \,
  \mathbb{P} \{ \bm{X} ( \bm{0} ) > u \bm{b} \},
\end{equation}
where
\begin{gather}
  \label{double-sum-A}
  A \coloneqq
  \exp \left(
  -\frac{1}{2}
  \, u^2 \, \bm{b}^\top \Big[ \Sigma_u^{-1} ( \bm{\tau}, \bm{\lambda} ) - \Sigma^{-1} \Big] \, \bm{b}
  \right),
  \\[7pt]
  \label{double-sum-B}
  B \coloneqq
  \int_{\mathbb{R}^d}
  \exp \left( \bm{b}^\top \Sigma_u^{-1} ( \bm{\tau}, \bm{\lambda} ) \, \bm{x} \right)
  \mathbb{P} \left\{
    \exists \, \bm{t}, \bm{s} \in [ \bm{0}, \bm{S} ] \colon
    \bm{\chi}_{u, \bm{\tau}, \bm{\lambda}} ( \bm{t}, \bm{s} )
    > \bm{x}
  \right\}
  \mathop{d \bm{x}}.
\end{gather}
At this point we split the proof in three parts: bounding \(A\), bounding
\(B\) and comparing the bounds.
\bigskip

\textbf{Bounding \(A\).}
By~\eqref{DS-prefactor}, we have
\begin{multline*}
\bm{b}^\top \Big[ \Sigma^{-1} ( \bm{\tau}, \bm{\lambda} ) - \Sigma^{-1} \Big] \bm{b}
\sim
\sum_{i = 1}^n
\left[
\bm{w}^\top A_{2, i} \, \bm{w} \, \Big[ \tau_i^{\beta_i} + \lambda_i^{\beta_i} \Big]
+\frac{\bm{w}^\top A_{5, i} \, \bm{w}}{2} \,
\left| \lambda_i - \tau_i \right|^{\alpha_i}
\right]
\\[7pt]
+\frac{1}{4} \,
\sum_{i, j \in \mathcal{F}}
\Xi_{i, j}
\Big[
\tau_i^{\beta_i/2} \, \lambda_j^{\beta_j/2}
+\lambda_i^{\beta_i/2} \, \tau_j^{\beta_j/2}
+\tau_i^{\beta_i/2} \, \tau_j^{\beta_j/2}
+\lambda_i^{\beta_i/2} \, \lambda_j^{\beta_j/2}
\Big]
\end{multline*}
with the error of order
\begin{equation*}
o \left( \sum_{i = 1}^n \Big[
\tau_i^{\beta_i} + \lambda_i^{\beta_i} + | \lambda_i - \tau_i |^{\alpha_i}
\Big] \right).
\end{equation*}
Using~\eqref{XI-elliptical}, we find that
\begin{equation*}
\bm{b}^\top \Big[ \Sigma^{-1} ( \bm{\tau}, \bm{\lambda} ) - \Sigma^{-1} \Big] \bm{b}
\geq
\frac{1}{2}
\sum_{i = 1}^n
\left[
\bm{w}^\top A_{2, i} \, \bm{w} \, \Big[ \tau_i^{\beta_i} + \lambda_i^{\beta_i} \Big]
+\frac{\bm{w}^\top A_{5, i} \, \bm{w}}{2} \,
\left| \lambda_i - \tau_i \right|^{\alpha_i}
\right]
\end{equation*}
for \(\bm{\tau}\) and \(\bm{\lambda}\) sufficiently close to \(\bm{0}\). Hence,
\begin{multline}\label{DS-A-bound}
-\frac{1}{2} \, u^2 \,
\bm{b}^\top \Big[ \Sigma_u^{-1} ( \bm{\tau}, \bm{\lambda} ) - \Sigma^{-1} \Big] \bm{b}
\leq
-\frac{1}{4}
\sum_{i = 1}^n
\Bigg[
u^{2 - 2 \beta_i / \nu_i} \,
\bm{w}^\top A_{2, i} \, \bm{w} \, \Big[ \tau_i^{\beta_i} + \lambda_i^{\beta_i} \Big]
\\[7pt]
+u^{2 - 2 \alpha_i / \nu_i} \,
\frac{\bm{w}^\top A_{5, i} \, \bm{w}}{2} \,
\left| \lambda_i - \tau_i \right|^{\alpha_i}
\Bigg]
\end{multline}

\textbf{Bounding \(B\).}
As in the proof of Lemma~\ref{lemma:log-layer-bound}, our next step
consists in deriving a bound for the integral
\begin{equation} \label{DS:B}
  \int_{\mathbb{R}^d}
  e^{
    \bm{b} ^\top
    \Sigma_u^{-1} ( \bm{\tau}, \bm{\lambda} ) \,
    \bm{x}
  } \,
  \mathbb{P} \left\{
    \exists \, \bm{t}, \bm{s} \in [ \bm{0}, \bm{S} ] \colon
    \bm{\chi}_{u, \bm{\tau}, \bm{\lambda}} ( \bm{t}, \bm{s} )
    >
    \bm{x}
  \right\}
  \mathop{d \bm{x}}
  \leq c_1 e^{c_2 ( G + \sigma^2 )},
\end{equation}
using Lemma~\ref{lemma:integral_estimate}. Here \(G \in \mathbb{R}\) and
\(\sigma^2 > 0\) are any such numbers that for all small enough \(\varepsilon\)
the following two inequalities hold:
\begin{equation}\label{DS:G-and-sigma}
\begin{gathered}
\sup_{F \subset \{ 1, \ldots, d \}}
\sup_{\bm{t} \in [ \bm{0}, \bm{\Lambda} ]}
\bm{w}_F^\top \,
\mathbb{E} \left\{ \bm{\chi}_{u, \bm{\tau}, \bm{\lambda}, F} ( \bm{t}, \bm{s} ) \right\}
\leq
G + \varepsilon \sum_{j = 1}^d | x_j |,
\\[7pt]
\sup_{F \subset \{ 1, \ldots, d \}}
\sup_{\bm{t} \in [ \bm{0}, \bm{\Lambda} ]}
\var \left\{ \bm{w}_F^\top \, \bm{\chi}_{u, \bm{\tau}, \bm{\lambda}, F} ( \bm{t} ) \right\}
\leq \sigma^2.
\end{gathered}
\end{equation}
For our current needs they also must be uniform in \(\bm{k}\). The following
estimate for \(G + \sigma^2\) is proven in the Appendix
(see Section~\ref{DS:calculations}):
\begin{multline}
\label{DS:G-plus-sigma2}
G + \sigma^2
=
c_1
\sum_{i = 1}^n \Bigg[
u^{2 - 2 \alpha_i / \nu_i} \,
\Big[
S_i^{\alpha_i}
+S_i \, \left( ( \lambda_i - \tau_i ) \vee S_i \right)^{\alpha_i - 1}
+\varepsilon ( \lambda_i - \tau_i )^{\alpha_i}
\Big]
\\[7pt]
+u^{2-2\beta_i/\nu_i} \,
\Big[
S_i \, ( \lambda_i \vee S_i )^{\beta_i - 1}
+\varepsilon \, ( \lambda_i \vee S_i )^{\beta_i}
+\varepsilon^{-1} \, S_j^2 \, ( \lambda_j \vee S_j )^{\beta_j - 2}
\Big]
\Bigg].
\end{multline}

\textbf{Comparing the bounds.}
Combining~\eqref{DS-A-bound} for \(A\), the bound~\eqref{DS:B} for
\(B\) with \(G + \sigma^2\) given by~\eqref{DS:G-plus-sigma2}, and plugging
all this into the \(AB\) bound~\eqref{DS:AB-bound}, we arrive at the
following inequality:
\begin{align*}
&
\ln
\frac{
P_{\bm{b}} ( \bm{\tau}, \bm{\lambda}, \bm{S} )
}{
\mathbb{P} \{ \bm{X} ( \bm{0} ) > u \bm{b} \}
}
\\[7pt]
& \hspace{30pt}
\leq
-\frac{1}{4} \sum_{i = 1}^n
\left[
u^{2 - 2 \beta_i / \nu_i} \,
\bm{w}^\top A_{2, i} \, \bm{w} \,
\Big[ \tau_i^{\beta_i} + \lambda_i^{\beta_i} \Big]
+u^{2 - 2 \alpha_i / \nu_i} \,
\frac{\bm{w}^\top A_{5, i} \, \bm{w}}{4} \, ( \lambda_i - \tau_i )^{\alpha_i}
\right]
\\[7pt]
& \hspace{30pt} +
c_1
\sum_{i = 1}^n \Bigg[
u^{2 - 2 \alpha_i / \nu_i} \,
\Big[
S_i^{\alpha_i}
+S_i \, \left( ( \lambda_i - \tau_i ) \vee S_i \right)^{\alpha_i - 1}
+\varepsilon ( \lambda_i - \tau_i )^{\alpha_i}
\Big]
\\[7pt]
+u^{2-2\beta_i/\nu_i} \,
\Big[
S_i \, ( \lambda_i \vee S_i )^{\beta_i - 1}
+\varepsilon \, ( \lambda_i \vee S_i )^{\beta_i}
+\varepsilon^{-1} \, S_j^2 \, ( \lambda_j \vee S_j )^{\beta_j - 2}
\Big]
\Bigg]
\span \omit.
\end{align*}
Let us exclude the terms
\begin{equation*}
\ln H_u ( \bm{\tau} ) \coloneqq
-\frac{1}{4} \sum_{i=0}^n \bm{w}^\top A_{2, i} \, \bm{w} \,
u^{2 - 2 \beta_i / \nu_i} \, \tau_i^{\beta_i}
\end{equation*}
from further considerations, since it will be useful for us as it is. Note that
\(H_u\) will appear without alterations in conclusion of the lemma. Setting
\begin{equation*}
\varepsilon =
\min \left\{
\frac{\bm{w}^\top A_{5, i} \, \bm{w}}{32 \, c_1},
\frac{\bm{w}^\top A_{2, i} \, \bm{w}}{8 \, c_1}
\right\},
\end{equation*}
we find that that the \(i\)-th term is at most
\begin{multline}\label{DS:master-bound}
u^{2 - 2 \beta_i / \nu_i} \left[
-\frac{\bm{w}^\top A_{2, i} \, \bm{w}}{8} \, \lambda_i^{\beta_i}
+c_1
\Big[
S_i \, ( \lambda_i \vee S_i )^{\beta_i - 1}
+\varepsilon^{-1} \, S_j^2 \, ( \lambda_j \vee S_j )^{\beta_j - 2}
\Big]
\right]
\\[7pt]
+u^{2 - 2 \alpha_i / \nu_i} \,
\left[
-\frac{\bm{w}^\top A_{5, i} \, \bm{w}}{32} \, ( \lambda_i - \tau_i )^{\alpha_i}
+c_1
\Big[
S_i^{\alpha_i}
+S_i \, \left( ( \lambda_i - \tau_i ) \vee S_i \right)^{\alpha_i - 1}
\Big]
\right].
\end{multline}

\textbf{Case \(\tau_i = \lambda_i = 0\).}
By assumptions of the lemma, this happens if and only if \(i \in \mathcal{K} \cup \mathcal{J}\). The
right-hand side of~\eqref{DS:master-bound} reads:
\begin{equation*}
c_2 \, S_i^{\beta_i} + c_3 \, u^{2 - 2 \alpha_i / \nu_i} \, S_i^{\alpha_i}
\end{equation*}
with some new \(c_2, \, c_3 > 0\). This bound can be further simplified if we
note that for \(i \in \mathcal{K}\) the second term tends to zero as \(u \to \infty\) for a
fixed \(S_i\). Hence, this contribution is at most
\begin{equation*}
c_2 \, S_i^{\beta_i} + c_4.
\end{equation*}

\textbf{Case \(\lambda_i = \tau_i \neq 0\).}
By assumptions of the lemma, this happens if and only if \(i \in \mathcal{I}' \subsetneq \mathcal{I}\). Note
that this set may be empty. The right-hand side of~\eqref{DS:master-bound}
reads:
\begin{equation*}
u^{2 - 2 \beta_i / \nu_i}
\left[
-\frac{\bm{w}^\top A_{2, i} \, \bm{w}}{4} \, \lambda_i^{\beta_i}
+c_1
\Big[
S_i \, ( \lambda_i \vee S_i )^{\beta_i - 1}
+\varepsilon^{-1} \, S_j^2 \, ( \lambda_j \vee S_j )^{\beta_j - 2}
\Big]
\right]
+c_4 \, S_i^{\alpha_i}.
\end{equation*}
There exists \(N_i\) such that for all \(\lambda_i \geq N_i \, S_i\) this
contribution may be bounded from above by
\begin{equation*}
-\frac{\bm{w}^\top A_{2, i} \, \bm{w}}{8} \,
u^{2 - 2 \beta_i / \nu_i} \,
\lambda_i^{\beta_i}
+c_4 \, S_i^{\alpha_i}.
\end{equation*}

\textbf{Case \(\lambda_i - \tau_i > S_i\).}
By assumptions of the lemma, this condition holds for all \(i \in \mathcal{I} \setminus \mathcal{I}'\), and
this set is non-empty. The right-hand side of~\eqref{DS:master-bound}
reads:
\begin{equation*}
u^{2 - 2 \beta_i / \nu_i}
\left[
-\frac{\bm{w}^\top A_{2, i} \, \bm{w}}{4} \, \lambda_i^{\beta_i}
+c_1
\Big[
S_i \, \lambda_i^{\beta_i - 1}
+\varepsilon^{-1} \, S_j^2 \, \lambda_j^{\beta_j - 2}
\Big]
\right]
+\left[
-\frac{\bm{w}^\top A_{5, i} \, \bm{w}}{32} \, ( \lambda_i - \tau_i )^{\alpha_i}
+c_5 S_i \, ( \lambda_i - \tau_i )^{\alpha_i - 1}
\right].
\end{equation*}
There exists \(N_i\) such that for all \(\lambda_i - \tau_i \geq N_i S_i\) this
contribution is at most
\begin{equation*}
-\frac{\bm{w}^\top A_{2, i} \, \bm{w}}{8} \,
u^{2 - 2 \beta_i / \nu_i} \,
\lambda_i^{\beta_i}
-\frac{\bm{w}^\top A_{5, i} \, \bm{w}}{64} \, ( \lambda_i - \tau_i )^{\alpha_i}.
\end{equation*}

\textbf{Combined bound.}
We have obtained the following inequality: if
\begin{equation}\label{DS:bound-cond}
\lambda_i \geq N_i \, S_i
\quad \text{for} \quad
i \in \mathcal{I}',
\quad \text{and} \quad
\lambda_i - \tau_i \geq N_i \, S_i
\quad \text{for} \quad
i \in \mathcal{I} \setminus \mathcal{I}',
\end{equation}
then
\begin{multline}\label{DS:combined1}
\ln
\frac{
P_{\bm{b}} ( \bm{\tau}, \bm{\lambda}, \bm{S} )
}{
c_7 \, H_u ( \bm{\tau} ) \,
\mathbb{P} \{ \bm{X} ( \bm{0} ) > u \bm{b} \}
}
\leq
-\frac{1}{8} \sum_{i \in \mathcal{I}}
u^{2 - 2 \beta_i / \nu_i} \,
\bm{w}^\top A_{2, i} \, \bm{w}
\, \lambda_i^{\beta_i}
\\[7pt]
-\frac{1}{64} \sum_{i \in \mathcal{I} \setminus \mathcal{I}'}
\bm{w}^\top A_{5, i} \, \bm{w} \, ( \lambda_i - \tau_i )^{\alpha_i}
+c_6 \sum_{i \in \mathcal{I}'} S_i^{\alpha_i}
+c_6 \sum_{i \in \mathcal{J} \cup \mathcal{K}} S_i^{\beta_i}.
\end{multline}
Next, we want to lift conditions~\eqref{DS:bound-cond}.

\textbf{Lifting the condition \(\lambda_i - \tau_i \geq N_i \, S_i\).}
Assume that there is exactly one \(i \in \mathcal{I} \setminus \mathcal{I}'\) such that
\(\lambda_i - \tau_i < N_i \, S_i\). The general case can be addressed in the same way.
Define \(\Delta \coloneqq ( \lambda_i - \tau_i - S_i ) / N_i\), and slice
\(P_{\bm{b}}\) in the \(i\)-th direction using this new scale:
\begin{equation}\label{DS:sum-bound}
P_{\bm{b}} ( \bm{\tau}, \bm{\lambda}, \bm{S} )
\leq
\sum_{j = \tau_i / \Delta}^{( \tau_i + S_i ) / \Delta}
\sum_{k = \lambda_i / \Delta}^{( \lambda_i + S_i ) / \Delta}
P_{\bm{b}} ( k, j, \Delta )
\leq
S_i^2 \, \Delta^{-2} \,
P_{\bm{b}} \left(
j^{ * } \Delta, \, k_{ * } \, \Delta, \, \Delta
\right),
\end{equation}
where \(j^{ * } = ( \tau_i + S_i ) / \Delta\), \(k_{ * } = \lambda_i / \Delta\), and
\begin{equation*}
P_{\bm{b}} ( \tau, \lambda, \Delta )
=
\mathbb{P} \left\{
\begin{aligned}
& \exists \, t_i \in u^{-2/\nu_i} [ \tau, \tau + \Delta ],
&
\\[3pt]
& \exists \, \bm{t}' \in u^{-2/\bm{\nu}'}  [ \bm{\tau}', \bm{\tau}' + \bm{S}' ] \colon
& \bm{X} ( \bm{t} ) > u \bm{b},
\\[3pt]
& \exists \, s_i \in u^{-2/\nu_i} [ \lambda, \lambda + \Delta ],
&
\\[3pt]
& \exists \, \bm{s}' \in u^{-2/\bm{\nu}'} [ \bm{\lambda}', \bm{\lambda}' + \bm{S}' ] \colon
& \bm{X} ( \bm{s} ) > u \bm{b}
\end{aligned}
\right\},
\qquad
\bm{x}' = ( x_j )_{j \neq i}.
\end{equation*}
Note that
\begin{equation*}
k_{ * } \, \Delta - j^{ * } \Delta
= \lambda_i - \tau_i - S_i
= N_i \Delta,
\end{equation*}
which means that we can apply~\eqref{DS:combined1}. Therefore,
\begin{multline}\label{DS:combined2}
\ln
\frac{
P_{\bm{b}} ( \bm{\tau}, \bm{\lambda}, \bm{S} )
}{
c_7 \, H_u ( \bm{\tau} ) \,
\mathbb{P} \{ \bm{X} ( \bm{0} ) > u \bm{b} \} \,
\prod_{i \in \mathcal{I} \setminus \mathcal{I}'}
S_i^2 \, \Delta^{-2}
}
\leq
-\frac{1}{8} \sum_{i \in \mathcal{I}}
u^{2 - 2 \beta_i / \nu_i} \,
\bm{w}^\top A_{2, i} \, \bm{w} \,
 \lambda_i^{\beta_i}
\\[7pt]
-\frac{1}{64} \sum_{i \in \mathcal{I} \setminus \mathcal{I}'} \bm{w}^\top A_{5, i} \,
\bm{w} \, ( \lambda_i - \tau_i - S_i )^{\alpha_i}
+c_6 \sum_{i \in \mathcal{I}'} S_i^{\alpha_i}
+c_6 \sum_{i \in \mathcal{J} \cup \mathcal{K}} S_i^{\beta_i}.
\end{multline}
which is now valid without the second condition of~\eqref{DS:bound-cond}.

\textbf{Lifting the condition \(\lambda_i \geq N_i \, S_i\).}
Assume now that there is exactly one \(i \in \mathcal{I}'\) such that
\(\lambda_i < N_i \, S_i\). Recall that for \(i \in \mathcal{I}'\) holds \(\lambda_i = \tau_i\).
Using the same approach, take \(\Delta \coloneqq ( \lambda_i \wedge 1 ) / N_i\). Then,
\begin{equation}\label{DS:sum-bound2}
P_{\bm{b}} ( \bm{\tau}, \bm{\lambda}, \bm{S} )
\leq
\sum_{k \neq j}
P_{\bm{b}} ( k \Delta, j \Delta, \Delta )
+\sum_{k = j} P_{\bm{b}} ( j \Delta, j \Delta, \Delta ).
\end{equation}
where the sums are taken over
\begin{equation*}
\left\{
( k, j ) \in \mathbb{Z}^2_{ + } \colon
\lambda_i / \Delta \leq k, \, j \leq ( \lambda_i + S_i ) / \Delta
\right\}.
\end{equation*}
Note that all the results up to this point were valid without any assumptions on
\(\bm{S} > \bm{0}\). Now, assuming that \(S_i\) is large enough, we can use
the bounds proven above to show that
\begin{equation*}
\sum_{| k - j | \geq 2}
P_{\bm{b}} ( k \Delta, j \Delta, \Delta )
\leq c_8 \sum_{k = j} P_{\bm{b}} ( j \Delta, j \Delta, \Delta ).
\end{equation*}
It can also be shown that
\begin{equation*}
\sum_{| k - j | = 1} P_{\bm{b}} ( k \Delta, j \Delta, \Delta )
\leq c_9 \sum_{k = j} P_{\bm{b}} ( j \Delta, j \Delta, \Delta ).
\end{equation*}
It is therefore enough to bound the second sum of~\eqref{DS:sum-bound2}.
We have
\begin{equation}\label{DS:sum-bound3}
P_{\bm{b}} ( \bm{\tau}, \bm{\lambda}, \bm{S} )
\leq
c_9 \, S_i \, \Delta^{-1} \, P_{\bm{b}} ( j_{ * } \Delta, j_{ * } \Delta, \Delta ).
\end{equation}
Since
\(j_{ * } \Delta = \lambda_i \geq \lambda_i \wedge 1 = N_i \, \Delta\),
the bound~\eqref{DS:combined2} is applicable. The \(i\)-th term of the
right-hand side is
\begin{equation*}
-\frac{\bm{w}^\top A_{2, i} \, \bm{w}}{8} \,
u^{2 - 2 \beta_i / \nu_i} \, ( j_{ * } \, \Delta )^{\beta_i}
+c_6 \, \Delta_i^{\alpha_i}.
\end{equation*}
Note that \(j_{ * } = \lambda_i / \Delta\), so the first term did not change, whereas the
second is now bounded by a constant: \(c_6 ( \lambda_i \wedge 1 )^{\alpha_i} \leq c_6\). We have
thus shown that the following bound
\begin{multline}\label{DS:combined3}
\ln
\frac{
P_{\bm{b}} ( \bm{\tau}, \bm{\lambda}, \bm{S} )
}{
c_{10} \, H_u ( \bm{\tau} ) \,
\mathbb{P} \{ \bm{X} ( \bm{0} ) > u \bm{b} \} \,
\prod_{i \in \mathcal{I} \setminus \mathcal{I}'}
S_i^2 \, \Delta^{-2}
\prod_{i \in \mathcal{I}} S_i \, \Delta^{-1}
}
\\[7pt]
\leq
-\frac{1}{8} \sum_{i \in \mathcal{I}}
u^{2 - 2 \beta_i / \nu_i} \,
\bm{w}^\top A_{2, i} \, \bm{w} \,
 \lambda_i^{\beta_i}
-\frac{1}{64} \sum_{i \in \mathcal{I} \setminus \mathcal{I}'} \bm{w}^\top A_{5, i} \,
\bm{w} \, ( \lambda_i - \tau_i - S_i )^{\alpha_i}
+c_6 \sum_{i \in \mathcal{J} \cup \mathcal{K}} S_i^{\beta_i}
\end{multline}
is valid without conditions~\eqref{DS:bound-cond}. This concludes the
proof.
\end{proof}
\subsection{Positivity of constant}
\label{sec:org6e5f7bf}
\begin{lemma} \label{lemma:single-point-event}
For all \(\bm{t} \in \mathbb{R}^n\) holds
\begin{equation*}
\int_{\mathbb{R}^d} e^{\bm{1}^\top \bm{x}} \,
\mathbb{P} \left\{
\bm{Y}_{\bm{\nu}, \mathbb{V}} ( \bm{t} )
+\bm{Z} ( \bm{t} )
-\bm{d}_{\bm{\nu}, \mathbb{W}} ( \bm{t} ) > \bm{x}
\right\}
\mathop{d \bm{x}}
=
\exp \left(
-\sum_{i \in \mathcal{J} \cup \mathcal{K}} | t_i |^{\nu_i} \bm{1}^\top W_i \, \bm{1}
+\bm{1}^\top R_{\bm{Z}} ( \bm{t}, \bm{t} ) \, \bm{1}
\right).
\end{equation*}
\end{lemma}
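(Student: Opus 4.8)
The plan is to observe that, for a fixed \(\bm{t}\), the random vector
\(\bm{\xi} := \bm{Y}_{\bm{\nu}, \mathbb{V}} ( \bm{t} ) + \bm{Z} ( \bm{t} ) - \bm{d}_{\bm{\nu}, \mathbb{W}} ( \bm{t} )\)
is a single Gaussian vector in \(\mathbb{R}^d\) (the value at \(\bm{t}\) of a Gaussian field, plus an independent Gaussian vector, shifted by a deterministic vector), so that the integral is nothing but a moment generating function evaluated at \(\bm{1}\). Concretely, since the integrand \(e^{\bm{1}^\top \bm{x}} \, \mathbb{P} \{ \bm{\xi} > \bm{x} \}\) is nonnegative and measurable, Tonelli's theorem together with the elementary identity \(\int_{ \{ \bm{x} < \bm{v} \} } e^{\bm{1}^\top \bm{x}} \mathop{d \bm{x}} = \prod_{j = 1}^d \int_{-\infty}^{v_j} e^{x_j} \mathop{d x_j} = e^{\bm{1}^\top \bm{v}}\) give
\begin{equation*}
\int_{\mathbb{R}^d} e^{\bm{1}^\top \bm{x}} \, \mathbb{P} \left\{ \bm{\xi} > \bm{x} \right\} \mathop{d \bm{x}}
= \mathbb{E} \left[ \int_{\mathbb{R}^d} e^{\bm{1}^\top \bm{x}} \, \mathbb{1} \{ \bm{\xi} > \bm{x} \} \mathop{d \bm{x}} \right]
= \mathbb{E} \left[ e^{\bm{1}^\top \bm{\xi}} \right],
\end{equation*}
the right-hand side being finite because \(\bm{\xi}\) is Gaussian. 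It then remains only to evaluate the Gaussian moment generating function
\(\mathbb{E} [ e^{\bm{1}^\top \bm{\xi}} ] = \exp ( \bm{1}^\top \mathbb{E} [ \bm{\xi} ] + \tfrac{1}{2} \var ( \bm{1}^\top \bm{\xi} ) )\).

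For the mean, the definitions give \(\mathbb{E} [ \bm{Y}_{\bm{\nu}, \mathbb{V}} ( \bm{t} ) ] = - \sum_{i \in \mathcal{I} \cup \mathcal{J}} S_{\nu_i, V_i} ( t_i ) \, \bm{1}\) and \(\mathbb{E} [ \bm{Z} ( \bm{t} ) ] = \bm{0}\), whence
\(\bm{1}^\top \mathbb{E} [ \bm{\xi} ] = - \sum_{i \in \mathcal{I} \cup \mathcal{J}} \bm{1}^\top S_{\nu_i, V_i} ( t_i ) \, \bm{1} - \sum_{i \in \mathcal{J} \cup \mathcal{K}} | t_i |^{\nu_i} \, \bm{1}^\top W_i \, \bm{1}\).
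For the variance, the mutual independence of the fields \(\bm{Y}_{\nu_i, V_i}\), \(i \in \mathcal{I} \cup \mathcal{J}\), and of \(\bm{Z}\) gives
\begin{equation*}
\var \left( \bm{1}^\top \bm{\xi} \right)
= \sum_{i \in \mathcal{I} \cup \mathcal{J}} \bm{1}^\top R_{\nu_i, V_i} ( t_i, t_i ) \, \bm{1} + \bm{1}^\top R_{\bm{Z}} ( \bm{t}, \bm{t} ) \, \bm{1}.
\end{equation*}
Here one uses \(S_{\alpha, V} ( 0 ) = 0\) and the fact that \(\bm{1}^\top S_{\alpha, V} ( -t ) \, \bm{1} = \bm{1}^\top S_{\alpha, V} ( t )^\top \, \bm{1} = \bm{1}^\top S_{\alpha, V} ( t ) \, \bm{1}\) (the middle expression being a scalar), so that by~\eqref{def:R-and-S} one has \(\bm{1}^\top R_{\nu_i, V_i} ( t_i, t_i ) \, \bm{1} = 2 \, \bm{1}^\top S_{\nu_i, V_i} ( t_i ) \, \bm{1}\).

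Plugging these into the moment generating function and collecting terms, one sees that for every \(i \in \mathcal{I} \cup \mathcal{J}\) the term \(\tfrac{1}{2} \cdot 2 \, \bm{1}^\top S_{\nu_i, V_i} ( t_i ) \, \bm{1}\) coming from the variance exactly cancels the term \(- \bm{1}^\top S_{\nu_i, V_i} ( t_i ) \, \bm{1}\) coming from the mean; what survives is precisely \(- \sum_{i \in \mathcal{J} \cup \mathcal{K}} | t_i |^{\nu_i} \, \bm{1}^\top W_i \, \bm{1}\) together with the \(\bm{Z}\)-variance contribution, which is the right-hand side of the asserted identity. There is no serious obstacle: the only two points that need a word of justification are the interchange of integration and expectation (covered by nonnegativity of the integrand and finiteness of the Gaussian Laplace transform) and the bookkeeping of the deterministic shift built into \(\bm{Y}_{\bm{\nu}, \mathbb{V}}\) against the identity \(\bm{1}^\top R_{\nu, V} ( t, t ) \, \bm{1} = 2 \, \bm{1}^\top S_{\nu, V} ( t ) \, \bm{1}\) that drives the cancellation.
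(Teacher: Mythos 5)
Your proof is correct and follows essentially the same route as the paper's: reduce the integral to the Gaussian moment generating function of \(\bm{1}^\top \bm{\xi}\) via Tonelli and the identity \(\int_{\{\bm{x} < \bm{v}\}} e^{\bm{1}^\top \bm{x}} \mathop{d\bm{x}} = e^{\bm{1}^\top \bm{v}}\), then use the independence of the \(\bm{Y}_{\nu_i, V_i}\) and \(\bm{Z}\) together with \(R_{\alpha, V}(t, t) = 2 S_{\alpha, V}(t)\) to cancel the built-in drift against half the variance. One bookkeeping point: your (correct) computation yields \(\tfrac{1}{2} \bm{1}^\top R_{\bm{Z}}(\bm{t}, \bm{t}) \bm{1}\) for the \(\bm{Z}\)-contribution, whereas the lemma as stated (and the final line of the paper's own proof) has \(\bm{1}^\top R_{\bm{Z}}(\bm{t}, \bm{t}) \bm{1}\) without the \(\tfrac{1}{2}\) --- this appears to be a slip on the paper's side rather than yours, but your closing claim that your expression ``is the right-hand side of the asserted identity'' is not literally true as written, so the discrepancy deserves an explicit remark.
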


\begin{proof}
Set
\begin{equation} \label{tld}
\widetilde{\bm{Y}}_{\bm{\nu}, \mathbb{V}} ( \bm{t} )
= \sum_{i \in \mathcal{I} \cup \mathcal{J}} \bm{Y}_{\nu_i, V_i} ( t_i ),
\qquad
\widetilde{\bm{d}}_{\bm{\nu}, \mathbb{W}} ( \bm{t} )
= \sum_{i \in \mathcal{I} \cup \mathcal{J}} S_{\nu_i, V_i} ( t_i ) \bm{1}
+\sum_{j \in \mathcal{J} \cup \mathcal{K}} | t_j |^{\nu_i} W_i \bm{1}
\end{equation}
and note that
\[
\bm{Y}_{\bm{\nu}, \mathbb{V}} ( \bm{t} )
-\bm{d}_{\bm{\nu}, \mathbb{W}} ( \bm{t} )
=
\widetilde{\bm{Y}}_{\bm{\nu}, \mathbb{V}} ( \bm{t} )
-\widetilde{\bm{d}}_{\bm{\nu}, \mathbb{W}} ( \bm{t} ).
\]
The claim follows from
\begin{align*}
&
\int_{\mathbb{R}^d} e^{\bm{1}^\top \bm{x}}
\mathbb{P} \left\{
  \bm{Y}_{\bm{\nu}, \mathbb{V}} ( \bm{t} )
  +\bm{Z} ( \bm{t} )
  -\bm{d}_{\bm{\nu}, \mathbb{W}} ( \bm{t} ) > \bm{x}
\right\}
\mathop{d \bm{x}}
\\[7pt]
& \hspace{50pt} =
\mathbb{E} \left\{
\int_{\mathbb{R}^d} e^{\bm{1}^\top \bm{x}}
\mathbb{1} \left\{
\bm{Y}_{\bm{\nu}, \mathbb{V}} ( \bm{t} )
+\bm{Z} ( \bm{t} )
-\bm{d}_{\bm{\nu}, \mathbb{W}} ( \bm{t} ) > \bm{x}
\right\}
\mathop{d \bm{x}}
\right\}
\\[7pt]
& \hspace{50pt} =
\exp \left(
-\bm{1}^\top \widetilde{\bm{d}}_{\bm{\nu}, \mathbb{W}} ( \bm{t} )
\right)
\mathbb{E} \left\{
\exp \left(
\bm{1}^\top \widetilde{\bm{Y}}_{\bm{\nu}, \mathbb{V}} ( \bm{t} )
\right)
\right\}
\,
\mathbb{E} \left\{
\exp \left( \bm{1}^\top \bm{Z} ( \bm{t} ) \right)
\right\}
\\[7pt]
& \hspace{50pt} =
\exp \left(
-\sum_{i \in \mathcal{J} \cup \mathcal{K}} | t_i |^{\nu_i} \bm{1}^\top W_i \, \bm{1}
\right)
\exp \left(
-\sum_{i \in \mathcal{I} \cup \mathcal{J}} \bm{1}^\top S_{\nu_i, V_i} ( t_i ) \, \bm{1}
\right)
\\[7pt]
& \hspace{50pt}
\times
\exp \left(
\frac{1}{2}
\sum_{i \in \mathcal{I} \cup \mathcal{J}} \bm{1}^\top R_{\nu_i, V_i} ( t_i, t_i ) \, \bm{1}
\right)
\exp \left(
\bm{1}^\top R_{\bm{Z}} ( \bm{t}, \bm{t} ) \, \bm{1}
\right)
\end{align*}
along with \(R_{\alpha, V} ( t, t ) = 2 S_{\alpha, V} ( t )\).
\end{proof}

\begin{lemma} \label{lemma:double-point-event}
For all \(\bm{t}, \ \bm{s} \in \mathbb{R}^n\) holds
\begin{multline*}
  \int_{\mathbb{R}^d} e^{\bm{1}^\top \bm{x}} \,
  \mathbb{P} \left\{
    \begin{aligned}
      \bm{Y}_{\bm{\nu}, \mathbb{V}} ( \bm{t} )
      +\bm{Z} ( \bm{t} )
      -\bm{d}_{\bm{\nu}, \mathbb{W}} ( \bm{t} ) > \bm{x} \\
      \bm{Y}_{\bm{\nu}, \mathbb{V}} ( \bm{s} )
      +\bm{Z} ( \bm{s} )
      -\bm{d}_{\bm{\nu}, \mathbb{W}} ( \bm{s} ) > \bm{x}
    \end{aligned}
  \right\}
  \mathop{d \bm{x}}
  \leq \\[7pt] \leq
  \exp \left(
    -\frac{1}{2} \sum_{i \in \mathcal{J} \cup \mathcal{K}}
  \Big[ | t_i |^{\nu_i} + | s_i |^{\nu_i} \Big] \bm{1}^\top W_i \, \bm{1}
  \right)
  \exp \left(
    -\frac{1}{4} \sum_{i \in \mathcal{I} \cup \mathcal{J}}
    \bm{1}^\top S_{\nu_i, V_i} ( t_i - s_i ) \bm{1}
  \right)
  \\[7pt]
  \times
  \exp \left(
    \frac{1}{2} \, \bm{1}^\top
    R_{\bm{Z}} ( \bm{t}, \bm{t} )
    +R_{\bm{Z}} ( \bm{t}, \bm{s} )
    +R_{\bm{Z}} ( \bm{s}, \bm{t} )
    +R_{\bm{Z}} ( \bm{s}, \bm{s} )
    \bm{1}
  \right).
\end{multline*}
\end{lemma}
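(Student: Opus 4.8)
The plan is to follow the same Fubini-and-Laplace-transform strategy used in the proof of Lemma~\ref{lemma:single-point-event}, but now applied to the intersection of two exceedance events. First I would bound the probability of the intersection from above by relaxing to a single event involving the averaged field: since
$\{\bm{U}(\bm{t}) > \bm{x}\} \cap \{\bm{U}(\bm{s}) > \bm{x}\} \subset \{\tfrac12(\bm{U}(\bm{t})+\bm{U}(\bm{s})) > \bm{x}\}$
for any vector field $\bm{U}$, where here $\bm{U} = \bm{Y}_{\bm{\nu},\mathbb{V}} + \bm{Z} - \bm{d}_{\bm{\nu},\mathbb{W}}$. Thus the left-hand side is at most
\begin{equation*}
\int_{\mathbb{R}^d} e^{\bm{1}^\top \bm{x}}\,
\mathbb{P}\left\{ \tfrac{1}{2}\big[\bm{U}(\bm{t}) + \bm{U}(\bm{s})\big] > \bm{x} \right\}
\mathop{d\bm{x}}
= \mathbb{E}\left\{ \exp\!\left( \bm{1}^\top \tfrac{1}{2}\big[\bm{U}(\bm{t}) + \bm{U}(\bm{s})\big] \right) \right\},
\end{equation*}
the equality being the elementary identity $\int_{\mathbb{R}^d} e^{\bm{1}^\top\bm{x}}\,\mathbb{1}\{\bm{v}>\bm{x}\}\,d\bm{x} = e^{\bm{1}^\top\bm{v}}$ combined with Tonelli.

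Next I would split $\bm{U} = \bm{G} - \bm{d}_{\bm{\nu},\mathbb{W}}$ into its Gaussian part $\bm{G} = \bm{Y}_{\bm{\nu},\mathbb{V}} + \bm{Z}$ (recall $\bm{Z}$ is independent of $\bm{Y}_{\bm{\nu},\mathbb{V}}$) and its deterministic part, and use $\mathbb{E}\{e^{\bm{1}^\top \bm{\xi}}\} = e^{\frac12 \var(\bm{1}^\top\bm{\xi})}$ for a centered Gaussian vector $\bm{\xi}$. The deterministic part contributes the factor $\exp(-\tfrac12 \bm{1}^\top[\bm{d}_{\bm{\nu},\mathbb{W}}(\bm{t}) + \bm{d}_{\bm{\nu},\mathbb{W}}(\bm{s})]\bm{1}\,\text{-style terms})$; writing $\bm{d}_{\bm{\nu},\mathbb{W}}(\bm{t}) = \sum_{i\in\mathcal{J}\cup\mathcal{K}}|t_i|^{\nu_i} W_i\bm{1}$ this is exactly $\exp(-\tfrac12\sum_{i\in\mathcal{J}\cup\mathcal{K}}[|t_i|^{\nu_i}+|s_i|^{\nu_i}]\bm{1}^\top W_i\bm{1})$, the first factor on the right-hand side. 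For the Gaussian part I would expand
\begin{equation*}
\var\!\left(\bm{1}^\top \tfrac{1}{2}[\bm{G}(\bm{t})+\bm{G}(\bm{s})]\right)
= \tfrac{1}{4}\,\bm{1}^\top\big[\Gamma(\bm{t},\bm{t}) + \Gamma(\bm{t},\bm{s}) + \Gamma(\bm{s},\bm{t}) + \Gamma(\bm{s},\bm{s})\big]\bm{1},
\end{equation*}
where $\Gamma$ is the covariance of $\bm{G}$, and by independence $\Gamma = R_{\bm{Y}_{\bm{\nu},\mathbb{V}}} + R_{\bm{Z}}$. The $R_{\bm{Z}}$-contribution is precisely the last factor $\exp(\tfrac12\bm{1}^\top[R_{\bm{Z}}(\bm{t},\bm{t})+R_{\bm{Z}}(\bm{t},\bm{s})+R_{\bm{Z}}(\bm{s},\bm{t})+R_{\bm{Z}}(\bm{s},\bm{s})]\bm{1})$.

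It remains to treat the $R_{\bm{Y}_{\bm{\nu},\mathbb{V}}}$-contribution and show it is bounded by $\exp(-\tfrac14\sum_{i\in\mathcal{I}\cup\mathcal{J}}\bm{1}^\top S_{\nu_i,V_i}(t_i-s_i)\bm{1})$ — this is the one genuinely non-trivial step. Since $\bm{Y}_{\bm{\nu},\mathbb{V}}(\bm{t}) = \sum_{i\in\mathcal{I}\cup\mathcal{J}}[\bm{Y}_{\nu_i,V_i}(t_i) - S_{\nu_i,V_i}(t_i)\bm{1}]$ is a sum of independent-in-$i$ summands, the quadratic form splits as a sum over $i\in\mathcal{I}\cup\mathcal{J}$, and the deterministic shift $S_{\nu_i,V_i}(t_i)\bm{1}$ drops out of the variance; so I would reduce to a one-parameter statement: for each $i$,
\begin{equation*}
\tfrac{1}{4}\,\bm{1}^\top\big[R_{\nu_i,V_i}(t_i,t_i) + R_{\nu_i,V_i}(t_i,s_i) + R_{\nu_i,V_i}(s_i,t_i) + R_{\nu_i,V_i}(s_i,s_i)\big]\bm{1} \;\leq\; -\tfrac{1}{4}\,\bm{1}^\top S_{\nu_i,V_i}(t_i - s_i)\bm{1}.
\end{equation*}
Using the definition $R_{\alpha,V}(t,s) = S_{\alpha,V}(t) + S_{\alpha,V}(-s) - S_{\alpha,V}(t-s)$ together with $R_{\alpha,V}(t,t) = 2S_{\alpha,V}(t)$ and the symmetry $S_{\alpha,V}(-s) + S_{\alpha,V}(s)$ appearing as $S_{\alpha,V} + S_{\alpha,V}^\top$ applied to $\bm 1$ (so that $\bm 1^\top S_{\alpha,V}(u)\bm 1 = \bm 1^\top S_{\alpha,V}(-u)\bm 1 = |u|^\alpha \bm 1^\top(V+V^\top)\bm 1/1$... careful: $\bm1^\top S_{\alpha,V}(u)\bm1$ equals $|u|^\alpha\bm1^\top V\bm1$ for $u\ge0$ and $|u|^\alpha\bm1^\top V^\top\bm1$ for $u<0$, and $\bm1^\top V\bm1 = \bm1^\top V^\top\bm1$), the bracket on the left collapses: the $S(t_i,t_i)$ and $S(s_i,s_i)$ diagonal terms together with the off-diagonal $S(t_i,s_i) + S(s_i,t_i)$ terms combine to leave exactly $-S_{\nu_i,V_i}(t_i-s_i) - S_{\nu_i,V_i}(s_i-t_i)$ contracted with $\bm 1$, which is $-\bm1^\top S_{\nu_i,V_i}(t_i-s_i)\bm1$ up to the factor; this is where the algebra must be done carefully, matching the indicator cases $t_i \geq s_i$ versus $t_i < s_i$. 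The positive-semidefiniteness hidden in $V$ (implicitly assumed, as $\bm{Y}_{\nu,V}$ is a genuine covariance) guarantees the contracted quantities have the right sign. Assembling the three factors gives the claimed bound. The main obstacle is thus purely bookkeeping in the last step — tracking the $\mathbb{1}_{t\geq 0}$ versus $\mathbb{1}_{t<0}$ branches in $S_{\nu_i,V_i}$ and confirming no cross terms survive — rather than anything conceptual.
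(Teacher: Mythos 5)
Your overall strategy is exactly the paper's: relax the intersection to the event that the average of the two fields exceeds \(\bm{x}\), integrate out \(\bm{x}\) via \(\int_{\mathbb{R}^d} e^{\bm{1}^\top\bm{x}}\mathbb{1}\{\bm{v}>\bm{x}\}\,d\bm{x}=e^{\bm{1}^\top\bm{v}}\) to reduce to an exponential moment, and split that moment into the \(\bm{d}_{\bm{\nu},\mathbb{W}}\)-, \(\bm{Z}\)- and \(\bm{Y}_{\bm{\nu},\mathbb{V}}\)-contributions. The first and third factors of the claimed bound come out exactly as you say.

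The last step, however, is wrong as written. You discard the deterministic drift \(-\sum_{i}S_{\nu_i,V_i}(t_i)\,\bm{1}\) built into the definition of \(\bm{Y}_{\bm{\nu},\mathbb{V}}\), remarking only that it ``drops out of the variance''. It does drop out of the variance, but it does not drop out of the exponential moment: your \(\bm{G}=\bm{Y}_{\bm{\nu},\mathbb{V}}+\bm{Z}\) is \emph{not} centered, and its mean contributes the factor \(\exp\bigl(-\tfrac12\sum_i\bm{1}^\top[S_{\nu_i,V_i}(t_i)+S_{\nu_i,V_i}(s_i)]\bm{1}\bigr)\). Without it, the ``one-parameter statement'' you reduce to is false. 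Indeed, writing \(\sigma_u:=\bm{1}^\top S_{\nu_i,V_i}(u)\bm{1}\) and using \(R_{\nu_i,V_i}(t,s)=S_{\nu_i,V_i}(t)+S_{\nu_i,V_i}(-s)-S_{\nu_i,V_i}(t-s)\) together with \(\bm{1}^\top V\bm{1}=\bm{1}^\top V^\top\bm{1}\), one finds
\[
\bm{1}^\top\Big[R_{\nu_i,V_i}(t,t)+R_{\nu_i,V_i}(t,s)+R_{\nu_i,V_i}(s,t)+R_{\nu_i,V_i}(s,s)\Big]\bm{1}
=4\sigma_t+4\sigma_s-2\sigma_{t-s},
\]
which is typically positive, whereas your claimed upper bound \(-\tfrac14\sigma_{t-s}\) is negative: the covariance bracket does \emph{not} collapse to \(-S_{\nu_i,V_i}(t-s)-S_{\nu_i,V_i}(s-t)\) on its own. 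The surviving \(\sigma_t\) and \(\sigma_s\) terms are cancelled precisely by the omitted mean contribution (note the bracket enters the exponent with prefactor \(\tfrac18\), not \(\tfrac14\), since \(\mathbb{E}\{e^{\bm{1}^\top\bm{\xi}}\}=e^{\frac12\var(\bm{1}^\top\bm{\xi})}\) applies on top of the \(\tfrac14\) from squaring the \(\tfrac12\); thus \(\tfrac18\cdot4\sigma_t=\tfrac12\sigma_t\) matches the \(-\tfrac12\sigma_t\) from the mean), leaving exactly \(-\tfrac14\sigma_{t-s}\) as required. Once the mean is restored the computation closes and coincides with the paper's proof.
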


\begin{proof}
Set \(\widetilde{\bm{Y}}_{\bm{\nu}, \mathbb{V}}\) and
\(\widetilde{\bm{d}}_{\bm{\nu}, \mathbb{W}}\) as in~\eqref{tld}. By
\begin{equation*}
  \mathbb{P} \left\{
    \begin{aligned}
      \bm{Y}_{\bm{\nu}, \mathbb{V}} ( \bm{t} )
      +\bm{Z} ( \bm{t} )
      -\bm{d}_{\bm{\nu}, \mathbb{W}} ( \bm{t} ) > \bm{x} \\
      \bm{Y}_{\bm{\nu}, \mathbb{V}} ( \bm{s} )
      +\bm{Z} ( \bm{s} )
      -\bm{d}_{\bm{\nu}, \mathbb{W}} ( \bm{s} ) > \bm{x}
    \end{aligned}
  \right\}
  \leq
  \mathbb{P} \left\{
  \begin{multlined}
  \bm{Y}_{\bm{\nu}, \mathbb{V}} ( \bm{t} )
  -\bm{d}_{\bm{\nu}, \mathbb{W}} ( \bm{t} )
  +\bm{Z} ( \bm{t} )
  \\
  +\bm{Y}_{\bm{\nu}, \mathbb{V}} ( \bm{s} )
  -\bm{d}_{\bm{\nu}, \mathbb{W}} ( \bm{s} )
  +\bm{Z} ( \bm{s} )
  > 2 \bm{x}
  \end{multlined}
  \right\},
\end{equation*}
we have
\begin{multline*}
  \int_{\mathbb{R}^d} e^{\bm{1}^\top \bm{x}} \,
  \mathbb{P} \left\{
    \begin{aligned}
      \bm{Y}_{\bm{\nu}, \mathbb{V}} ( \bm{t} )
      +\bm{Z} ( \bm{t} )
      -\bm{d}_{\bm{\nu}, \mathbb{W}} ( \bm{t} ) > \bm{x} \\
      \bm{Y}_{\bm{\nu}, \mathbb{V}} ( \bm{s} )
      +\bm{Z} ( \bm{s} )
      -\bm{d}_{\bm{\nu}, \mathbb{W}} ( \bm{s} ) > \bm{x}
    \end{aligned}
  \right\}
  \mathop{d \bm{x}}
  \leq
  \exp \left(
    -\frac{1}{2} \, \bm{1}^\top \Big[
      \widetilde{\bm{d}}_{\bm{\nu}, \mathbb{W}} ( \bm{t} )
      +\widetilde{\bm{d}}_{\bm{\nu}, \mathbb{W}} ( \bm{s} )
    \Big]
  \right)
  \\[7pt]
  \times
  \mathbb{E} \left\{
    \exp \left(
      \frac{1}{2} \, \bm{1}^\top \Big[
        \widetilde{\bm{Y}}_{\bm{\nu}, \mathbb{V}} ( \bm{t} )
        +\widetilde{\bm{Y}}_{\bm{\nu}, \mathbb{V}} ( \bm{s} )
      \Big]
    \right)
  \right\}
  \,
  \mathbb{E} \left\{
    \exp \left(
      \frac{1}{2} \, \bm{1}^\top
      \left( \bm{Z} ( \bm{t} ) + \bm{Z} ( \bm{s} ) \right) \,
      \bm{\mathcal{N}}
    \right)
  \right\}.
\end{multline*}
First, compute the last expectation:
\begin{equation*}
\mathbb{E} \left\{ \exp \left(
\frac{1}{2} \,
\bm{1}^\top \left( \bm{Z} ( \bm{t} ) + \bm{Z} ( \bm{s} ) \right)
\right) \right\}
=
\exp \left(
\frac{1}{2} \,
\bm{1}^\top
\Big[
R_{\bm{Z}} ( \bm{t}, \bm{t} )
+R_{\bm{Z}} ( \bm{t}, \bm{s} )
+R_{\bm{Z}} ( \bm{s}, \bm{t} )
+R_{\bm{Z}} ( \bm{s}, \bm{s} )
\Big]
\bm{1}
\right).
\end{equation*}
Since
\begin{align*}
&
\mathbb{E} \left\{
  \exp \left(
    \frac{1}{2} \bm{1}^\top \Big[
      \widetilde{\bm{Y}}_{\bm{\nu}, \mathbb{V}} ( \bm{t} )
      +\widetilde{\bm{Y}}_{\bm{\nu}, \mathbb{V}} ( \bm{s} )
    \Big]
  \right)
\right\}
\\[7pt]
& \hspace{30pt}
=
\exp \left(
  \frac{1}{8}
  \bm{1}^\top
  \mathbb{E} \left\{
    \Big[
      \widetilde{\bm{Y}}_{\bm{\nu}, \mathbb{V}} ( \bm{t} )
      +\widetilde{\bm{Y}}_{\bm{\nu}, \mathbb{V}} ( \bm{s} )
    \Big]
    \Big[
      \widetilde{\bm{Y}}_{\bm{\nu}, \mathbb{V}} ( \bm{t} )
      +\widetilde{\bm{Y}}_{\bm{\nu}, \mathbb{V}} ( \bm{s} )
    \Big]^\top
  \right\}
  \bm{1}
\right)
\\[7pt]
& \hspace{30pt} =
\exp \left(
  \frac{1}{8}
  \sum_{i \in \mathcal{I} \cup \mathcal{J}}
  \bm{1}^\top
  \Big[
    R_{\nu_i, V_i} ( t_i, t_i )
    +R_{\nu_i, V_i} ( t_i, s_i )
    +R_{\nu_i, V_i} ( s_i, t_i )
    +R_{\nu_i, V_i} ( s_i, s_i )
  \Big]
  \bm{1}
\right)
\\[7pt]
& \hspace{30pt} =
\exp \left(
  \frac{1}{4} \sum_{i \in \mathcal{I} \cup \mathcal{J}}
  \bm{1}^\top
  \Big[
    S_{\nu_i, V_i} ( t_i )
    +R_{\nu_i, V_i} ( t_i, s_i )
    +S_{\nu_i, V_i} ( s_i )
  \Big]
  \bm{1}
\right)
\end{align*}
and
\[
\bm{1}^\top \Big[
  \widetilde{\bm{d}}_{\bm{\nu}, \mathbb{W}} ( \bm{t} )
  +\widetilde{\bm{d}}_{\bm{\nu}, \mathbb{W}} ( \bm{s} )
\Big]
=
\sum_{i \in \mathcal{I} \cup \mathcal{J}} \bm{1}^\top
\Big[ S_{\nu_i, V_i} ( t_i ) + S_{\nu_i, V_i} ( s_i ) \Big] \bm{1}
+\sum_{i \in \mathcal{J} \cup \mathcal{K}}
\Big[ | t_i |^{\nu_i} + | t_i |^{\nu_i} \Big] \bm{1}^\top W_i \, \bm{1},
\]
we have
\begin{align*}
  &
  \exp \left(
    -\frac{1}{2} \bm{1}^\top \Big[
      \widetilde{\bm{d}}_{\bm{\nu}, \mathbb{W}} ( \bm{t} )
      +\widetilde{\bm{d}}_{\bm{\nu}, \mathbb{W}} ( \bm{s} )
    \Big]
  \right)
  \mathbb{E} \left\{
    \exp \left(
      \frac{1}{2} \bm{1}^\top \Big[
        \widetilde{\bm{Y}}_{\bm{\nu}, \mathbb{V}} ( \bm{t} )
        +\widetilde{\bm{Y}}_{\bm{\nu}, \mathbb{V}} ( \bm{s} )
      \Big]
    \right)
  \right\}
  \\[7pt]
  & \hspace{30pt} =
  \exp \left(
    -\frac{1}{2} \sum_{i \in \mathcal{J} \cup \mathcal{K}}
    \Big[ | t_i |^{\nu_i} + | s_i |^{\nu_i} \Big] \bm{1}^\top W_i \bm{1}
  \right)
  \\[7pt]
  & \hspace{40pt} \times
  \exp \left(
    -\frac{1}{4} \sum_{i \in \mathcal{I} \cup \mathcal{J}} \bm{1}^\top \Big[
      S_{\nu_i, V_i} ( t_i ) + S_{\nu_i, V_i} ( s_i ) - R_{\nu_i, V_i} ( t_i, s_i )
    \Big] \bm{1}
  \right)
\end{align*}
and the claim follows by
\(S_{\alpha, V} ( t ) + S_{\alpha, V} ( s ) - R_{\alpha, V} ( t, s ) = S_{\alpha, V} ( t - s )\).
\end{proof}

\begin{lemma}[Lower bound for the constant] \label{lemma:const-lower-bound}
For \(\bm{S}_{\mathcal{J} \cup \mathcal{K}} / 2 < \bm{\delta}_{\mathcal{J} \cup \mathcal{K}} < \bm{S}_{\mathcal{J} \cup \mathcal{K}}\) and
all \(\bm{\delta}_{\mathcal{I}} > \bm{0}\), holds
\begin{equation} \label{eq:19}
  H_{\bm{\nu}, \mathbb{V}, \mathbb{W}} \left( [ \bm{0}, \bm{S} ] \right)
  \prod_{i \in \mathcal{I}} \frac{1}{S_i}
  \geq
  \prod_{i \in \mathcal{I}} \frac{1}{\delta_i} \left[
    1 - \sum_{\mathcal{I}_0 \subsetneq \mathcal{I}} \prod_{i \in \mathcal{I} \setminus \mathcal{I}_0} \frac{A_i}{\delta_i}
  \right]
\end{equation}
with
\[
  A_i
  = 2 \left( \frac{4}{\bm{1}^\top V_i \bm{1}} \right)^{1/\nu_i}
  \Gamma \left( \frac{1}{\nu_i} + 1 \right)
\]
if \(\bm{1}^\top V_i \, \bm{1} > 0\) for all \(i \in \mathcal{I}\).
\end{lemma}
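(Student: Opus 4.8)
The plan is to prove~\eqref{eq:19} by the classical Pickands-type lower bound: one replaces the supremum over $[\bm{0},\bm{S}]$ by a supremum over a finite grid, applies the Bonferroni inequality $\mathbb{P}(\bigcup A_{\bm{k}})\ge\sum_{\bm{k}}\mathbb{P}(A_{\bm{k}})-\tfrac12\sum_{\bm{k}\ne\bm{k}'}\mathbb{P}(A_{\bm{k}}\cap A_{\bm{k}'})$, integrates against $e^{\bm{1}^\top\bm{x}}$, and evaluates the one- and two-point integrals with Lemmas~\ref{lemma:single-point-event} and~\ref{lemma:double-point-event} (taken with $\mathbb{D}=\varnothing$, so that $\bm{Z}\equiv\bm{0}$). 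Write $v_i\coloneqq\bm{1}^\top V_i\,\bm{1}>0$ for $i\in\mathcal{I}$. We may assume that the bracket on the right of~\eqref{eq:19} is strictly positive, since otherwise the inequality is trivial: $H_{\bm{\nu},\mathbb{V},\mathbb{W}}\ge0$.

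First I would introduce the grid
\[
T\coloneqq\bigl\{\bm{t}^{(\bm{k})}\colon k_i\in\{0,1,\dots,\floor*{S_i/\delta_i}\}\ \text{for}\ i\in\mathcal{I}\bigr\},
\qquad
t^{(\bm{k})}_i=\delta_i k_i\ (i\in\mathcal{I}),\quad t^{(\bm{k})}_i=0\ (i\in\mathcal{J}\cup\mathcal{K}).
\]
Then $T\subset[\bm{0},\bm{S}]$ and $|T|=\prod_{i\in\mathcal{I}}(\floor*{S_i/\delta_i}+1)\ge\prod_{i\in\mathcal{I}}S_i/\delta_i$. The essential choice here is to anchor the grid at $\bm{t}_{\mathcal{J}\cup\mathcal{K}}=\bm{0}$: there $\bm{d}_{\bm{\nu},\mathbb{W}}$ vanishes, which is precisely what keeps any $\mathbb{W}$-dependent factor out of the final bound; the hypothesis $\bm{S}_{\mathcal{J}\cup\mathcal{K}}/2<\bm{\delta}_{\mathcal{J}\cup\mathcal{K}}<\bm{S}_{\mathcal{J}\cup\mathcal{K}}$ is not used. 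Applying Bonferroni to the events $A_{\bm{k}}=\{\bm{Y}_{\bm{\nu},\mathbb{V}}(\bm{t}^{(\bm{k})})-\bm{d}_{\bm{\nu},\mathbb{W}}(\bm{t}^{(\bm{k})})>\bm{x}\}$, integrating and using Fubini gives
\[
H_{\bm{\nu},\mathbb{V},\mathbb{W}}([\bm{0},\bm{S}])\ \ge\ \sum_{\bm{k}}I_{\bm{k}}\ -\ \frac12\sum_{\bm{k}\ne\bm{k}'}I_{\bm{k},\bm{k}'},
\]
with $I_{\bm{k}}$ the single-point integral of Lemma~\ref{lemma:single-point-event} and $I_{\bm{k},\bm{k}'}$ the two-point integral bounded in Lemma~\ref{lemma:double-point-event}.

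Next come the two moment computations. Since $t^{(\bm{k})}_i=0$ for $i\in\mathcal{J}\cup\mathcal{K}$ and $\bm{Z}\equiv\bm{0}$, Lemma~\ref{lemma:single-point-event} gives $I_{\bm{k}}=1$, hence $\sum_{\bm{k}}I_{\bm{k}}=|T|$. For the pairs, Lemma~\ref{lemma:double-point-event}, together with $S_{\nu_i,V_i}(0)=0$ for $i\in\mathcal{J}$ and $\bm{1}^\top S_{\nu_i,V_i}(\tau)\,\bm{1}=|\tau|^{\nu_i}v_i$ (using $\bm{1}^\top V_i^\top\bm{1}=\bm{1}^\top V_i\bm{1}$), yields $I_{\bm{k},\bm{k}'}\le\exp\bigl(-\tfrac14\sum_{i\in\mathcal{I}}v_i\delta_i^{\nu_i}|k_i-k_i'|^{\nu_i}\bigr)$. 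Summing over $\bm{k}'$ and enlarging the range to $\bm{k}'-\bm{k}\in\mathbb{Z}^{\mathcal{I}}$ factorises the double sum as $|T|\bigl(\prod_{i\in\mathcal{I}}(1+a_i)-1\bigr)$ with $a_i=2\sum_{l\ge1}\exp(-\tfrac{v_i}{4}(\delta_i l)^{\nu_i})$; bounding the monotone summand by its integral and computing $\int_0^\infty e^{-ct^{\nu}}\,dt=c^{-1/\nu}\Gamma(1/\nu+1)$ gives $a_i\le 2\delta_i^{-1}(4/v_i)^{1/\nu_i}\Gamma(1/\nu_i+1)=A_i/\delta_i$. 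Combining, and using $\prod_{i\in\mathcal{I}}(1+a_i)-1\ge0$ and $a_i\le A_i/\delta_i$,
\[
H_{\bm{\nu},\mathbb{V},\mathbb{W}}([\bm{0},\bm{S}])\ \ge\ |T|\Bigl(1-\tfrac12\bigl(\textstyle\prod_{i\in\mathcal{I}}(1+a_i)-1\bigr)\Bigr)\ \ge\ |T|\Bigl(1-\bigl(\textstyle\prod_{i\in\mathcal{I}}(1+A_i/\delta_i)-1\bigr)\Bigr).
\]
Expanding $\prod_{i\in\mathcal{I}}(1+A_i/\delta_i)-1=\sum_{\mathcal{I}_0\subsetneq\mathcal{I}}\prod_{i\in\mathcal{I}\setminus\mathcal{I}_0}A_i/\delta_i$ and invoking our standing assumption that this is $<1$, the last factor is positive; multiplying by $\prod_{i\in\mathcal{I}}S_i^{-1}$ and using $|T|\ge\prod_{i\in\mathcal{I}}S_i/\delta_i$ then gives exactly~\eqref{eq:19}.

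I do not anticipate a real obstacle: this is the standard Pickands lower-bound machinery. The only points needing a little attention are (i) recognising that the grid must be anchored at $\bm{t}_{\mathcal{J}\cup\mathcal{K}}=\bm{0}$ rather than inside $[\bm{0},\bm{S}]$, so that the single-point integrals equal $1$ and no $\mathbb{W}$-factor survives; (ii) the elementary combinatorics converting the factorised double sum into the proper-subset sum of~\eqref{eq:19}; and (iii) the routine but slightly delicate verification that the bound is preserved under multiplication by $\prod_{i\in\mathcal{I}}S_i^{-1}$, which is why the trivial case (non-positive right-hand bracket) is disposed of at the outset.
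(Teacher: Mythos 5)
Your proposal is correct and follows essentially the same route as the paper's proof: a $\bm{\delta}$-grid anchored at $\bm{0}$ in the $\mathcal{J}\cup\mathcal{K}$ coordinates, Bonferroni, the one- and two-point integral lemmas, and an integral comparison yielding the constants $A_i$; your factorisation $\prod_{i\in\mathcal{I}}(1+a_i)-1$ is just a repackaging of the paper's reindexing by the coincidence set $\mathcal{I}_0$. The only (cosmetic) difference is that you fix $\bm{t}_{\mathcal{J}\cup\mathcal{K}}=\bm{0}$ by hand, whereas the paper uses the hypothesis $\bm{S}_{\mathcal{J}\cup\mathcal{K}}/2<\bm{\delta}_{\mathcal{J}\cup\mathcal{K}}<\bm{S}_{\mathcal{J}\cup\mathcal{K}}$ to force $\bm{N}_{\bm{\delta},\mathcal{J}\cup\mathcal{K}}=\bm{1}$ and thereby collapse the grid in those directions.
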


\begin{proof}
For any \(\bm{\delta} > \bm{0}\) we have
\begin{multline*}
  H_{\bm{\nu}, \mathbb{V}, \mathbb{W}}
  \left( [ \bm{0}, \bm{S} ] \right)
  \geq
  \int_{\mathbb{R}^{d}} e^{\bm{1}^\top \bm{x}} \,
  \mathbb{P} \left\{
  \exists \, \bm{t} \in [ \bm{0}, \bm{S} ] \cap \bm{\delta} \mathbb{Z}_{+}^n \colon
  \bm{Y}_{\bm{\nu}, \mathbb{V}} ( \bm{t} )
  +\bm{Z} ( \bm{t} )
  -\bm{d}_{\bm{\nu}, \mathbb{W}} ( \bm{t} ) > \bm{x}
  \right\}
  \mathop{d \bm{x}}
  \\[7pt] \geq
  \sum_{\bm{k} \leq \bm{N}_{\bm{\delta}}}
  \int_{\mathbb{R}^{d}} e^{\bm{1}^\top \bm{x}} \,
  \mathbb{P} \left\{
  \bm{Y}_{\bm{\nu}, \mathbb{V}} ( \bm{\delta} \bm{k} )
  +\bm{Z} ( \bm{\delta} \bm{k} )
  -\bm{d}_{\bm{\nu}, \mathbb{W}} ( \bm{\delta} \bm{k} ) > \bm{x}
  \right\}
  \mathop{d \bm{x}}
  \\[7pt]
  -\sum_{
    \substack{
      \bm{k}, \, \bm{l} \, \leq \, \bm{N}_{\bm{\delta}}, \\[3pt]
      \bm{k} \, \neq \, \bm{l}
    }
  }
  \int_{\mathbb{R}^{d}} e^{\bm{1}^\top \bm{x}} \,
  \mathbb{P} \left\{
    \begin{aligned}
      \bm{Y}_{\bm{\nu}, \mathbb{V}} ( \bm{\delta} \bm{k} )
      +\bm{Z} ( \bm{\delta} \bm{k} )
      -\bm{d}_{\bm{\nu}, \mathbb{W}} ( \bm{\delta} \bm{k} ) > \bm{x} \\
      \bm{Y}_{\bm{\nu}, \mathbb{V}} ( \bm{\delta} \bm{l} )
      +\bm{Z} ( \bm{\delta} \bm{l} )
      -\bm{d}_{\bm{\nu}, \mathbb{W}} ( \bm{\delta} \bm{l} ) > \bm{x}
    \end{aligned}
  \right\}
  \mathop{d \bm{x}},
\end{multline*}
where
\(\bm{N}_{\bm{\delta}} = \floor*{\bm{S} / \bm{\delta}}\).
Take
\(\bm{S}_{\mathcal{J} \cup \mathcal{K}} / 2 < \bm{\delta}_{\mathcal{J} \cup \mathcal{K}} < \bm{S}_{\mathcal{J} \cup \mathcal{K}}\).
Then \(\bm{N}_{\bm{\delta}, \mathcal{J} \cup \mathcal{K}} = \bm{1}_{\mathcal{J} \cup \mathcal{K}}\), and therefore
\(\bm{k}_{\mathcal{I} \cup \mathcal{K}} = \bm{0}_{\mathcal{J} \cup \mathcal{K}}\).
It follows by definition of \(\bm{Z}\) that
\(\bm{Z} ( \bm{\delta} \bm{k} ) = \bm{0}\).
Apply Lemma~\ref{lemma:single-point-event} and
Lemma~\ref{lemma:double-point-event}.
\begin{equation} \label{eq:15}
\begin{aligned}
  H_{\bm{\nu}, \mathbb{V}, \mathbb{W}} \left( [ \bm{0}, \bm{S} ] \right)
  & \geq
  \sum_{\bm{k}_{\mathcal{I}} \leq \bm{N}_{\bm{\delta}, \mathcal{I}}} 1
  -\sum_{
    \substack{
      \bm{k}_{\mathcal{I}}, \, \bm{l}_{\mathcal{I}} \, \leq \, \bm{N}_{\bm{\delta}, \mathcal{I}}, \\[3pt]
      \bm{k}_{\mathcal{I}} \, \neq \, \bm{l}_{\mathcal{I}}
    }
  }
  \exp \left(
    -\frac{1}{4} \sum_{i \in \mathcal{I}} \bm{1}^\top S_{\nu_i, V_i} ( \delta_i k_i - \delta_i l_i ) \bm{1}
  \right)
  \\
  & = \prod_{i \in \mathcal{I}} \frac{S_i}{\delta_i} - \mathbb{\Sigma}_2 ( \bm{\delta} ).
\end{aligned}
\end{equation}
To bound the double sum, use
\(S_{\alpha, V} = | t |^{\alpha} \left( V 1_{t \geq 0} + V^\top 1_{t < 0} \right)\)
reindex the sum as follows: let \(\mathcal{I}_0 ( \bm{k} )\) denote those indices, in which
\(k_i = l_i\). This set cannot be equal to the entire \(\mathcal{I}\), becasuse in this case
\(\bm{k}_{\mathcal{I}} = \bm{l}_{\mathcal{I}}\) and such pairs are excluded from the sum, but it
is empty if \(\min_{i \in \mathcal{I}} | k_i - l_i | \geq 1\).
\begin{align*}
  \mathbb{\Sigma}_2 ( \bm{\delta} )
  & =
  \sum_{\mathcal{I}_0 \subsetneq \mathcal{I}}
  \prod_{i \in \mathcal{I} \setminus \mathcal{I}_0}
  \sum_{k = 1}^{N_{\bm{\delta}, i}}
  \sum_{l = k + 1}^{N_{\bm{\delta}, i}}
  2 \exp \left(
    -\frac{1}{4} \sum_{i \in \mathcal{I}} \delta_i^{\nu_i} | k_i - l_i |^{\nu_i} \bm{1}^\top V_i \bm{1}
  \right)
  \prod_{i \in \mathcal{I}_0} \sum_{k = 1}^{N_{\bm{\delta}, i}} 1
  \\[7pt]
  & \leq
    \sum_{\mathcal{I}_0 \subsetneq \mathcal{I}}
    \prod_{i \in \mathcal{I} \setminus \mathcal{I}_0}
    \frac{2 S_i}{\delta_i^2}
    \left( \frac{4}{\bm{1}^\top V_i \bm{1}} \right)^{1/\nu_i}
    \int_0^{\infty} e^{-x^{\nu_i}} \mathop{dx}
  \prod_{i \in \mathcal{I}_0} \sum_{k = 1}^{N_{\bm{\delta}, i}} 1
  \\[7pt]
  & =
    \sum_{\mathcal{I}_0 \subsetneq \mathcal{I}}
    \prod_{i \in \mathcal{I} \setminus \mathcal{I}_0}
    \frac{2 S_i}{\delta_i^2}
    \left( \frac{4}{\bm{1}^\top V_i \bm{1}} \right)^{1/\nu_i}
  \Gamma \left( \frac{1}{\nu_i} + 1 \right)
  \prod_{i \in \mathcal{I}_0} \frac{S_i}{\delta_i}
  \\[7pt]
  & =
  \prod_{j \in \mathcal{I}} S_i
  \sum_{\mathcal{I}_0 \subsetneq \mathcal{I}}
  \prod_{i \in \mathcal{I} \setminus \mathcal{I}_0}
  \frac{2}{\delta_i^2}
  \left( \frac{4}{\bm{1}^\top V_i \bm{1}} \right)^{1/\nu_i}
  \Gamma \left( \frac{1}{\nu_i} + 1 \right)
  \prod_{i \in \mathcal{I}_0} \frac{1}{\delta_i}
\end{align*}
Combining the above together, we obtain
\begin{equation} \label{eq:18}
  H_{\bm{\nu}, \mathbb{V}, \mathbb{W}} \left( [ \bm{0}, \bm{S} ] \right)
  \prod_{i \in \mathcal{I}} \frac{1}{S_i}
  \geq \prod_{i \in \mathcal{I}} \frac{1}{\delta_i}
  -\sum_{\mathcal{I}_0 \subsetneq \mathcal{I}} \prod_{i \in \mathcal{I} \setminus \mathcal{I}_0} \frac{A_i}{\delta_i^2} \prod_{i \in \mathcal{I}_0} \frac{1}{\delta_i}
\end{equation}
where
\[
  A_i =
  2 \left( \frac{4}{\bm{1}^\top V_i \bm{1}} \right)^{1/\nu_i}
  \Gamma \left( \frac{1}{\nu_i} + 1 \right).
\]
This concludes the proof.
\end{proof}
\subsection{Main theorem proof}
\label{sec:org6546d83}
\begin{proof}
By Lemma~\ref{lemma:gen-var-exp}, the generalized variance satisfies
\begin{equation*}
\sigma_{\bm{b}}^{-2} ( \bm{\tau} ) - \sigma_{\bm{b}}^{-2} ( \bm{0} )
=
\sum_{i, j = 1}^n \Xi_{i, j} \, \tau_i^{\beta_i/2} \, \tau_j^{\beta_j/2}
+o \left( \sum_{i = 1}^n \tau_i^{\beta_i} \right),
\end{equation*}
with \(\Xi\) defined in~\eqref{XI-alternative}.
By~\eqref{XI-elliptical} and~\ref{A2.4}, there exists a constant
\(c > 0\) such that
\begin{equation*}
\sigma_{\bm{b}}^{-2} ( \bm{\tau} ) - \sigma_{\bm{b}}^{-2} ( \bm{0} )
\geq c \sum_{i = 1}^n \tau_i^{\beta_i},
\end{equation*}
and therefore we obtain
\begin{equation*}
\mathbb{P} \left\{
\exists \, \bm{t} \in [ \bm{0}, \bm{T} ] \colon
\bm{X} ( \bm{t} ) > u \bm{b}
\right\}
\sim
\mathbb{P} \left\{
\exists \, \bm{t} \in [ \bm{0}, \bm{\delta}_u ] \colon
\bm{X} ( \bm{t} ) > u \bm{b}
\right\}
\end{equation*}
with \(\bm{\delta}_u = u^{-2/\bm{\beta}} \ln^{2/\bm{\beta}} u\) by using the Piterbarg
inequality~\ref{Piterbarg-inequality}.
\subsubsection{Upper bound}
\label{sec:org316ba81}
Take \(\bm{\Lambda} > \bm{0}\) and use the log-layer bound
(Lemma~\ref{lemma:log-layer-bound}) to obtain
\begin{multline} \label{main-theorem:upper}
\mathbb{P} \left\{
\exists \, \bm{t} \in [ \bm{0}, \bm{\delta}_u ] \colon
\bm{X} ( \bm{t} ) > u \bm{b}
\right\}
\leq
\mathbb{P} \left\{
\exists \, \bm{t} \in [ \bm{0}, u^{-2/\bm{\beta}} \bm{\Lambda} ] \colon
\bm{X} ( \bm{t} ) > u \bm{b}
\right\}
\\[7pt]
+C \, \mathbb{P} \left\{ \bm{X} ( \bm{0} ) > u \bm{b} \right\}
\exp \left( -\frac{1}{8} \sum_{i = 1}^n \xi_i \, \Lambda_i^{\beta_i} \right).
\end{multline}
In order to bound the first term from above, let us split the cube
\(u^{-2 / \bm{\beta}} [ \bm{0}, \bm{\Lambda} ]\) into parts of ``size''
\(u^{- 2 / \bm{\nu}} [ \bm{0}, \bm{S} ]\) with
\(\bm{S} > \bm{0}\) such that \(\bm{S}_{\mathcal{I}^c} = \bm{\Lambda}_{\mathcal{I}^c}\),
and note that
\begin{equation} \label{def:single-sum}
  \mathbb{P} \left\{
    \exists \, \bm{t} \in u^{-2 / \bm{\nu}} [ \bm{0}, \bm{\Lambda} ] \colon
    \bm{X} ( \bm{t} ) > u \bm{b}
  \right\}
  \leq
  \sum_{\bm{k} \leq \bm{N}_u}
  \mathbb{P} \left\{
    \exists \, \bm{t} \in u^{-2 / \bm{\nu}} \bm{S} [ \bm{k}, \bm{k} + 1 ] \colon
    \bm{X} ( \bm{t} ) > u \bm{b}
  \right\}
  \eqqcolon
  \mathbb{\Sigma}_1 ( u, \bm{\Lambda}, \bm{S} ),
\end{equation}
where
\(\bm{N}_u = \ceil*{u^{2 / \bm{\nu} - 2 / \bm{\beta}} \bm{\Lambda} / \bm{S}}\).
By Lemma~\ref{lemma:limiting-process} and the uniform local Pickands
Lemma~\ref{lemma:local-Pickands}, we have
\begin{align*}
  &
  \mathbb{P} \left\{
    \exists \, \bm{t} \in u^{-2 / \bm{\nu}} \bm{S}
    \left[ \bm{k}, \bm{k} + \bm{1} \right] \colon
    \bm{X} ( \bm{t} ) > u \bm{b}
  \right\}
  \\[7pt]
  & \hspace{30pt} \sim
    H_{\bm{\nu}, \mathbb{V}_{\bm{w}}, \mathbb{W}_{\bm{w}}}
    \left( [\bm{0}, \bm{S}] \right)
    \mathbb{P} \left\{
    \bm{X}_{u, \bm{k}} ( \bm{0} )
    > u \bm{b}
    \right\}
  \\[7pt]
  & \hspace{30pt} \sim
    H_{\bm{\nu}, \mathbb{V}_{\bm{w}}, \mathbb{W}_{\bm{w}}}
    \left( [\bm{0}, \bm{S}] \right)
    \mathbb{P} \left\{
    \bm{X} ( \bm{0} ) > u \bm{b}
    \right\}
    \exp \left(
    -\frac{u^2}{2} \,
      \bm{b}^\top
      \Big[
        \Sigma_{u, \bm{k}}^{-1} - \Sigma^{-1}
      \Big] \, \bm{b}
    \right),
\end{align*}
and therefore by~\eqref{exp-prefactor-1}
\begin{multline*}
\mathbb{P} \left\{
\exists \, \bm{t} \in u^{-2 / \bm{\nu}} \bm{S}
\left[ \bm{k}, \bm{k} + \bm{1} \right] \colon
\bm{X} ( \bm{t} ) > u \bm{b}
\right\}
\sim
H_{\bm{\nu}, \mathbb{V}_{\bm{w}}, \mathbb{W}_{\bm{w}}}
\left( [\bm{0}, \bm{S}] \right)
\mathbb{P} \left\{
\bm{X} ( \bm{0} ) > u \bm{b}
\right\}
\\[7pt]
\times
\exp \left(
-\frac{1}{2} \,
\sum_{i, j \in \mathcal{I}}
\Xi_{i, j}
\left( S_i \, k_i \, u^{-\zeta_i} \right)^{\beta_i/2}
\left( S_j \, k_j \, u^{-\zeta_j} \right)^{\beta_j/2}
\right)
\end{multline*}
with \(\Xi\) defined in~\eqref{XI-alternative} and
\(\bm{\zeta} = 2 / \bm{\beta} - 2 / \bm{\nu}\).
Using the following formula
\begin{equation*}
\sum_{\bm{k} \leq \bm{N}_u}
\exp \left(
-\frac{1}{2} \,
\sum_{i, j \in \mathcal{I}}
\Xi_{i, j}
\left( S_i \, k_i \, u^{-\zeta_i} \right)^{\beta_i/2}
\left( S_j \, k_j \, u^{-\zeta_j} \right)^{\beta_j/2}
\right)
\sim
\prod_{i \in \mathcal{I}} \frac{u^{\zeta_i}}{S_i} \, G ( \bm{\beta}, \Xi, \bm{\Lambda} ),
\end{equation*}
where
\begin{equation*}
G ( \bm{\beta}, \Xi, \bm{\Lambda} ) \coloneqq
\int_{\bm{0}_{\mathcal{I}}}^{\bm{\Lambda}_{\mathcal{I}}}
\exp \left( -\frac{1}{2} \sum_{i, j \in \mathcal{I}} \Xi_{i, j} \,
t_i^{\beta_i/2} \, t_j^{\beta_j} \right)
\mathop{d \bm{t}},
\quad
G ( \bm{\beta}, \Xi ) \coloneqq \lim_{\bm{\Lambda} \to \infty} G ( \bm{\beta}, \Xi ) < \infty,
\end{equation*}
which may be proven by Riemann sum argument, we obtain the following estimate
for the single sum:
\begin{equation*}
  \frac{
    \mathbb{\Sigma}_1 ( u, \bm{\Lambda}, \bm{S} )
  }{
    \mathbb{P} \left\{ \bm{X} ( \bm{0} ) > u \bm{b} \right\}
  }
  \sim
  H_{\bm{\nu}, \mathbb{V}_{\bm{w}}, \mathbb{V}_{\bm{w}}} \left( [ \bm{0}, \bm{S} ] \right)
  \prod_{i \in \mathcal{I}}
  \frac{u^{\zeta_i}}{S_i} \, G ( \bm{\beta}, \Xi, \bm{\Lambda} ).
\end{equation*}
Let us prove that the following limit
\begin{equation} \label{constant-finiteness}
  \mathcal{H}_{\bm{\nu}, \mathbb{V}_{\bm{w}}, \mathbb{W}_{\bm{w}}}
  =
  \lim_{\bm{\Lambda}_{\mathcal{I}^c} \to \infty}
  \lim_{\bm{S}_{\mathcal{I}} \to \infty}
  H_{\bm{\nu}, \mathbb{V}_{\bm{w}}, \mathbb{W}_{\bm{w}}} \left(
  [ \bm{0}, \bm{S} ]
  \right)
  \prod_{i \in \mathcal{I}} \frac{1}{S_i},
\end{equation}
exists and is finite. Since
\(\bm{S} \mapsto
H_{\bm{\nu}, \mathbb{V}_{\bm{w}}, \mathbb{W}_{\bm{w}}}
\left( [ \bm{0}, \bm{S} ] \right)\)
is subadditive in each \(S_i\), the limit
\begin{equation*}
\lim_{\bm{S}_{\mathcal{I}} \to \infty}
H_{\bm{\nu}, \mathbb{V}_{\bm{w}}, \mathbb{W}_{\bm{w}}}
\left( [ \bm{0}, \bm{S} ] \right)
\prod_{i \in \mathcal{I}} \frac{1}{S_i}
\end{equation*}
exists and is an increasing function of each \(S_i > 0\), \(i \in \mathcal{I}^c\).
To prove that the limit~\eqref{constant-finiteness} exists and is finite,
it would suffice to show that
\(H_{\bm{\nu}, \mathbb{V}_{\bm{w}}, \mathbb{W}_{\bm{w}}}
\left( [ \bm{0}, \bm{S}] \right) / \prod_{i \in \mathcal{I}} S_i\)
is uniformly bounded in \(\bm{S}_{\mathcal{I}^c}\).
To this end, fix \(\bm{\Lambda}_0 < \bm{\Lambda}\) and note that for every \(\bm{S}\)
and \(\bm{S}_0\) such that \(\bm{S}_{\mathcal{I}^c} = \bm{\Lambda}_{\mathcal{I}^c}\) and
\(\bm{S}_{0, \mathcal{I}^c} = \bm{\Lambda}_{0, \mathcal{I}}\), holds
\begin{multline*}
\mathbb{\Sigma}_1' ( u, \bm{\Lambda}, \bm{S} )
\leq
\mathbb{P} \left\{
\exists \, \bm{t} \in u^{-2 / \bm{\nu}} [ \bm{0}, \bm{\Lambda}] \colon
\bm{X} ( \bm{t} ) > u \bm{b}
\right\}
\\[7pt]
\leq
\mathbb{P} \left\{
\exists \, \bm{t} \in u^{-2 / \bm{\nu}} [ \bm{0}, \bm{\Lambda}_0] \colon
\bm{X} ( \bm{t} ) > u \bm{b}
\right\}
+\mathbb{P} \left\{
\exists \, \bm{t} \in u^{-2 / \bm{\nu}} [ \bm{\Lambda}_0, \bm{\Lambda} ] \colon
\bm{X} ( \bm{t} ) > u \bm{b}
\right\}
\\[7pt]
\leq
\mathbb{\Sigma}_1 ( u, \bm{\Lambda}_0, \bm{S}_0 )
+C \,
\mathbb{P} \left\{ \bm{X} ( \bm{0} ) > u \bm{b} \right\}
\exp \left( -\frac{1}{8} \, \sum_{i = 1}^n \xi_i \, \Lambda_{0, i}^{\beta_i} \right),
\end{multline*}
where \(\mathbb{\Sigma}_1' ( u, \bm{\Lambda}, \bm{S} )\) is the same single
sum~\eqref{def:single-sum}, but with \(\bm{N}_u - 1\) instead of
\(\bm{N}_u\) in the limit of summation.
It is easy to see that a computation analogous to what we did above for
\(\mathbb{\Sigma}_1 ( u, \bm{\Lambda}, \bm{S} )\)
gives the same estimate for \(\mathbb{\Sigma}_1' ( u, \bm{\Lambda}, \bm{S} )\):
\begin{equation} \label{eq:S1-lower}
\frac{
\mathbb{\Sigma}_1' ( u, \bm{\Lambda}, \bm{S} )
}{
\mathbb{P} \left\{ \bm{X} ( \bm{0} ) > u \bm{b} \right\}
}
\sim
H_{\bm{\nu}, \mathbb{V}_{\bm{w}}, \mathbb{W}_{\bm{w}}}
\left( [ \bm{0}, \bm{S} ] \right)
\prod_{i \in \mathcal{I}}
\frac{u^{\zeta_i}}{S_i} \,
G ( \bm{\beta}, \bm{\Xi} ).
\end{equation}
Hence,
\begin{align*}
&
H_{\bm{\nu}, \mathbb{V}_{\bm{w}}, \mathbb{W}_{\bm{w}}}
\left( [ \bm{0}, \bm{S} ] \right)
\prod_{i \in \mathcal{I}} \frac{1}{S_i} \, G ( \bm{\beta}, \Xi, \bm{\Lambda} )
= \lim_{u \to \infty}
\frac{
\mathbb{\Sigma}_1' ( u, \bm{\Lambda}, \bm{S} )
}{
\mathbb{P} \left\{ \bm{X} ( \bm{0} ) > u \bm{b} \right\}
\prod_{i \in \mathcal{I}} u^{\zeta_i}
}
\\[7pt]
& \hspace{30pt} \leq
\lim_{u \to \infty}
\frac{
\mathbb{\Sigma}_1 ( u, \bm{\Lambda}_0, \bm{S}_0 )
}{
\mathbb{P} \left\{ \bm{X} ( \bm{0} ) > u \bm{b} \right\}
\prod_{i \in \mathcal{I}} u^{\zeta_i}
}
+C \exp \left(
-\frac{1}{8} \sum_{i = 1}^n \xi_i \, \Lambda_i^{\beta_i}
\right) 1_{\mathcal{I} = \varnothing}
\\[7pt]
& \hspace{30pt} \leq
H_{\bm{\nu}, \mathbb{V}_{\bm{w}}, \mathbb{W}_{\bm{w}}}
\left( [ \bm{0}, \bm{S}_0 ] \right)
\prod_{i \in \mathcal{I}} \frac{1}{S_{0, i}} \,
G ( \bm{\beta}, \Xi, \bm{\Lambda}_0 )
+C \exp \left(
-\frac{1}{8} \sum_{i = 1}^n \xi_i \, \Lambda_{0, i}^{\beta_i}
\right)
1_{\mathcal{I} = \varnothing}
\eqqcolon c,
\end{align*}
Since for \(\Lambda_i > 1\) holds
\(G ( \bm{\beta}, \bm{\Xi}, \bm{\Lambda} ) > G ( \bm{\beta}, \Xi, \bm{1} ) = c_1 > 0\),
we have the uniform bound
\begin{equation*}
  H_{\bm{\nu}, \mathbb{V}_{\bm{w}}, \mathbb{W}_{\bm{w}}}
  \left( [ \bm{0}, \bm{S} ] \right)
  \prod_{i \in \mathcal{I}} \frac{1}{S_i}
  \leq
  \frac{c}{c_1},
\end{equation*}
and thus the claim is proved.
Using~\eqref{main-theorem:upper}, letting \(\bm{S} \to \infty\) and then
\(\bm{\Lambda} \to \infty\), we obtain
\begin{equation*}
  \limsup_{u \to \infty}
  \frac{
    \mathbb{P} \left\{
      \exists \, \bm{t} \in u^{-2/\beta} [ \bm{0}, \bm{\Lambda} ] \colon
      \bm{X} ( \bm{t} ) > u \bm{b}
    \right\}
  }{
    \mathbb{P} \left\{ \bm{X} ( \bm{0} ) > u \bm{b} \right\}
    \prod_{i \in \mathcal{I}} u^{\zeta_i}
  }
\leq
\mathcal{H}_{\bm{\nu}, \mathbb{V}_{\bm{w}}, \mathbb{W}_{\bm{w}}} \,
G ( \bm{\beta}, \Xi ).
\end{equation*}
\subsubsection{Lower bound}
\label{sec:org1716004}
By Bonferroni inequality,
\begin{equation} \label{main-theorem:lower}
\mathbb{P} \left\{
\exists \, \bm{t} \in u^{-2 / \bm{\nu}} [ \bm{0}, \bm{\Lambda} ] \colon
\bm{X} ( \bm{t} ) > u \bm{b}
\right\}
\geq
\mathbb{\Sigma}_1' ( u, \bm{\Lambda}, \bm{S} ) - \mathbb{\Sigma}_2 ( u, \bm{\Lambda}, \bm{S} ),
\end{equation}
where \(\mathbb{\Sigma}_1' ( u, \bm{\Lambda}, \bm{S} )\) is the same single
sum~\eqref{def:single-sum}, but with \(\bm{N}_u - \bm{1}\) instead of
\(\bm{N}_u\) in the limit of summation, and
\begin{equation*}
  \mathbb{\Sigma}_2 ( u, \bm{\Lambda}, \bm{S} )
  =
  \sum_{\bm{k} \, \neq \, \bm{l} \, \leq \, \bm{N}_u}
  P_{\bm{b}} ( \bm{S} \bm{k}, \bm{S} \bm{l}, \bm{S} )
\end{equation*}
and it only remains to bound \(\mathbb{\Sigma}_2 ( u, \bm{\Lambda}, \bm{S} )\). Note that
if \(\bm{k} \leq \bm{N}_u\), then in particular \(\bm{k}_{\mathcal{I}^c} = \bm{0}\).

First, rewrite it as \(\mathbb{\Sigma}_2 = \mathbb{\Sigma}_{2, 1} + \mathbb{\Sigma}_{2, 2}\),
where the double sums \(\mathbb{\Sigma}_{2, 1}\) and \(\mathbb{\Sigma}_{2, 2}\) are
taken over
\begin{equation*}
\left\{
    \substack{
      \bm{k}_{\mathcal{I}} \, \neq \, \bm{l}_{\mathcal{I}} \, \leq \, \bm{N}_{u, \mathcal{I}}, \\[3pt]
      \min_{i \in \mathcal{I}} | k_i - l_i | \leq 1 \\[3pt]
      \bm{k}_{\mathcal{I}^c} = \bm{l}_{\mathcal{I}^c} = \bm{0}
    }
\right\}
\quad \text{and} \quad
\left\{
    \substack{
      \bm{k}_{\mathcal{I}} \, \neq \, \bm{l}_{\mathcal{I}} \, \leq \, \bm{N}_{u, \mathcal{I}}, \\[3pt]
      \min_{i \in \mathcal{I}} | k_i - l_i | > 1 \\[3pt]
      \bm{k}_{\mathcal{I}^c} = \bm{l}_{\mathcal{I}^c} = \bm{0}
    }
\right\}
\end{equation*}
correspondingly.
Each term of the sum \(\mathbb{\Sigma}_{2, 2} ( u, \bm{\Lambda}, \bm{S} )\) satisfies
conditions of Lemma~\ref{lemma:double-sum}, and therefore
\begin{multline*}
\frac{
\mathbb{\Sigma}_{2, 2} ( u, \bm{\Lambda}, \bm{S} )
}{
H_u ( \bm{S} \bm{k} ) \,
\mathbb{P} \left\{ \bm{X} ( \bm{0} ) > u \bm{b} \right\}
}
\\[7pt]
\leq
C \, e^{c \sum_{j \in \mathcal{I}^c} \Lambda_j^{\beta_j}} \,
\sum_{\bm{k}_{\mathcal{I}} \leq \bm{N}_{u, \mathcal{I}}} \
\sum_{ \bm{k}_{\mathcal{I}} \, + \, \bm{1}_{\mathcal{I}} \, \leq \, \bm{l}_{\mathcal{I}} \, \leq \, \bm{N}_{u, \mathcal{I}} } \
\prod_{i \in \mathcal{I}}
( l_i - k_i - 1 )^2
\exp \left(
-\frac{\varkappa_i}{64} \, S_i^{\alpha_i} ( l_i - k_i - 1 )^{\alpha_i}
\right)
\\[7pt]
\leq
C
e^{c \sum_{j \in \mathcal{I}^c} \Lambda_j^{\beta_j}} \,
\prod_{i \in \mathcal{I}} \sum_{l = 1}^{\infty} \ l^2
\exp \left(
-\frac{\varkappa}{64} \, S_i^{\alpha_i} \, l^{\alpha_i}
\right).
\end{multline*}
It remains to note that
\begin{equation} \label{eq:16}
\sum_{k = 0}^{N_{u, j}}
\exp \Bigg(
-\frac{\xi_i}{4} S_j^{\beta_j} k^{\beta_j} u^{-\beta_j \zeta_j}
\Bigg)
\sim
\frac{4^{1/\beta_i}u^{\zeta_i}}{S_i \, \xi_i^{1/\beta_i}}
\int_0^{\Lambda_i} e^{-x^{\beta_i}}
\mathop{dx},
\end{equation}
which together with~\eqref{eq:S1-lower} gives
\begin{equation*}
\frac{
\mathbb{\Sigma}_{2,2} (u, \bm{\Lambda}, \bm{S})
}{
\mathbb{\Sigma}_1' ( u, \bm{\Lambda}, \bm{S} )
}
\leq
\frac{
C \, e^{c \sum_{j \in \mathcal{I}^c} \Lambda_j^{\beta_j}}
}{
H_{\bm{\nu}, \mathbb{V}_{\bm{w}}, \mathbb{W}_{\bm{w}}}
\left( [ \bm{0}, \bm{S} ] \right)
}
\prod_{i \in \mathcal{I}} \exp \left( -\frac{\varkappa_i}{64} \, S_i^{\alpha_i} \right)
\end{equation*}
and therefore
\begin{equation*}
  \lim_{\bm{\Lambda}_{\mathcal{I}^c} \to \infty}
  \lim_{\bm{S}_{\mathcal{I}} \to \infty}
  \limsup_{u \to \infty}
  \frac{\mathbb{\Sigma}_{2,2} (u, \bm{\Lambda}, \bm{S})}{\mathbb{\Sigma}_1' ( u, \bm{\Lambda}, \bm{S} )}
  = 0.
\end{equation*}

Next, we bound the sum \(\mathbb{\Sigma}_{2, 1}\) over adjacent events, that is,
those with \(\min_{i \in \mathcal{I}} | k_i - l_i | \leq 1\). To this end, introduce the
following reindexing of the sum: for a given pair \(( \bm{k}, \bm{l} )\),
indexing \(\mathbb{\Sigma}_{2, 1}\), let \(\mathcal{I}_0 ( \bm{k}, \bm{l} )\) denote those
indices, in which \(\bm{l}_{\mathcal{I}_0} = \bm{k}_{\mathcal{I}_0}\). This set may be empty, but
it cannot be equal to \(\mathcal{I}\), since in this case \(\bm{l}_{\mathcal{I}}\) would be
equal to \(\bm{k}_{\mathcal{I}}\). Let also \(\mathcal{I}_1 ( \bm{k}, \bm{l} )\) denote those
indices, in which \(\bm{l}_{\mathcal{I}_1} = \bm{k}_{\mathcal{I}_1} + 1\). This set may be equal
to \(\mathcal{I}\), but it cannot be empty, since these terms are already included in
\(\mathbb{\Sigma}_{2, 2} ( u, \bm{\Lambda}, \bm{S} )\). Finally, denote
and \(\mathcal{I}_2 = \mathcal{I} \setminus \left( \mathcal{I}_0 \cup \mathcal{I}_1 \right)\). We have
\begin{equation*}
\mathbb{\Sigma}_{2, 1} ( u, \bm{\Lambda}, \bm{S} )
=
\sum_{\mathcal{I}_0 \subsetneq \mathcal{I}} \
\sum_{\varnothing \, \neq \mathcal{I}_1 \subset \mathcal{I}} \
\sum_{\bm{k}_{\mathcal{I}_0} = \bm{l}_{\mathcal{I}_0} \leq \bm{N}_{u, \mathcal{I}_0}} \
\sum_{
  \substack{
    \bm{l}_{\mathcal{I}_1} = \bm{k}_{\mathcal{I}_1} + 1, \\[3pt]
    \bm{k}_{\mathcal{I}_1} \leq \bm{N}_{u, \mathcal{I}_1}
    }
  } \
\sum_{
  \substack{
    \bm{k}_{\mathcal{I}_2} \neq \bm{l}_{\mathcal{I}_2} \leq \bm{N}_{u, \mathcal{I}_2}, \\[3pt]
    \min_{i \in \mathcal{I}_2} | k_i - l_i | > 1
  }
} \
2 P_{\bm{b}} ( \bm{S} \bm{k}, \bm{S} \bm{l}, \bm{S} ).
\end{equation*}
Let us split \([ \bm{0}, \bm{S} ]\) along axes from \(\mathcal{I}_1\)
into \(2^{| \mathcal{I}_1 |}\) subintervals
\[
  [ \bm{0}, \bm{S} ]
  = \bm{S} \left[ \bm{1}_{\mathcal{I}_1} \bm{S}^{-1/2}, \bm{1} \right]
  \cup \bigcup_{j =0}^{2^{| \mathcal{I}_1 |} - 1} C_j,
  \qquad
  \bm{1}_{\mathcal{I}_1, i}
  = \begin{cases}
    1, & i \in \mathcal{I}_1, \\
    0, & i \not\in \mathcal{I}_1,
  \end{cases}
\]
and use it to obtain the following bound:
\begin{equation} \label{eq:14}
  P_{\bm{b}} ( \bm{S} \bm{k}, \bm{S} \bm{l}, \bm{S} )
  \leq
  A ( \bm{k}, \bm{l} )
  +\sum_{j = 0}^{2^{| \mathcal{I}_1 |} - 1}
  \mathbb{P} \left\{
  \exists \, \bm{t} \in C_j \colon \bm{X} ( \bm{t} )
  > u \bm{b}
  \right\}
  \eqqcolon
  \mathbb{\Sigma}_{2, 1}' ( u, \bm{\Lambda}, \bm{S} )
  +\mathbb{\Sigma}_{2, 1}'' ( u, \bm{\Lambda}, \bm{S} ).
\end{equation}
where
\begin{equation*}
  A ( \bm{k}, \bm{l} )
  =
  \mathbb{P} \left\{
    \begin{aligned}
      & \exists \, \bm{t} \in u^{-2 / \bm{\nu}} \bm{S}
        \left[ \bm{k}, \bm{k} + \bm{1} \right] \colon
      & \bm{X} ( \bm{t} ) > u \bm{b}
      \\[7pt]
      & \exists \, \bm{s}_{\mathcal{I}_0} \in u^{-2 / \bm{\nu}_{\mathcal{I}_0}} \bm{S}_{\mathcal{I}_0}
        \left[ \bm{k}_{\mathcal{I}_0}, \bm{k}_{\mathcal{I}_0} + \bm{1}_{\mathcal{I}_0} \right],
      &
      \\[7pt]
      & \exists \, \bm{s}_{\mathcal{I}_1} : u^{-2 / \bm{\nu}_{\mathcal{I}_1}} \bm{S}_{\mathcal{I}_1}
        \left[ \bm{S}_{\mathcal{I}_{1}}^{-1/2} + \bm{k}_{\mathcal{I}_1}
          +\bm{1}, \bm{k}_{\mathcal{I}_1} + \bm{2}_{\mathcal{I}_1} \right],
      &
      \\[7pt]
      & \exists \, \bm{s}_{\mathcal{I}_2} \in u^{-2 / \bm{\nu}_{\mathcal{I}_2}} \bm{S}_{\mathcal{I}_2}
        \left[ \bm{l}_{\mathcal{I}_2}, \bm{l}_{\mathcal{I}_2} + \bm{1}_{\mathcal{I}_2} \right] \colon
      & \bm{X} ( \bm{s} ) > u \bm{b}
    \end{aligned}
  \right\}
\end{equation*}
In order to apply Lemma~\ref{lemma:double-sum}, we lengthen the \(\mathcal{I}_1\)
interval interval in the definition of \(A ( \bm{k}, \bm{l} )\)
by \(\bm{S}_{\mathcal{I}'}^{1/2}\) so that it fell under the definition of double event
probabilty~\eqref{def:double-event}, and therefore
\begin{align*}
  \frac{A ( \bm{k}, \bm{l} )}{\mathbb{P} \left\{ \bm{X} ( \bm{0} ) > u \bm{b} \right\}}
  & \leq \frac{
    P_{\bm{b}} (
      \bm{S} \bm{k},
      \bm{S} \bm{k} \bm{1}_{\mathcal{I}_0 \cup \mathcal{I}_1}
      +\bm{S}^{1/2} \bm{1}_{\mathcal{I}_1}
      +\bm{1}_{\mathcal{I}_1}
      +\bm{S} \bm{l} \bm{1}_{\mathcal{I}_2},
      \bm{S}
    )
  }{
    \mathbb{P} \left\{ \bm{X} ( \bm{0} ) > u \bm{b} \right\}
  }
  \\[7pt]
  & \leq
  C
  \prod_{i_c \in \mathcal{I}^c} A_{c, i_c}
  \prod_{i_0 \in \mathcal{I}_0} A_{0, i_0}
  \prod_{i_1 \in \mathcal{I}_1} A_{1, i_1}
  \prod_{i_2 \in \mathcal{I}_2} A_{2, i_2},
\end{align*}
where \(A_{c, i} = \exp \left( c \Lambda_i^{\alpha_i} \right)\), \(A_{0, i} = S_i\)
\begin{gather*}
  A_{1, i}
  = S_i \exp \left(
    -\frac{\xi_i ( \bm{w} )}{32} S_i^{\alpha_i / 2}
    -\frac{\tau_i ( \bm{w} )}{4} S_i^{\beta_i} k_i^{\beta_i} u^{2 - 2 \beta_i / \nu_i}
  \right),
  \\[7pt]
  A_{2, i}
  =
  ( l_i - k_i - 1 )^{-2} \exp \left(
    -\frac{\xi_i ( \bm{w} )}{32} ( l_i - k_i - 1 )^{\alpha_i}
    -\frac{\tau_i ( \bm{w} )}{4} S_i^{\beta_i} k_i^{\beta_i} u^{2 - 2 \beta_i / \nu_i}
  \right).
\end{gather*}
Summing up \(\mathbb{\Sigma}_{2, 1}' ( u, \bm{\Lambda}, \bm{S} )\) in \(\bm{k}\)
and \(\bm{l}\), we obtain
\begin{equation*}
\frac{
  \mathbb{\Sigma}_{2, 1}' ( u, \bm{\Lambda}, \bm{S} )
}{
  \mathbb{P} \left\{ \bm{X} ( \bm{0} ) > u \bm{b} \right\}
}
\leq
C
\prod_{i_c \in \mathcal{I}^c} \mathbb{A}_{c, i_c}
\sum_{\mathcal{I}_0 \subsetneq \mathcal{I}} \
\sum_{\varnothing \, \neq \, \mathcal{I}_1 \subset \mathcal{I}} \
\prod_{i_0 \in \mathcal{I}_0} \mathbb{A}_{0, i_0}
\prod_{i_1 \in \mathcal{I}_1} \mathbb{A}_{1, i_1}
\prod_{i_2 \in \mathcal{I}_2} \mathbb{A}_{2, i_2},
\end{equation*}
where
\(\mathbb{A}_{c, i} = A_{c, i} = \exp \left( c \Lambda_i^{\alpha_i} \right)\),
\(\mathbb{A}_{0, i} = S_i u^{\zeta_i} \Lambda_i / S_i = u^{\zeta_i} \Lambda_i\)
and
\begin{gather*}
  \mathbb{A}_{1, i}
  = \frac{u^{\zeta_i}}{S_i}
  \left( \frac{4}{\tau_i ( \bm{w} )} \right)^{1/\beta_i}
  \Gamma \left( \frac{1}{\beta_i} + 1 \right)
  S_i \exp \left( -\frac{\xi_i ( \bm{w} )}{32} S_i^{\alpha_i / 2} \right),
  \\[7pt]
  \mathbb{A}_{2, i}
  = \frac{u^{\zeta_i}}{S_i}
  \left( \frac{4}{\tau_i ( \bm{w} )} \right)^{1/\beta_i}
  \Gamma \left( \frac{1}{\beta_i} + 1 \right)
  \exp \left( -\frac{\xi_i ( \bm{w} )}{32} S_i^{\alpha_i} \right).
\end{gather*}
\sloppy
Therefore, by~\eqref{eq:S1-lower}, \eqref{eq:16} and
Lemma~\ref{lemma:const-lower-bound} to bound
\(1 / H_{\bm{\nu}, \mathbb{V}_{\bm{w}}, \mathbb{W}_{\bm{w}}}\) from above by
\(C / \prod_{i \in \mathcal{I}} S_i\), we obtain
\begin{multline*}
  \frac{\mathbb{\Sigma}_{2,1}' ( u, \bm{\Lambda}, \bm{S} )
    }{\mathbb{\Sigma}_{1}' ( u, \bm{\Lambda}, \bm{S} )}
  \leq
  C
  \prod_{i_c \in \mathcal{I}^c}
  \exp \left(
    c \Lambda_{i_c}^{\alpha_{i_c}}
  \right)
  \times \\[7pt] \times
  \sum_{\mathcal{I}_0 \subsetneq \mathcal{I}} \
  \sum_{\varnothing \, \neq \, \mathcal{I}_1 \subset \mathcal{I}}
  \prod_{i_0 \in \mathcal{I}_0} \frac{\Lambda_{i_0}}{S_i}
  \prod_{i_1 \in \mathcal{I}_1} \exp \left(
    -\frac{\xi_{i_1} ( \bm{w} )}{32} S_i^{\alpha_{i_1} / 2}
  \right)
  \prod_{i_2 \in \mathcal{I}_2} \frac{1}{S_i} \exp \left(
    -\frac{\xi_{i_2} ( \bm{w} )}{32} S_{i_2}^{\alpha_{i_2}}
  \right).
\end{multline*}
and
\begin{equation*}
  \lim_{\bm{\Lambda}_{\mathcal{I}^c} \to \infty}
  \lim_{\bm{S}_{\mathcal{I}} \to \infty}
  \limsup_{u \to \infty}
  \frac{\mathbb{\Sigma}_{2,1}' ( u, \bm{\Lambda}, \bm{S} )
    }{\mathbb{\Sigma}_{1}' ( u, \bm{\Lambda}, \bm{S} )}
  = 0.
\end{equation*}
Finally, let us find a bound for the second term of \eqref{eq:14}.
By local Pickands lemma
\begin{equation*}
  \mathbb{P} \left\{ \exists \, \bm{t} \in C_j \colon \bm{X} ( \bm{t} ) > u \bm{b} \right\}
  \sim
  H_{\bm{\nu}, \mathbb{V}_{\bm{w}}, \mathbb{W}_{\bm{w}}}
  \left( \left[
  \bm{0}, \bm{1}_{\mathcal{I}_1} \bm{S}^{1/2}
  +\bm{1}_{\mathcal{I} \setminus \mathcal{I}_1} \bm{S} \right] \right) \
  \mathbb{P} \left\{ \bm{X} ( \bm{0} ) > u \bm{b} \right\}
\end{equation*}
and therefore, using \(N_{u, i} = \ceil*{\Lambda_i u^{\zeta_i} / S_i}\), obtain
\begin{multline*}
  \mathbb{\Sigma}_{2,1}'' ( u, \bm{\Lambda}, \bm{S} )
  \coloneqq
  \sum_{\bm{k} \leq \bm{N}_u} \
  \sum_{\varnothing \, \neq \, \mathcal{I}_1 \subset \mathcal{I}} \quad
  \sum_{j = 0}^{2^{| \mathcal{I}_1 |} - 1}
  \mathbb{P} \left\{ \exists \, \bm{t} \in C_j \colon \bm{X} ( \bm{t} ) > u \bm{b} \right\}
  \sim \\[7pt] \sim
  \mathbb{P} \left\{ \bm{X} ( \bm{0} ) > u \bm{b} \right\}
  \prod_{i \in \mathcal{I}} \frac{\Lambda_i u^{\zeta_i}}{S_i}
  \sum_{\varnothing \, \neq \, \mathcal{I}_1 \subset \mathcal{I}}
  2^{| \mathcal{I}_1 |}
  H_{\bm{\nu}, \mathbb{V}_{\bm{w}}, \mathbb{W}_{\bm{w}}}
  \left( \left[ \bm{0}, \bm{1}_{\mathcal{I}_1} \bm{S}^{1/2}
    +\bm{1}_{\mathcal{I} \setminus \mathcal{I}_1} \bm{S} \right] \right)
\end{multline*}
and
\begin{equation*}
  \frac{
    \mathbb{\Sigma}_{2,1}'' ( u, \bm{\Lambda}, \bm{S} )
  }{
    \mathbb{\Sigma}_1' ( u, \bm{\Lambda}, \bm{S} )
  }
  \leq
  \Bigg[
    \prod_{i \in \mathcal{I}} \Lambda_i
  \Bigg]
  \sum_{\varnothing \, \neq \, \mathcal{I}_1 \subset \mathcal{I}}
  2^{| \mathcal{I}_1 |}
  H_{\bm{\nu}, \mathbb{V}_{\bm{w}}, \mathbb{W}_{\bm{w}}}
  \left( \left[ \bm{0}, \bm{1}_{\mathcal{I}_1} \bm{S}^{1/2}
    +\bm{1}_{\mathcal{I} \setminus \mathcal{I}_1} \bm{S} \right] \right)
  \prod_{j \in \mathcal{I}} \frac{1}{S_i}.
\end{equation*}
By subadditivity of the generalized Pickands-Piterbarg constant,
\[
\lim_{\bm{S} \to \infty}
H_{\bm{\nu}, \mathbb{V}_{\bm{w}}, \mathbb{W}_{\bm{w}}}
\left( [ \bm{0}, \bm{1}_{\mathcal{I}'} \bm{S}^{1/2} + \bm{1}_{\mathcal{I} \setminus \mathcal{I}'} \bm{S} ] \right)
\prod_{j \in \mathcal{I}} \frac{1}{S_i}
\leq
\lim_{\bm{S} \to \bm{\infty}}
H_{\bm{\nu}, \mathbb{V}_{\bm{w}}, \mathbb{W}_{\bm{w}}}
\left( [ \bm{0}, \bm{1} ] \right)
\prod_{j \in \mathcal{I}_1} S_j^{-1/2}
= 0,
\]
and therefore
\begin{equation*}
  \lim_{\bm{\Lambda}_{\mathcal{I}^c} \to \infty}
  \lim_{\bm{S}_{\mathcal{I}} \to \infty}
  \lim_{u \to \infty}
  \frac{
    \mathbb{\Sigma}_{2,1}'' ( u, \bm{\Lambda}, \bm{S} )
  }{
    \mathbb{\Sigma}_1' ( u, \bm{\Lambda}, \bm{S} )
  }
  = 0.
\end{equation*}
\end{proof}
\section{Appendix}
\label{sec:org8f085fb}
\label{SEC:appendix}
\subsection{Covariance lemma}
\label{sec:orgb0a80e4}
\begin{lemma}\label{lemma:quadratic-covariance}
Let \(D = ( D_{i, j} )_{i, j \in \mathcal{F}}\) be a block matrix, each block of which is
a \(d \times d\) matrix. Then the following are equivalent:
\begin{enumerate}
\item \(D \succcurlyeq 0\).
\item There exists a family \(( C_{i, k} )_{i, k \in \mathcal{F}}\) of \(d \times d\) matrices such that
\begin{equation*}
D_{i, j} = \sum_{k \in \mathcal{F}} C_{i, k} \, C_{k, j}^\top.
\end{equation*}
\item A matrix-valued function \(R\) defined by
\begin{equation*}
R ( \bm{t}, \bm{s} ) = \sum_{i, j \in \mathcal{F}} D_{i, j} \, t_i \, s_j,
\qquad \bm{t}, \bm{s} \in \mathbb{R}^n
\end{equation*}
is a covariance function of some \(\mathbb{R}^d\)-Gaussian random field.
\end{enumerate}

If either of statements hold, then \(R\) is the covariance function of
\begin{equation*}
\bm{Z} ( \bm{t} ) \coloneqq \sum_{k \in \mathcal{F}} C_k ( \bm{t} ) \, \bm{\mathcal{N}}_k,
\quad
C_k ( \bm{t} ) \coloneqq \sum_{i \in \mathcal{F}} C_{i, k} \, t_i,
\end{equation*}
where \(\bm{\mathcal{N}}_k \sim N ( \bm{0}, I )\) are independent standard Gaussian
vectors.
\end{lemma}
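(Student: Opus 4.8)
The plan is to prove the cycle of implications $(1)\Rightarrow(2)\Rightarrow(3)\Rightarrow(1)$ and then to read off the last assertion of the lemma for free from the construction performed in the step $(2)\Rightarrow(3)$. Throughout I would identify the family $(D_{i,j})_{i,j\in\mathcal{F}}$ with the single symmetric $|\mathcal{F}|d\times|\mathcal{F}|d$ matrix $\mathbf{D}$ obtained by stacking its blocks in a fixed order on $\mathcal{F}$, so that $(1)$ is literally the statement $\mathbf{D}\succcurlyeq 0$; similarly, a family $(C_{i,k})_{i,k\in\mathcal{F}}$ of $d\times d$ matrices corresponds to a $|\mathcal{F}|d\times|\mathcal{F}|d$ matrix $\mathbf{C}$, and the identity in $(2)$ is the blockwise form of $\mathbf{D}=\mathbf{C}\,\mathbf{C}^{\top}$.

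For $(1)\Rightarrow(2)$ I would invoke the standard fact that a symmetric positive semidefinite matrix always admits a factorization $\mathbf{D}=\mathbf{C}\,\mathbf{C}^{\top}$ (one may take $\mathbf{C}=\mathbf{D}^{1/2}$, or a Cholesky factor); partitioning such a $\mathbf{C}$ into $d\times d$ blocks $(C_{i,k})_{i,k\in\mathcal{F}}$ and reading off the $(i,j)$ block of $\mathbf{C}\,\mathbf{C}^{\top}$ produces the required representation $D_{i,j}=\sum_{k\in\mathcal{F}}C_{i,k}C_{k,j}^{\top}$.

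For $(2)\Rightarrow(3)$, which will simultaneously establish the final claim, I would take the family $(C_{i,k})$ and set $C_k(\bm{t})=\sum_{i\in\mathcal{F}}C_{i,k}\,t_i$ and $\bm{Z}(\bm{t})=\sum_{k\in\mathcal{F}}C_k(\bm{t})\,\bm{\mathcal{N}}_k$ with $\bm{\mathcal{N}}_k\sim N(\bm{0},I)$ independent. This $\bm{Z}$ is manifestly a centered Gaussian random field, and by independence of the $\bm{\mathcal{N}}_k$
\[
\mathbb{E}\bigl\{\bm{Z}(\bm{t})\,\bm{Z}(\bm{s})^{\top}\bigr\}
=\sum_{k\in\mathcal{F}}C_k(\bm{t})\,C_k(\bm{s})^{\top}
=\sum_{i,j\in\mathcal{F}}D_{i,j}\,t_i\,s_j
=R(\bm{t},\bm{s}),
\]
the middle equality obtained by expanding $C_k(\bm{t})=\sum_{i\in\mathcal{F}}C_{i,k}t_i$ and invoking the factorization from $(2)$. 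Hence $R$ is the covariance function of $\bm{Z}$, which proves $(3)$ and, at the same time, the concluding statement of the lemma.

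For $(3)\Rightarrow(1)$ I would use that the covariance function of any $\mathbb{R}^d$-valued Gaussian field $\bm{W}$ is a positive semidefinite kernel: for every finite set of points $\bm{t}^{(1)},\dots,\bm{t}^{(m)}$ and vectors $\bm{a}_1,\dots,\bm{a}_m\in\mathbb{R}^d$ one has $\sum_{p,q}\bm{a}_p^{\top}R(\bm{t}^{(p)},\bm{t}^{(q)})\bm{a}_q=\mathbb{E}\{(\sum_{p}\bm{a}_p^{\top}\bm{W}(\bm{t}^{(p)}))^{2}\}\geq 0$. Choosing the points to be the standard basis vectors $\bm{e}_i\in\mathbb{R}^n$, $i\in\mathcal{F}$, and noting from $R(\bm{t},\bm{s})=\sum_{i,j\in\mathcal{F}}D_{i,j}t_is_j$ that $R(\bm{e}_i,\bm{e}_j)=D_{i,j}$, this inequality reads $\sum_{i,j\in\mathcal{F}}\bm{a}_i^{\top}D_{i,j}\bm{a}_j\geq 0$ for all $(\bm{a}_i)_{i\in\mathcal{F}}$; combined with the block symmetry $D_{j,i}=D_{i,j}^{\top}$ coming from $R(\bm{s},\bm{t})=R(\bm{t},\bm{s})^{\top}$, this is exactly $\mathbf{D}\succcurlyeq 0$. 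I do not expect any real obstacle here — each implication is short. The one place that genuinely needs care is the block-matrix bookkeeping, namely checking that the transpose occurring in $(2)$, the one produced by the factorization $\mathbf{D}=\mathbf{C}\,\mathbf{C}^{\top}$, and the one arising in the covariance computation for $\bm{Z}$ all refer to the same operation once a single convention for turning a doubly-indexed family of $d\times d$ matrices into an $|\mathcal{F}|d\times|\mathcal{F}|d$ matrix has been fixed and held throughout.
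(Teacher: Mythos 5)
Your proposal is correct and follows essentially the same route as the paper: factor $\mathbf{D}=\mathbf{C}\,\mathbf{C}^{\top}$ and read off blocks for $(1)\Rightarrow(2)$, exhibit $\bm{Z}$ explicitly for $(2)\Rightarrow(3)$, and use positive semidefiniteness of the covariance kernel for $(3)\Rightarrow(1)$. The only cosmetic difference is that in the last step you evaluate the kernel at the basis vectors $\bm{e}_i$, $i\in\mathcal{F}$, whereas the paper uses arbitrary points $\bm{t}_k$ and argues that the resulting vectors $\bm{y}_i=\sum_k \bm{z}_k t_{k,i}$ exhaust $\mathbb{R}^{|\mathcal{F}|d}$; your choice is marginally more direct and equally valid.
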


\begin{proof}[Proof of Lemma~\ref{lemma:quadratic-covariance}]
Let \(n = | \mathcal{F} |\). If \(D \succcurlyeq 0\), then there exists a \((nd) \times (nd)\)
matrix \(C\) such that \(D = C C^\top\). By viewing \(C\) as a block matrix
with the same block structure as \(D\), we find that
\begin{equation*}
D_{i, j} = \sum_{k \in \mathcal{F}} C_{i, k} \, C_{k, j}^\top.
\end{equation*}
Hence,
\begin{equation*}
R ( \bm{t}, \bm{s} )
= \sum_{i, j \in \mathcal{F}} \sum_{k \in \mathcal{F}} C_{i, k} \, C_{k, j}^\top \, t_i \, s_j
= \sum_{k \in \mathcal{F}}
\left( \sum_{i \in \mathcal{F}} C_{i, k} \, t_i \right)
\left( \sum_{j \in \mathcal{F}} C_{j, k} \, s_j \right)^\top,
\end{equation*}
which clearly is positive definite.

If, on the other hand, \(R\) is positive definite, then for all collections
\(\{ ( \bm{t}_i, \bm{z}_i ) \in \mathbb{R}^n \times \mathbb{R}^{\mathcal{F}}, \ i = 1, \ldots, p \}\) holds
\begin{equation*}
0 \leq
\sum_{k, l = 1}^p \bm{z}_k^\top R ( \bm{t}_k, \bm{t}_l ) \, \bm{z}_l
= \sum_{k, l = 1}^p \sum_{i, j \in \mathcal{F}} \bm{z}_k^\top D_{i, j} \, \bm{z}_l \, t_{k, i} \, s_{l, j}
= \sum_{i, j \in \mathcal{F}} \bm{y}_i^\top D_{i, j} \, \bm{y}_j
= \bm{y}^\top D \, \bm{y},
\end{equation*}
where
\begin{equation*}
\bm{y}_i = \sum_{k = 1}^p \bm{z}_k \, t_{k, i},
\quad
\bm{y} = ( \bm{y}_i )_{i \in \mathcal{F}}.
\end{equation*}
Since \(\bm{z}_k\) and \(\bm{t}_k\) are arbitrary, \(\bm{y}\) is also
arbitrary. Hence, \(D \succcurlyeq 0\), and the chain of implications has come full
circle.
\end{proof}
\subsection{Supplementary lemma on the quadratic optimization problem}
\label{sec:orge838649}
\begin{proof}[Proof of Lemma~\ref{lemma:QPP-supp}]
For \(j \in \{ 1, \ldots, d \}\) define
\begin{equation*}
f_j ( \bm{t} ) = \widetilde{\bm{b}} ( \bm{t} ) \, \Sigma^{-1} ( \bm{t} ) \, \bm{e}_j,
\quad
g_j ( \bm{t} ) = \widetilde{b}_j ( \bm{t} ) - b_j.
\end{equation*}
By continuity of \(\Sigma^{-1} ( \bm{t} )\) and the fact that \(\Sigma \mapsto \widetilde{\bm{b}}\)
is Lipschitz continuous, these functions are continuous and we have
\begin{align*}
I ( \bm{t} )
& =
\max \left\{
V \, \middle| \,
V = \left\{
i \in \{ 1, \ldots, d \} \colon \
g_i ( \bm{t} ) = 0, \
f_i ( \bm{t} ) > 0
\right\}
\right\},
\\
K ( \bm{t} )
& =
\max \left\{
V \, \middle| \,
V =
\left\{
j \in \{ 1, \ldots, d \} \colon \
g_j ( \bm{t} ) = 0, \
f_j ( \bm{t} ) = 0
\right\}
\right\},
\\
L ( \bm{t} )
& =
\max
\left\{
V \, \middle| \,
V =
\left\{
l \in \{ 1, \ldots, d \} \colon \
g_l ( \bm{t} ) > 0, \
f_l ( \bm{t} ) = 0
\right\}
\right\},
\end{align*}
where the maximum is in the power set of \(\{ 1, \ldots, d \}\) ordered by
inclusion. It follows from the quadratic optimization lemma that
these three sets are disjoint and the equality
\(I ( \bm{t} ) \cup K ( \bm{t} ) \cup L ( \bm{t} ) = \{ 1, \ldots, d \}\)
holds for each \(\bm{t} \in E\).

For \(U, \, V \subset \{ 1, \ldots, d \}\) define
\begin{equation*}
A_U
\coloneqq
\bigcap_{j \in V} f_j^{-1} ( 0 )
\cap
\bigcap_{j \in V^c} \left( f_j^{-1} ( 0 ) \right)^c,
\quad
B_V
\coloneqq
\bigcap_{j \in V} g_j^{-1} ( 0 )
\cap
\bigcap_{j \in V^c} \left( g_j^{-1} ( 0 ) \right)^c,
\quad
C_{U, V} = A_U \cap B_V.
\end{equation*}
One can easily check the following properties of these sets:
\begin{enumerate}
\item \(V \neq U \iff A_V \cap A_U = \varnothing \iff \ B_V \cap B_U = \varnothing\)
and hence \(( U, V ) \neq ( U', V' ) \iff C_{U, V} \neq C_{U', V'}\).
\item \(( A_V )_{V \in 2^d}, \  ( B_V )_{V \in 2^d}\)
and \(( C_{U, V} )_{U, \, V \in 2^d}\) are finite covers of \(E\).
\item \(A_V\), \(B_V\) and \(C_{U, V}\) are locally closed sets
(that is, intersections of an open and a closed set).
\item \(I ( \bm{t} ) = U^c\) on \(A_U\).
\item \(L ( \bm{t} ) = V^c\) on \(B_V\).
\item \(K ( \bm{t} ) = U \cap V\) on \(C_{U, V}\).
\end{enumerate}
In other words, we have
\begin{equation*}
I ( \bm{t} ) = \sum_{U \in 2^d} U^c \, \mathbb{1}_{A_U} ( \bm{t} ),
\quad
L ( \bm{t} ) = \sum_{V \in 2^d} V^c \, \mathbb{1}_{B_V} ( \bm{t} ),
\quad
K ( \bm{t} ) = \sum_{U, \, V \in 2^d} U \cap V \, \mathbb{1}_{C_{U, V}} ( \bm{t} ).
\end{equation*}
Note that some of \(A_V\)'s, \(B_V\)'s or \(C_{U, V}\)'s may be empty.

Let us split \(2^d\) in two parts
\(2^d = \mathbb{V}_1 ( \bm{t} ) \cup \mathbb{V}_2 ( \bm{t} )\)
in the following way:
\begin{enumerate}
\item if \(V \in \mathbb{V}_1 ( \bm{t} )\), then for all \(\varepsilon > 0\) holds
\(A_V \cap  B_\varepsilon ( \bm{t} ) \neq \varnothing\),
\item if \(V \in \mathbb{V}_2 ( \bm{t} )\), then there exists
\(\varepsilon_0 = \varepsilon_0 ( V, \bm{t} ) > 0\) such that
\(A_V \cap B_{\varepsilon_0} ( \bm{t} ) = \varnothing\).
\end{enumerate}
Define
\begin{equation*}
\varepsilon ( \bm{t} ) \coloneqq \min_{V \in \mathbb{V}_2 ( \bm{t} )} \varepsilon_0 ( V, \bm{t} ).
\end{equation*}
If \(\bm{s} \in B_\varepsilon ( \bm{t} )\) with \(\varepsilon < \varepsilon ( \bm{t} )\) there
exists \(V \in \mathbb{V}_1 ( \bm{t} )\)
such that  \(\bm{s} \in A_V\) and therefore \(I ( \bm{s} ) = V^c\).
Since \(j \in I^c ( \bm{t} ) = K ( \bm{t} ) \cup L ( \bm{t} ) = V\) if and only
if \(f_j ( \bm{s} ) = 0\), using continuity of \(f_j\) and letting
\(\varepsilon \to 0\) we obtain that \(f_j ( \bm{t} ) = 0\) and therefore
\(K ( \bm{s} ) \cup L ( \bm{s} ) = V \subset K ( \bm{t} ) \cup L ( \bm{t} )\).
The latter is equivalent to \(I ( \bm{t} ) \subset I ( \bm{s} )\).

Now we proceed to the last claim of the lemma. Similarly to what we did above,
split again \(2^d\) in two parts
\(2^d = \mathbb{V}_1' ( \bm{t} ) \cup \mathbb{V}_2' ( \bm{t} )\),
where
\begin{enumerate}
\item \(V \in \mathbb{V}_1' ( \bm{t} )\) if for all \(\varepsilon > 0\)
holds \(B_V \cap B_\varepsilon ( \bm{t} ) \neq \varnothing\),
\item \(V \in \mathbb{V}_2' ( \bm{t} )\) if there exists
\(\varepsilon_0' = \varepsilon_0' ( V, \bm{t} ) > 0\) such that
\(B_V \cap B_{\varepsilon_0} ( \bm{t} ) = \varnothing\).
\end{enumerate}
Define
\begin{equation*}
\varepsilon' ( \bm{t} ) = \min_{V \in \mathbb{V}_2' ( \bm{t} )} \varepsilon_0' ( V, \bm{t} ).
\end{equation*}
Then, if \(\bm{s} \in B_\varepsilon ( \bm{t} )\) with \(\varepsilon < \varepsilon' ( \bm{t} )\),
there exists \(V \in \mathbb{V}_1' ( \bm{t} )\) such that \(\bm{s} \in B_V\) and therefore
\(L ( \bm{s} ) = V^c\). Since \(j \in L^c ( \bm{s} ) = I ( \bm{s} ) \cup K ( \bm{s} ) = V\)
is equivalent to \(g_j ( \bm{s} ) = 0\), letting \(\varepsilon \to 0\) and using
continuity of \(g_j\) we obtain that \(g_j ( \bm{t} ) = 0\). Hence,
\(I ( \bm{s} ) \cup K ( \bm{s} ) = V \subset I ( \bm{t} ) \cup K ( \bm{t} )\).
The latter is equivalent to \(L ( \bm{t} ) \subset L ( \bm{s} )\).

From these two properties follows that if \(\bm{s} \in B_\varepsilon ( \bm{t} )\)
with \(\varepsilon < \min \{ \varepsilon ( \bm{t} ), \, \varepsilon' ( \bm{t} ) \}\),
then \(K ( \bm{s} ) \subset K ( \bm{t} )\).
\end{proof}
\subsection{Expansions}
\label{sec:orge65020a}
In this section, we develop some asymptotic expansions, first without
assuming~\ref{A2.6}.
\subsubsection{Inverse of Sigma}
\label{sec:org22e13b9}
\begin{lemma}\label{lemma:Sigma-inverse}
The inverse of the variance matrix \(\Sigma ( \bm{t} ) = R ( \bm{t}, \bm{t} )\)
admits the following asymptotic formula
\begin{equation}\label{Sigma-inv-1}
\Sigma^{-1} ( \bm{t} ) - \Sigma^{-1}
\sim
\Sigma^{-1} \left[
\sum_{i = 1}^n
\Big[
B_{1, i} \, t_i^{\beta_i'}
+B_{2, i} \, t_i^{\beta_i}
\Big]
+\sum_{i, j \in \mathcal{F}}^n
\widetilde{D}_{i,j} \, t_i^{\beta_i/2} \, t_j^{\beta_j/2}
\right] \Sigma^{-1},
\end{equation}
where
\begin{equation}\label{Sigma-inv-coeff}
B_{k, i} = A_{k, i} + A_{k, i}^\top,
\quad
k = 1, 2
\quad \text{and} \quad
\widetilde{D}_{i,j}
= A_{6, i, j} + B_{1, i} \Sigma^{-1} B_{1, j},
\end{equation}
and the error is of order
\begin{equation*}
o \left( \sum_{i = 1}^n t_i^{\beta_i} \right).
\end{equation*}
\end{lemma}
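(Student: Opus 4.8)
The plan is to obtain the expansion of $\Sigma^{-1}(\bm t)$ from that of $\Sigma(\bm t)=R(\bm t,\bm t)$ by a Neumann series, keeping track of which monomials in $\bm t$ exceed the error threshold $o(\sum_i t_i^{\beta_i})$.

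First I would set $\bm s=\bm t$ in the rewritten assumption~\eqref{Sigma-minus-R}. Since $S_{\alpha_i,A_{5,i}}(0)=0$, this gives $\Sigma-\Sigma(\bm t)=E(\bm t)$ with
\begin{equation*}
E(\bm t)=\sum_{i=1}^n\bigl[B_{1,i}\,t_i^{\beta_i'}+B_{2,i}\,t_i^{\beta_i}\bigr]
+\sum_{i,j\in\mathcal F}A_{6,i,j}\,t_i^{\beta_i/2}\,t_j^{\beta_j/2}
+o\Bigl(\textstyle\sum_i t_i^{\beta_i}\Bigr),
\end{equation*}
where $B_{k,i}=A_{k,i}+A_{k,i}^\top$; in particular $\|E(\bm t)\|=O(\sum_i t_i^{\beta_i'})\to0$ as $\bm t\downarrow\bm 0$. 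Hence for $\bm t$ in a neighbourhood of $\bm 0$ the matrix $I-\Sigma^{-1}E(\bm t)$ is invertible, and
\begin{equation*}
\Sigma^{-1}(\bm t)-\Sigma^{-1}
=\sum_{k\ge1}\Sigma^{-1}\bigl(E(\bm t)\,\Sigma^{-1}\bigr)^{k}
=\Sigma^{-1}E\Sigma^{-1}+\Sigma^{-1}E\Sigma^{-1}E\Sigma^{-1}+R_{\ge3}(\bm t),
\end{equation*}
with $\|R_{\ge3}(\bm t)\|=O(\|E(\bm t)\|^{3})=O\bigl((\sum_i t_i^{\beta_i'})^{3}\bigr)$.

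The technical core is an elementary monomial estimate: for $\bm t\in(0,1]^n$ one has $\prod_l t_l^{p_l}\le\bigl(\sum_l t_l^{\beta_l}\bigr)^{\sum_l p_l/\beta_l}$, so $\prod_l t_l^{p_l}=o(\sum_l t_l^{\beta_l})$ whenever $\sum_l p_l/\beta_l>1$. Since $\bm\beta\le2\bm\beta'$, each ratio $\beta_l'/\beta_l$ is $\ge\tfrac12$, with equality precisely for $l\in\mathcal F$. Using this I would argue: (i) $R_{\ge3}(\bm t)=o(\sum_i t_i^{\beta_i})$, because every surviving monomial there is a product of at least three factors $t_{l}^{\beta_{l}'}$ and $3\cdot\tfrac12>1$; (ii) expanding $\Sigma^{-1}E\Sigma^{-1}E\Sigma^{-1}$, the only monomials that are not $o(\sum_i t_i^{\beta_i})$ are those of the form $t_i^{\beta_i'}t_j^{\beta_j'}$ with $i,j\in\mathcal F$ (for which $\beta_i'=\beta_i/2$), while every product containing a $B_{2,\cdot}$ factor, an $A_{6,\cdot,\cdot}$ factor, or a $t_l^{\beta_l'}$ with $l\notin\mathcal F$ has ratio-sum strictly above $1$. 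Hence
\begin{equation*}
\Sigma^{-1}E\Sigma^{-1}E\Sigma^{-1}
=\Sigma^{-1}\Bigl[\sum_{i,j\in\mathcal F}B_{1,i}\,\Sigma^{-1}B_{1,j}\,t_i^{\beta_i/2}t_j^{\beta_j/2}\Bigr]\Sigma^{-1}
+o\Bigl(\textstyle\sum_i t_i^{\beta_i}\Bigr).
\end{equation*}

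Finally I would substitute the expansion of $E$ into $\Sigma^{-1}E\Sigma^{-1}$ (the residual $o(\sum_i t_i^{\beta_i})$ survives left and right multiplication by the fixed matrix $\Sigma^{-1}$) and add the previous display; since $t_i^{\beta_i'}=t_i^{\beta_i/2}$ for $i\in\mathcal F$, everything collapses exactly to~\eqref{Sigma-inv-1} with $\widetilde D_{i,j}=A_{6,i,j}+B_{1,i}\Sigma^{-1}B_{1,j}$ as in~\eqref{Sigma-inv-coeff}. I expect the main obstacle to be the bookkeeping in step (ii)—verifying that the surviving monomials are exactly the $\mathcal F\times\mathcal F$ quadratic ones, uniformly over all directions $\bm t\downarrow\bm 0$, which is precisely what the monomial estimate guarantees.
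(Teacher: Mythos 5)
Your proposal is correct and follows essentially the same route as the paper's proof: set $\bm{s}=\bm{t}$ in \eqref{Sigma-minus-R}, expand $\Sigma^{-1}(\bm{t})$ by the Neumann series, discard the terms of order $k\ge 3$, and collect the quadratic contributions into $\widetilde D_{i,j}$. Your explicit monomial estimate justifying which products $t_i^{\beta_i'}t_j^{\beta_j'}$ survive (exactly the $\mathcal F\times\mathcal F$ ones) is a point the paper leaves implicit, but it is a refinement of, not a departure from, the same argument.
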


\begin{proof}[Proof of Lemma~\ref{lemma:Sigma-inverse}]
Plugging \(\bm{t} = \bm{s}\) into~\eqref{Sigma-minus-R}, we obtain
\begin{equation}\label{Sigma-exp-formula}
\Sigma - \Sigma ( \bm{t} )
=
\sum_{i = 1}^n \Big[ B_{1, i} \, t_i^{\beta_i'} + B_{2, i} \, t_i^{\beta_i} \Big]
+\sum_{i, \, j \, \in \mathcal{F}}
A_{6, i, j} \, t_i^{\beta_i/2} \, t_j^{\beta_j/2}
+o \left( \sum_{i = 1}^n t_i^{\beta_i} \right)
\end{equation}
with \(B_{k, i}\), \(k = 1, 2\) defined in~\eqref{Sigma-inv-coeff}.
To find its expansion up to the second order, use the Neumann power series
\begin{equation*}
\Sigma^{-1} ( \bm{t} )
= \Big[ \Sigma - \big[ \Sigma - \Sigma ( \bm{t} ) \big] \Big]^{-1}
= \Big[ I - \Sigma^{-1} \big[ \Sigma - \Sigma ( \bm{t} ) \big] \Big]^{-1} \Sigma^{-1}
= \sum_{k \geq 0} \left( \Sigma^{-1} \big[ \Sigma - \Sigma ( \bm{t} ) \big] \right)^k \Sigma^{-1}.
\end{equation*}
Note that
\begin{equation*}
\sum_{k \geq 3} \left( \Sigma^{-1} [ \Sigma - \Sigma ( \bm{t} ) ] \right)^k \Sigma^{-1}
= o \left( \sum_{i = 1}^n t_i^{\beta_i} \right)
\end{equation*}
and the first three terms give
\begin{equation*}
\Sigma^{-1} ( \bm{t} )
\sim \Sigma^{-1} + \Sigma^{-1} \big[ \Sigma - \Sigma ( \bm{t} ) \big] \Sigma^{-1}
+\Sigma^{-1} \big[ \Sigma - \Sigma ( \bm{t} ) \big] \Sigma^{-1} \big[ \Sigma - \Sigma ( \bm{t} ) \big] \Sigma^{-1}.
\end{equation*}
Plugging~\eqref{Sigma-exp-formula} into the latter, we obtain the desired
result.
\end{proof}
\subsubsection{Exponential prefactor}
\label{sec:org5d0598a}
On several occasions we shall need a formula for the following quantity, which
we shall refer to as \textit{the exponential prefactor}:
\begin{align}
\ln
\frac{
\varphi_{\Sigma ( \bm{\tau} )} \left( u \bm{b} - u^{-1} \bm{x} \right)
}{
\varphi_{\Sigma} ( u \, \bm{b} )
}
& =
-\frac{1}{2} \,
\left( u \bm{b} - u^{-1} \bm{x} \right)^\top
\Sigma^{-1} ( \bm{\tau} )
\left( u \bm{b} - u^{-1} \bm{x} \right)
+\frac{1}{2} \, u^2 \, \bm{b}^\top \Sigma^{-1} \bm{b}
\notag
\\[7pt]
& =
-\frac{1}{2} \, u^2 \, \bm{b}^\top \Big[ \Sigma^{-1} ( \bm{\tau} ) - \Sigma^{-1} \Big] \bm{b}
+\bm{b}^\top \Sigma^{-1} ( \bm{\tau} ) \, \bm{x}
-\frac{1}{2 u^2} \, \bm{x}^\top \Sigma^{-1} ( \bm{\tau} ) \, \bm{x}
\label{prefactor-equality}
\\[7pt]
& \leq
-\frac{1}{2} \, u^2 \, \bm{b}^\top \Big[ \Sigma^{-1} ( \bm{\tau} ) - \Sigma^{-1} \Big] \bm{b}
+\bm{b}^\top \Sigma^{-1} ( \bm{\tau} ) \, \bm{x}.
\label{prefactor-inequality}
\end{align}

Using~\eqref{Sigma-inv-1}
and observing that
\begin{equation}\label{wB1w-equals-zero}
\bm{b}^\top \Sigma^{-1} B_{1, i} \Sigma^{-1} \bm{b}
= \bm{w}^\top B_{1, i} \bm{w}
= \bm{w}^\top ( A_{1, i} + A_{1, i}^\top ) \, \bm{w}
= \bm{w}^\top \, \underbracket[.1pt]{A_{1, i} \, \bm{w}}_{= \, \bm{0}}
+\underbracket[.1pt]{\bm{w}^\top A_{1, i}}_{= \, \bm{0}} \bm{w}
= 0,
\end{equation}
we obtain
\begin{equation}\label{exp-prefactor-1}
\bm{b}^\top \Big[ \Sigma^{-1} ( \bm{\tau} ) - \Sigma^{-1} \Big] \bm{b}
= \sum_{i, j = 1}^n
\Xi_{i,j}
\, \tau_i^{\beta_i/2} \, \tau_j^{\beta_j/2}
+o \left( \sum_{i = 1}^n \tau_i^{\beta_i} \right),
\end{equation}
where
\begin{equation}\label{def:XI}
\Xi_{i,j} = \bm{w}^\top \Big[
B_{2, i} \, \mathbb{1}_{i = j} + \widetilde{D}_{i,j} \, \mathbb{1}_{i, j \in \mathcal{F}}
\Big] \, \bm{w}.
\end{equation}
Note that
\begin{equation}\label{XI-alternative}
\Xi_{i, j} = \bm{w}^\top \Big[
2 \, A_{2, i} \, \mathbb{1}_{i = j} + D_{i, j} \, \mathbb{1}_{i, j \in \mathcal{F}}
\Big]
\, \bm{w},
\qquad
D_{i, j}
= A_{6, i, j} + A_{1, i} \Sigma^{-1} A_{1, j}^\top,
\end{equation}
because \(A_1 \, \bm{w} = \bm{0}\) and
\(\bm{w}^\top B_{2, i} \, \bm{w}
= \bm{w}^\top ( A_{2, i} + A_{2, i}^\top ) \, \bm{w}
= 2 \, \bm{w}^\top A_{2, i} \, \bm{w}\).
\smallskip

Assuming~\ref{A2.6}, we find that
\begin{equation*}
\sum_{i, j \in \mathcal{F}} \bm{w}^\top D_{i, j} \, \bm{w} \, \tau_i^{\beta_i/2} \, \tau_j^{\beta_j/2}
= \widehat{\bm{w}}^\top D \, \widehat{\bm{w}}
\geq 0,
\end{equation*}
where
\(\widehat{\bm{w}} = ( \bm{w} \, \tau_i^{\beta_i/2} )_{i \in \mathcal{F}} \in \mathbb{R}^{\mathcal{F}}\).
Therefore,
\begin{equation}\label{XI-elliptical}
\sum_{i, j = 1}^n \Xi_{i, j} \, \tau_i^{\beta_i/2} \, \tau_j^{\beta_j/2}
= 2 \sum_{i = 1}^n \bm{w}^\top A_{2, i} \, \bm{w} \, \tau_i^{\beta_i}
+\underbracket[.5pt]{
\sum_{i, i \in \mathcal{F}} \bm{w}^\top D_{i, j} \, \bm{w} \, \tau_i^{\beta_i/2} \, \tau_j^{\beta_j/2}
}_{\geq \, 0}
\geq 2 \sum_{i = 1}^n \bm{w}^\top A_{2, i} \, \bm{w} \, \tau_i^{\beta_i}.
\end{equation}
\subsubsection{Generalized variance expansion}
\label{sec:orgc097537}
\begin{lemma}\label{lemma:gen-var-exp}
The generalized variance
\begin{equation*}
\sigma_{\bm{b}}^{-2} ( \bm{\tau} )
=
\min_{\bm{x} \geq \bm{b}} \bm{x}^\top \, \Sigma^{-1} ( \bm{\tau} ) \, \bm{x}
\end{equation*}
admits the following asymptotic formula:
\begin{equation}\label{gen-var-exp-formula}
\sigma_{\bm{b}}^{-2} ( \bm{\tau} ) - \sigma_{\bm{b}}^{-2} ( \bm{0} )
=
\sum_{i, j = 1}^n \Xi_{i, j} \, \tau_i^{\beta_i/2} \, \tau_j^{\beta_j/2}
+o \left( \sum_{i = 1}^n \tau_i^{\beta_i} \right),
\end{equation}
where \(\Xi\) is defined by~\eqref{def:XI}
or~\eqref{XI-alternative}.
\end{lemma}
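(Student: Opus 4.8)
The plan is to reduce \eqref{gen-var-exp-formula} to the Neumann expansion of $\Sigma^{-1}(\bm{\tau})$ in Lemma~\ref{lemma:Sigma-inverse} — so that the computation becomes essentially the one already performed for the exponential prefactor in \eqref{exp-prefactor-1} — after controlling how the minimiser of the quadratic programming problem $\Pi_{\Sigma(\bm{\tau})}(\bm{b})$ drifts as $\bm{\tau}\to\bm{0}$. By Lemma~\ref{lemma:QPP} one has $\sigma_{\bm{b}}^{-2}(\bm{\tau}) = \bm{b}(\bm{\tau})^\top\Sigma^{-1}(\bm{\tau})\,\bm{b}(\bm{\tau})$, where $\bm{b}(\bm{\tau})$ is the unique solution of $\Pi_{\Sigma(\bm{\tau})}(\bm{b})$, and at $\bm{\tau}=\bm{0}$ we have $\bm{w} = \Sigma^{-1}\bm{b}(\bm{0})$ with $\bm{w}_{J}=\bm{0}_J$ for $J = I(\bm{0})^c$. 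Set $\bm{\delta}(\bm{\tau}) \coloneqq \bm{b}(\bm{\tau}) - \bm{b}(\bm{0})$.

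\textbf{Step 1: structure of $\bm{\delta}(\bm{\tau})$.} By the lower hemicontinuity of the active-set map $\bm{\tau}\mapsto I(\bm{\tau})$ from Lemma~\ref{lemma:QPP-supp}, for $\bm{\tau}$ in a neighbourhood of $\bm{0}$ we have $I(\bm{0})\subset I(\bm{\tau})$; hence $\bm{b}(\bm{\tau})_i = b_i = \bm{b}(\bm{0})_i$ for every $i\in I(\bm{0})$, i.e. $\bm{\delta}(\bm{\tau})_{I(\bm{0})}=\bm{0}$. Combined with $\bm{w}_J=\bm{0}$ this yields the exact cancellation $\bm{w}^\top\bm{\delta}(\bm{\tau})=0$. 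Moreover, assumption~\ref{A2.3} gives the rate $\bm{\delta}(\bm{\tau}) = o\!\left(\sum_i \tau_i^{\beta_i'}\right)$ as $\bm{\tau}\to\bm{0}$.

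\textbf{Step 2: expand and collect orders.} Write $\sigma_{\bm{b}}^{-2}(\bm{\tau}) = \bm{b}(\bm{0})^\top\Sigma^{-1}(\bm{\tau})\bm{b}(\bm{0}) + 2\,\bm{b}(\bm{0})^\top\Sigma^{-1}(\bm{\tau})\bm{\delta}(\bm{\tau}) + \bm{\delta}(\bm{\tau})^\top\Sigma^{-1}(\bm{\tau})\bm{\delta}(\bm{\tau})$. The last term is $O(|\bm{\delta}(\bm{\tau})|^2) = o\!\left((\sum_i\tau_i^{\beta_i'})^2\right) = o\!\left(\sum_i\tau_i^{\beta_i}\right)$, the final step because $\beta_i\le 2\beta_i'$ forces $\tau_i^{2\beta_i'}\le\tau_i^{\beta_i}$ and $\tau_i^{\beta_i'}\tau_j^{\beta_j'}\le\tfrac12(\tau_i^{\beta_i}+\tau_j^{\beta_j})$ for $\bm{\tau}$ small. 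For the cross term, Lemma~\ref{lemma:Sigma-inverse} gives $\Sigma^{-1}(\bm{\tau}) = \Sigma^{-1} + O\!\left(\sum_i\tau_i^{\beta_i'}\right)$, so $2\,\bm{b}(\bm{0})^\top\Sigma^{-1}(\bm{\tau})\bm{\delta}(\bm{\tau}) = 2\,\bm{w}^\top\bm{\delta}(\bm{\tau}) + O\!\left(\sum_i\tau_i^{\beta_i'}\right)o\!\left(\sum_i\tau_i^{\beta_i'}\right) = 0 + o\!\left(\sum_i\tau_i^{\beta_i}\right)$ by Step 1. Finally, $\bm{b}(\bm{0})^\top\Sigma^{-1}(\bm{\tau})\bm{b}(\bm{0}) - \sigma_{\bm{b}}^{-2}(\bm{0}) = \bm{b}(\bm{0})^\top[\Sigma^{-1}(\bm{\tau})-\Sigma^{-1}]\bm{b}(\bm{0})$, and substituting \eqref{Sigma-inv-1} together with $\Sigma^{-1}\bm{b}(\bm{0}) = \bm{w}$ and the identity $\bm{w}^\top B_{1,i}\bm{w}=0$ (valid since $A_{1,i}\bm{w}=\bm{0}$ by~\ref{A2.3}, as in \eqref{wB1w-equals-zero}) annihilates all $\tau_i^{\beta_i'}$-order contributions and leaves $\sum_{i,j}\bm{w}^\top[B_{2,i}\mathbb{1}_{i=j} + \widetilde{D}_{i,j}\mathbb{1}_{i,j\in\mathcal{F}}]\bm{w}\,\tau_i^{\beta_i/2}\tau_j^{\beta_j/2} + o\!\left(\sum_i\tau_i^{\beta_i}\right)$, i.e. the right-hand side of \eqref{gen-var-exp-formula} with $\Xi$ as in \eqref{def:XI}; the equivalence with the form \eqref{XI-alternative} is the elementary computation already recorded just after \eqref{def:XI}.

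\textbf{Main obstacle.} The delicate point is the cross term $2\,\bm{b}(\bm{0})^\top\Sigma^{-1}(\bm{\tau})\bm{\delta}(\bm{\tau})$: since $\bm{\delta}(\bm{\tau})$ is only $o(\sum_i\tau_i^{\beta_i'})$, a scale strictly coarser than the target remainder $\sum_i\tau_i^{\beta_i}$, the naive bound $|\bm{w}^\top\bm{\delta}(\bm{\tau})|\le|\bm{w}|\,|\bm{\delta}(\bm{\tau})|$ is far too weak, and one genuinely needs the exact identity $\bm{w}^\top\bm{\delta}(\bm{\tau})=0$. That identity is exactly where both parts of assumption~\ref{A2.3} enter, interacting with the lower hemicontinuity of the active set from Lemma~\ref{lemma:QPP-supp}; everything else is routine bookkeeping of error orders. (In the special case $I(\bm{0})=\{1,\dots,d\}$ — the reduction used repeatedly elsewhere in the paper — one has $\bm{b}(\bm{\tau})=\bm{b}$ for all $\bm{\tau}$ near $\bm{0}$, so $\bm{\delta}\equiv\bm{0}$ and the statement collapses immediately to \eqref{exp-prefactor-1} up to the additive constant $\sigma_{\bm{b}}^{-2}(\bm{0})$.)
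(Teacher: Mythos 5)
Your proof is correct and follows essentially the same route as the paper's: both hinge on the exact cancellation $\bm{w}^\top\bigl(\bm{b}(\bm{\tau})-\bm{b}(\bm{0})\bigr)=0$ obtained from the lower hemicontinuity of the active set (Lemma~\ref{lemma:QPP-supp}) combined with $\bm{w}_J=\bm{0}_J$, on Assumption~\ref{A2.3} for the rate of the residual drift, and on the Neumann expansion~\eqref{Sigma-inv-1} together with $\bm{w}^\top B_{1,i}\,\bm{w}=0$ for the main term. The only organisational difference is that the paper first establishes the exact bilinear identity~\eqref{gen-var-true} and then replaces $\bm{b}(\bm{\tau})$ by $\bm{b}(\bm{0})$ using~\ref{A2.3}, whereas you expand around $\bm{b}(\bm{0})$ in both slots from the outset; the error bookkeeping is equivalent (both implicitly require $\bm{\beta}\leq 2\bm{\beta}'$, which you make explicit).
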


\begin{proof}[Proof of Lemma~\ref{lemma:gen-var-exp}]
We remind the reader of our notation convention
\(A_I = A_{II} = ( A_{ij} )_{i, \, j \in I}\) for a given \(d \times d\) matrix
\(A\) and \(I \subset \{ 1, \ldots, d \}\).

Define a vector-valued function \(\bm{b} ( \bm{\tau} )\) by
\begin{equation*}
\sigma_{\bm{b}}^{-2} ( \bm{\tau} )
=
\min_{\bm{x} \geq \bm{b}} \bm{x}^\top \, \Sigma^{-1} ( \bm{\tau} ) \, \bm{x}
\eqqcolon
\bm{b}^\top ( \bm{\tau} ) \, \Sigma^{-1} ( \bm{\tau} ) \, \bm{b} ( \bm{\tau} ).
\end{equation*}
We have
\begin{equation}\label{eq:gen-var-exp-1-step}
\sigma_{\bm{b}}^{-2} ( \bm{\tau} ) - \sigma_{\bm{b}}^{-2} ( \bm{0} )
=
\bm{b}^\top ( \bm{\tau} ) \, \Sigma^{-1} ( \bm{\tau} ) \, \bm{b} ( \bm{\tau} )
-\bm{b}^\top ( \bm{0} ) \, \Sigma^{-1} ( \bm{0} ) \, \bm{b} ( \bm{0} ).
\end{equation}
Adding and subtracting \(\bm{b} ( \bm{0} )\) and using
Lemma~\ref{lemma:QPP}, we obtain
\begin{align*}
\bm{b}^\top ( \bm{\tau} ) \, \Sigma^{-1} ( \bm{\tau} ) \, \bm{b} ( \bm{\tau} )
& =
\bm{b}^\top ( \bm{\tau} ) \, \Sigma^{-1} ( \bm{\tau} ) \, \bm{b} ( \bm{0} )
+\bm{b} ( \bm{\tau} ) \,
\Sigma ( \bm{\tau} ) \,
\Big[ \bm{b} ( \bm{\tau} ) - \bm{b} ( \bm{0} ) \Big]
\\[7pt]
& =
\bm{b}^\top ( \bm{\tau} ) \, \Sigma^{-1} ( \bm{\tau} ) \, \bm{b} ( \bm{0} )
+\bm{b}_{I ( \bm{\tau} )}^\top ( \bm{\tau} ) \,
\Sigma_{I ( \bm{\tau} )}^{-1} ( \bm{\tau} ) \,
\Big[ \bm{b} ( \bm{\tau} ) - \bm{b} ( \bm{0} ) \Big]_{I ( \bm{\tau} )}.
\end{align*}
Take \(\varepsilon ( \bm{0} ) > 0\) as in Lemma~\ref{lemma:QPP-supp} and let
\(| \bm{\tau} | < \varepsilon ( \bm{0} )\). Then, we have that
\(I ( \bm{\tau} ) \subset I ( \bm{0} )\), and therefore
\(\bm{b}_{I ( \bm{\tau} )} ( \bm{\tau} )
= \bm{b}_{I ( \bm{\tau} )}
= \bm{b}_{I ( \bm{\tau} )} ( \bm{0} )\),
hence the last term on the right is zero.
Similarly, adding and subtracting \(\bm{b} ( \bm{\tau} )\) in the second term
of~\eqref{eq:gen-var-exp-1-step} and applying Lemma~\ref{lemma:QPP},
we obtain
\begin{equation*}
\bm{b}^\top ( \bm{0} ) \, \Sigma^{-1} ( \bm{0} ) \, \bm{b} ( \bm{0} )
=
\bm{b}^\top ( \bm{\tau} ) \, \Sigma^{-1} ( \bm{0} ) \, \bm{b} ( \bm{0} )
+\Big[ \bm{b} ( \bm{0} ) - \bm{b} ( \bm{\tau} ) \Big]_{I ( \bm{0} )}^\top \,
\Sigma_{I ( \bm{0} )}^{-1} ( \bm{0} ) \,
\bm{b}_{I ( \bm{0} )} ( \bm{0} ).
\end{equation*}
The last term is zero, since
\(I ( \bm{0} ) \subset I ( \bm{t} ) \cup K ( \bm{t} )\),
and therefore
\(\bm{b}_{I ( \bm{0} )} ( \bm{0} ) = \bm{b}_{I ( \bm{0} )} = \bm{b}_{I ( \bm{0} )} ( \bm{\tau} )\).
Therefore,
\begin{equation}\label{gen-var-true}
\sigma_{\bm{b}}^{-2} ( \bm{\tau} ) - \sigma_{\bm{b}}^{-2} ( \bm{0} )
=
\bm{b}^\top ( \bm{\tau} ) \,
\Big[ \Sigma^{-1} ( \bm{\tau} ) - \Sigma^{-1} ( \bm{0} ) \Big] \,
\bm{b} ( \bm{0} ).
\end{equation}
Note the small difference with the formula~\eqref{exp-prefactor-1}: here
we have \(\bm{b} ( \bm{\tau} )\) on the left instead of
\(\bm{b} = \bm{b} ( \bm{0} )\).
This, however, does not matter within the given error, since by~\ref{A2.3}
and~\eqref{Sigma-inv-1} the difference
\begin{equation*}
\Big[ \bm{b} ( \bm{\tau} ) - \bm{b} ( \bm{0} ) \Big]^\top
\Big[ \Sigma^{-1} ( \bm{\tau} ) - \Sigma^{-1} \Big] \, \bm{b} ( \bm{0} )
= o \left( \sum_{i, j = 1}^n \tau_i^{\beta_i'} \, \tau_j^{\beta_j'} \right)
\end{equation*}
is within the required error. By~\eqref{wB1w-equals-zero}, we arrive
at~\eqref{gen-var-exp-formula}.
\end{proof}
\subsubsection{Conditional mean}
\label{sec:org0fe267f}
\begin{lemma}\label{lemma:cond-mean}
The conditional mean vector
\begin{equation*}
\bm{d}_{\, \bm{\tau}} ( \bm{t} )
= \Big[ I - R ( \bm{\tau} + \bm{t}, \bm{\tau} ) \, \Sigma^{-1} ( \bm{\tau} ) \Big] \bm{b}
\end{equation*}
admits the following asymptotic formula
\begin{multline}\label{cond-mean-formula}
\bm{d}_{\, \bm{\tau}} ( \bm{t} )
=
\sum_{i = 1}^n \Bigg[
A_{2, i} \Big[ ( \tau_i + t_i )^{\beta_i} - \tau_i^{\beta_i} \Big]
+S_{\alpha_i, A_{5, i}} ( t_i )
\Bigg]
\bm{w}
\\[7pt]
+\sum_{i, j \in \mathcal{F}}
\tau_j^{\beta_j/2} \,
\Big[ ( \tau_i + t_i )^{\beta_i/2} - \tau_i^{\beta_i/2} \Big]
D_{i, j} \, \bm{w}
+\epsilon ( \bm{\tau}, \bm{t} ),
\end{multline}
with \(D_{i, j} = A_{6, i, j} + A_{1,i} \Sigma^{-1} A_{1, j}^\top\) and the error
\(\epsilon\) satisfying
\begin{equation*}
\epsilon ( \bm{\tau}, \bm{t} )
= o \left( \sum_{i = 1}^n \Big[
\tau_i^{\beta_i} + t_i^{\beta_i} + | t_i |^{\alpha_i}
\Big] \right).
\end{equation*}
\end{lemma}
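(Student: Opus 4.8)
The plan is to reduce $\bm d_{\,\bm\tau}$ to a difference of covariance matrices applied to a mildly perturbed copy of $\bm w$, and then expand both factors to the order prescribed by~\eqref{A2.2}. As in the proof of Lemma~\ref{lemma:log-layer-bound} I would assume $I=\{1,\ldots,d\}$, so that $\widetilde{\bm b}=\bm b$ and $\bm w=\Sigma^{-1}\bm b$. Since $\Sigma(\bm\tau)=R(\bm\tau,\bm\tau)$ and $\Sigma(\bm\tau)\,\Sigma^{-1}(\bm\tau)=I$, one has
\begin{equation*}
\bm d_{\,\bm\tau}(\bm t)
=\big[\Sigma(\bm\tau)-R(\bm\tau+\bm t,\bm\tau)\big]\,\Sigma^{-1}(\bm\tau)\,\bm b
=M(\bm\tau,\bm t)\,\bm w(\bm\tau),\qquad
M(\bm\tau,\bm t):=\Sigma(\bm\tau)-R(\bm\tau+\bm t,\bm\tau),\quad
\bm w(\bm\tau):=\Sigma^{-1}(\bm\tau)\,\bm b .
\end{equation*}
Subtracting~\eqref{Sigma-minus-R} evaluated at $(\bm t,\bm s)=(\bm\tau,\bm\tau)$ from the same formula at $(\bm\tau+\bm t,\bm\tau)$ cancels the pure-$\bm\tau$ terms $A_{1,i}^\top\tau_i^{\beta_i'}$ and $A_{2,i}^\top\tau_i^{\beta_i}$ and, using $S_{\alpha_i,A_{5,i}}\big((\tau_i+t_i)-\tau_i\big)=S_{\alpha_i,A_{5,i}}(t_i)$, gives
\begin{equation*}
M(\bm\tau,\bm t)
=\sum_{i=1}^n\Big[A_{1,i}\,\delta'_i+A_{2,i}\,\delta_i+S_{\alpha_i,A_{5,i}}(t_i)\Big]
+\sum_{i,j\in\mathcal F}A_{6,i,j}\,\delta'_i\,\tau_j^{\beta_j/2}
+\epsilon_1(\bm\tau,\bm t),
\end{equation*}
where $\delta_i:=(\tau_i+t_i)^{\beta_i}-\tau_i^{\beta_i}$ and $\delta'_i:=(\tau_i+t_i)^{\beta_i'}-\tau_i^{\beta_i'}$ (so $\delta'_i=(\tau_i+t_i)^{\beta_i/2}-\tau_i^{\beta_i/2}$ for $i\in\mathcal F$), and $\epsilon_1$ inherits the bound of~\eqref{A2.2} at $(\bm s,\bm t)=(\bm\tau,\bm\tau+\bm t)$, which turns into $o\big(\sum_i[\tau_i^{\beta_i}+t_i^{\beta_i}+|t_i|^{\alpha_i}]\big)$ via $(\tau_i+t_i)^{\beta_i}\le C(\tau_i^{\beta_i}+t_i^{\beta_i})$.

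For the second factor, the Neumann series (equivalently Lemma~\ref{lemma:Sigma-inverse} with~\eqref{Sigma-exp-formula}) gives $\bm w(\bm\tau)=\bm w+\Sigma^{-1}\big[\Sigma-\Sigma(\bm\tau)\big]\bm w+O\big(\sum_i\tau_i^{\beta_i}\big)$, and since $A_{1,i}\bm w=\bm 0$ by~\eqref{A2.3} one has $B_{1,i}\bm w=(A_{1,i}+A_{1,i}^\top)\bm w=A_{1,i}^\top\bm w$, so $\bm w(\bm\tau)=\bm w+\Sigma^{-1}\sum_i A_{1,i}^\top\tau_i^{\beta_i'}\bm w+O\big(\sum_i\tau_i^{\beta_i}\big)$. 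Multiplying, $\bm d_{\,\bm\tau}(\bm t)=M(\bm\tau,\bm t)\,\bm w+M(\bm\tau,\bm t)\big(\bm w(\bm\tau)-\bm w\big)$. In $M(\bm\tau,\bm t)\,\bm w$ every $A_{1,i}\delta'_i\bm w$ vanishes by~\eqref{A2.3}, leaving, modulo error, $\sum_i[A_{2,i}\delta_i+S_{\alpha_i,A_{5,i}}(t_i)]\bm w+\sum_{i,j\in\mathcal F}A_{6,i,j}\delta'_i\tau_j^{\beta_j/2}\bm w$. In the cross term the only contribution not absorbed into the error comes from pairing the $A_{1,i}\delta'_i$ part of $M$ with the leading correction of $\bm w(\bm\tau)$, namely $\sum_{i,j}\delta'_i\,\tau_j^{\beta_j'}\,A_{1,i}\Sigma^{-1}A_{1,j}^\top\bm w$; the summands with $i\notin\mathcal F$ or $j\notin\mathcal F$ are of lower order (as in the Remark containing~\eqref{Sigma-minus-R}), and for $i,j\in\mathcal F$ one has $\tau_j^{\beta_j'}=\tau_j^{\beta_j/2}$. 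Adding this to the $A_{6,i,j}$-part of $M\bm w$ collapses the two into $\sum_{i,j\in\mathcal F}\big[A_{6,i,j}+A_{1,i}\Sigma^{-1}A_{1,j}^\top\big]\delta'_i\tau_j^{\beta_j/2}\bm w=\sum_{i,j\in\mathcal F}\tau_j^{\beta_j/2}\big((\tau_i+t_i)^{\beta_i/2}-\tau_i^{\beta_i/2}\big)D_{i,j}\bm w$, which together with the first sum is precisely~\eqref{cond-mean-formula}.

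The main obstacle is the error bookkeeping in the two-parameter limit $(\bm\tau,\bm t)\downarrow(\bm 0,\bm 0)$: one must verify that the higher Neumann-series remainder times $M$, and the mixed products $(\tau_i+t_i)^{\beta_i'}\tau_j^{\beta_j}$, $\delta_i\tau_j^{\beta_j'}$, $|t_i|^{\alpha_i}\tau_j^{\beta_j'}$ and the off-$\mathcal F$ pieces of $\delta'_i\tau_j^{\beta_j'}$ are all $o\big(\sum_i[\tau_i^{\beta_i}+t_i^{\beta_i}+|t_i|^{\alpha_i}]\big)$ in the paper's $o(\cdot)$-sense. This is elementary: in each such product one factor tends to $0$, and the only borderline terms, $t_i^{\beta_i'}\tau_j^{\beta_j'}$ with $i$ or $j\notin\mathcal F$, are handled by Young's inequality with a small parameter using $\bm 0<\bm\beta'<\bm\beta\le 2\bm\beta'$ (which is exactly what underlies the Remark containing~\eqref{Sigma-minus-R}). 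The remaining manipulations are routine matrix algebra.
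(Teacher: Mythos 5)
Your argument is correct and is essentially the paper's own proof in a mildly reorganized form: the paper expands $I-R(\bm\tau+\bm t,\bm\tau)\,\Sigma^{-1}(\bm\tau)=(C-B)\Sigma^{-1}+C\Sigma^{-1}B\Sigma^{-1}$ and applies $\bm b$ at the end, whereas you factor $\bm d_{\,\bm\tau}(\bm t)=M(\bm\tau,\bm t)\,\bm w(\bm\tau)$ and expand the two factors separately, but the key ingredients --- differencing \eqref{Sigma-minus-R}, the Neumann series for $\Sigma^{-1}(\bm\tau)$, the cancellation via $A_{1,i}\bm w=\bm 0$ producing $D_{i,j}=A_{6,i,j}+A_{1,i}\Sigma^{-1}A_{1,j}^\top$, and the Young-inequality disposal of the off-$\mathcal F$ cross terms --- are identical. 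Your explicit simplifying assumption $I=\{1,\ldots,d\}$ (so $\bm w=\Sigma^{-1}\bm b$) matches what the paper does implicitly, and the error bookkeeping you outline is exactly what is needed.
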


\begin{proof}[Proof of Lemma~\ref{lemma:cond-mean}]
Throughout this calculation, the equivalence relation \(f \sim g\) is defined as
follows:
\begin{equation*}
f \sim g
\iff
f - g
= o \left( \sum_{i = 1}^n \Big[ t_i^{\beta_i} + \tau_i^{\beta_i} + | t_i |^{\alpha_i} \Big] \right).
\end{equation*}
In order to simplify the computation, define \(C\) and \(B\) by
\begin{equation*}
R ( \bm{t}, \bm{s} ) \eqqcolon \Sigma - C,
\qquad
\Sigma^{-1} ( \bm{\tau} )
\eqqcolon \Sigma^{-1} + \Sigma^{-1} \, B \, \Sigma^{-1}.
\end{equation*}
With these shorthands, we have
\begin{equation*}
I - R ( \bm{\tau} + \bm{t}, \bm{\tau} ) \, \Sigma^{-1} ( \bm{\tau} )
= I - \Big[ \Sigma - C \Big] \Big[ \Sigma^{-1} + \Sigma^{-1} B \, \Sigma^{-1} \Big]
= ( C - B ) \, \Sigma^{-1} + C \, \Sigma^{-1} B \, \Sigma^{-1}
\end{equation*}
Note that
\begin{align*}
C
& \sim
\sum_{i = 1}^n \Big[
A_{1, i} \, ( \tau_i + t_i )^{\beta_i'}
+A_{2, i} \, ( \tau_i + t_i )^{\beta_i}
+A_{1, i}^\top \, \tau_i^{\beta_i'}
+A_{2, i}^\top \, \tau_i^{\beta_i}
+S_{\alpha_i, A_{5, i}} ( t_i )
\Big]
+\sum_{i, \, j \in \mathcal{F}} A_{6, i, j} \, ( \tau_i + t_i )^{\beta_i / 2} \, \tau_j^{\beta_j / 2},
\\[7pt]
B
& \sim
\sum_{i = 1}^n \Big[
B_{1, i} \, \tau_i^{\beta_i'}
+B_{2, i} \, \tau_i^{\beta_i}
\Big]
+\sum_{i, j \in \mathcal{F}}
( A_{6, i, j} + B_{1, i} \Sigma^{-1} B_{1, j} )
\, \tau_i^{\beta_i/2} \, \tau_j^{\beta_j/2},
\end{align*}
with \(B_{k, i} = A_{k, i} + A_{k, i}^\top\), \(k = 1, 2\).
First term:
\begin{multline*}
( C - B ) \, \Sigma^{-1}
\sim
\sum_{i = 1}^n \Bigg[
A_{1, i} \left[ ( \tau_i + t_i )^{\beta_i'} - \tau_i^{\beta_i'} \right]
+A_{2, i} \left[ ( \tau_i + t_i )^{\beta_i} - \tau_i^{\beta_i} \right]
+S_{\alpha_i, A_{5, i}} ( t_i )
\Bigg]
\Sigma^{-1}
\\[7pt]
-\sum_{i, j \in \mathcal{F}} B_{1, i} \Sigma^{-1} B_{1, j}
 \Sigma^{-1} \, \tau_i^{\beta_i/2} \, \tau_j^{\beta_j/2}
+\sum_{i, j \in \mathcal{F}} A_{6, i, j} \Sigma^{-1} \Big[
( \tau_i + t_i )^{\beta_i/2} - \tau_i^{\beta_i/2}
\Big] \, \tau_j^{\beta_j/2}.
\end{multline*}
Second term:
\begin{equation*}
C \, \Sigma^{-1} B \, \Sigma^{-1}
\sim
\sum_{i, j \in \mathcal{F}}
\Big[
A_{1, i} \Sigma^{-1} B_{1, j} ( \tau_i + t_i )^{\beta_i/2} \tau_j^{\beta_j/2}
+A_{1, i}^\top \Sigma^{-1} B_{1, j} \tau_i^{\beta_i/2} \, \tau_j^{\beta_j/2}
\Big]
\Sigma^{-1}.
\end{equation*}
Adding the two formulae together, we observe a few cancellations:
\begin{multline}\label{I-minus-R-Sigma-inverse}
I - R ( \bm{\tau} + \bm{t}, \bm{\tau} ) \, \Sigma^{-1} ( \bm{\tau} )
\sim
\sum_{i = 1}^n \Bigg[
A_{1, i} \Big[ ( \tau_i + t_i )^{\beta_i'} - \tau_i^{\beta_i'} \Big]
+A_{2, i} \Big[ ( \tau_i + t_i )^{\beta_i} - \tau_i^{\beta_i} \Big]
+S_{\alpha_i, A_{5, i}} ( t_i )
\Bigg] \Sigma^{-1}
\\[7pt]
+\sum_{i, j \in \mathcal{F}}
\tau_j^{\beta_j/2}
\Big[ ( \tau_i + t_i )^{\beta_i/2} - \tau_i^{\beta_i/2} \Big]
\Big[ A_{6, i, j} + A_{1, i} \Sigma^{-1} B_{1, j} \Big] \Sigma^{-1}.
\end{multline}
Multiplying by \(\bm{b}\) on the right and using \(A_{1, i} \, \bm{w} = \bm{0}\)
yields~\eqref{cond-mean-formula}.
\end{proof}
\subsubsection{Conditional covariance}
\label{sec:orge933d99}
\begin{lemma} \label{lemma:conditional-covariance}
The conditional covariance function
\begin{equation*}
\mathcal{R}_{\bm{\tau}} ( \bm{t}, \bm{s} )
\coloneqq
R ( \bm{\tau} + \bm{t}, \bm{\tau} + \bm{s} )
-R ( \bm{\tau} + \bm{t}, \bm{\tau} ) \,
\Sigma^{-1} ( \bm{\tau} ) \,
R ( \bm{\tau}, \bm{\tau} + \bm{s} )
\end{equation*}
admits the following asymptotic formula
\begin{multline}\label{cond-cov-asympt}
\mathcal{R}_{\bm{\tau}} ( \bm{t}, \bm{s} )
=
\sum_{i = 1}^n \Big[
S_{\alpha_i, A_{5, i}} ( t_i ) + S_{\alpha_i, A_{5, i}} ( -s_i ) - S_{\alpha_i, A_{5, i}} ( t_i - s_i )
\Big]
\\[7pt]
+\sum_{i, j \in \mathcal{F}}
D_{i, j}
\left( ( \tau_i + t_i )^{\beta_i/2} - \tau_i^{\beta_i/2} \right)
\left( ( \tau_j + s_j )^{\beta_j/2} - \tau_j^{\beta_j/2} \right)
+\epsilon ( \bm{\tau}, \bm{t}, \bm{s} )
\end{multline}
with \(D_{i, j} = A_{6, i, j} + A_{1, i} \Sigma^{-1} A_{1, j}^\top\) and the error
\(\epsilon\) satisfying
\begin{equation}\label{cond-cov-error}
\epsilon ( \bm{\tau}, \bm{t}, \bm{s} )
= o \left(
\sum_{i = 1}^n \Big[ t_i^{\beta_i} + s_i^{\beta_i} + \tau_i^{\beta_i}
+|t_i|^{\alpha_i} + |s_i|^{\alpha_i} + | t_i - s_i |^{\alpha_i} \Big]
\right)
\end{equation}
as \(( \bm{\tau}, \bm{t}, \bm{s} ) \to \bm{0}\).
\end{lemma}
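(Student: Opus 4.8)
The plan is to proceed exactly as in the proofs of Lemma~\ref{lemma:Sigma-inverse} and Lemma~\ref{lemma:cond-mean}: plug the available asymptotic expansions of $R$ and of $\Sigma^{-1}(\bm{\tau})$ into the definition of $\mathcal{R}_{\bm{\tau}}$, multiply everything out, and retain only the terms that do not fall into the error order~\eqref{cond-cov-error}. Throughout I would write $f\sim g$ to mean that $f-g$ is $o$ of the right-hand side of~\eqref{cond-cov-error} as $(\bm{\tau},\bm{t},\bm{s})\to\bm{0}$.

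Write $R(\bm{\tau}+\bm{t},\bm{\tau}+\bm{s})=\Sigma-C^{(ts)}$, $R(\bm{\tau}+\bm{t},\bm{\tau})=\Sigma-C^{(t)}$, $R(\bm{\tau},\bm{\tau}+\bm{s})=\Sigma-C^{(s)}$; by~\eqref{Sigma-minus-R} applied with the corresponding shifted arguments the matrices $C^{(\cdot)}$ have explicit expansions, for instance
\[
C^{(t)}\sim\sum_{i=1}^n\Bigl[A_{1,i}(\tau_i+t_i)^{\beta_i'}+A_{2,i}(\tau_i+t_i)^{\beta_i}+A_{1,i}^\top\tau_i^{\beta_i'}+A_{2,i}^\top\tau_i^{\beta_i}+S_{\alpha_i,A_{5,i}}(t_i)\Bigr]+\sum_{i,j\in\mathcal{F}}A_{6,i,j}(\tau_i+t_i)^{\beta_i/2}\tau_j^{\beta_j/2},
\]
and analogously for $C^{(s)}$ and $C^{(ts)}$. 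Setting also $\Sigma^{-1}(\bm{\tau})=\Sigma^{-1}+\Sigma^{-1}B\Sigma^{-1}$ with $B$ the matrix from Lemma~\ref{lemma:Sigma-inverse}, one expands the triple product, drops the term $C^{(t)}\Sigma^{-1}B\Sigma^{-1}C^{(s)}$ (which lies in the error order), and is left with
\[
\mathcal{R}_{\bm{\tau}}(\bm{t},\bm{s})\ \sim\ \bigl(-C^{(ts)}+C^{(t)}+C^{(s)}-B\bigr)+\bigl(C^{(t)}\Sigma^{-1}B+B\Sigma^{-1}C^{(s)}-C^{(t)}\Sigma^{-1}C^{(s)}\bigr).
\]

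It then remains to collect the two groups. In the first group the $A_{1,i}$-, $A_{1,i}^\top$-, $A_{2,i}$- and $A_{2,i}^\top$-linear parts telescope to zero ($B_{1,i}=A_{1,i}+A_{1,i}^\top$ and $B_{2,i}=A_{2,i}+A_{2,i}^\top$ being precisely what makes the cancellation work), the $S_{\alpha_i,A_{5,i}}$-parts recombine via $R_{\alpha,V}(t,s)=S_{\alpha,V}(t)+S_{\alpha,V}(-s)-S_{\alpha,V}(t-s)$ into $\sum_i R_{\alpha_i,A_{5,i}}(t_i,s_i)$, the first sum of~\eqref{cond-cov-asympt}, and the $A_{6,i,j}$-parts collapse, through $-ab+ab_0+a_0b-a_0b_0=-(a-a_0)(b-b_0)$ with $a=(\tau_i+t_i)^{\beta_i/2}$, $a_0=\tau_i^{\beta_i/2}$, $b=(\tau_j+s_j)^{\beta_j/2}$, $b_0=\tau_j^{\beta_j/2}$, into a quadratic $\mathcal{F}$-term carrying $A_{6,i,j}$ times the scalar factor $(a-a_0)(b-b_0)$. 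In the second group only the leading, $\beta_i'$-homogeneous (i.e.\ $A_{1,i}$-type) parts of $C^{(t)},C^{(s)},B$ produce surviving products; restricting the sums to $i,j\in\mathcal{F}$ and combining them with the $-B_{1,i}\Sigma^{-1}B_{1,j}\tau_i^{\beta_i/2}\tau_j^{\beta_j/2}$ piece of $-B$, the $A_{1,i}\Sigma^{-1}A_{1,j}$-, $A_{1,i}^\top\Sigma^{-1}A_{1,j}$- and $A_{1,i}^\top\Sigma^{-1}A_{1,j}^\top$-type contributions all cancel, leaving only a quadratic $\mathcal{F}$-term carrying $A_{1,i}\Sigma^{-1}A_{1,j}^\top$ with the same factor $(a-a_0)(b-b_0)$. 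Adding the two $\mathcal{F}$-contributions produces the term governed by $D_{i,j}=A_{6,i,j}+A_{1,i}\Sigma^{-1}A_{1,j}^\top$ in~\eqref{cond-cov-asympt}. An equivalent, slightly shorter route is to use $\mathcal{R}_{\bm{\tau}}=R(\bm{\tau}+\bm{t},\bm{\tau}+\bm{s})-R(\bm{\tau},\bm{\tau}+\bm{s})+\bigl[I-R(\bm{\tau}+\bm{t},\bm{\tau})\Sigma^{-1}(\bm{\tau})\bigr]R(\bm{\tau},\bm{\tau}+\bm{s})$ and to substitute the already-established expansion~\eqref{I-minus-R-Sigma-inverse} of the bracket.

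The routine but genuinely delicate part — and the main obstacle — is the bookkeeping of which cross-products may be absorbed into~\eqref{cond-cov-error}. The key point is that a product $t_i^{\beta_i'}s_j^{\beta_j'}$ with $i\notin\mathcal{F}$ (or $j\notin\mathcal{F}$) is $o(t_i^{\beta_i}+s_j^{\beta_j})$: since $\beta_i<2\beta_i'$ one may apply Young's inequality with a conjugate pair $(p,q)$ for which $p\in(\beta_i/\beta_i',2)$, so that $t_i^{p\beta_i'}=o(t_i^{\beta_i})$, whereupon $q>2$ and $q\beta_j'>2\beta_j'\ge\beta_j$ give $s_j^{q\beta_j'}=o(s_j^{\beta_j})$ (using $\beta_j\le2\beta_j'$). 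This is the mechanism already invoked in the remark following~\eqref{Sigma-minus-R}, and it is what lets one discard every off-$\mathcal{F}$ quadratic term, as well as every product of an $S_{\alpha_i,A_{5,i}}$-term with a positive power of $\bm{\tau},\bm{t}$ or $\bm{s}$ (the extra factor tending to zero). Once these reductions are carried out, the collection described above is mechanical and delivers~\eqref{cond-cov-asympt} with the stated error.
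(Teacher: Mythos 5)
Your proposal reproduces the paper's own proof almost verbatim: the same shorthands $R=\Sigma-C$, $\Sigma^{-1}(\bm{\tau})=\Sigma^{-1}+\Sigma^{-1}B\Sigma^{-1}$, the same split into the group $-C^{(ts)}+C^{(t)}+C^{(s)}-B$ plus the group $B\Sigma^{-1}C^{(s)}+C^{(t)}\Sigma^{-1}B-C^{(t)}\Sigma^{-1}C^{(s)}$ with the quartic term relegated to the error, the same coefficient-by-coefficient collection producing the fBm part and the quadratic $\mathcal{F}$-part, and the same Young-inequality mechanism for absorbing the off-$\mathcal{F}$ cross terms. The approach is the intended one and the cancellations you describe are exactly those carried out in the paper.

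One item to recheck, though it afflicts the paper's proof as well as yours: your displayed identity $-ab+ab_0+a_0b-a_0b_0=-(a-a_0)(b-b_0)$ is correct, yet you then report the aggregate $\mathcal{F}$-contribution with the factor $+(a-a_0)(b-b_0)$. Carrying the bookkeeping through consistently — both for the $A_{6,i,j}$ coefficients in the first group (pattern $-,+,+,-$ on $ab$, $ab_0$, $a_0b$, $a_0b_0$) and for the $A_{1,i}\Sigma^{-1}A_{1,j}^\top$ coefficients emerging from the second group combined with $-B_{1,i}\Sigma^{-1}B_{1,j}\tau_i^{\beta_i/2}\tau_j^{\beta_j/2}$ — yields
\begin{equation*}
-\sum_{i,j\in\mathcal{F}} D_{i,j}\left( (\tau_i+t_i)^{\beta_i/2}-\tau_i^{\beta_i/2} \right)\left( (\tau_j+s_j)^{\beta_j/2}-\tau_j^{\beta_j/2} \right),
\end{equation*}
i.e.\ the opposite sign to \eqref{cond-cov-asympt}. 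The paper exhibits the identical flip between its coefficient table and its stated conclusion \eqref{power-contributions}, so this is a shared convention to be resolved rather than a defect of your argument alone; a one-dimensional sanity check such as $X(t)=\mathcal{N}_0+c\,t^{\beta'}\mathcal{N}_1$, for which $A_{6}=-c^2$, $A_{1}=0$ and the conditional covariance evaluates exactly to $c^2\bigl((\tau+t)^{\beta'}-\tau^{\beta'}\bigr)\bigl((\tau+s)^{\beta'}-\tau^{\beta'}\bigr)/(1+c^2\tau^{2\beta'})$, supports the minus sign. You should either trace the sign once more or flag the discrepancy explicitly; everything else in your plan is sound.
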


\begin{proof}[Proof of Lemma~\ref{lemma:conditional-covariance}]
Let us, for the sake of clarity, denote
\begin{equation}\label{B-and-C-shorthand}
R ( \bm{t}, \bm{s} ) \eqqcolon \Sigma - C ( \bm{t}, \bm{s} ),
\qquad
\Sigma^{-1} ( \bm{\tau} )
\eqqcolon \Sigma^{-1} + \Sigma^{-1} \, B ( \bm{\tau} ) \, \Sigma^{-1}.
\end{equation}
Therefore, we have
\begin{align*}
\notag
\mathcal{R} ( \bm{\tau}, \bm{t}, \bm{s} )
& =
\Sigma - C ( \bm{\tau} + \bm{t}, \bm{\tau} + \bm{s} )
\\[7pt]
-\Big[ \Sigma - C ( \bm{\tau} + \bm{t}, \bm{\tau} ) \Big]
\Big[ \Sigma^{-1} + \Sigma^{-1} \, B ( \bm{\tau} ) \, \Sigma^{-1} \Big]
\Big[ \Sigma - C ( \bm{\tau}, \bm{\tau} + \bm{s} ) \Big]
\span \omit
\\[7pt]
& =
-C ( \bm{\tau} + \bm{t}, \bm{\tau} + \bm{s} )
+C ( \bm{\tau} + \bm{t}, \bm{\tau} )
+C ( \bm{\tau}, \bm{\tau} + \bm{s} )
-B ( \bm{\tau} )
+F
+\epsilon,
\end{align*}
where
\[
F
\coloneqq
B ( \bm{\tau} ) \, \Sigma^{-1} \, C ( \bm{\tau}, \bm{\tau} + \bm{s} )
+C ( \bm{\tau} + \bm{t}, \bm{\tau} ) \, \Sigma^{-1} \, B ( \bm{\tau} )
-C ( \bm{\tau} + \bm{t}, \bm{\tau} ) \, \Sigma^{-1} \, C ( \bm{\tau}, \bm{\tau} + \bm{s} )
\]
and
\begin{equation}\label{CC-error}
\epsilon
\coloneqq
-C ( \bm{\tau} + \bm{t}, \bm{\tau} ) \,
\Sigma^{-1} \,
B ( \bm{\tau} ) \,
\Sigma^{-1} \,
C ( \bm{\tau}, \bm{\tau} + \bm{s} )
= O \left(
\sum_{i = 1}^n \left[
\tau_i^{3 \beta_i'}
+t_i^{3 \beta_i'}
+s_i^{3 \beta_k'}
+| t_i |^{3 \alpha_i}
+| s_i |^{3 \alpha_i}
\right]
\right)
\end{equation}
can be subsumed into~\eqref{cond-cov-error}. Let us substitute \(C\) and
\(B\) with their expressions
\begin{equation}\label{B-and-C-expr}
\begin{aligned}
C ( \bm{t}, \bm{s} )
& \sim
\sum_{i = 1}^n \Big[
A_{1, i} \, t_i^{\beta_i'}
+A_{2, i} \, t_i^{\beta_i}
+A_{1, i}^\top \, s_i^{\beta_i'}
+A_{2, i}^\top \, s_i^{\beta_i}
+S_{\alpha_i, A_{5, i}} ( t_i - s_i )
\Big]
+\sum_{i, j \in \mathcal{F}} A_{6, i, j} \, t_i^{\beta_i/2} \, t_j^{\beta_j/2},
\\[7pt]
B ( \bm{\tau} )
& \sim
\sum_{i = 1}^n \Big[ B_{1, i} \, \tau_i^{\beta_i'} + B_{2, i} \, \tau_i^{\beta_i} \Big]
+\sum_{i, j \in \mathcal{F}} \Big[ A_{6, i, j} + B_{1, i} \Sigma^{-1} B_{1, j} \Big] \,
\tau_i^{\beta_i/2} \, \tau_j^{\beta_j/2}.
\end{aligned}
\end{equation}
Let us first compute the coefficients in front of various powers of \(\tau\),
\(\tau + t\) and \(\tau + s\) in
\begin{equation}\label{top-order}
-C ( \bm{\tau} + \bm{t}, \bm{\tau} + \bm{s} )
+C ( \bm{\tau} + \bm{t}, \bm{\tau} )
+C ( \bm{\tau}, \bm{\tau} + \bm{s} )
-B ( \bm{\tau} ).
\end{equation}
Coefficients of the following terms are zero:
\begin{equation*}
( \tau_i + t_i )^{\beta_i'}, \
( \tau_i + t_i )^{\beta_i}, \
( \tau_i + s_i )^{\beta_i'}, \
( \tau_i + s_i )^{\beta_i}, \
\tau_i^{\beta_i'}, \
\tau_i^{\beta_i}.
\end{equation*}
Next, we proceed to the mixed terms:
\begin{align*}
& ( \tau_i + t_i )^{\beta_i/2} \, ( \tau_j + s_j )^{\beta_j/2} \colon \quad
& -A_{6, i, j},
\qquad
& \tau_i^{\beta_i/2} \, ( \tau_j + s_j )^{\beta_j/2} \colon \quad
& A_{6, i, j},
\\
& ( \tau_i + t_i )^{\beta_i/2} \, \tau_j^{\beta_j/2} \colon \quad
& A_{6, i, j},
\qquad
& \tau_i^{\beta_i/2} \, \tau_j^{\beta_j/2} \colon \quad
& -A_{6, i, j} - B_{1, i} \Sigma^{-1} B_{1, j}
\end{align*}
Let us compare these with the corresponding orders in
\begin{equation*}
F =
B ( \bm{\tau} ) \, \Sigma^{-1} \, C ( \bm{\tau}, \bm{\tau} + \bm{s} )
+C ( \bm{\tau} + \bm{t}, \bm{\tau} ) \, \Sigma^{-1} \, B ( \bm{\tau} )
-C ( \bm{\tau} + \bm{t}, \bm{\tau} ) \, \Sigma^{-1} \, C ( \bm{\tau}, \bm{\tau} + \bm{s} ).
\end{equation*}
We have:
\begin{align*}
& ( \tau_i + t_i )^{\beta_i/2} \, ( \tau_j + s_j )^{\beta_j/2} \colon
& -A_{1, i} \Sigma^{-1} A_{1, j}^\top,
\\[7pt]
& \tau^{\beta_i/2} \, ( \tau_j + s_j )^{\beta_j/2} \colon
& A_{1, i} \Sigma^{-1} A_{1, j}^\top,
\\[7pt]
& ( \tau_i + t_i )^{\beta_i/2} \, \tau_j^{\beta_j/2} \colon
& A_{1, i} \Sigma^{-1} A_{1, j}^\top,
\\[7pt]
& \tau_i^{\beta_i/2} \, \tau_j^{\beta_j/2} \colon
& B_{1, i} \Sigma^{-1} A_{1, j} + A_{1, i}^\top \Sigma^{-1} A_{1, j}^\top.
\end{align*}
Combining these, we find that the aggregate contribution of these terms to
\(\mathcal{R}\) is
\begin{equation}\label{power-contributions}
\sum_{i, j \in \mathcal{F}}
\Big[ A_{6, i, j} + A_{1, i} \Sigma^{-1} A_{1, j}^\top \Big]
\left( ( \tau_i + t_i )^{\beta_i/2} - \tau_i^{\beta_i/2} \right)
\left( ( \tau_j + s_j )^{\beta_j/2} - \tau_j^{\beta_j/2} \right).
\end{equation}
The significant \(S\)-terms appear only from~\eqref{top-order}. Their
combined contribution is
\begin{equation}\label{S-contributions}
\sum_{i = 1}^n \Big[
S_{\alpha_i, A_{5, i}} ( t_i ) + S_{\alpha_i, A_{5, i}} ( -s_i ) - S_{\alpha_i, A_{5, i}} ( t_i - s_i )
\Big].
\end{equation}
The accumulated error is of order
\begin{equation*}
o \left(
\sum_{i = 1}^n \Big[ t_i^{\beta_i} + s_i^{\beta_i} + \tau_i^{\beta_i}
+| t_i |^{\alpha_i} + | s_i |^{\alpha_i} + | t_i - s_i |^{\alpha_i} \Big]
\right).
\end{equation*}
\end{proof}
\subsubsection{Limiting process}
\label{sec:orgc5e60ad}
\begin{lemma}\label{lemma:limiting-process}
Let \(\Lambda > 0\), \(\bm{S} > \bm{0}\) and let
\begin{equation*}
Q_u
= \left\{
\bm{\tau} \in \mathbb{R}^n_{ + } \colon
\tau_i = 0
\ \text{if} \  i \in \mathcal{I}^c
\ \text{and} \
\tau_i \leq u^{2/\alpha_i - 2/\nu_i} \Lambda_i / S_i
\ \text{if} \ i \in \mathcal{I}
\right\}.
\end{equation*}
Then, the rescaled by \(u^{-2/\bm{\nu}}\) contitional mean vector
\(\bm{d}_{u, \bm{\tau}}\) converges uniformly
\begin{equation*}
\lim_{u \to \infty}
\sup_{\bm{\tau} \in Q_u}
\sup_{\bm{t} \in [\bm{0}, \bm{S}]}
\left|
u^2 \, \bm{d}_{u, \bm{\tau}} \left( \bm{t} \right)
-\bm{d} ( \bm{t} )
\right|
= 0
\end{equation*}
to the following limit
\begin{equation}\label{lim-mean}
\bm{d} ( \bm{t} )
=
\sum_{i \in \mathcal{I} \cup \mathcal{J}}
S_{\alpha_i, A_{5, i}} ( t_i ) \, \bm{w}
+\sum_{i \in \mathcal{J} \cup \mathcal{K}} A_{2, i} \, \bm{w} \, t_i^{\beta_i}.
\end{equation}
The rescaled conditional covariance matrix \(\mathcal{R}_{u, \bm{\tau}}\) converges
uniformly
\begin{equation*}
\lim_{u \to \infty}
\sup_{\bm{\tau} \in Q_u}
\sup_{\bm{t}, \, \bm{s} \in [\bm{0}, \bm{S}]}
\left\|
u^2 \, \mathcal{R}_{u, \bm{\tau}} \left( \bm{t}, \bm{s} \right)
-\mathcal{R} ( \bm{t}, \bm{s} )
\right\|
= 0
\end{equation*}
to the following limit:
\begin{equation}\label{lim-cov}
\mathcal{R} ( \bm{t}, \bm{s} )
=
\sum_{i \in \mathcal{I} \cup \mathcal{J}} R_{\alpha_i, A_{5, i}} ( t_i, s_i )
+\sum_{i, j \in ( \mathcal{J} \cup \mathcal{K} ) \cap \mathcal{F}}
D_{i, j}
\, t_i^{\beta_i/2} \, s_j^{\beta_j/2},
\end{equation}
where \(R_{\alpha, V}\) is defined in~\eqref{def:R-and-S} and \(D_{i, j}\)
in~\eqref{XI-alternative}.
\end{lemma}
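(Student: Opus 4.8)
The plan is to obtain both uniform limits from the pointwise asymptotic expansions of the conditional mean and conditional covariance established in Lemma~\ref{lemma:cond-mean} and Lemma~\ref{lemma:conditional-covariance}, and then simply to track the powers of $u$ produced by the rescaling $f_u(\bm{t}) = f(u^{-2/\bm{\nu}}\bm{t})$. Concretely, $u^2\,\bm{d}_{u,\bm{\tau}}(\bm{t})$ is what one gets by substituting $\tau_i\mapsto u^{-2/\nu_i}\tau_i$ and $t_i\mapsto u^{-2/\nu_i}t_i$ in the right-hand side of~\eqref{cond-mean-formula} and multiplying by $u^2$, and likewise $u^2\,\mathcal{R}_{u,\bm{\tau}}(\bm{t},\bm{s})$ from~\eqref{cond-cov-asympt}.

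First I would verify that the physical arguments shrink to the origin \emph{uniformly} over $\bm{\tau}\in Q_u$ and $\bm{t},\bm{s}\in[\bm{0},\bm{S}]$: for $i\in\mathcal{I}^c$ one has $\tau_i=0$ and $u^{-2/\nu_i}t_i\le u^{-2/\nu_i}S_i\to0$, while for $i\in\mathcal{I}$ one has $\nu_i=\alpha_i$ and $\tau_i$ is allowed to grow in $u$ only with an exponent strictly smaller than $2/\alpha_i$, whence $u^{-2/\nu_i}\tau_i\to0$ and $u^{-2/\nu_i}t_i\to0$ as well. Since the $o(\cdot)$ symbols in~\eqref{cond-mean-formula} and~\eqref{cond-cov-asympt} are, by the asymptotic-equivalence convention recalled at the end of Section~\ref{SEC:intro}, uniform over all arguments inside a sufficiently small ball around $\bm{0}$, the two expansions become valid simultaneously for every $\bm{\tau}\in Q_u$ and $\bm{t},\bm{s}\in[\bm{0},\bm{S}]$ as soon as $u$ is large.

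Second, I would perform the substitution and exploit homogeneity: the $S$-terms $S_{\alpha_i,A_{5,i}}(\cdot)$ and $R_{\alpha_i,A_{5,i}}(\cdot,\cdot)$ scale by $u^{-2\alpha_i/\nu_i}$, the terms built from $t_i^{\beta_i}$ by $u^{-2\beta_i/\nu_i}$, and the $\mathcal{F}$-products $((\tau_i+t_i)^{\beta_i/2}-\tau_i^{\beta_i/2})((\tau_j+s_j)^{\beta_j/2}-\tau_j^{\beta_j/2})$ by $u^{-\beta_i/\nu_i-\beta_j/\nu_j}$, so after multiplication by $u^2$ each term acquires a prefactor $u^{2-2\alpha_i/\nu_i}$, $u^{2-2\beta_i/\nu_i}$, or $u^{2-\beta_i/\nu_i-\beta_j/\nu_j}$. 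By the definitions~\eqref{def:index-sets-1} and~\eqref{def:index-sets-2}, $u^{2-2\alpha_i/\nu_i}=1$ exactly for $i\in\mathcal{I}\cup\mathcal{J}$ and tends to $0$ for $i\in\mathcal{K}$; $u^{2-2\beta_i/\nu_i}=1$ exactly for $i\in\mathcal{J}\cup\mathcal{K}$ (and for such $i$ one also has $\tau_i=0$, so $(\tau_i+t_i)^{\beta_i}-\tau_i^{\beta_i}=t_i^{\beta_i}$); and the $\mathcal{F}$-product prefactor equals $1$ exactly for $i,j\in(\mathcal{J}\cup\mathcal{K})\cap\mathcal{F}$ and tends to $0$ otherwise. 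The $\mathcal{F}$-products occurring in the conditional mean carry in addition a bare factor $\tau_j^{\beta_j/2}$, which vanishes after rescaling, so they contribute nothing to the limit mean. Reading off the surviving terms reproduces precisely~\eqref{lim-mean} and~\eqref{lim-cov}, with each $R_{\alpha_i,A_{5,i}}(t_i,s_i)$ reassembled from the three $S$-terms of~\eqref{cond-cov-asympt}.

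The main obstacle is to make the vanishing of all the other, strictly negative power of $u$, contributions \emph{uniform}, the delicate point being that for $i\in\mathcal{I}$ the coordinate $\tau_i$ is allowed to grow with $u$. For the $A_{2,i}$-term of the mean with $i\in\mathcal{I}$ I would bound $(\tau_i+t_i)^{\beta_i}-\tau_i^{\beta_i}\le\beta_i(\tau_i+S_i)^{\beta_i-1}S_i$ (and more simply when $\beta_i\le1$), and observe that inserting the largest admissible $\tau_i$ collapses the exponent of $u$ to a strictly negative value; the $\mathcal{F}$-products are handled the same way by a direct check of exponents. For the error terms, rescaling the $o(\cdot)$ bound of~\eqref{cond-mean-formula}/\eqref{cond-cov-asympt} and multiplying by $u^2$ produces an expression of the form $\varepsilon\cdot C(\bm{\Lambda},\bm{S})$, where $C$ is independent of $u$, $\bm{\tau}$, $\bm{t}$, $\bm{s}$, since each rescaled monomial $u^{2-2\beta_i/\nu_i}\tau_i^{\beta_i}$, $u^{2-2\beta_i/\nu_i}t_i^{\beta_i}$, $u^{2-2\alpha_i/\nu_i}|t_i|^{\alpha_i}$, $u^{2-2\alpha_i/\nu_i}|t_i-s_i|^{\alpha_i}$ (and likewise in $\bm{s}$) is bounded by a constant on $Q_u\times[\bm{0},\bm{S}]^2$; hence it tends to $0$ uniformly as $u\to\infty$. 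Putting these estimates together gives the two displayed limits; in particular, $\mathcal{R}(\bm{t},\bm{s})$ in~\eqref{lim-cov} is jointly continuous and is the covariance function of an a.s.\ continuous $\mathbb{R}^d$-valued Gaussian field, which is exactly what is needed to feed into the local Pickands lemma (Lemma~\ref{lemma:local-Pickands}).
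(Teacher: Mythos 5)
Your overall route --- substitute the rescaled arguments into the expansions of Lemmas~\ref{lemma:cond-mean} and~\ref{lemma:conditional-covariance}, multiply by \(u^2\), and sort the surviving terms by the sign of the resulting exponent of \(u\) --- is exactly the argument the paper intends (no separate proof is written out there), and your handling of the \(S\)-terms, the \(A_{2,i}\)-terms and the error terms is correct. You also read the range of \(Q_u\) the right way: as printed, \(u^{2/\alpha_i - 2/\nu_i} = u^0\) for \(i \in \mathcal{I}\), which cannot be what is meant, since the application in the main theorem needs \(\tau_i\) up to order \(u^{2/\alpha_i - 2/\beta_i}\).

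There is, however, one step where your argument does not merely lack detail but fails as written: the dismissal of the \(\mathcal{F}\)-mixed terms of the conditional mean on the grounds that ``the bare factor \(\tau_j^{\beta_j/2}\) vanishes after rescaling''. A vanishing factor does not make the whole term vanish after the multiplication by \(u^2\). Carrying out the ``direct check of exponents'' you defer to: for \(i \in \mathcal{F} \cap \mathcal{I}^c\) and \(j \in \mathcal{F} \cap \mathcal{I}\) the rescaled term is
\begin{equation*}
u^{2 - \beta_i/\nu_i - \beta_j/\nu_j} \,
\tau_j^{\beta_j/2}
\Big[ ( \tau_i + t_i )^{\beta_i/2} - \tau_i^{\beta_i/2} \Big]
D_{i, j} \, \bm{w}
=
u^{1 - \beta_j/\alpha_j} \, \tau_j^{\beta_j/2} \, t_i^{\beta_i/2} \, D_{i, j} \, \bm{w},
\end{equation*}
and at the top of the admissible range \(\tau_j \asymp u^{2/\alpha_j - 2/\beta_j}\) one has \(\tau_j^{\beta_j/2} \asymp u^{\beta_j/\alpha_j - 1}\), so this term is of order \(u^0\) and depends on \(\tau_j\): it neither vanishes uniformly nor converges to a \(\bm{\tau}\)-independent limit. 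All other combinations are fine --- \(j \in \mathcal{I}^c\) forces \(\tau_j = 0\), and for \(i \in \mathcal{I}\) the difference of powers supplies the missing negative exponent, which is also why the analogous cross terms in the covariance, where \emph{both} factors are differences, genuinely vanish. Hence your proof (and, frankly, the lemma in this generality) is complete only under the proviso that \(\mathcal{F} \subset \mathcal{I}\) or \(\mathcal{F} \subset \mathcal{I}^c\), or that \(D_{i, j} \, \bm{w} = \bm{0}\) for the cross pairs; this covers every application made in the paper, but in the general case this term has to be confronted rather than waved away.
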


Assuming~\ref{A2.6} and using Lemma~\ref{lemma:quadratic-covariance}
we find that the covariance~\eqref{lim-cov} corresponds to the following
Gaussian random field:
\begin{equation*}
\bm{Y} ( \bm{t} )
=
\sum_{i \in \mathcal{I} \cup \mathcal{J}} \bm{Y}_{\alpha_i, A_{5, i}} ( t_i )
+\sum_{k \in \mathcal{F}} C_k ( \bm{t} ) \, \bm{\mathcal{N}}_k,
\qquad
C_k ( \bm{t} ) = \sum_{i \in \mathcal{F}} C_{i, k} \, t_i^{\beta_i/2},
\end{equation*}
where
\begin{enumerate}
\item \(\bm{Y}_{\alpha_i, A_{5, i}}\) are operator fractional Brownian motions
associated to \(R_{\alpha_i, A_{5, i}}\), independent of each other,
\item \(\bm{\mathcal{N}}_k\) are standard Gaussian vectors, independent of each other and
of \(\bm{Y}_{\alpha_i, A_{5, i}}\),
\item \(( C_{i, k} )_{i, k \in \mathcal{F}}\), is a family of \(d \times d\) matrices
satisfying~\eqref{A2.6-D-as-sum}, whose existence is guaranteed
by~\ref{A2.6} and Lemma~\ref{lemma:quadratic-covariance}.
\end{enumerate}
\subsection{Calculations from the log-layer lemma}
\label{sec:orgb6b584f}
\label{LL:calculations}
\begin{proof}[Proof of~\eqref{LL:G-plus-sigma2}]
By the definition of \(\bm{\chi}_{u, \bm{k}}\) as the condional process
\begin{equation*}
\bm{\chi}_{u, \bm{k}} ( \bm{t} )
=
u \,
\Big(
\bm{X}_{u, \bm{k}} ( \bm{t} ) - \bm{X}_{u, \bm{k}} ( \bm{0} )
\ \Big| \
\bm{X}_{u, \bm{k}} ( \bm{0} ) = u \bm{b} - \frac{\bm{x}}{u}
\Big)
\end{equation*}
we have
\begin{align}\label{eq:4}
\mathbb{E} \left\{ \bm{\chi}_{u, \bm{k}} ( \bm{t} ) \right\}
& =
-u \Big[
I - R_u ( \bm{\Lambda} \bm{k} + \bm{t}, \bm{\Lambda} \bm{k} ) \,
\Sigma_u^{-1} ( \bm{\Lambda} \bm{k} )
\Big]
\left( u \, \bm{b} - \frac{\bm{x}}{u} \right)
\\[7pt]
& =
-u^2 \, \bm{d}_{u, \bm{\Lambda} \bm{k}} ( \bm{t} )
+\Big[
I - R_u ( \bm{\Lambda} \bm{k} + \bm{t}, \bm{\Lambda} \bm{k} ) \,
\Sigma_u^{-1} ( \bm{\Lambda} \bm{k} )
\Big] \bm{x}.
\end{align}
By~\ref{I-minus-R-Sigma-inverse}, we have that for every \(\varepsilon > 0\)
exists \(\delta > 0\) such that if \(| \bm{\tau} |, \, | \bm{t} | < \delta\), then
\begin{equation*}
\left\|
I - R ( \bm{\tau} + \bm{t}, \bm{\tau} ) \,
\Sigma^{-1} ( \bm{\tau} )
\right\|
\leq \varepsilon
\end{equation*}
Setting
\(\bm{\tau} \leadsto u^{-2/\bm{\nu}} \bm{\Lambda} \bm{k}\) and \(\bm{t} \leadsto u^{-2/\bm{\nu}} \bm{t}\),
with \(\bm{k}\) now belonging to \(Q_u\) defined by~\eqref{LL:1},
and noting that both belong to a shrinking,
as \(u \to \infty\), vicinity of zero, we obtain the following result: for every
\(\varepsilon > 0\) there exists \(u_0\) such that for all \(u \geq u_0\) holds
\begin{equation*}
\left|
-u \Big[
I - R ( \bm{\tau} + \bm{t}, \bm{\tau} ) \,
\Sigma^{-1} ( \bm{\tau} )
\Big] \, \frac{\bm{x}}{u}
\right|
\leq
\varepsilon \, \sum_{j = 1}^d | x_j |
\end{equation*}
Next, we want to study the other term of~\eqref{eq:4}.
By~\eqref{cond-mean-formula},
\begin{multline*}
\bm{d}_{\, \bm{\tau}} ( \bm{t} )
\sim
\sum_{i = 1}^n \Bigg[
A_{2, i} \Big[ ( \tau_i + t_i )^{\beta_i} - \tau_i^{\beta_i} \Big]
+S_{\alpha_i, A_{5, i}} ( t_i )
\Bigg]
\bm{w}
\\[7pt]
+\sum_{i, j \in \mathcal{F}}
\tau_j^{\beta_j/2} \,
\Big[ ( \tau_i + t_i )^{\beta_i/2} - \tau_i^{\beta_i/2} \Big]
\Big[ A_{6, i, j} + A_{1, i} \Sigma^{-1} A_{1, j}^\top \Big] \bm{w}
+\epsilon ( \bm{\tau}, \bm{t} )
\end{multline*}
with \(\epsilon\) satisfying the following condition:
\begin{equation}\label{LL:epsilon}
\begin{gathered}
\text{for every} \ \varepsilon > 0, \ \text{there exists} \ \delta > 0 \
\text{such that if} \ | \bm{\tau} |, \, | \bm{t} | < \delta, \ \text{then}
\\
\left| \epsilon ( \bm{\tau}, \bm{t} ) \right|
\leq \varepsilon \sum_{i = 1}^n \Big[ \tau_i^{\beta_i} + t_i^{\beta_i} + | t_i |^{\alpha_i} \Big].
\end{gathered}
\end{equation}
Bounding the mixed terms sum by
\begin{equation*}
2 \, \tau_j^{\beta_j/2} \, \Big[ ( \tau_i + t_i )^{\beta_i/2} - \tau_i^{\beta_i/2} \Big]
\leq \varepsilon \, \tau_j^{\beta_j} + \varepsilon^{-1} \, \Big[ ( \tau_i + t_i )^{\beta_i/2} - \tau_i^{\beta_i/2} \Big]^2
\end{equation*}
with the same \(\varepsilon\) as above, we obtain the following inequality:
\begin{equation*}
\left| \bm{d}_{\, \bm{\tau}} ( \bm{t} ) \right|
\leq
c \sum_{i = 1}^n \left[
\Big[ ( \tau_i + t_i )^{\beta_i} - \tau_i^{\beta_i} \Big]
+\varepsilon^{-1} \, \Big[ ( \tau_i + t_i )^{\beta_i/2} - \tau_i^{\beta_i/2} \Big]^2
+| t_i |^{\alpha_i}
\right]
+\varepsilon \sum_{i = 1}^n \Big[ \tau_i^{\beta_i} + t_i^{\beta_i} \Big].
\end{equation*}
Set again
\(\bm{\tau} \leadsto u^{-2/\bm{\nu}} \, \bm{\Lambda} \bm{k}\) and \(\bm{t} \leadsto u^{-2/\bm{\nu}} \, \bm{t}\).
New \(\bm{k}\) and \(\bm{t}\) belong to \(Q_u\) and \([\bm{0}, \bm{\Lambda}]\)
correspondingly.

Using the following inequality
\begin{equation}\label{eq:ineq1}
( x + 1 )^\beta - x^\beta \leq c \, ( x \vee 1 )^{\beta-1}
\quad \text{if} \quad x \geq 0,
\end{equation}
which is valid with some constant \(c > 0\), we obtain that
\begin{equation*}
( \Lambda_i \, k_i + t_i )^{\zeta} - ( \Lambda_i \, k_i )^{\zeta}
\leq
\Big[ ( \Lambda_i \, k_i + \Lambda_i )^{\zeta} - ( \Lambda_i \, k_i )^{\zeta} \Big]
\leq
c_2 \, \Lambda_i^{\zeta} \, ( k_i \vee 1 )^{\zeta - 1}
\end{equation*}
for \(\zeta = \beta_i\) or \(\zeta = \beta_i / 2\). Therefore, the first condition
of~\eqref{eq:log-layer-G-and-sigma} holds with
\begin{equation*}
G \coloneqq
c_3 \sum_{i = 1}^n \left[
u^{2 - 2 \beta_i / \nu_i} \Lambda_i^{\beta_i}
\Big[
( k_i \vee 1 )^{\beta_i - 1}
+\varepsilon^{-1} \, ( k_i \vee 1 )^{\beta_i - 2}
+\varepsilon \left( k_i^{\beta_i} + 1 \right)
\Big]
+u^{2 - 2 \alpha_i / \nu_i} \Lambda_i^{\alpha_i}
\right]
\end{equation*}
where \(\varepsilon\) can be made as small as required by taking \(u\) large enough.
\smallskip

Next, we seek for a bound of the variance of
\(\bm{w}_F^\top \, \bm{\chi}_{u, \bm{k}, F} ( \bm{t} )\).
We have:
\begin{equation*}
\var \left\{ \bm{w}_F^\top \, \bm{\chi}_{u, \bm{k}, F} ( \bm{t} ) \right\}
=
\var \left\{ \sum_{j \in F} w_j \chi_{u, \bm{k}, j} ( \bm{t} ) \right\}
=
\sum_{j \in F} w_j^2 \left[ K_{u, \bm{k}} ( \bm{t}, \bm{t} ) \right]_{jj},
\end{equation*}
where \(K_{u, \bm{k}}\) is the covariance of \(\bm{\chi}_{u, \bm{k}} ( \bm{t} )\):
\begin{equation*}
K_{u, \bm{k}} ( \bm{t}, \bm{s} )
=
\mathbb{E} \left\{
\Big[
\bm{\chi}_{u, \bm{k}} ( \bm{t} )
- \bm{d}_{u, \bm{k}} ( \bm{t} )
\Big]
{\Big[
\bm{\chi}_{u, \bm{k}} ( \bm{s} )
- \bm{d}_{u, \bm{k}} ( \bm{s} )
\Big]}^\top
\right\}
=
u^2 \, \mathcal{R}_{u, \bm{\Lambda} \bm{k}} ( \bm{t}, \bm{s} ).
\end{equation*}
By Lemma~\ref{lemma:conditional-covariance}, we have that
\begin{multline*}
\mathcal{R}_{\bm{\tau}} ( \bm{t}, \bm{t} )
=
\sum_{i = 1}^n \Big[
S_{\alpha_i, A_{5, i}} ( t_i ) + S_{\alpha_i, A_{5, i}} ( -t_i )
\Big]
\\[7pt]
+\sum_{i, j \in \mathcal{F}}
\Big[ A_{6, i, j} + A_{1, i} \Sigma^{-1} A_{1, j}^\top \Big]
\left( ( \tau_i + t_i )^{\beta_i/2} - \tau_i^{\beta_i/2} \right)
\left( ( \tau_j + t_j )^{\beta_j/2} - \tau_j^{\beta_j/2} \right)
+\epsilon ( \bm{\tau}, \bm{t} )
\end{multline*}
with \(\epsilon\) satisfying~\eqref{LL:epsilon}. Setting
\(\bm{\tau} \leadsto u^{-2/\bm{\nu}} \bm{\Lambda} \bm{k}\) and \(\bm{t} \leadsto u^{-2/\bm{\nu}} \bm{t}\),
and using again~\eqref{eq:ineq1}, we find that the second condition
of~\eqref{eq:log-layer-G-and-sigma} holds with
\begin{equation*}
\sigma^2 \coloneqq
c_4 \sum_{i = 1}^n
\left[
u^{2-2\beta_i/\nu_i} \, \Lambda_i^{\beta_i}
\Big[ (k_i \vee 1)^{\beta_i - 2} + \varepsilon \left( k_i^{\beta_i} + 1 \right) \Big]
+u^{2 - 2\alpha_i / \nu_i}
\Lambda_i^{\alpha_i}
\right],
\end{equation*}
where \(c_4 > 0\) and \(\varepsilon\) can be made arbitrarily small by taking \(u\)
to be large enough. Adding the two bounds together
yields
\begin{equation*}
G + \sigma^2 =
c_5 \sum_{i = 1}^n \Bigg[
u^{2 - 2 \beta_i / \nu_i} \Lambda_i^{\beta_i}
\Big[
( k_i \vee 1 )^{\beta_i - 1}
+\varepsilon^{-1} \, ( k_i \vee 1 )^{\beta_i - 2}
+\varepsilon \left( k_i^{\beta_i} + 1 \right)
\Big]
+u^{2 - 2 \alpha_i / \nu_i} \Lambda_i^{\alpha_i}
\Bigg].
\end{equation*}
\end{proof}
\subsection{Calculations from the double sum lemma}
\label{sec:org88e9c48}
\label{DS:calculations}
The covariance function of the field
\(( \bm{X} ( \bm{t} ) + \bm{X} ( \bm{s} ) ) / 2\)
is given by:
\begin{align}
\notag
R ( \bm{t}_1, \bm{s}_1, \bm{t}_2, \bm{s}_2 )
& =
\frac{1}{4} \,
\mathbb{E} \left\{
\Big[ \bm{X} ( \bm{t}_1 ) + \bm{X} ( \bm{s}_1 ) \Big]
\Big[ \bm{X} ( \bm{t}_2 ) + \bm{X} ( \bm{s}_2 ) \Big]^\top
\right\}
\\[7pt]
\label{DS-cov}
& =
\frac{1}{4} \Big[
R ( \bm{t}_1, \bm{t}_2 )
+R ( \bm{t}_1, \bm{s}_2 )
+R ( \bm{s}_1, \bm{t}_2 )
+R ( \bm{s}_1, \bm{s}_2 )
\Big].
\end{align}
\subsubsection{Inverse of Sigma}
\label{sec:orgd9696ce}
By~\eqref{DS-cov}, we have
\begin{equation*}
\Sigma ( \bm{t}, \bm{s} )
= \frac{1}{4} \Big[
\Sigma ( \bm{t} ) + \Sigma ( \bm{s} ) + R ( \bm{t}, \bm{s} ) + R ( \bm{s}, \bm{t} )
\Big].
\end{equation*}
Using~\eqref{B-and-C-shorthand}, we can rewrite it as follows:
\begin{equation*}
\Sigma ( \bm{t}, \bm{s} )
=
\Sigma -
\frac{1}{4}
\Big[
C ( \bm{t}, \bm{t} )
+C ( \bm{s}, \bm{s} )
+C ( \bm{t}, \bm{s} )
+C ( \bm{s}, \bm{t} )
\Big]
\eqqcolon
\Sigma - B ( \bm{t}, \bm{s} ),
\end{equation*}
where we have introduced one more shorthand \(B ( \bm{t}, \bm{s} )\).
Similarly to what we did in Lemma~\ref{lemma:Sigma-inverse},
\begin{equation}\label{DS-Sigma-inverse-1}
\Sigma^{-1} ( \bm{t}, \bm{s} ) - \Sigma^{-1}
=
\Sigma^{-1}
B ( \bm{t}, \bm{s} ) \,
\Sigma^{-1}
+\Sigma^{-1}
B ( \bm{t}, \bm{s} ) \,
\Sigma^{-1}
B ( \bm{t}, \bm{s} ) \,
\Sigma^{-1}
+\epsilon,
\end{equation}
where, similarly to~\eqref{CC-error}, \(\epsilon\) can be shown to satisfy
\begin{equation*}
\epsilon
= O \left(
\sum_{i = 1}^n \Big[
t_i^{3 \beta_i} + s_i^{3 \beta_i} + | t_i - s_i |^{3 \alpha_i}
\Big]
\right).
\end{equation*}
Using~\eqref{B-and-C-expr}, we can find an expression for
\(B ( \bm{t}, \bm{s} )\):
\begin{equation}\label{DS-B}
\begin{aligned}
B ( \bm{t}, \bm{s} )
& \sim
\frac{1}{4}
\sum_{i = 1}^n \Big[
2 \, B_{1, i} \, t_i^{\beta_i'}
+2 \, B_{2, i} \, t_i^{\beta_i}
+2 \, B_{1, i} \, s_i^{\beta_i'}
+2 \, B_{2, i} \, s_i^{\beta_i}
+S_{\alpha_i, A_{5, i}} ( t_i - s_i )
+S_{\alpha_i, A_{5, i}} ( s_i - t_i )
\Big]
\\[7pt]
& \hspace{10pt}
+\frac{1}{4} \, \sum_{i, j \in \mathcal{F}}
A_{6, i, j}
\Big[
t_i^{\beta_i/2} \, t_j^{\beta_j/2}
+s_i^{\beta_i/2} \, s_j^{\beta_j/2}
+t_i^{\beta_i/2} \, s_j^{\beta_j/2}
+s_i^{\beta_i/2} \, t_j^{\beta_j/2}
\Big]
\end{aligned}
\end{equation}
with \(B_{k, i} = A_{k, i} + A_{k, i}^\top\), \(k = 1, 2\) and within the same
magnitude of error.

The quadratic term of~\eqref{DS-Sigma-inverse-1} reads:
\begin{equation*}
B ( \bm{t}, \bm{s} ) \, \Sigma^{-1} B ( \bm{t}, \bm{s} )
\sim
\frac{1}{4} \,
\sum_{i, j \in \mathcal{F}}
B_{1, i} \, \Sigma^{-1} B_{1, j}
\Big[
t_i^{\beta_i/2} \, t_j^{\beta_j/2}
+s_i^{\beta_i/2} \, s_j^{\beta_j/2}
+t_i^{\beta_i/2} \, s_j^{\beta_j/2}
+s_i^{\beta_i} \, t_j^{\beta_j/2}
\Big].
\end{equation*}
Finally, we arrive at
\begin{align*}
\Sigma^{-1} ( \bm{t}, \bm{s} ) - \Sigma^{-1}
& \sim
\frac{1}{4} \,
\Sigma^{-1} \,
\sum_{i = 1}^n \Big[
2 \, B_{1, i} \, t_i^{\beta_i'}
+2 \, B_{2, i} \, t_i^{\beta_i}
+2 \, B_{1, i} \, s_i^{\beta_i'}
+2 \, B_{2, i} \, s_i^{\beta_i}
+S_{\alpha_i, A_{5, i}} ( t_i - s_i )
+S_{\alpha_i, A_{5, i}} ( s_i - t_i )
\Big]
\Sigma^{-1}
\\[7pt]
& \hspace{10pt}
+\frac{1}{4} \,
\sum_{i, j \in \mathcal{F}}
\Sigma^{-1}
\Big[ A_{6, i, j} + B_{1, i} \, \Sigma^{-1} B_{1, j} \Big] \,
\Sigma^{-1}
\Big[
t_i^{\beta_i/2} \, t_j^{\beta_j/2}
+s_i^{\beta_i/2} \, s_j^{\beta_j/2}
+t_i^{\beta_i/2} \, s_j^{\beta_j/2}
+s_i^{\beta_i/2} \, t_j^{\beta_j/2}
\Big].
\end{align*}
Rewriting it in terms of
\(\widetilde{D}_{i, j} = A_{6, i, j} + B_{1, j} \, \Sigma^{-1} B_{1, j}\)
at
\begin{equation}\label{DS-Sigma-inverse}
\begin{aligned}
\Sigma^{-1} ( \bm{t}, \bm{s} ) - \Sigma^{-1}
& =
\frac{1}{4} \,
\Sigma^{-1} \,
\sum_{i = 1}^n
\Big[
2 \, B_{1, i} \, t_i^{\beta_i'}
+2 \, B_{2, i} \, t_i^{\beta_i}
+2 \, B_{1, i} \, s_i^{\beta_i'}
+2 \, B_{2, i} \, s_i^{\beta_i}
\\
+S_{\alpha_i, A_{5, i}} ( t_i - s_i )
+S_{\alpha_i, A_{5, i}} ( s_i - t_i )
\Big]
\Sigma^{-1}
\span \omit
\\[7pt]
& \hspace{10pt}
+\frac{1}{4} \,
\Sigma^{-1} \,
\sum_{i, j \in \mathcal{F}}
\widetilde{D}_{i, j} \,
\Big[
t_i^{\beta_i/2} \, s_j^{\beta_j/2}
+s_i^{\beta_i/2} \, t_j^{\beta_j/2}
+t_i^{\beta_i/2} \, t_j^{\beta_j/2}
+s_i^{\beta_i/2} \, s_j^{\beta_j/2}
\Big]
\Sigma^{-1}
+\epsilon.
\end{aligned}
\end{equation}
The error term \(\epsilon\) satisfies
\begin{equation}\label{DS-Sigma-error}
\epsilon = o \left(
\sum_{i = 1}^n \Big[
t_i^{\beta_i} + s_i^{\beta_i} + | t_i - s_i |^{\alpha_i}
\Big]
\right).
\end{equation}
\subsubsection{Exponential prefactor}
\label{sec:orgad72226}
Multiplying~\eqref{DS-Sigma-inverse} by \(\bm{b}\) on both sides, and
using the fact that by~\ref{A2.3}
\begin{equation*}
\bm{w}^\top B_{1, i} \, \bm{w} = 0, \quad
\bm{w}^\top B_{2, i} \, \bm{w} = 2 \, \bm{w}^\top A_{2, i} \, \bm{w}, \quad
\bm{w}^\top \widetilde{D}_{i, j} \, \bm{w}
= \Xi_{i, j},
\end{equation*}
where \(\Xi_{i, j}\) is defined in~\eqref{XI-alternative}, and also
\begin{equation*}
\bm{w}^\top S_{\alpha, A} ( t ) \, \bm{w}
= \bm{w}^\top A \, \bm{w} \, \mathbb{1}_{t \geq 0} \, | t |^{\alpha}
+\underbracket[.1pt]{\bm{w}^\top A^\top \, \bm{w}}_{= \, \bm{w}^\top A \, \bm{w}} \,
\mathbb{1}_{t < 0} \, | t |^{\alpha}
= \bm{w}^\top A \, \bm{w}  \, | t |^{\alpha},
\end{equation*}
we obtain
\begin{multline}\label{DS-prefactor}
\bm{b}^\top \Big[ \Sigma^{-1} ( \bm{\tau}, \bm{\lambda} ) - \Sigma^{-1} \Big] \bm{b}
\sim
\sum_{i = 1}^n
\left[
\bm{w}^\top A_{2, i} \, \bm{w} \, \Big[ \tau_i^{\beta_i} + \lambda_i^{\beta_i} \Big]
+\frac{\bm{w}^\top A_{5, i} \, \bm{w}}{2} \,
\left| \lambda_i - \tau_i \right|^{\alpha_i}
\right]
\\[7pt]
+\frac{1}{4} \,
\sum_{i, j \in \mathcal{F}}
\Xi_{i, j}
\Big[
\tau_i^{\beta_i/2} \, \lambda_j^{\beta_j/2}
+\lambda_i^{\beta_i/2} \, \tau_j^{\beta_j/2}
+\tau_i^{\beta_i/2} \, \tau_j^{\beta_j/2}
+\lambda_i^{\beta_i/2} \, \lambda_j^{\beta_j/2}
\Big]
\end{multline}
with the same error as in~\eqref{DS-Sigma-error}.
\subsubsection{Conditional mean: formula}
\label{sec:org21ab9ae}
The conditional mean vector \(\bm{d}_{\bm{\tau}, \bm{\lambda}} ( \bm{t}, \bm{s} )\)
of the field
\(( \bm{X} ( \bm{t} ) + \bm{X} ( \bm{s} ) ) / 2\)
is given by
\begin{equation*}
\bm{d}_{\bm{\tau}, \bm{\lambda}} ( \bm{t}, \bm{s} )
= \Big[
I - R ( \bm{\tau} + \bm{t}, \bm{\lambda} + \bm{s}, \bm{\tau}, \bm{\lambda} ) \,
\Sigma^{-1} ( \bm{\tau}, \bm{\lambda} )
\Big] \bm{b},
\end{equation*}
where \(R ( \bm{t}_1, \bm{s}_1, \bm{t}_2, \bm{s}_2 )\) is defined
by~\eqref{DS-cov}.

As in the proof of~\ref{lemma:cond-mean}, to simplify the computations
define \(C\) and \(B\) by
\begin{equation*}
R ( \bm{\tau} + \bm{t}, \bm{\lambda} + \bm{s}, \bm{\tau}, \bm{\lambda} )
\eqqcolon \Sigma - C,
\qquad
\Sigma^{-1} ( \bm{\tau}, \bm{\lambda} )
\eqqcolon \Sigma^{-1} + \Sigma^{-1} \, B \, \Sigma^{-1}.
\end{equation*}
With these shorthands, we have
\begin{equation}\label{DS-I-minus-R-Sigma-inv}
I - R ( \bm{\tau} + \bm{t}, \bm{\lambda} + \bm{s}, \bm{\tau}, \bm{\lambda} ) \,
\Sigma^{-1} ( \bm{\tau}, \bm{\lambda} )
= I - \Big[ \Sigma - C \Big] \Big[ \Sigma^{-1} + \Sigma^{-1} B \, \Sigma^{-1} \Big]
= ( C - B ) \, \Sigma^{-1} + C \, \Sigma^{-1} B \, \Sigma^{-1}.
\end{equation}
By~\eqref{DS-Sigma-inverse-1},
\begin{equation*}
C = \frac{1}{4} \Big[
C ( \bm{\tau} + \bm{t}, \bm{\tau} )
+C ( \bm{\tau} + \bm{t}, \bm{\lambda} )
+C ( \bm{\lambda} + \bm{s}, \bm{\tau} )
+C ( \bm{\lambda} + \bm{s}, \bm{\lambda} )
\Big]
\end{equation*}
with
\begin{equation*}
C ( \bm{t}, \bm{s} )
\sim
\sum_{i = 1}^n \Big[
A_{1, i} \, t^{\beta_i'}
+A_{2, i} \, t^{\beta_i}
+A_{1, i}^\top \, s_i^{\beta_i'}
+A_{2, i}^\top \, s_i^{\beta_i}
+S_{\alpha_i, A_{5, i}} ( t_i - s_i )
\Big]
+\sum_{i, \, j \in \mathcal{F}} A_{6, i, j} \, t_i^{\beta_i / 2} \, s_j^{\beta_j / 2},
\end{equation*}
and
\begin{multline*}
B \sim
\frac{1}{4} \,
\sum_{i = 1}^n
\Big[
2 \, B_{1, i} \, \tau_i^{\beta_i'}
+2 \, B_{2, i} \, \tau_i^{\beta_i}
+2 \, B_{1, i} \, \lambda_i^{\beta_i'}
+2 \, B_{2, i} \, \lambda_i^{\beta_i}
+S_{\alpha_i, A_{5, i}} ( \tau_i - \lambda_i )
+S_{\alpha_i, A_{5, i}} ( \tau_i - \lambda_i )
\Big]
\\[7pt]
+\frac{1}{4} \,
\sum_{i, j \in \mathcal{F}}
\widetilde{D}_{i, j} \,
\Big[
\tau_i^{\beta_i/2} \, \tau_j^{\beta_j/2}
+\tau_i^{\beta_i/2} \, \lambda_j^{\beta_j/2}
+\lambda_i^{\beta_i/2} \, \tau_j^{\beta_j/2}
+\lambda_i^{\beta_i/2} \, \lambda_j^{\beta_j/2}
\Big]
\end{multline*}
Let us calculate the leading order coefficients
in~\eqref{DS-I-minus-R-Sigma-inv}. First, the \(S\)-type contributions
are
\begin{multline}\label{DS-S-contrib}
F_1 \coloneqq
\frac{1}{4}
\sum_{i = 1}^n
\Big[
2 \, S_{\alpha_i, A_{5, i}} ( t_i )
+2 \, S_{\alpha_i, A_{5, i}} ( s_i )
+S_{\alpha_i, A_{5, i}} ( \tau_i + t_i - \lambda_i )
\\[7pt]
+S_{\alpha_i, A_{5, i}} ( \lambda_i + s_i - \tau_i )
-S_{\alpha_i, A_{5, i}} ( \tau_i - \lambda_i )
-S_{\alpha_i, A_{5, i}} ( \lambda_i - \tau_i )
\Big]
\Sigma^{-1}.
\end{multline}
Next, the leading power-type orders give
\begin{gather*}
( \tau_i + t_i )^{\beta_i'} - \tau_i^{\beta_i'}, \
( \lambda_i + s_i )^{\beta_i'} - \lambda_i^{\beta_i} \colon
\quad
\frac{1}{2} \, A_{1, i},
\\[7pt]
( \tau_i + t_i )^{\beta_i} - \tau_i^{\beta_i}, \
( \lambda_i + s_i )^{\beta_i} - \lambda_i^{\beta_i} \colon
\quad
\frac{1}{2} \, A_{2, i}.
\end{gather*}
It remains to compute the mixed terms: from \(C - D\):
\begin{equation}\label{eq:3}
\begin{gathered}
( \tau_i + t_i )^{\beta_i/2} \, \tau_j^{\beta_j/2}, \quad
( \lambda_i + s_i )^{\beta_i/2} \, \lambda_j^{\beta_j/2},
\\
( \tau_i + t_i )^{\beta_i/2} \, \lambda_j^{\beta_j/2}, \quad
( \lambda_i + s_i )^{\beta_i/2} \, \tau_j^{\beta_j/2}
\end{gathered}
\colon \quad
\frac{1}{4} \, A_{6, i, j}
\end{equation}
and
\begin{equation}\label{eq:6}
\begin{gathered}
\tau_i^{\beta_i/2} \, \tau_j^{\beta_j/2}, \quad
\tau_i^{\beta_i/2} \, \lambda_j^{\beta_j/2},
\\
\lambda_i^{\beta_i/2} \, \tau_j^{\beta_j/2}, \quad
\lambda_i^{\beta_i/2} \, \lambda_j^{\beta_j/2}
\end{gathered}
\colon \quad
-\frac{1}{4} \, \widetilde{D}_{i, j}.
\end{equation}
From \(C \, \Sigma^{-1} B\):
\begin{equation}\label{eq:5}
\begin{gathered}
( \tau_i + t_i )^{\beta_i/2} \, \tau_j^{\beta_j/2}, \quad
( \tau_i + t_i )^{\beta_i/2} \, \lambda_j^{\beta_j/2},
\\
( \lambda_i + s_i )^{\beta_i/2} \, \tau_j^{\beta_j/2}, \quad
( \lambda_i + s_i )^{\beta_i/2} \, \lambda_j^{\beta_j/2}
\end{gathered}
\colon \quad
\frac{1}{4} \,
A_{1, i} \, \Sigma^{-1} B_{1, j}
\end{equation}
and
\begin{equation}\label{eq:7}
\begin{gathered}
\tau_i^{\beta_i/2} \, \tau_j^{\beta_j/2}, \quad
\tau_i^{\beta_i/2} \, \lambda_j^{\beta_j/2},
\\
\lambda_i^{\beta_i/2} \, \tau_j^{\beta_j/2}, \quad
\lambda_i^{\beta_i/2}, \, \lambda_j^{\beta_j/2}
\end{gathered}
\colon \quad
\frac{1}{4} \, A_{1, i}^\top \, \Sigma^{-1} B_{1, j}.
\end{equation}
Combining~\eqref{eq:3} and~\eqref{eq:5} gives
\begin{equation*}
\begin{gathered}
( \tau_i + t_i )^{\beta_i/2} \, \tau_j^{\beta_j/2}, \quad
( \tau_i + t_i )^{\beta_i/2} \, \lambda_j^{\beta_j/2},
\\
( \lambda_i + s_i )^{\beta_i/2} \, \tau_j^{\beta_j/2}, \quad
( \lambda_i + s_i )^{\beta_i/2} \, \lambda_j^{\beta_j/2}
\end{gathered}
\colon \quad
\frac{1}{4} \, A_{6, i, j}
+\frac{1}{4} \, A_{1, i} \, \Sigma^{-1} B_{1, j}.
\end{equation*}
Similarly for~\eqref{eq:6} and~\eqref{eq:7}:
\begin{equation*}
\begin{gathered}
\tau_i^{\beta_i/2} \, \tau_j^{\beta_j/2}, \quad
\tau_i^{\beta_i/2} \, \lambda_j^{\beta_j/2},
\\
\lambda_i^{\beta_i/2} \, \tau_j^{\beta_j/2}, \quad
\lambda_i^{\beta_i/2}, \, \lambda_j^{\beta_j/2}
\end{gathered}
\colon \quad
-\frac{1}{4} \, A_{6, i, j} -\frac{1}{4} \, A_{1, i} \, \Sigma^{-1} B_{1, j},
\end{equation*}
where we have used
\begin{equation*}
-\widetilde{D}_{i, j} + A_{1, i}^\top \, \Sigma^{-1} B_{1, j}
= -A_{6, i, j} -B_{1, i} \, \Sigma^{-1} B_{1, j} + A_{1, i}^\top \, \Sigma^{-1} B_{1, j}
= -A_{6, i, j} -A_{1, i} \, \Sigma^{-1} B_{1, j}
\end{equation*}
The aggregate power-type contribution is
\begin{equation}\label{DS-F2}
\begin{aligned}
F_2
& \coloneqq
\frac{1}{2} \,
\sum_{i = 1}^n
\Bigg[
A_{1, i}
\left[
\Big( ( \tau_i + t_i )^{\beta_i'} - \tau_i^{\beta_i'} \Big)
+\Big( ( \lambda_i + s_i )^{\beta_i'} - \lambda_i^{\beta_i'} \Big)
\right]
\\[7pt]
+A_{2, i}
\left[
\Big( ( \tau_i + t_i )^{\beta_i} - \tau_i^{\beta_i} \Big)
+\Big( ( \lambda_i + s_i )^{\beta_i} - \lambda_i^{\beta_i} \Big)
\right]
\Bigg]
\Sigma^{-1}
\span \omit
\\[7pt]
& \hspace{10pt}
+\frac{1}{4} \sum_{i, j \in \mathcal{F}}
\Big[ A_{6, i, j} + A_{1, i} \, \Sigma^{-1} B_{1, j} \Big] \Sigma^{-1}
\left( \tau_j^{\beta_j/2} + \lambda_j^{\beta_j/2} \right)
\\[7pt]
\times
\Bigg[
\left( ( \tau_i + t_i )^{\beta_i/2} - \tau_i^{\beta_i/2} \right) \,
+\left( ( \lambda_i + s_i )^{\beta_i/2} - \lambda_i^{\beta_i/2} \right) \,
\Bigg]
\span \omit
\end{aligned}
\end{equation}
We have thus shown that
\begin{equation}\label{DS:I-minus-R-Sigma-inv}
I - R ( \bm{\tau} + \bm{t}, \bm{\lambda} + \bm{s}, \bm{\tau}, \bm{\lambda} ) \,
\Sigma^{-1} ( \bm{\tau}, \bm{\lambda} )
= F_1 + F_2 + \epsilon
\end{equation}
with \(F_2\) defined in~\eqref{DS-F2}, \(F_1\) defined
in~\eqref{DS-S-contrib} and the error \(\epsilon\) of order
\begin{equation}
\label{DS-bound-error}
o \Bigg(
\sum_{i = 1}^n \Big[
\tau_i^{\beta_i} + \lambda_i^{\beta_i} + t_i^{\beta_i} + s_i^{\beta_i}
+| t_i |^{\alpha_i} +| s_i |^{\alpha_i} + | t_i - s_i |^{\alpha_i}
+| \tau_i + t_i - \lambda_i |^{\alpha_i} + | \lambda_i + s_i - \tau_i |^{\alpha_i}
\Big]
\Bigg).
\end{equation}
\subsubsection{Conditional mean: upper bound}
\label{sec:org4e1dc87}
Recall that we are interested in the upper bound for the rescaled conditional
mean vector \(\bm{d}_{u, \bm{\tau}, \bm{\lambda}}\) uniform in
\(\bm{t}, \, \bm{s} \in [\bm{0}, \bm{S}]\) and \(\bm{\tau}\) and \(\bm{\lambda}\)
such that with some \(\mathcal{I}' \subset \mathcal{I}\)
\begin{equation}\label{DS-tau-lambda-cond}
\bm{0} \leq \bm{\tau} \leq \bm{\lambda} \leq u^{2/\bm{\nu} - 2/\bm{\beta}} \bm{\Lambda} / \bm{S}, \quad
\bm{\tau}_{\mathcal{J} \cup \mathcal{K}} = \bm{\lambda}_{\mathcal{J} \cup \mathcal{K}} = \bm{0}_{\mathcal{J} \cup \mathcal{K}}, \quad
\bm{\tau}_{\mathcal{I}'} = \bm{\lambda}_{\mathcal{I}'}, \quad
\bm{\tau}_{\mathcal{I} \setminus \mathcal{I}'} + \mathcal{S}_{\mathcal{I} \setminus \mathcal{I}'} < \bm{\lambda}_{\mathcal{I}' \setminus \mathcal{I}}.
\end{equation}
Let us bound \(F_1 \, \bm{w}\) and \(F_2 \, \bm{w}\) separately.

\textbf{Bound for \(F_1 \, \bm{w}\).}
Multiplying \(F_1\) by \(\bm{w}\) on the right and rescaling all time
parameters by \(u^{-2/\bm{\nu}}\), we find that
\begin{multline*}
u^2 \, F_{1, u} \, \bm{w} \coloneqq
\frac{1}{4}
\sum_{i = 1}^n
u^{2 - 2 \alpha_i / \nu_i}
\Big[
2 \, S_{\alpha_i, A_{5, i}} ( t_i )
+2 \, S_{\alpha_i, A_{5, i}} ( s_i )
+S_{\alpha_i, A_{5, i}} ( \tau_i + t_i - \lambda_i )
\\[7pt]
+S_{\alpha_i, A_{5, i}} ( \lambda_i + s_i - \tau_i )
-S_{\alpha_i, A_{5, i}} ( \tau_i - \lambda_i )
-S_{\alpha_i, A_{5, i}} ( \lambda_i - \tau_i )
\Big]
\, \bm{w}.
\end{multline*}
Then, with some \(c_1 > 0\) holds
\begin{equation*}
\left| S_{\alpha_i, A_{5, i}} ( t_i ) \, \bm{w} \right|, \
\left| S_{\alpha_i, A_{5, i}} ( s_i ) \, \bm{w} \right|
\leq
c_1 \, S_i^{\alpha_i}
\end{equation*}
and, using
\begin{equation}\label{eq:ineq2}
\left| \left| x \pm 1 \right|^{\zeta} - x^{\zeta} \right|
\leq c_2 \, ( x \vee 1 )^{\zeta-1}
\quad \text{for} \quad x \geq 0,
\end{equation}
we find that
\begin{align*}
\left|
\Big[
S_{\alpha_i, A_{5, i}} ( \tau_i + t_i - \lambda_i )
-S_{\alpha_i, A_{5, i}} ( \tau_i - \lambda_i )
\Big]
\, \bm{w}
\right|
& \leq
c_3 \Big[
( \lambda_i - \tau_i - t_i )^{\alpha_i} - ( \lambda_i - \tau_i )^{\alpha_i}
\Big]
\\[7pt]
& \leq
c_4 \, S_i \, \left( ( \lambda_i - \tau_i ) \vee S_i \right)^{\alpha_i - 1},
\\[7pt]
\left|
\Big[
S_{\alpha_i, A_{5, i}} ( \lambda_i + s_i - \tau_i )
-S_{\alpha_i, A_{5, i}} ( \lambda_i - \tau_i )
\Big]
\, \bm{w}
\right|
& \leq
c_5 \Big[
( \lambda_i - \tau_i + s_i )^{\alpha_i} - ( \lambda_i - \tau_i )^{\alpha_i}
\Big]
\\[7pt]
& \leq
c_6 \, S_i \, \left( ( \lambda_i - \tau_i ) \vee S_i \right)^{\alpha_i - 1}.
\end{align*}

\textbf{Bound for \(F_2 \, \bm{w}\).}
Multiplying \(F_2\) by \(\bm{w}\) on the right, rescaling all time parameters
by \(u^{-2/\bm{\nu}}\) and using
\(A_{1, i} \, \bm{w} = \bm{0}\) yields
\begin{equation}\label{DS-F2-bound}
u^2 \, F_{2, u} \, \bm{w}
=
\sum_{i = 1}^n u^{2 - 2 \beta_i / \nu_i} \, B_i
+\sum_{i, j \in \mathcal{F}} u^{2 - \beta_i / \nu_i - \beta_j / \nu_j} \, B_{i, j},
\end{equation}
where we have introduced two shorthands
\begin{gather*}
B_i \coloneqq
\frac{1}{2} \,
A_{2, i} \, \bm{w}
\left[
\Big( ( \tau_i + t_i )^{\beta_i} - \tau_i^{\beta_i} \Big)
+\Big( ( \lambda_i + s_i )^{\beta_i} - \lambda_i^{\beta_i} \Big)
\right]
\\[7pt]
B_{i, j} \coloneqq
\frac{1}{4} \,
D_{i,j} \, \bm{w}
\left[
\left( ( \tau_i + t_i )^{\beta_i/2} - \tau_i^{\beta_i/2} \right) \,
+\left( ( \lambda_i + s_i )^{\beta_i/2} - \lambda_i^{\beta_i/2} \right) \,
\right]
\left( \tau_i^{\beta_j/2} + \lambda_j^{\beta_j/2} \right).
\end{gather*}
They can all be bounded using~\eqref{eq:ineq1} as follows:
\begin{equation*}
\left| B_i \right| \leq c_1 S_i \, ( \lambda_i \vee S_i )^{\beta_i - 1}, \quad
\left| B_{i, j} \right| \leq c_1
\Big[
S_i \, ( \tau_i \vee S_i )^{\beta_i - 1} + S_j \, ( \lambda_j \vee S_j )^{\beta_j - 1}
\Big] \,
\left( \tau_j^{\beta_j/2} + \lambda_j^{\beta_j/2} \right).
\end{equation*}
To simplify the bound further, let us get rid of the mixing, applying
\begin{equation}\label{ineq3}
2 \, x \, y \leq \varepsilon x + \varepsilon^{-1} y
\end{equation}
to each of the terms. Hence, the right-hand side of~\eqref{DS-F2-bound} is
at most
\begin{equation*}
c_1
\sum_{i = 1}^n
u^{2-2\beta_i/\nu_i}
\Big[
S_i \, ( \lambda_i \vee S_i )^{\beta_i - 1}
+\varepsilon \, \lambda_j^{\beta_i}
+\varepsilon^{-1} \, S_j^2 \, ( \lambda_j \vee S_j )^{\beta_j - 2}
\Big].
\end{equation*}

\textbf{Error.}
The error~\eqref{DS-bound-error} is no larger than
\begin{equation*}
\varepsilon
\sum_{i = 1}^n
\left[
u^{2 - 2 \beta_i / \nu_i} \, ( \lambda_i \vee S_i )^{\beta_i}
+u^{2 - 2 \alpha_i / \nu_i} \, \left( ( \lambda_i - \tau_i) \vee S_i \right)^{\alpha_i}
\right],
\end{equation*}
where \(\varepsilon\) can be made arbitrarily small by choosing \(u\) large enough.

\textbf{Combined bound.}
\begin{multline*}
\left| u^2 \, \bm{d}_{u, \bm{\tau}, \bm{\lambda}} ( \bm{t}, \bm{s} ) \right|
\leq
c_1
\sum_{i = 1}^n \Bigg[
u^{2 - 2 \alpha_i / \nu_i} \,
\Big[
S_i \, \left( ( \lambda_i - \tau_i ) \vee S_i \right)^{\alpha_i - 1}
+\varepsilon \left( ( \lambda_i - \tau_i ) \vee S_i \right)^{\alpha_i}
\Big]
\\[7pt]
+u^{2-2\beta_i/\nu_i} \,
\Big[
S_i \, ( \lambda_i \vee S_i )^{\beta_i - 1}
+\varepsilon \, ( \lambda_i \vee S_i )^{\beta_i}
+\varepsilon^{-1} \, S_j^2 \, ( \lambda_j \vee S_j )^{\beta_j - 2}
\Big]
\Bigg]
\end{multline*}
The right-hand side of this bound can be taken as \(G\)
for~\eqref{DS:G-and-sigma}.
\subsubsection{Conditional covariance}
\label{sec:orgcb67e45}
Finally, we need a bound for
\begin{multline*}
\mathcal{R}_{\bm{\tau}, \bm{\lambda}} ( \bm{t}_1, \bm{s}_1, \bm{t}_2, \bm{s}_2 )
=
R ( \bm{\tau} + \bm{t}_1, \bm{\lambda} + \bm{s}_1, \bm{\tau} + \bm{t}_2, \bm{\lambda} + \bm{s}_2 )
\\
+R ( \bm{\tau} + \bm{t}_1, \bm{\lambda} + \bm{s}_1, \bm{\tau}, \bm{\lambda} ) \,
\Sigma^{-1} ( \bm{\tau}, \bm{\lambda} ) \,
R ( \bm{\tau}, \bm{\lambda}, \bm{\tau} + \bm{t}_2, \bm{\lambda} + \bm{s}_2 ).
\end{multline*}
Introduce the following shorthands:
\begin{align*}
& R ( \bm{\tau} + \bm{t}_1, \bm{\lambda} + \bm{s}_1, \bm{\tau} + \bm{t}_2, \bm{\lambda} + \bm{s}_2 )
\eqqcolon \Sigma - C_1,
\\[7pt]
& R ( \bm{\tau} + \bm{t}_1, \bm{\lambda} + \bm{s}_1, \bm{\tau}, \bm{\lambda} )
\eqqcolon \Sigma - C_2,
\\[7pt]
& R ( \bm{\tau}, \bm{\lambda}, \bm{\tau} + \bm{t}_2, \bm{\lambda} + \bm{s}_2 )
\eqqcolon \, \Sigma - C_3
\\[7pt]
& \Sigma^{-1} ( \bm{\tau}, \bm{\lambda} )
\eqqcolon \, \Sigma^{-1} + \Sigma^{-1} B \, \Sigma^{-1} \, + \Sigma^{-1} B \, \Sigma^{-1} B \, \Sigma^{-1},
\end{align*}
Using these shorthands, we find that \(\mathcal{R}_{\bm{\tau}, \bm{\lambda}}\) satisfies
\begin{equation*}
\mathcal{R}_{\bm{\tau}, \bm{\lambda}} ( \bm{t}_1, \bm{s}_1, \bm{t}_2, \bm{s}_2 )
\sim
G_1 + G_2,
\quad \text{where} \quad
\begin{cases}
G_1 \coloneqq -C_1 + C_2 -B + C_3, \\[3pt]
\begin{multlined}
G_2 \coloneqq B \, \Sigma^{-1} C_3 + C_2 \, \Sigma^{-1} B
\\[-10pt] -C_2 \, \Sigma^{-1} C_3 - B \, \Sigma^{-1} B.
\end{multlined}
\end{cases}
\end{equation*}
We shall also need the following formula:
\begin{equation*}
C ( \bm{t}, \bm{s} )
\sim
\sum_{i = 1}^n \Big[
A_{1, i} \, t^{\beta_i'}
+A_{2, i} \, t^{\beta_i}
+A_{1, i}^\top \, s_i^{\beta_i'}
+A_{2, i}^\top \, s_i^{\beta_i}
+S_{\alpha_i, A_{5, i}} ( t_i - s_i )
\Big]
+\sum_{i, \, j \in \mathcal{F}} A_{6, i, j} \, t_i^{\beta_i / 2} \, s_j^{\beta_j / 2},
\end{equation*}

\textbf{Terms with \(( \bm{t}_1, \bm{t}_2 )\).}
In \(G_1\), these terms come from
\begin{equation*}
G_{1, 1}
=
\frac{1}{4}
\Big[
-C ( \bm{\tau} + \bm{t}_1, \bm{\tau} + \bm{t}_2 )
+C ( \bm{\tau} + \bm{t}_1, \bm{\tau} )
-C ( \bm{\tau}, \bm{\tau} )
+C ( \bm{\tau}, \bm{\tau} + \bm{t}_2 )
\Big].
\end{equation*}
The following coefficients are zero:
\begin{equation*}
( \tau_i + t_{1, i} )^{\beta_i'}, \
( \tau_i + t_{1, i} )^{\beta_i}, \
( \tau_i + t_{2, i} )^{\beta_i'}, \
( \tau_i + t_{2, i} )^{\beta_i}, \
\tau_i^{\beta_i'}, \
\tau_i^{\beta_i}.
\end{equation*}
Mixed terms from \(G_{1, 1}\) and \(G_2\):
\begin{equation*}
\frac{1}{4}
\sum_{i, j \in \mathcal{F}}
D_{i, j}
\Big[ ( \tau_i + t_{1, i} )^{\beta_i/2} - \tau_i^{\beta_i/2} \Big]
\Big[ ( \tau_j + t_{1, j} )^{\beta_j/2} - \tau_j^{\beta_j/2} \Big].
\end{equation*}
\(S\)-terms:
\begin{equation*}
\frac{1}{4}
\sum_{i = 1}^n
\Big[
-S_{\alpha_i, A_{5, i}} ( t_{1, i} - t_{2, i} )
+S_{\alpha_i, A_{5, i}} ( t_{1, i} )
+S_{\alpha_i, A_{5, i}} ( -t_{2, i} )
\Big].
\end{equation*}
Rescaling everything by \(u^{-2/\bm{\nu}}\), we obtain
\begin{equation*}
u^2 \, \left\| \substack{\text{terms with} \\ \bm{t}_1 \ \text{and} \ \bm{t}_2} \right\|
\leq
c_1 \sum_{i = 1}^n u^{2-2\alpha_i/\nu_i} \, S_i^{\alpha_i}
+c_1 \sum_{i, j \in \mathcal{F}} u^{2-\beta_i/\nu_i-\beta_j/\nu_j} \,
S_i \, ( \tau_i \vee S_i )^{\beta_i/2 - 1} \,
S_j \, ( \tau_j \vee S_j )^{\beta_j/2 - 1}.
\end{equation*}
We can also simplify the bound by getting rid of the mixing:
\begin{equation*}
u^2 \, \left\| \substack{\text{terms with} \\ \bm{t}_1 \ \text{and} \ \bm{t}_2} \right\|
\leq
c_2 \sum_{i = 1}^n \left[
u^{2-2\alpha_i/\nu_i} \, S_i^{\alpha_i}
+u^{2 - 2 \beta_i / \nu_i} S_i^2 \, ( \tau_i \vee S_i )^{\beta_i - 2}
\right].
\end{equation*}

\textbf{Terms with \(( \bm{t}_1, \bm{s}_2 )\).}
In \(G_1\), these terms come from
\begin{equation*}
G_{1, 2}
=
\frac{1}{4}
\Big[
-C ( \bm{\tau} + \bm{t}_1, \bm{\lambda} + \bm{s}_2 )
+C ( \bm{\tau} + \bm{t}_1, \bm{\lambda} )
-C ( \bm{\lambda}, \bm{\lambda} )
+C ( \bm{\tau}, \bm{\lambda} + \bm{s}_2 )
\Big].
\end{equation*}
The following coefficients are zero:
\begin{equation*}
( \tau_i + t_{1, i} )^{\beta_i'}, \
( \tau_i + t_{1, i} )^{\beta_i}, \
( \lambda_i + s_{2, i} )^{\beta_i'}, \
( \lambda_i + s_{2, i} )^{\beta_i}, \
\lambda_i^{\beta_i'}, \
\lambda_i^{\beta_i}.
\end{equation*}
Mixed terms from \(G_{1, 1}\) and \(G_2\):
\begin{equation*}
\frac{1}{4}
\sum_{i, j \in \mathcal{F}}
D_{i, j}
\Big[ ( \tau_i + t_{1, i} )^{\beta_i/2} - \tau_i^{\beta_i/2} \Big]
\Big[ ( \lambda_j + s_{1, j} )^{\beta_j/2} - \lambda_j^{\beta_j/2} \Big].
\end{equation*}
\(S\)-terms:
\begin{equation*}
\frac{1}{4}
\sum_{i = 1}^n
\Big[
-S_{\alpha_i, A_{5, i}} ( \tau_i + t_{1, i} - \lambda_i - s_{2, i} )
+S_{\alpha_i, A_{5, i}} ( \tau_i + t_{1, i} )
-S_{\alpha_i, A_{5, i}} ( \tau_i - \lambda_i )
+S_{\alpha_i, A_{5, i}} ( -\lambda_i -s_{2, i} )
\Big].
\end{equation*}
Rescaling everything by \(u^{-2/\bm{\nu}}\), and getting rid of the mixed terms
as above, we obtain
\begin{equation*}
u^2 \, \left\|
\substack{
\text{terms with}
\\ \bm{t}_1 \ \text{and} \ \bm{t}_2
}
\right\|
\leq
c_1 \sum_{i = 1}^n
\left[
u^{2-2\alpha_i/\nu_i} \, S_i \left( ( \lambda_i - \tau_i ) \vee S_i \right)^{\alpha_i - 1}
+u^{2 - 2 \beta_i / \nu_i} \, S_i^2 \, ( \lambda_i \vee S_i )^{\beta_i - 2}.
\right]
\end{equation*}

The remaining terms (with \(( \bm{s}_1, \bm{t}_2 )\) and
\(( \bm{s}_1, \bm{s}_2 )\)) may be estimated similarly.

\textbf{Error}.
The accumulated error is at most
\begin{equation*}
\varepsilon
\sum_{i = 1}^n \Bigg[
u^{2 - 2 \beta_i / \nu_i} ( \lambda_i \vee S_i )^{\beta_i}
+u^{2 - 2 \alpha_i / \nu_i} \,
\left( ( \lambda_i - \tau_i ) \vee S_i \right)^{\alpha_i}
\Bigg]
\end{equation*}
with \(\varepsilon\), which can be made arbitrarily small by taking \(u\) large
enough.

\textbf{Combined bound.}
Combining the bounds together, we find that
\begin{multline*}
u^2 \,
\left\|
\mathcal{R}_{u, \bm{\tau}, \bm{\lambda}} ( \bm{t}_1, \bm{s}_1, \bm{t}_1, \bm{s}_1 )
\right\|
\leq
c_1 \sum_{i = 1}^n
\Bigg[
u^{2 - 2 \alpha_i / \nu_i}
\Big[
S_i^{\alpha_i}
+S_i \, \left( ( \lambda_i - \tau_i ) \vee S_i \right)^{\alpha_i - 1}
+\varepsilon \, ( \lambda_i - \tau_i )^{\alpha_i}
\Big]
\\[7pt]
+u^{2 - 2 \beta_i / \nu_i} \,
\Big[
S_i ( \lambda_i \vee S_i )^{\beta_i - 1}
+\varepsilon ( \lambda_i \vee S_i )^{\beta_i}
\Big]
\Bigg]
\end{multline*}

We have therefore obtained that~\eqref{DS:G-and-sigma} holds with
\begin{multline*}
G + \sigma^2
=
c_1
\sum_{i = 1}^n \Bigg[
u^{2 - 2 \alpha_i / \nu_i} \,
\Big[
S_i^{\alpha_i}
+S_i \, \left( ( \lambda_i - \tau_i ) \vee S_i \right)^{\alpha_i - 1}
+\varepsilon ( \lambda_i - \tau_i )^{\alpha_i}
\Big]
\\[7pt]
+u^{2-2\beta_i/\nu_i} \,
\Big[
S_i \, ( \lambda_i \vee S_i )^{\beta_i - 1}
+\varepsilon \, ( \lambda_i \vee S_i )^{\beta_i}
+\varepsilon^{-1} \, S_j^2 \, ( \lambda_j \vee S_j )^{\beta_j - 2}
\Big]
\Bigg].
\end{multline*}
\subsection{Integral estimate}
\label{sec:org3915b78}
\begin{proof}[Proof of Lemma~\ref{lemma:integral_estimate}]
Define a collection of sets
\(\Omega_F = \left\{
\bm{x} \in \mathbb{R}^d \colon
\bm{x}_F > \bm{0}, \  \bm{x}_{F^c} < \bm{0}
\right\}\) indexed by \(F \subset \{ 1, \ldots, d \}\) and
split the integral:
\[
  \int_{\mathbb{R}^d}
  e^{\bm{w}^\top \bm{x}} \,
  \mathbb{P} \left\{
    \exists \, \bm{t} \in [ \bm{0}, \bm{\Lambda} ] \colon
    \bm{\chi}_{\bm{x}} ( \bm{t} ) > \bm{x}
  \right\}
  \mathop{d \bm{x}}
  =
  \sum_{F \in 2^d}
  \int_{\Omega_F}
  e^{\bm{w}^\top \bm{x}} \,
  \mathbb{P} \left\{
    \exists \, \bm{t} \in [ \bm{0}, \bm{\Lambda} ] \colon
    \bm{\chi}_{\bm{x}} ( \bm{t} ) > \bm{x}
  \right\}
  \mathop{d \bm{x}}.
\]
For \(\bm{x} \in \Omega_F\) the probability under the integral may be bounded as follows:
\begin{align*}
  &
  \mathbb{P} \left\{
    \exists \, \bm{t} \in [ \bm{0}, \bm{\Lambda} ] \colon
    \bm{\chi}_{\bm{x}} ( \bm{t} ) > \bm{x}
  \right\}
  \\[7pt]
  & \hspace{30pt} \leq
  \mathbb{P} \left\{
    \exists \, \bm{t} \in [ \bm{0}, \bm{\Lambda} ] \colon
    \bm{w}_F^\top \big(
    \bm{\chi}_{\bm{x}, F} ( \bm{t} )
    -\mathbb{E} \left\{ \bm{\chi}_{\bm{x}, F} ( \bm{t} ) \right\}
    \big)
    > \bm{w}_F^\top \bm{x}_F
    -\bm{w}_F^\top \mathbb{E} \left\{ \bm{\chi}_{\bm{x}, F} ( \bm{t} ) \right\}
  \right\}
  \\[7pt]
  & \hspace{30pt} \leq
  \mathbb{P} \left\{
    \exists \, \bm{t} \in [ \bm{0}, \bm{\Lambda} ] \colon
    \bm{w}_F^\top \big( \bm{\chi}_{\bm{x}}
    -\mathbb{E} \left\{ \bm{\chi}_{\bm{x}} ( \bm{t} ) \right\}
    \big)
    > \bm{w}_F^\top \bm{x}_F - G - \varepsilon \sum_{j = 1}^d | x_j |
  \right\}
  \\[7pt]
  & \hspace{30pt} =
  \mathbb{P} \left\{
    \exists \, \bm{t} \in [ \bm{0}, \bm{\Lambda} ] \colon
    \eta_{\bm{x}, F} ( \bm{t} ) > r_{F, \varepsilon} ( \bm{x} ) - G
  \right\},
\end{align*}
where
\[
  r_{F, \varepsilon} ( \bm{x} )
  = \bm{w}_F^\top \, \bm{x}_F - \varepsilon \sum_{j = 1}^d | x_j |
  \qquad \text{and} \qquad
  \eta_{\bm{x}, F} ( \bm{t} )
  = \bm{w}_F^\top \big(
    \bm{\chi}_{\bm{x}, F} ( \bm{t} )
    -\mathbb{E} \left\{ \bm{x}_{\bm{x}, F} ( \bm{t} ) \right\}
  \big).
\]
Let us split the domain \(\Omega_{F}\) into two parts
\[
  \Omega_{F,+} = \left\{ \bm{x} \in \Omega_F \colon r_{F, \varepsilon} ( \bm{x} ) > G \right\}
  \qquad \text{and} \qquad
  \Omega_{F, -} = \Omega_F \setminus \Omega_{F, +}.
\]
Let us first deal with the integral over \(\Omega_{F,-}\).
It follows from \(\bm{w}_F^\top \, \bm{x}_F - \varepsilon \sum_{j = 1}^d | x_j | < G\) that
\[
  \sum_{j \in F} ( w_i - \varepsilon ) | x_j | - \varepsilon \sum_{j \in F^c} | x_j | < G
\]
or, with \(w_{ * } = \min_{j \in F} w_j > 0\) and \(\varepsilon < w_{ * }\),
\[
\varepsilon \sum_{j \in F} | x_j | \leq \frac{\varepsilon G}{w_{ * } - \varepsilon} + \frac{\varepsilon^2}{w_{ * } - \varepsilon} \sum_{j \in F^c} | x_j |
\]
Therefore, with \(r = r_{F, \varepsilon} ( \bm{x} )\), we have
\begin{multline*}
  \bm{w}^\top \bm{x}
  = r + \bm{w}_{F^c}^\top \, \bm{x}_{F^c} + \varepsilon \sum_{j = 1}^d | x_j |
  = r + \varepsilon \sum_{j \in F} | x_j | - \sum_{j \in F^c} ( w_j - \varepsilon ) | x_j |
  \\[7pt]
  \leq r + \frac{\varepsilon G}{w_{ * } - \varepsilon}
  -\left( w_{ * } - \frac{\varepsilon^2}{w_{ * } - \varepsilon} - \varepsilon \right)
  \sum_{j \in F^c} | x_j |
  \leq r + \frac{\varepsilon G}{w_{ * } - \varepsilon},
\end{multline*}
provided that \(\varepsilon\) is small enough.
Bounding the probability under the integral by \(1\) and changing the variables, we obtain
\begin{multline*}
  \int_{\Omega_{F,-}}
  e^{\bm{w}^\top \bm{x}} \,
  \mathbb{P} \left\{
    \exists \, \bm{t} \in [ \bm{0}, \bm{\Lambda} ] \colon
    \bm{\chi}_{\bm{x}} ( \bm{t} ) > \bm{x}
  \right\}
  \mathop{d \bm{x}}
  \leq
  \int_{\Omega_{F,-}}
  e^{\bm{w}^\top \bm{x}}
  \mathop{d \bm{x}}
  =
  \int_{-\infty}^G \mathop{dr} \int \mathop{dS}
  e^{\bm{w}^\top \bm{x}} \,
  r^{d - 1}
  \leq \\[7pt] \leq
  \int_{-\infty}^G \int \mathop{dS}
  e^{r + \varepsilon G / ( w_{*} - \varepsilon )}
  r^{d - 1} \mathop{d r} \mathop{d S}
  \leq
  c_1 e^{\varepsilon G / ( w_{*} - \varepsilon )} \int_{-\infty}^G e^{(1 + \varepsilon) r} \mathop{dr}
  = c_1 \, e^{c_2 G}.
\end{multline*}

Next, we concentrate on the intergral over \(\Omega_{F,+}\). By Piterbarg inequality,
we have the following uniform in \(\bm{x} \in \Omega_{F, +}\) upper bound:
\[
  \mathbb{P} \left\{
    \exists \, \bm{t} \in [ \bm{0}, \bm{\Lambda} ] \colon
    \eta_{\bm{x}, F} ( \bm{t} ) > \bm{x}
  \right\}
  \leq
  c_3 \left( \frac{r ( \bm{x} ) - G}{\sigma} \right)^{2/\gamma}
  \exp \left( -\frac{1}{2} \left( \frac{r ( \bm{x} ) - G}{\sigma} \right)^2 \right).
\]
Plugging this bound into the integral and changing the variables, we obtain
\begin{multline*}
  \int_{\Omega_{F,+}}
  e^{\bm{w}^\top \bm{x}} \,
  \mathbb{P} \left\{
    \exists \, \bm{t} \in [ \bm{0}, \bm{\Lambda} ] \colon
    \bm{\chi}_{\bm{x}} ( \bm{t} ) > \bm{x}
  \right\}
  \mathop{d \bm{x}}
  \leq \\[7pt] \leq
  c_3
  \int_{\Omega_{F,+}}
  e^{\bm{w}^\top \bm{x}}
  \left( \frac{r ( \bm{x} ) - G}{\sigma} \right)^{2/\gamma}
  \exp \left( -\frac{1}{2} \left( \frac{r ( \bm{x} ) - G}{\sigma} \right)^2 \right)
  \mathop{d \bm{x}}
  = \\[7pt] =
  c_3 \int_G^{\infty} \mathop{dr} \int \mathop{dS}
  e^{\bm{w}^\top \bm{x}}
  \left( \frac{r - G}{\sigma} \right)^{2/\gamma + d - 1}
  \exp \left( -\frac{1}{2} \left( \frac{r - G}{\sigma} \right)^2 \right).
\end{multline*}
Note that with \(w^{ * } = \max_{i = 1, \dots, d} w_i\) we have
\[
  r
  =
  \sum_{i \in F} (w_i - \varepsilon) | x_i |
  -\varepsilon \sum_{i \in F^{c}} | x_i |
  \geq
  \left( w^{ * } - \varepsilon \right) \sum_{i \in F} | x_i |
  -\varepsilon \sum_{i \in F^c} | x_i |
\]
and it follows that for all \(\varepsilon < w^{ * }\) the following bound holds:
\[
  \varepsilon \sum_{i \in F} | x_i |
  \leq
  \frac{\varepsilon r}{w^{ * } - \varepsilon}
  +\frac{\varepsilon^2}{w^{ * } - \varepsilon}
  \sum_{i \in F^{c}} | x_i |.
\]
This bound yields
\[
  \left( \bm{w}, \bm{x} \right)
  =
  r
  +\varepsilon \sum_{i \in F}^d | x_i |
  -\sum_{i \in F^{c}} (w_i - \varepsilon) | x_i |
  \leq
  \left( 1 + \frac{\varepsilon}{w^{ * } - \varepsilon} \right) r
  -\left(
    w_{ * } - \varepsilon
    -\frac{\varepsilon^2}{w^{ * } - \varepsilon}
  \right)
  \sum_{i \in F^{c}} | x_i |,
\]
from which for small enough \(\varepsilon\) follows that \(\left( \bm{w}, \bm{x} \right)
\leq
(1 + \varepsilon') r,\) with \(\varepsilon' = \varepsilon / (w^{ * } - \varepsilon)\).
Hence,
\begin{multline*}
  c_3 \int_G^{\infty} \mathop{dr} \int \mathop{dS}
  e^{\bm{w}^\top \bm{x}}
  \left( \frac{r - G}{\sigma} \right)^{2/\gamma + d - 1}
  \exp \left( -\frac{1}{2} \left( \frac{r - G}{\sigma} \right)^2 \right)
  \leq \\[7pt] \leq
  c_4 \int_{-\infty}^{\infty}
  e^{(1 + \varepsilon') r} \exp \left( -\frac{1}{2} \left( \frac{r - G}{\sigma} \right)^2 \right) \mathop{dr}
  \leq
  c_5 \, e^{c_6 ( G + \sigma^2 )},
\end{multline*}
where in the last step we used the Gaussian mgf formula
\(\mathbb{E} \left\{ e^{t \mathcal{N} ( \mu, \sigma^2 )} \right\} = e^{t \mu + t^2 \sigma^2 / 2}\) with \(t = 1 + \varepsilon'\).
\end{proof}
\subsection{Double crossing: vicinity of the diagonal}
\label{sec:orgf70551c}
\begin{proof}[Proof of Lemma~\ref{double-crossing-diagonal}]
We begin the proof with the following upper bound:
\begin{multline*}
\mathbb{P} \left\{ \exists \, \bm{t} \in D_{\varepsilon} \colon
X ( t_1 ) > a u, \ X ( t_2 ) < -bu
\right\}
\leq \mathbb{P} \left\{
\exists \, \bm{t} \in D_{\varepsilon}^+ \colon
X ( t_1 ) - X ( t_2 ) > ( a + b ) \, u
\right\}
\\
+\mathbb{P} \left\{
\exists \, \bm{t} \in D_{\varepsilon}^- \colon
X ( t_1 ) - X ( t_2 ) > ( a + b ) \, u
\right\},
\end{multline*}
where
\[
D_{\varepsilon}^+ = \left\{
\bm{t} = ( t, s ) \in [ 0, T ]^2 \colon t < s \leq t + \varepsilon
\right\},
\qquad
D_{\varepsilon}^- = \left\{
\bm{t} = ( t, s ) \in [ 0, T ]^2 \colon s < t \leq s + \varepsilon
\right\}.
\]
Define a Gaussian field
\[
\mathcal{X} ( s, l ) \coloneqq X ( s + l ) - X ( s ),
\qquad ( s, l ) \in \mathbb{T} \coloneqq [ 0, T ] \times [ 0, \varepsilon ]
\]
and use it to coarsen the bound above:
\[
\mathbb{P} \left\{
\exists \, \bm{t} \in D_{\varepsilon}^- \colon
X ( t_1 ) - X ( t_2 ) > ( a + b ) \, u
\right\}
\leq
\mathbb{P} \left\{
\exists \, ( s, l ) \in \mathbb{T} \colon
\mathcal{X} ( s, l ) > ( a + b ) \, u
\right\}.
\]
The variance of this Gaussian random field is
\begin{equation*}
\sigma^2 ( s, l )
= \var \{ \mathcal{X} ( s, l ) \}
= \mathbb{E} \left\{ \left[ X ( s + l ) - X ( s ) \right]^2 \right\}
\leq f ( \varepsilon ).
\end{equation*}
By Borell-TIS inequality~\eqref{Borell-TIS}, there exists \(\mu > 0\) such
that for all \(u > \mu\)
\begin{equation*}
\mathbb{P} \left\{ \exists \, ( s, l ) \in \mathbb{T} \colon \mathcal{X} ( s, l ) > ( a + b ) \, u \right\}
\leq \exp \left( -\frac{( u - \mu )^2}{2 \, f ( \varepsilon )} \right).
\end{equation*}
Since \(f ( \varepsilon ) \to 0\) by the hypotheses of the theorem, for any \(\delta > 0\)
there exists some \(\varepsilon > 0\) such that \(f ( \varepsilon ) < 1/4 \, \delta\). Therefore,
\[
\mathbb{P} \left\{
\exists \, \bm{t} \in D_{\varepsilon} \colon
\mathcal{X} ( s, l ) > ( a + b ) \, u
\right\}
\leq o \left( e^{-\delta u^2} \right).
\]
\end{proof}
\subsection{Double crossing for stationary processes: expansions}
\label{sec:orga180aa1}
\begin{proof}[Proof of Lemma~\ref{double-crossing-expansions}]
Consider \(\bm{X}_1 ( \bm{t} ) = ( X ( t_1 ), -X ( T - t_2 ) )^\top\). We want to
find the expansion of
\begin{equation*}
R ( \bm{t}, \bm{s} )
= \mathbb{E} \left\{ \bm{X}_1 ( \bm{t} ) \, \bm{X}_1 ( \bm{s} )^\top \right\}
= \begin{pmatrix}
\rho ( | t_1 - s_1 | ) & -\rho ( | T - s_2 - t_1 | ) \\
-\rho ( | T - t_2 - s_1 | ) & \rho ( | t_2 - s_2 | )
\end{pmatrix}
\end{equation*}
near \(\bm{t} = \bm{0}\). We have:
\begin{equation*}
\Sigma = \begin{pmatrix}
1 & -\rho ( T ) \\
-\rho ( T ) & 1
\end{pmatrix},
\end{equation*}
and
\begin{align*}
\Sigma - R ( \bm{t}, \bm{s} )
& = \begin{pmatrix}
1 - \rho ( | t_1 - s_1 | ) & \rho ( | T - s_2 - t_1 | ) - \rho ( T ) \\
\rho ( | T - t_2 - s_1 | ) - \rho ( T ) & 1 - \rho ( | t_2 - s_2 | )
\end{pmatrix}
\\[7pt]
& =
A_{2, 1} \, t_1 + A_{2, 2} \, t_2
+A_{2, 1}^\top \, s_2 +A_{2, 2}^\top \, s_2
+A_{5, 1} \, | t_1 - s_1 |^{\alpha}
+A_{5, 2} \, | t_2 - s_2 |^{\alpha}
\\[7pt]
& \quad
+o \left( t_1 + t_2 + s_1 + s_2 + | t_1 - s_1 |^{\alpha} + | t_2 - s_2 |^{\alpha} \right),
\end{align*}
where the matrix coefficients are given by
\begin{equation*}
A_{2, 1} = A_{2, 2}^\top =
-\rho' ( T )
\begin{pmatrix}
0 & 1 \\
0 & 0
\end{pmatrix},
\quad
A_{5, 1} =
\vartheta
\begin{pmatrix}
1 & 0 \\
0 & 0
\end{pmatrix},
\quad
A_{5, 2} =
\vartheta
\begin{pmatrix}
0 & 0 \\
0 & 1
\end{pmatrix}.
\end{equation*}
Clearly, \(\alpha_1 = \alpha_2 = \alpha\). Next, we need the optimal vector
\begin{equation*}
\bm{w}
= \Sigma^{-1} ( \bm{0} ) \, ( a, b )^\top
= \frac{1}{1 - \rho^2 ( T )}
\begin{pmatrix}
1 & \rho ( T ) \\
\rho ( T ) & 1
\end{pmatrix}
\begin{pmatrix}
a \\ b
\end{pmatrix}
= \frac{1}{1 - \rho^2 ( T )}
\begin{pmatrix}
a + b \rho ( T ) \\
b + a \rho ( T )
\end{pmatrix}
\end{equation*}
to check whether we have correctly identified \(\beta_1 = \beta_2 = 1\). That is, we
need to check~\ref{A2.4}
\begin{equation*}
\xi_1 = \bm{w}^\top \, A_{2, 1} \, \bm{w}
= \xi_2 = \bm{w}^\top \, A_{2, 2} \, \bm{w}
= \frac{
-\rho' ( T ) ( a + b \rho ( T ) )
( b + a \rho ( T ) )
}{( 1 - \rho^2 ( T ) )^2}
> 0.
\end{equation*}
We also need to check~\ref{A2.5}:
\begin{equation*}
\varkappa_1 = \bm{w}^\top \, A_{5, 1} \, \bm{w}
= \varkappa_2 = \bm{w}^\top \, A_{5, 2} \, \bm{w}
= \frac{C ( b + a \rho ( T ) )^2}{( 1 - \rho^2 ( T ) )^2}
> 0.
\end{equation*}
Assumption~\ref{A3} may be shown as follows:
\begin{align*}
\mathbb{E} \left\{ \left| \bm{X} ( \bm{t} ) - \bm{X} ( \bm{s} ) \right|^2 \right\}
& =
\mathbb{E} \left\{
( X ( t_1 ) - X ( s_1 ) )^2
+( X ( t_2 ) - X ( s_2 ) )^2
\right\}
\\
& =
2 ( 1 - \rho ( | t_1 - s_1 | ) )
+2 ( 1 - \rho ( | t_2 - s_2 | ) )
\\
& \leq
c_1 \Big( | t_1 - t_2 |^{\alpha} + | t_2 - s_2 |^{\alpha} \Big)
\end{align*}
with some constant \(c_1 > 0\). The last inequality follows from the
asymptotics of \(\rho ( t )\) near \(t = 0\). Hence, Assumption~\ref{A3}
is satisfied with \(\bm{\gamma} = ( \alpha, \alpha )^\top\).
\end{proof}
\subsection{Double crossing for fBm: minimization of generalized variance}
\label{sec:orge8d58b4}
\begin{proof}[Proof of Lemma~\ref{lemma:double-crossing-minimizer-fBm}]
We begin the proof by making use of the positive homogenity of the
generalized variance \(\sigma_{a, b}^{-2}\). Namely, for \(c > 0\) we have
\begin{equation*}
\sigma_{a, b}^{-2} ( c \bm{t} )
= c^{-2H} \sigma_{a, b}^{-2} ( \bm{t} ).
\end{equation*}
Therefore, if we assume that \(\bm{t} = ( t_1, t_2 )^\top\) lies in the lower
triangle \(t_1 > t_2\), we obtain from the equation above
\begin{equation*}
\sigma_{a, b}^{-2} ( t_1, t_2 )
= \left( \frac{t_1}{T} \right)^{-2H} \sigma_{a, b}^{-2}
\left( T, \frac{T t_2}{t_1} \right)
\geq  \sigma_{a, b}^{-2} \left( T, \frac{T t_2}{t_1} \right),
\end{equation*}
since \(( t_1 / T )^{-2H} \geq  1\). Since \(t_1 > t_2\), we have
that \(t_2' = T t_2 / t_1 \in ( 0, T )\) and it follows that
\begin{equation*}
\min_{t_1 > t_2} \sigma_{a, b}^{-2} ( t_1, t_2 ) \geq \min_{t_2' \in [ 0, T ]} \sigma_{a, b}^{-2} ( T, t_2' ),
\end{equation*}
hence, to minimize \(\bm{t} \mapsto \sigma_{a, b}^{-2} ( \bm{t} )\) in the lower triangle
we only need to minimize \(t_2 \mapsto \sigma_{a, b}^{-2} ( T, t_2 )\) in \(t_2 \in ( 0, T )\).
Similarly, to minimize \(\bm{t} \mapsto \sigma_{a, b}^{-2} ( \bm{t} )\) in the upper triangle,
we only need to minimize \(t_1 \mapsto \sigma_{a, b}^{-2} ( t_1, T )\) in \(t_1 \in ( 0, T )\).
\newline

If \(a < b\), the following trivial inequality
\begin{equation*}
\sigma_{a, b}^{-2} ( t, T ) =
\frac{a^2 t^{2H} + 2 a b r ( t, T ) + b^2 T^{2H}}{( t T )^{2H} - r^2 ( t, T )}
>
\frac{a^2 T^{2H} + 2 a b r ( t, T ) + b^2 t^{2H}}{( t T )^{2H} - r^2 ( t, T )}
= \sigma_{a, b}^{-2} ( T, t )
\end{equation*}
shows, that the minimum over lower triangle is strictly smaller than the minimum
over upper triangle. Therefore, the global minimum lies in \(t_1 > t_2\).
Similarly, if \(a > b\), the global minimum lies in \(t_1 < t_2\).
If \(a = b\), we have two global minima. \newline

Let us proceed to showing that the function
\begin{equation*}
t \mapsto \sigma_{a, b}^{-2} ( T, s )
= \frac{a^2 s^{2H} + 2 a b r ( s, T ) + b^2 T^{2H}}{( s T )^{2H} - r^2 ( s, T )}
\end{equation*}
possesses a unique minimum in \(t \in ( 0, T )\). Without loss of generality,
we may rewrite this function as
\begin{equation} \label{eq:sigma_D_representation}
\sigma_{a, b}^{-2} ( T, s )
=
\frac{a^2}{T^{2H}} \,
D_{b / a} \left( \frac{s}{T} \right),
\end{equation}
where
\[
D_{\alpha} ( s )
\coloneqq \frac{\alpha^2 + 2 \alpha f ( s ) + s^{2H}}{s^{2H} - f^2 ( s )}
= \frac{( \alpha + f ( s ) )^2}{s^{2H} - f^2 ( s )} + 1
\qquad s \in ( 0, 1 ),
\]
where we introduced the function
\[
f ( s ) = \frac{1}{2} \Big( s^{2H} + 1 - ( 1 - s )^{2H} \Big).
\]
A straightforward approach would be to show that
\(D_{\alpha}' ( 0+ ) < 0\), \(D_{\alpha}' ( 1- ) > 0\) and that
\(D_{\alpha}'' > 0\). The first two claims are easily seen to be true, but, unfortunately,
the third is false. The idea we shall employ to get around this issue is to
multiply the function \(D_{\alpha}'\) by an appropriately chosen and strictly positive
function \(U > 0\), so that the roots of \(D_{\alpha}' U\) remained the same as the
roots of \(D_{\alpha}'\), but \(D_{\alpha}' U\) became strictly increasing.
We now proceed to finding such multiplier.
\newline

First, rewrite \(D_{\alpha}'\) collecting \(\alpha\)-free and \(\alpha\)-dependent terms:
\begin{equation} \label{eq:D_prime}
D_{\alpha}' ( s ) = \frac{2 H ( \alpha + f ( s ) )}{( s^{2H} - f^2 ( s ) )^2}
\Big( \alpha G_{\alpha} ( s ) + G_0 ( s ) \Big),
\end{equation}
where
\begin{equation*}
G_{\alpha} ( s ) = \frac{f ( s ) f' ( s )}{H} - s^{2H - 1},
\qquad
G_0 ( s ) = \frac{f' ( s ) s^{2H}}{H} - f ( s ) s^{2H - 1}.
\end{equation*}
We can drop the positive factor
\begin{equation*}
\frac{2 H ( \alpha + f ( s ) )}{( s^{2H} - f^2 ( s ) )^2} > 0,
\end{equation*}
since the roots of \(D_{\alpha}'\) are that of \(\alpha \, G_{\alpha} ( s ) + G_0 ( s )\).
Unfortunately, this remainder is still non-monotone.
\newline

Proceding with the computations, we expand the derivatives and find that
\begin{align*}
G_\alpha ( s )
& = f ( s ) \Big( s^{2H - 1} + ( 1 - s )^{2H - 1} \Big) - s^{2H - 1} \\
& = f ( s ) ( 1 - s )^{2H - 1} - ( 1 - f ( s ) ) s^{2H - 1} \\
& = f ( s ) ( 1 - s )^{2H - 1} - f ( 1 - s ) s^{2H - 1} \\
& = s^{2 H - 1} ( 1 - s )^{2H - 1}
\Big( f ( s ) s^{1 - 2H} - f ( 1 - s ) ( 1 - s )^{1 - 2H} \Big).
\end{align*}
where in the second to last equality we used the identity \(f ( s ) + f ( 1 - s ) = 1\).
We can now represent \(G_{\alpha} ( s )\) as
\begin{equation*}
G_{\alpha} ( s )
= s^{2H - 1} ( 1 - s )^{2H - 1} \Big( A ( s ) - A ( 1 - s ) \Big),
\qquad A ( s ) = f ( s ) s^{1 - 2H}.
\end{equation*}
Similarly, but using the identity \(f ( s ) - f ( 1 - s ) = s^{2H} - ( 1 - s )^{2H}\),
we obtain a representation of \(G_0 ( s )\)
\begin{align*}
G_0 ( s )
& = \Big( s^{2H - 1} + ( 1 - s )^{2H - 1} \Big) s^{2H} - f ( s ) s^{2H - 1} \\
& = \Big( s^{2H - 1} + ( 1 - s )^{2H - 1} \Big) s^{2H}
-\Big( f ( 1 - s ) + s^{2H} - ( 1 - s )^{2H} \Big) s^{2H - 1} \\
& = -f ( 1 - s ) s^{2H - 1}
+\Big( ( 1 - s )^{2H - 1} s^{2H} + ( 1 - s )^{2H} s^{2H - 1} \Big) \\
& = s^{2H - 1} ( 1 - s )^{2H - 1}
\Big( -A ( 1 - s ) + 1 \Big).
\end{align*}
We can now rewrite~\eqref{eq:D_prime} as
\begin{equation} \label{eq:D_prime_product}
D_{\alpha}' ( s )
=
\widetilde{D}_{\alpha} ( s ) \,
\widetilde{G}_{\alpha} ( s ),
\qquad
\end{equation}
with
\begin{align}
\label{eq:G_tilde}
\widetilde{G}_{\alpha} ( s )
& \coloneqq
\alpha \Big( A ( s ) - A ( 1 - s ) \Big) - A ( 1 - s ) + 1,
\\[7pt]
\widetilde{D}_{\alpha} ( s )
& \coloneqq
\frac{
2 H ( \alpha + f ( s ) )
s^{2H - 1} ( 1 - s )^{2H - 1}
}{( s^{2H} - f^2 ( s ) )^2}.
\end{align}
We claim now that the function \(\widetilde{G}_{\alpha} ( s )\) is increasing.
Provided that this is true, we immediately obtain both existence and uniqueness
of the optimal point \(s_{ * }\), as well as positivity of the second
derivative at this point. Indeed,
\begin{equation} \label{eq:D_double_prime}
D_{\alpha}'' ( s_{ * } )
=
\widetilde{D}_{\alpha}' ( s_{ * } ) \,
\underbracket[0.1pt]{\widetilde{G}_{\alpha} ( s_{ * } )}_{= 0}
+\underbracket[0.1pt]{\widetilde{D}_{\alpha} ( s_{ * } )}_{> 0} \,
\underbracket[0.1pt]{\widetilde{G}_{\alpha}' ( s_{ * } )}_{> 0}
> 0.
\end{equation}
To prove this claim it clearly suffices to show that \(A ( s )\) is increasing.
\newline

We have,
\[
A' ( s ) = f' ( s ) s^{1 - 2H} + ( 1 - 2H ) f ( s ) s^{-2H}
\]
and its positivity is equivalent to that of
\[
s f' ( s ) + ( 1 - 2 H ) f ( s ).
\]
In case \(H \leq 1 / 2\), the inequality
\[
s f' ( s ) + ( 1 - 2 H ) f ( s ) > 0
\]
is clear, since \(f' ( s ) > 0\).
If \(H > 1 / 2\), we use the Bernoulli inequality
\[
( 1 - s )^{2H - 1} \leq 1 - ( 2 H - 1 ) s,
\]
which gives
\begin{align*}
s f' ( s ) + ( 1 - 2 H ) f ( s )
& = ( 1 - 2H ) + s^{2H} - ( 1 - s )^{2H} + 2H ( 1 - s )^{2H - 1} \\
& \geq ( 1 - 2H ) + s^{2H} - ( 1 - s ) \Big( 1 - ( 2 H - 1 ) s \Big) + 2 H ( 1 - s )^{2H - 1} \\
& = s^{2H} - ( 2 H - 1 ) s^2 + 2 H \Big( ( 1 - s )^{2H - 1} - ( 1 - s ) \Big) \\
& \geq s^{2H} - ( 2 H - 1 ) s^2 = s^2 \Big( s^{2 H - 2} - 2H + 1 \Big) \\
& \geq 2 s^2 \Big( 1 - H \Big) > 0.
\end{align*}

As a corollary of the above, we obtain
\(\alpha \, G_{\alpha} ( s_{ * } ) + G_0 ( s_{ * } ) = 0\)
or
\begin{equation} \label{eq:identity_from_op_lemma}
\alpha \Bigg[
r ( s_{ * }, 1 )
\Big( s_{ * }^{2H - 1} + ( 1 - s_{ * } )^{2H - 1} \Big)
-s_{ * }^{2H - 1}
\Bigg]
+\Bigg[
s_{ * }^{2H}
\Big( s_{ * }^{2H - 1} + ( 1 - s_{ * } )^{2H - 1} \Big)
-r ( s_{ * }, 1 ) s_{ * }^{2H - 1}
\Bigg]
= 0,
\end{equation}
which will be useful for us in Lemma~\ref{double-crossing-expansions-fBm}.
\newline

We have thus shown that the function
\[
\sigma_{\bm{b}}^{-2} ( t, s )
= \frac{a^2 s^{2H} + 2 a b r ( t, s ) + b^{2H} t^2}{( t s )^{2H} - r^2 ( t, s )}
\]
posesses a unique minimum in the lower triangle. By
\eqref{eq:sigma_D_representation}, we see that this point is given by
\[
t_{ * } = T s_{ * },
\]
where \(s_{ * }\) is the minimizer of \(D\). Moreover, we have
by~\eqref{eq:D_prime_product}
\[
\frac{\partial \sigma_{\bm{b}}^{-2}}{\partial s} ( T, t_{ * } )
=
\frac{a^2}{T^{2H}}
\widetilde{D}_{b/a} ( s_{ * } )
\underbracket[0.1pt]{
\widetilde{G}_{b/a} ( s_{ * } )
}_{= 0}
= 0
\]
and by~\eqref{eq:D_double_prime} we obtain
\[
\kappa_2 \coloneqq
\frac{\partial^2 \sigma_{\bm{b}}^{-2}}{\partial s^2} ( T, t_{ * } )
=
\frac{a^2}{T^{2H}} \, D_{b / a}'' ( s_{ * } )
=
\frac{a^2}{T^{2H}} \,
\widetilde{D}_{b/a} ( s_{ * } )
\widetilde{G}_{b/a}' ( s_{ * } )
> 0
\]

Similarly to~\eqref{eq:sigma_D_representation}, let us rewrite
\(\sigma_{\bm{b}}^{-2}\) as follows:
\[
\sigma_{\bm{b}}^{-2} ( t, s )
=
\frac{a^2}{t^{2H}} \, D_{b/a} \left( \frac{s}{t} \right).
\]
Therefore,
\[
-\kappa_1 \coloneqq
\frac{\partial \sigma_{\bm{b}}^{-2}}{\partial t} ( T, t_{ * } )
=
-\frac{2 H a^2}{T^{2H + 1}} \, D_{b/a} ( s_{ * } )
+\frac{a^2}{T^{2H}} \,
\left[ \frac{-s_{ * }}{T^2} \right]
\underbracket[0.1pt]{D_{b/a}' ( s_* )}_{= 0}
< 0.
\]
Finally, we have
\begin{equation*}
\sigma_{\bm{b}}^{-2} ( \bm{t}_{ * } )
-\sigma_{\bm{b}}^{-2} ( \bm{t}_{ * } - \bm{\tau} )
\sim
-\kappa_1 \, \tau_1
-\kappa_2 \, \tau_2^2.
\end{equation*}
\end{proof}
\subsection{Double crossing for fBm: matrix expansions}
\label{sec:orgc36be16}
\begin{proof}[Proof of Lemma~\ref{double-crossing-expansions-fBm}]
Let \(\bm{t}_{ * } = ( T, t_{ * } )\) be a point in \([ 0, T ]^2\)
minimizing the generalized variance \(\sigma_{a, b}^{-2} ( \bm{t} )\).

Recall that
\begin{equation*}
\Sigma ( \bm{t}_{ * } )
= \begin{pmatrix}
T^{2H} & -r ( T, t_{ * } ) \\
-r ( T, t_{ * } ) & t_{ * }^{2H}
\end{pmatrix},
\qquad
R ( \bm{t}, \bm{s} )
= \begin{pmatrix}
r ( t_1, s_1 ) & -r ( t_1, s_2 ) \\
-r ( t_2, s_1 ) & r ( t_2, s_2 )
\end{pmatrix},
\end{equation*}
where
\begin{equation*}
r ( t, s ) = \frac{1}{2} \left( t^{2H} + s^{2H} - |t-s|^{2H} \right).
\end{equation*}
We have
\begin{equation*}
\Sigma ( \bm{t}_{ * } ) - R ( \bm{t}_{ * } + \, \bm{t}, \bm{t}_{ * } + \bm{s} )
=
\begin{pmatrix}
T^{2H} - r ( T + t_1, T + s_1 )
& -r ( T, t_{ * } ) + r ( T + t_1, t_{ * } + s_2 ) \\[7pt]
-r ( t_{ * }, T ) + r ( t_{ * } + t_2, T + s_1 )
& t_{ * }^{2H} - r ( t_{ * } + t_2, t_{ * } + s_2 )
\end{pmatrix}.
\end{equation*}
For the top left cell, we have:
\begin{equation*}
T^{2H} - r ( T + t_1, T + s_1 )
= T^{2H} - \frac{1}{2} \left(
( T + t_1 )^{2H} + ( T + s_1 )^{2H}
-| t_1 - s_1 |^{2H}
\right).
\end{equation*}
Here is the expression for the top right cell:
\begin{multline*}
r ( T + t_1, t_{ * } + s_2 ) - r ( T, t_{ * } )
=
\frac{1}{2} \left( ( T + t_1 )^{2H} - T^{2H} \right)
+\frac{1}{2} \left( ( t_{ * } + s_2 )^{2H} - t_{ * }^{2H} \right)
\\[7pt]
-\frac{1}{2} \left( | T - t_{ * } + t_1 - s_2 |^{2H} - | T - t_{ * } |^{2H} \right)
\end{multline*}
and similarly for the remaining two. Let us compute the first order coefficients
of different contributions. Jumping ahead, we will be giving these coefficients
names corresponding to their roles within Assumption~\ref{A2}. Recall that
the first index \(i\) in \(A_{i, j}\) corresponds to the order (first or
second) of the contribution, while the second indicates the variable \(t_j\).
The coefficients of the corresponding \(\bm{s}\)-terms can be expressed as
transpositions of these.
\smallskip

First, the only two terms which depend on the difference are the following:
\begin{equation*}
\left| t_1 - s_1 \right|^{2H} \colon \quad
A_{5, 1} \coloneqq
\frac{1}{2}
\begin{pmatrix}
1 & 0 \\
0 & 0
\end{pmatrix},
\qquad
\left| t_2 - s_2 \right|^{2H} \colon \quad
A_{5, 2} \coloneqq
\frac{1}{2}
\begin{pmatrix}
0 & 0 \\
0 & 1
\end{pmatrix},
\qquad
\alpha_1 = \alpha_2 = 2H.
\end{equation*}
Next, we proceed to the power-type contributions of the leading order:
\begin{align*}
& t_1 \colon \quad
A_{2, 1} \coloneqq
H
\begin{pmatrix}
-T^{2H-1} & T^{2H-1} - | T - t_{ * } |^{2H-1} \\
0 & 0
\end{pmatrix},
\\[7pt]
& t_2 \colon \quad
A_{1, 2} \coloneqq
H
\begin{pmatrix}
0 & 0 \\
t_{ * }^{2H-1} + | T - t_{ * } |^{2H-1} & -t_{ * }^{2H-1}
\end{pmatrix}.
\end{align*}
We will show below that
\begin{equation}\label{fbm-assumptions}
\bm{w}^\top A_{2, 1} \, \bm{w} > 0
\quad \text{and} \quad
A_{1, 2} \, \bm{w} ( \bm{t} ) \sim 0,
\end{equation}
whence the names \(A_{2, 1}\) and \(A_{1, 2}\). It also explains why we do
not need to compute the second order in \(t_1\). However, we do need to find
the second order in the second coordinate:
\begin{equation}\label{fbm-2-order}
t_2^2 \colon \quad
A_{2, 2} \coloneqq
H \left( H - \frac{1}{2} \right)
\begin{pmatrix}
0 & 0 \\
t_{ * }^{2H-2} + | T - t_{ * } |^{2H-2} & -t_{ * }^{2H-2}
\end{pmatrix},
\end{equation}
and the coefficient \(A_{6, 2, 2}\) of \(t_2 \, s_2\) is zero.
To show~\eqref{fbm-assumptions}, we need the inverse of \(\Sigma\)
\begin{equation*}
\Sigma^{-1} ( \bm{t} )
=
\frac{1}{t_1^{2H} t_2^{2H} - r^2 ( t_1, t_2 )}
\begin{pmatrix}
t_2^{2H} & r ( t_1, t_2 ) \\
r ( t_1, t_2 ) & t_1^{2H}
\end{pmatrix}
\end{equation*}
and its action on the vector \(\bm{w} ( \bm{t} )\):
\begin{equation*}
\bm{w} ( \bm{t} ) = \Sigma^{-1} ( \bm{t} ) \, \bm{b} =
\frac{1}{t_1^{2H} t_2^{2H} - r^2 ( t_1, t_2 )}
\begin{pmatrix}
t_2^{2H} \, a + r ( t_1, t_2 ) \, b
\\[7pt]
r ( t_1, t_2 ) \, a + t_1^{2H} \, b
\end{pmatrix}.
\end{equation*}
Using the following identity~\eqref{eq:identity_from_op_lemma} from
Lemma~\ref{lemma:double-crossing-minimizer-fBm}
\begin{equation*}
b \Bigg[
r ( s_{ * }, 1 )
\Big( s_{ * }^{2H - 1} + ( 1 - s_{ * } )^{2H - 1} \Big)
-s_{ * }^{2H - 1}
\Bigg]
+a \Bigg[
s_{ * }^{2H}
\Big( s_{ * }^{2H - 1} + ( 1 - s_{ * } )^{2H - 1} \Big)
-r ( s_{ * }, 1 ) s_{ * }^{2H - 1}
\Bigg]
= 0
\end{equation*}
we can show that Assumptions~\ref{A2.3} to~\ref{A2.5} are satisfied
with
\begin{equation*}
A_{1, 2} \, \bm{w} ( \bm{t} ) \sim 0, \quad
\bm{w}^\top A_{2, 1} \, \bm{w} > 0, \quad
\bm{w}^\top \, A_{5, 2} \, \bm{w} > 0, \quad
\beta_1 = 1, \quad
\beta_2 = 2, \quad
\mathcal{F} = \{ 2 \},
\end{equation*}
which gives
\begin{multline*}
\Sigma - R ( \bm{t}_{ * } + \bm{t}, \bm{t}_{ * } + \bm{s} )
=
\Big[
A_{2, 1} \, t_1
+A_{1, 2} \, t_2
+A_{2, 2} \, t_2^2
\Big]
+\Big[
A_{2, 1}^\top \, s_1
+A_{1, 2}^\top \, s_2
+A_{2, 2}^\top \, s_2^2
\Big]
\\[7pt]
+A_{5, 1} \, | t_1 - s_1 |^{2H}
+A_{5, 2} \, | t_2 - s_2 |^{2H}
+o \left( \sum_{i = 1}^n \Big[
t_1 + t_2^2 + s_1 + s_2^2
\Big] \right),
\end{multline*}
which verifies~\ref{A2.1} and~\ref{A2.2}. Finally, \(\mathcal{F} \times \mathcal{F}\)
contains one element \((2, 2)\), and by~\eqref{fbm-2-order} we have that
\begin{equation*}
A_{6, 2, 2} + A_{1, 2} \Sigma^{-1} A_{1, 2}^\top
= A_{1, 2} \Sigma^{-1} A_{1, 2}
= C_2 \, C_2^\top
\quad \text{with} \quad
C_2 = A_{1, 2} \, \Sigma^{-1/2},
\end{equation*}
so~\ref{A2.6} is satisfied.
\end{proof}
\section*{Acknowledgements}
\label{sec:org659376a}
The authors kindly acknowledge the financial support by SNSF Grant
200021-196888.
\section*{References}
\label{sec:orgcc09156}
\printbibliography[heading=none]
\end{document}